\tikzset{font=\small}
\newtheorem{theorem}{Theorem} [section]
\newtheorem{lemma}[theorem]{Lemma}
\newtheorem{corollary}[theorem]{Corollary}
\newtheorem{proposition}[theorem]{Proposition}
\theoremstyle{definition}
\newtheorem{definition}[theorem]{Definition}
\newtheorem{remark}[theorem]{Remark}
\newtheorem{example}[theorem]{Example}
\numberwithin{equation}{section}
\def\mC{\mathcal{C}}
\def\mD{\mathcal{D}}
\DeclareMathOperator{\unit}{\mathop{\boldsymbol{u}}}
\DeclareMathOperator{\dom}{\mathop{\boldsymbol{d}}}
\DeclareMathOperator{\ran}{\mathop{\boldsymbol{r}}}
\DeclareMathOperator{\pr}{\mathop{\boldsymbol{m}}}
\def\PT{\mathscr{P\!\!T\!}}
\def\I{\mathscr{I\!}}
\def\T{\mathscr{T\!}}
\def\Spec{\mathrm{Spec}}
\setlist[enumerate]{itemsep=0mm}
\setlist[itemize]{itemsep=0mm}
\let\OLDthebibliography\thebibliography
\renewcommand\thebibliography[1]{
  \OLDthebibliography{#1}
  \setlength{\parskip}{0pt}
  \setlength{\itemsep}{0pt plus 0.3ex}
}
\begin{document}

\title[A topological approach to restriction semigroups]{A topological approach to discrete restriction semigroups 
and their algebras}

\date{}
\author{Ganna Kudryavtseva}
\address{G. Kudryavtseva: University of Ljubljana,
Faculty of Mathematics and Physics, Jadranska ulica 19, SI-1000 Ljubljana, Slovenia / Institute of Mathematics, Physics and Mechanics, Jadranska ulica 19, SI-1000 Ljubljana, Slovenia}
\email{ganna.kudryavtseva\symbol{64}fmf.uni-lj.si}

\thanks{This work was partially supported by the Slovenian Research and Innovation Agency grants P1-0288 and J1-60025.}

\begin{abstract}   
We introduce a general framework, based on \'etale topological categories, for studying discrete restriction semigroups and their algebras. Generalizing Paterson's universal groupoid of an inverse semigroup, we define the universal category ${\mathscr C}(S)$ of a
restriction semigroup $S$ with local units as the category of germs of the spectral action of $S$ on the character space of its projection semilattice. This is an \'etale topological category, meaning that its domain map is a local homeomorphism, while its range map is only required to be continuous. We show that $S$ embeds into the universal Boolean restriction semigroup of compact slices of ${\mathscr C}(S)$ and apply this embedding to establish the following results:

\begin{itemize}
\item a topological version of the ESN-type theorem for restriction semigroups by Gould and Hollings;
\item an extension to restriction semigroups of the Petrich-Reilly structure theorem for $E$-unitary inverse semigroups in terms of partial actions;
\item an isomorphism between the semigroup algebra of a restriction semigroup $S$ with local units and the convolution algebra of
the universal category ${\mathscr C}(S)$, extending the seminal result by Steinberg.
\end{itemize}
The paper is inspired by the work of Cockett and Garner and builds upon the earlier research of the author. It shows that the theory of restriction semigroups can be developed much further than was previously thought, as a natural extension of the  inverse semigroup theory.

\vspace{0.2cm}

{\em Keywords:} \'Etale category; Ample category;  Restriction semigroup; Inverse semigroup; Boolean restriction semigroup;  Spectral action; Category of germs; Steinberg algebra; ESN theorem; Proper restriction semigroups.
\vspace{0.2cm}

{MSC 2020:} 22A22, 22A30, 20M18, 18B40, 20M25, 06E15.
\end{abstract}

\maketitle

\tableofcontents 

\section{Introduction}
Restriction semigroups are algebraic counterparts of partial functions, which are ubiquitous throughout mathematics. These are semigroups equipped with a unary operation, $s\mapsto s^*$ that captures the notion of `taking the domain of a partial function'. A prototypical example is the partial transformation semigroup $PT(X)$ of all partial self-maps of a set $X$, where for a partial function $f$, the partial function $f^*$ is the identity map on the domain of $f$. Just as every group can be embedded into a symmetric group due to the Cayley theorem and, more generally, every inverse semigroup can be embedded into a symmetric inverse semigroup (which consists of all partial injective self-maps of a set) due to the Wagner-Preston theorem, every restriction semigroup can be embedded into a partial transformation semigroup. 

Partial symmetries form a special subclass of partial functions and are modelled by inverse semigroups, which have a rich and well-developed theory and are closely related to \'etale groupoids, forming a convenient framework for the study of algebras of dynamical origin. \'Etale groupoids arise naturally as groupoids of germs of inverse semigroup actions on spaces and are particularly useful for modelling invariants studied in topological dynamics, such as homology, full groups and orbit equivalence, see, for example, \cite{GPS95,M12}. Convolution algebras arising from \'etale groupoids, considered  in both analytical and algebraic settings, include many important examples such as Cuntz algebras \cite{C77}, graph and higher-rank graph $C^*$-algebras \cite{KP00, Raeburn05} and their algebraic counterparts \cite{AASM_book, APCHR13}. 

Restriction semigroups, their bi-unary analogues and generalization have been extensively studied over the last few decades, see, e.g., \cite{CDEGZ23, CG12, DKK21, F77, FG93, GG99, GH09, GH09a, JS01, JS03, Jones16, K15, Kud19, Kud25, KL23, KL17, L86, S16, S17, S18, Stokes17, Stokes22, Sz14} and references therein. For a historical note, see the survey \cite{H09}. 
Motivated by the significant role of partial functions in computer science\footnote{For an overview of the developments in category theory which let to introducing restriction categories and corresponding references, we refer the reader to the introduction of \cite{CL02}.}, multi-object generalizations of restriction monoids,  termed restriction categories, have received much attention from the category theoretic perspective, see, e.g., \cite{CG21,CGH12, CL02, CL03, CL07, CM09, CL24, HL21}. Although the algebraic theory of restriction semigroups has been extensively developed, their connection with topological categories has not been systematically studied, and has been explored only for the classes of Boolean restriction and birestriction semigroups \cite{CG21, Kud25,KL17}. The work of Lawson and the author \cite{KL17} made the first step, extending the non-commutative Stone duality between Boolean inverse semigroups and ample topological groupoids \cite{L10, L12, LL13, R07} to the duality between Boolean birestriction semigroups and biample topological categories\footnote{In \cite{KL17}, birestriction semigroups are referred to as two-sided restriction semigroups, and biample categories as Boolean \'etale categories.}. These results were recently applied by Machado and de Castro, who defined and studied non self-adjoint operator algebras \cite{MC24}, which generalize groupoid $C^*$-algebras. 

The pioneering work of Cockett and Garner \cite{CG21} and its subsequent extension by the author \cite{Kud25} have shown that representation in topological categories is possible even for Boolean restriction semigroups, despite their lack of the range operation. The present article in turn initiates a systematic development of topological methods for the study of arbitrary restriction semigroups and their algebras. The cornerstone of our theory is the universal category ${\mathscr C}(S)$ of a restriction semigroup $S$ with local units, which we define as a generalization of the notion of the universal groupoid of an inverse semigroup \cite{Paterson, Exel08, St10}. The following theorem provides a brief summary of our results on representing $S$ inside ${\mathscr C}(S)$ and on the restriction semigroup of compact slices of ${\mathscr C}(S)$, see Proposition \ref{prop:embedding}, Remark~\ref{rem:identity} and Theorem~\ref{th:univ_main}.

\begin{theorem}\label{th:intro1}\mbox{}
\begin{enumerate}
\item Let $S$ be a restriction semigroup with local units. Then $S$ acts on the spect\-rum $\widehat{P(S)}$ of its projection semilattice $P(S)$. This leads to the category of germs ${\mathscr C}(S)$, which is an ample topological category and is called the universal category of $S$. The restriction semigroup $S$ embeds into the Boolean restriction semigroup ${\mathscr C}(S)^a$ of compact slices of ${\mathscr C}(S)$. Furthermore, ${\mathscr C}(S)^a$ has the universal property with respect to non-degenerate morphisms from $S$ to Boolean restriction semigroups.
\item Let $S$ be an arbitrary restriction semigroup. Then $S$ embeds into the Boolean restriction semigroup ${\mathscr C}(S^1)^a$. Consequently, any restriction semigroup is isomorphic as a restriction semigroup of compact slices of an ample category.
\end{enumerate}
\end{theorem}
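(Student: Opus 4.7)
The plan is to mimic Paterson's construction of the universal groupoid of an inverse semigroup, adapting it to restriction semigroups: the loss of inverses is compensated by the local-units hypothesis, and the resulting object is an \'etale topological category rather than a groupoid. For Part~(1), I would first fix the spectrum $\widehat{P(S)}$ of the meet-semilattice of projections $P(S)=\{s^*:s\in S\}$, namely the locally compact Hausdorff zero-dimensional space of nonzero characters $\chi\colon P(S)\to\{0,1\}$, with compact-open basis $D_e=\{\chi:\chi(e)=1\}$. The spectral action attaches to each $s\in S$ a homeomorphism $\alpha_s$ from $D_{s^*}$ onto an open subset of $\widehat{P(S)}$; the local-units hypothesis is exactly what makes this definition intrinsic. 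The category of germs ${\mathscr C}(S)$ then has $\widehat{P(S)}$ as unit space and arrows the germs $[s,\chi]$ with $\chi\in D_{s^*}$, subject to $[s,\chi]=[t,\chi]$ iff $se=te$ for some $e\in P(S)$ with $\chi(e)=1$. A basic slice is $\{[s,\chi]:\chi\in U\}$ for $U\subseteq D_{s^*}$ compact-open; these sets generate the topology, and verifying that ${\mathscr C}(S)$ is an ample topological category proceeds as in the groupoid case.

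The embedding $\iota\colon S\to{\mathscr C}(S)^a$ defined by $\iota(s)=\{[s,\chi]:\chi\in D_{s^*}\}$ is a morphism of restriction semigroups by a routine unpacking of the multiplication (join of compatible compact slices) and of the $*$-operation (via the domain map to the unit space) on ${\mathscr C}(S)^a$. The main obstacle is injectivity of $\iota$: given $s\neq t$ in $S$, one needs a character $\chi$ of $P(S)$ such that no $e\in P(S)$ with $\chi(e)=1$ satisfies $se=te$. I would prove this by invoking the restriction-semigroup analogue of the Wagner--Preston theorem alluded to in the introduction, which realizes $S$ faithfully inside the partial transformation monoid $\PT(X)$ of a suitable set $X$. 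Every point $x\in X$ produces a character $\chi_x\in\widehat{P(S)}$ with $\chi_x(e)=1$ iff $x$ lies in the domain of the partial function representing $e$, and faithfulness of the representation supplies an $x$ at which the germs of $s$ and $t$ differ.

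For the universal property, given a non-degenerate morphism $\phi\colon S\to B$ into a Boolean restriction semigroup, I would invoke the Stone-type duality between Boolean restriction semigroups and ample categories established in \cite{Kud25}: the morphism $\phi$ induces a continuous map $\widehat{P(B)}\to\widehat{P(S)}$ of spectra, and hence a functor ${\mathscr C}(B)\to{\mathscr C}(S)$ of ample categories, which dualizes to the sought factorization $\tilde\phi\colon{\mathscr C}(S)^a\to B$ with $\tilde\phi\circ\iota=\phi$. Uniqueness is automatic because $\iota(S)$ generates ${\mathscr C}(S)^a$ under finite compatible joins. Finally, Part~(2) follows from Part~(1) applied to the monoid $S^1$ obtained by adjoining an identity to $S$: $S^1$ trivially has local units (the identity serves as a global one), the inclusion $S\hookrightarrow S^1$ is an embedding of restriction semigroups, and composing with $\iota\colon S^1\hookrightarrow{\mathscr C}(S^1)^a$ from Part~(1) gives the required embedding of $S$.
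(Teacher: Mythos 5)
There are two genuine problems. First, you assert that the spectral action ``attaches to each $s\in S$ a homeomorphism $\alpha_s$ from $D_{s^*}$ onto an open subset of $\widehat{P(S)}$''. For a general restriction semigroup this is false, and its failure is the central phenomenon of the paper: $\beta_s$ is merely a continuous partial self-map with open domain $D_{s^*}$; it need be neither injective nor open, and Example~\ref{ex:range} exhibits a restriction semigroup with local units for which $\beta_s(D_{s^*})$ is not even open. Only for birestriction semigroups does the spectral action act by partial homeomorphisms (Proposition~\ref{prop:s16a}). The germ construction survives because an \'etale category only requires the \emph{domain} map to be a local homeomorphism, but your premise, taken at face value, would make ${\mathscr C}(S)$ bi\'etale, which it is not in general.

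Second, and more seriously, your argument for the universal property does not go through as described. The duality of \cite{Kud25} between Boolean restriction semigroups and ample categories is a \emph{covariant} equivalence whose morphisms on the topological side are \emph{cofunctors}, not functors. A non-degenerate morphism $\phi\colon S\to B$ does induce a proper continuous map $\Spec(P(B))\to\widehat{P(S)}$ of unit spaces, but it does not induce a functor ${\mathscr C}(B)\to{\mathscr C}(S)$ (germs do not pull back along $\phi$), and there is nothing to ``dualize''. The paper's proof of Theorem~\ref{th:univ_main} instead writes $B\cong\mC^a$ for an ample category $\mC$ and constructs by hand a cofunctor ${\mathscr C}(S)\rightsquigarrow\mC$: an action of ${\mathscr C}(S)$ on $\mC^{(0)}$ given by $[s,f(x)]\star x=\ran(\alpha(s)x)$ together with a functor out of the associated transformation category, followed by verification of the action axioms, continuity, and preservation of compact slices. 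That construction is the substantive content of the universal property and is absent from your sketch. Two smaller remarks: your injectivity argument via a faithful representation in $PT(X)$ can be made to work, but it is a detour --- the paper simply evaluates at the principal-filter character $\chi_{(s^*)^{\uparrow}}$ and concludes $s\leq t$ directly; and uniqueness of the factorization requires that $\iota(S)$ generate ${\mathscr C}(S)^a$ using relative complements of projections as well as compatible joins, not joins alone. Your treatment of Part~(2) agrees with Remark~\ref{rem:identity}.
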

Our categorical representation of a restriction semigroup may be viewed as, roughly speaking, the `action analogue' of the classical universal representation of a semilattice in a ring of sets. Because the range map of an ample category is not necessarily open,  a compact slice may have a range which is not open and thus algebraically undetectable. Therefore, unlike in the case of inverse semigroups, working with the action of $S$ on the spectrum of $P(S)$ rather than on $P(S)$ itself is, in the case of restriction semigroups, indispensable. The effectiveness of our approach is demonstrated by several applications. We anticipate that more applications will emerge in the future, related to both restriction semigroups themselves, and to the theory of their algebras and operator algebras.

The first application concerns extensions of the ESN (Ehresmann-Schein-Nam\-boo\-ripad) theorem, usually called ESN-type theorems, which is an active and important area of research, see, e.g., \cite{EAM24}. The ESN theorem \cite{Lawson_book} states that the category of inverse semigroups is isomorphic to the category of inductive groupoids. The absence of the range operation for restriction semigroups motivated Gould and Hollings to define one-sided versions of categories, termed constellations  \cite{GH09}, which led to an  ESN-type theorem, see also \cite{GS17, GS22, JS01, JS03}.  Since the paper \cite{GH09} appeared, it has been widely presumed that  correspondence of restriction semigroups with any kind of category is impossible, or at least unnatural (see \cite[p.271]{GS17} or \cite[p.444]{Stokes17}). However, Theorem \ref{th:intro1} implies that, perhaps quite surprisingly, the category of restriction semigroups is equivalent to the category of inductive constellations of compact slices of ample categories, which also leads to a new proof of the Gould-Hollings theorem. See Section~\ref{s:see_ranges}.

The second application extends McAlister's celebrated result on the structure of $E$-unitary inverse semigroups \cite{McAlister74}, formulated in terms of partial actions \cite{PR79, KL04}, to restriction semigroups. While the existing literature describes the structure of proper restriction semigroups\footnote{Proper restriction semigroups are an appropriate generalization $E$-unitary inverse semigroups.} in terms of actions  \cite{BGG10, CDEGZ23, F77, GG99, GSz13, L86}, the extension of arguably the most powerful variation of the $P$-theorem -- the one representing $E$-unitary inverse semigroups as partial action products -- has not been developed thus far.  
Inspired by the work of Milan and Steinberg \cite{MS14}, we fill this gap by working with partial actions of a monoid acting on the spectrum of a semilattice. This is carried out in Section~\ref{s:proper}. The main results are Theorem~\ref{th:isom_proper}, which shows that the category of germs of an action of a proper restriction semigroup $S$ is topologically isomorphic to the partial transformation category of the corresponding action of the maximum reduced quotient $S/\sigma$ of $S$, and Theorem \ref{th:main1}, which describes the structure of proper restriction semigroups in terms of partial actions and extends the corresponding result for $E$-unitary inverse and proper birestriction semigroups \cite{KL04,PR79,K15}. As a byproduct, we give a new proof of the Petrich-Reilly theorem on the structure of $E$-unitary inverse semigroups \cite{PR79}.

Finally, the third application extends Steinberg's influential result \cite{St10}, which states that the semigroup algebra $KS$ of an inverse semigroup $S$ over a commutative unital ring $K$ is isomorphic to the convolution algebra $K{\mathscr G}(S)$ of the universal groupoid ${\mathscr G}(S)$. This isomorphism provides a direct link between inverse semigroup algebras and the algebras of \'etale groupoids, now widely known as Steinberg algebras. We prove a similar isomorphism result for restriction semigroups with local units: the semigroup algebra $KS$ of such a semigroup is isomorphic to the convolution algebra of the universal category $K{\mathscr G}(S)$, see Theorem \ref{th:isom}. Although our result is formulated similarly to  Steinberg's, our proof is substantially different. The argument in \cite{St10} relies on the underlying groupoid of an inverse semigroup, which has no analogue for restriction semigroups. Our arguments are direct and, when applied to the inverse case, yield a new proof of Steinberg's result that avoids reference to the underlying groupoid.

We conclude the introduction by a brief description of the contents of the paper. Section \ref{s:prelim}  collects the necessary preliminaries for comfortable reading of the paper. In Section \ref{s:brs} we present a brief summary of  notions and results related to ample categories and Boolean restriction semigroups from \cite{Kud25}, which are needed in the sequel. Section~\ref{s:monoid_actions} defines partial actions of monoid on locally compact and Hausdorff spaces and their \'etale categories. Section \ref{s:germs} introduces our central technique. We define an action of a restriction semigroup on a locally compact Hausdorff space and its category of germs. We pay particular attention to the special cases and variations, by looking at actions of range and birestriction semigroups, and show how all of these generalize actions of inverse semigroups. In Section~\ref{s:bool}, we introduce the universal category and the universal Booleanization of a restriction semigroup with local units. Applying the author's results from \cite{Kud25}, we establish the universal property of the universal Booleanization in Theorem \ref{th:univ_bool}. Sections \ref{s:see_ranges}, \ref{s:proper} and~\ref{s:algebras} are devoted to applications of Theorem \ref{th:intro1}. In Section~\ref{s:see_ranges} we prove the equivalence of the categories of inductive constellations and inductive constellations of compact slices of ample categories, see Theorem~\ref{th:main15}. Section~\ref{s:proper} concerns actions and structure of proper restriction semigroups. Section \ref{s:algebras} culminates at Theorem~\ref{th:isom} that extends Steinberg's isomorphism theorem \cite[Theorem 6.3]{St10}. Finally, Section \ref{s:last} outlines several directions for future research suggested by this paper.

%%%%%%%
\section{Preliminaries}\label{s:prelim}
Throughout the paper, by an {\em algebra} we mean an ordered pair $(A; F)$, where $A$ is a non-empty set and $F$ a collection of finitary operations on $A$, see \cite[Definition 1.1]{McKMcNT87}. If $(A; f_1, \dots, f_n)$ is an algebra, its {\em signature} is the ordered $n$-tuple $(f_1, \dots, f_n)$, and its {\em type} is the ordered $n$-tuple $(\rho(f_1),\dots, \rho(f_n))$, where $\rho(f_i)$ is the arity of the operation $f_i$ for each $i\in \{1,\dots, n\}$. If $(A; f_1, \dots, f_n)$ is an algebra of type $(\rho(f_1),\dots, \rho(f_n))$, we say that it is a $(\rho(f_1),\dots, \rho(f_n))$-algebra. Whenever this does not cause ambiguity,  $(A; f_1, \dots, f_n)$ is denoted simply by~$A$. Morphisms and subalgebras are assumed to respect the given signatures. To emphasise this, we sometimes refer to such a subalgebra or such a morphism as an $(f_1,\dots, f_n)$-subalgebra or a $(f_1, \dots, f_n)$-morphism.

\subsection{Restriction, birestriction and range semigroups} 
The main objects of study in this paper are restriction semigroups. These are semigroups equipped with a unary operation, denoted by $^*$, which serves as an abstract analogue of taking the identity map on the domain of a partial function. We begin, however, by introducing the slightly more general concepts of Ehresmann semigroups and their bi-unary analogues. For a detailed background, we refer the reader to the survey \cite{G10}. 

\begin{definition} (Ehresmann, coEhresmann and biEhresmann semigroups) An {\em Ehresmann semigroup} is an algebra $(S; \cdot \,, ^*)$, where $(S;\cdot)$ is a semigroup and $^*$ is a unary operation on $S$ such that:
\begin{equation}\label{eq:axioms_star}
xx^*=x, \qquad \quad x^*y^*=y^*x^* = (x^*y^*)^*, \qquad \quad (xy)^*=(x^*y)^*.
\end{equation}

A {\em coEhresmann semigroup} is defined dually as an algebra $(S; \cdot \,, ^+)$, where $(S;\cdot)$ is a semigroup and $^+$ is a unary operation on $S$ such that:
\begin{equation}\label{eq:axioms_plus}
x^+x=x, \qquad \quad x^+y^+=y^+x^+=(x^+y^+)^+, \qquad \quad (xy)^+=(xy^+)^+.
\end{equation}

A {\em biEhresmann semigroup}\footnote{BiEhresmann semigroups appear in the literature as {\em two-sided Ehresmann semigroups} as {\em Ehresmann semigroups}. Multi-object generalizations of biEhresmann monoids have been appeared in the category theory literature as {\em bisupport categories.}} is an algebra $(S; \cdot\, , ^*, ^+)$, where $(S;\cdot\, , ^*)$ is an Ehresmann semigroup, $(S;\cdot\, , ^+)$ is a  coEhresmann semigroup and the operations $^*$ and $^+$ satisfy the identities
$(x^+)^*=x^+$  and $(x^*)^+=x^*$.
\end{definition}

The identities $x^* = x^*x^*$ and $(x^*)^* = x^*$  follow from \eqref{eq:axioms_star}, and their dual identities 
 $x^+ = x^+x^+$ and $(x^+)^+ = x^+$
 follow from  \eqref{eq:axioms_plus}.  The operations $^*$ and $^+$ are called  the {\em domain operation} and the  {\em range operation}, respectively\footnote{These operations are also termed the {\em support operation} and the {\em cosupport operation}.}. 

\begin{remark}
In the category theory literature multi-object generalizations of Ehresmann, coEhresmann and biEhresmann monoids are known as {\em support}, {\em cosupport} and {\em bisupport} categories (see \cite{CGH12, HL21}), and the elements $s^*$ and $s^+$ are often denoted by $\bar{s}$ and $\hat{s}$, respectively.   (Here, as usual, a monoid is a semigroup with an identity element $1$, thus an Ehresmann monoid is an Ehresmann semigroup with an identity element.) 
\end{remark}

Recall that a {\em semilattice} is a poset $E$ such that for any elements $e,f\in E$ their greatest lower bound $e\wedge f$ exists in $E$. If $E$ is a semilattice, then $(E; \,\wedge)$ is a commutative idempotent semigroup. Conversely, if $(E;\,\cdot)$ is a commutative idempotent semigroup, define 
$e\leq f$  if and only if  $e=ef=fe$.
Then $\leq$ is a partial order and $(E,\leq)$ is a semilattice with $e\wedge f = e\cdot f$. Denoting the product by juxtaposition, we write $e\wedge f  = ef$. It follows that semilattices can be identified with idempotent and commutative semigroups. Saying that $E$ is a semilattice, we always assume that $E$ is an idempotent and commutative semigroup, equipped with the partial order $\leq$ given above. Saying that $E$ is a {\em meet-semilattice}, we emphasise that we are considering it with respect to the meet-operation $\wedge$ (which is particularly important to state for structures possessing both a meet and a join operation).

If $S$ is an Ehresmann semigroup, then the set $P(S) = \{s^*\colon s\in S\}$
is closed with respect to the multiplication and is a semilattice. It is called 
the {\em projection semilattice} of $S$ and its elements are called {\em projections}.\footnote{In the category theory literature projections are called {\em restriction idempotents.}} 
It follows from \eqref{eq:axioms_star} that $P(S)$ can be equivalently defined as the set of all $s\in S$ satisfying $s^*=s$. If $S$ is biEhresmann semigroup, we have $P(S)=\{s^+\colon s\in S\} = \{s^*\colon s\in S\}$.

Restriction (resp. corestriction or birestriction) semigroups form a subclass of Ehresmann (resp. coEhresmann or biEhresmann) semigroups and are defined as follows. 

\begin{definition} (Restriction, corestriction and birestriction semigroups)
A {\em restriction semigroup}\footnote{Restriction and corestriction semigroups appear in the literature  as {\em right  restriction semigroups} (or {\em weakly right ample semigroups}) and {\em left restriction semigroups} (or {\em weakly left ample semigroups}), respectively. Our terminology follows \cite{Kud25} and is consistent with the category theory literature, where multi-object versions of restriction monoids are called {\em restriction categories.}} is an Ehresmann semigroup $(S; \cdot \,, ^*)$ which satisfies the identity
\begin{equation}\label{eq:axioms_star_restr}
x^*y = y(xy)^*.
\end{equation}
A {\em corestriction semigroup} is a coEhresmann semigroup $(S; \cdot \,, ^+)$ which satisfies the dual identity
$xy^+ = (xy)^+x$.
A {\em birestriction semigroup}\footnote{Just as with biEhresmann semigroups, birestriction semigroups appear in the literature  as {\em two-sided restriction semigroups} or as  {\em restriction semigroups}.} is a bi\-Ehres\-mann semigroup 
such that both \eqref{eq:axioms_star_restr} and its dual identity hold.
\end{definition}

The following identities easily follow from the definitions.
\begin{enumerate}
\item[1.] Let $S$ be an Ehresmann semigroup. Then
\begin{equation}\label{eq:rule1r}
(se)^* = s^*e \quad \text{ for all } \,\,  s\in S \,\, \text{ and } \,\, e\in P(S).
\end{equation}
\item[2.] Let $S$ be a  birestriction semigroup. Then
\begin{equation}\label{eq:ample_r}
es = s(es)^* \,\, \text{ and } \,\, se = (se)^+s \quad \text{ for all } \,\,  s\in S \,\, \text{ and } \,\, e\in P(S).  
\end{equation}
\end{enumerate}

Recall that a semigroup $S$ is called an {\em inverse semigroup} if for each $a\in S$, there exists a unique $b\in S$ such that $aba=a$ and $bab=b$. The element $b$ is the {\em inverse} of $a$ and is denoted by $a^{-1}$. When $(S;\,\cdot)$ is an inverse semigroup, defining $a^+=aa^{-1}$ and $a^*=a^{-1}a$ for each $a\in S$ makes $(S; \cdot,\, ^*, ^+)$ a birestriction semigroup (and thus also a biEhresmann semigroup). If $S$ is an inverse semigroup then the semilattice $P(S)$ coincides with the semilattice $E(S)$ of idempotents of $S$.

Range semigroups were introduced in \cite[Definition 3.8]{Kud25} and are inspired by range categories of \cite{CGH12}.

\begin{definition} (Range semigroups)
By a {\em range semigroup} we mean a biEhresmann semigroup $(S;\cdot \,, ^*, ^+)$ such that $(S; \cdot \,, ^*)$ is a restriction semigroup.
\end{definition}

Before proceeding, we provide prototypical examples of the unary and bi-unary semigroups thus defined.

\begin{example} \label{ex:1} 
\begin{enumerate}
\item The monoid $R(X)$\footnote{In \cite{Kud25}, the monoid $R(X)$ was denoted by ${\mathcal{B}}(X)$, but in this paper the latter notation has a different meaning.} of all binary relations on a set $X$, together with the operations $^*$ and $^+$ given by
\begin{align*}
\rho^* & = \{(x,x)\colon (y,x)\in \rho \text{ for some } y\in X\} \quad \text{ and } \\
\rho^+ & = \{(x,x)\colon (x,y)\in \rho \text{ for some } y\in X\},
\end{align*}
is biEhresmann. If $|X|\geq 2$, it is neither restriction nor corestriction.
\item The $(\cdot \,, ^* ,^+)$-subalgebra $PT(X)$ of $R(X)$ consists of all partial self-maps of $X$ (that is, maps $\varphi \colon {\mathrm{dom}}(\varphi)\to X$ where ${\mathrm{dom}}(\varphi)\subseteq X$ is the {\em domain} of $\varphi$) is a restriction monoid with respect to the signature $(\cdot,\, ^*)$, and a range monoid with respect to the signature $(\cdot \,, ^*, ^+)$. If $|X|\geq 2$ it is not corestriction. 
\item The $(\cdot \,, ^*,^+)$-subalgebra $I(X)$ of $PT(X)$ consisting of all partial bijections of $X$ (that is, of all $\varphi\in PT(X)$ which are injective) is a birestriction monoid. Defining $\varphi^{-1}$ to be the inverse partial bijection of $\varphi$, $I(X)$ is an inverse monoid.
\end{enumerate} 
\end{example}
The range monoid $PT(X)$ is known as the {\em full transformation monoid} on $X$, and the inverse monoid $I(X)$ is called the {\em symmetric inverse monoid} on $X$.

\begin{definition} (Local units) \label{def:lu} We say that a restriction semigroup $S$ has {\em local units} \cite[Definition 5.4]{Kud25} if for every $s\in S$ there are $e,f\in P(S)$ such that $es=sf=s$. 
\end{definition} 
Since $ss^* =s$, $S$ has local units if and only if for every $s\in S$ there is $e\in P(S)$ such that $s=es$. Due to the identity $s^+s=s$, any range semigroup has local units, as clearly does any restriction monoid. If $S$ does not have an identity element then $S^1 = S\cup \{1\}$ where $1\not\in S$ is a restriction monoid if one sets $1^* = 1$.

\subsection{Proper restriction semigroups} \label{subs:proper} Let $S$ be a restriction semigroup. The {\em natural partial order} on $S$ is defined by $s\leq t$ if and only if $s=ts^*$, which is equivalent to $s=te$ for some $e\in P(S)$. The restriction of the partial order $\leq$ to $P(S)$ coincides with the partial order on the semilattice $P(S)$.
The following easy properties (see \cite[Lemmas 3.6 and 3.7]{Kud25}) of the natural partial order will be used throughout the paper, possible without explicit mention.

\begin{lemma}\label{lem:1} Let $S$ be a restriction semigroup. 
\begin{enumerate}
\item Let $a,b\in S$ and $a\leq b$. Then $a^*\leq b^*$.
\item Let $e,f\in P(S)$, $s\in S$ and $e\leq f$. Then $(es)^* \leq (fs)^*$.
\item  Let $s,t,u\in S$ and $s\leq t$. Then $su \leq tu$ and $us \leq ut$.
\end{enumerate}
\end{lemma}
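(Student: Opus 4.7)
The plan is to derive all three items directly from the defining axioms \eqref{eq:axioms_star} and \eqref{eq:axioms_star_restr}, together with the derived identity \eqref{eq:rule1r} and the characterisation $s \leq t$ iff $s = te$ for some $e \in P(S)$. Items (2) and (3) both reduce either via item (1) or by a direct manipulation, so I would take them in the stated order.

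For (1), if $a \leq b$, then $a = b a^{*}$. Applying \eqref{eq:rule1r} with the projection $a^{*} \in P(S)$ gives $a^{*} = (b a^{*})^{*} = b^{*} a^{*}$, which is precisely $a^{*} \leq b^{*}$.

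For (2), I would first upgrade the hypothesis $e \leq f$ to $es \leq fs$ in $S$. Since projections satisfy $e^{*} = e$, applying \eqref{eq:axioms_star_restr} with $x = e$ and $y = s$ yields $es = s(es)^{*}$. Combining this with $e = fe$ (which encodes $e \leq f$) produces $es = fes = f \cdot s(es)^{*} = (fs)(es)^{*}$, hence $es \leq fs$. Item (1) applied to this inequality then gives $(es)^{*} \leq (fs)^{*}$.

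For (3), write $s = te$ with $e \in P(S)$. The right translation case is immediate, since $us = u \cdot te = (ut)e$, so $us \leq ut$. For the left translation case, I would once more use \eqref{eq:axioms_star_restr} to transport $e$ past $u$: from $eu = u(eu)^{*}$ it follows that $su = t(eu) = t \cdot u(eu)^{*} = (tu)(eu)^{*}$, hence $su \leq tu$. There is no serious obstacle in the argument; the only mildly delicate point is this systematic use of \eqref{eq:axioms_star_restr}, which is the tool that allows a projection on the left of an element to be rewritten as a projection on its right, thereby exhibiting the multiplied element as the larger one times a projection.
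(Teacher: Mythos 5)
Your proof is correct, and since the paper itself gives no argument for this lemma (it simply cites \cite[Lemmas 3.6 and 3.7]{Kud25}), there is nothing to diverge from: the computations you give, based on \eqref{eq:rule1r}, the identity $ey=y(ey)^*$ obtained from \eqref{eq:axioms_star_restr} with $x=e\in P(S)$, and the characterisation $s\leq t$ iff $s=te$ for some $e\in P(S)$, are exactly the standard verification these references contain. All three items check out as written.
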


We say that $S$ is {\em reduced}, if it contains only one projection. A reduced restriction semigroup is necessarily a monoid, with   the identity element $1$ being its unique projection. The {\em minimum reduced congruence} $\sigma$ on $S$ is the minimum congruence that identifies all the projections. The quotient $S/\sigma$ is then the maximum reduced quotient of $S$. Although the following lemma is known, we include its proof here for completeness.

\begin{lemma} \label{lem:s14a} Let $a,b\in S$. Then $a \mathrel{\sigma} b$ if and only if there is $c\in S$ such that $c\leq a,b$.
\end{lemma}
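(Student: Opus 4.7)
The plan is to introduce the auxiliary relation $\tau$ on $S$ defined by
\[
a \mathrel{\tau} b \iff \exists\, c \in S \text{ with } c \leq a \text{ and } c \leq b,
\]
and show that $\tau = \sigma$ by establishing both inclusions separately.

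For the inclusion $\tau \subseteq \sigma$, I would argue via the reduced quotient $S/\sigma$. Since $S/\sigma$ is reduced, its unique projection is the identity of the monoid $S/\sigma$, so $\overline{a}\,\overline{e} = \overline{a}$ in $S/\sigma$ for every $a \in S$ and every $e \in P(S)$ (using that $aa^* = a$ and that all projections are identified). Now if $c \leq a, b$, write $c = ae = bf$ for $e,f \in P(S)$. Passing to $S/\sigma$ gives $\overline{c} = \overline{a}\,\overline{e} = \overline{a}$ and similarly $\overline{c} = \overline{b}$, so $a \mathrel{\sigma} b$.

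For the reverse inclusion, I would verify that $\tau$ is itself a congruence on $S$ which identifies all projections; minimality of $\sigma$ then forces $\sigma \subseteq \tau$. Reflexivity and symmetry are immediate from $a \leq a$. For transitivity, suppose $c \leq a, b$ with $c = ae_1 = be_2$ and $c' \leq b, d$ with $c' = be_3 = de_4$. Then $ce_3 = ae_1e_3 \leq a$ and, using commutativity of projections and associativity, $ce_3 = be_2e_3 = (be_3)e_2 = de_4e_2 \leq d$, so $a \mathrel{\tau} d$. Compatibility follows from Lemma~\ref{lem:1}(3): if $c \leq a, b$, then $sc \leq sa, sb$ and $cs \leq as, bs$, giving $sa \mathrel{\tau} sb$ and $as \mathrel{\tau} bs$. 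Finally, for any $e,f \in P(S)$ the element $ef \in P(S)$ satisfies $ef \leq e$ and $ef \leq f$ (take $k=f$ and $k=e$ respectively in the definition $x = yk$), so $\tau$ identifies projections.

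Combining the two inclusions yields $\tau = \sigma$, which is the claim. No step looks technically difficult; the only point requiring slight care is the computation in the transitivity argument, where one must exploit that projections commute and that right multiplication by a projection decreases an element in the natural partial order. Once that is observed, the remainder is a routine verification.
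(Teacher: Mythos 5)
Your argument is correct and takes essentially the same route as the paper's proof: the paper likewise introduces the relation ``there exists a common lower bound,'' verifies that it is a congruence identifying all projections (with the same common-lower-bound computation $bs^*t^*\leq a,c$ for transitivity), and obtains the reverse inclusion from the chain $a = aa^* \mathrel{\sigma} ac^* = c = bc^* \mathrel{\sigma} bb^* = b$, which is exactly your quotient argument written out without passing to $S/\sigma$. The one step you leave implicit --- that $\tau$ also respects the unary operation $^*$ --- is automatic here, since $a^*$ and $b^*$ are projections and $\tau$ identifies all projections via $ef\leq e,f$.
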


\begin{proof} Define the relation $\rho$ on $S$ by $a\mathrel{\rho} b$ if and only if there is $c\in S$ such that $c\leq a,b$. We show that $\rho = \sigma$.
To show that $\rho$ is transitive, let $a \mathrel{\rho} b$ and $b\mathrel{\rho} c$. Then there are $s,t\in S$ such that $s\leq a,b$ and $t \leq b,c$.
Then $s=bs^*$ and $t=bt^*$. It follows that $bs^*t^*\leq s,t$, so that $bs^*t^* \leq a,c$. Thus $a\mathrel{\rho} c$. Since $\rho$ is trivially reflexive and symmetric, it is an equivalence relation. That $\rho$ respects the operations $\cdot$ and $^*$ follows from Lemma \ref{lem:1}, yielding that $\rho$ is a congruence. Let $e,f\in P(S)$. Since $ef \leq e,f$, it follows that $e \mathrel{\rho} f$. Since $\rho$ identifies all projections and $\sigma$ is the minimum congruence with this property, we have $\sigma\subseteq \rho$. For the reverse inclusion, assume $a\mathrel{\rho} b$. If $c\leq a,b$ then $a = aa^* \mathrel{\sigma} ac^* = c = bc^* \mathrel{\sigma} bb^* = b$. Hence, $\rho \subseteq \sigma$. It follows that $\rho = \sigma$.
\end{proof}

The {\em compatibility relation} $\smile$ on a restriction semigroup $S$ is defined by $s\smile t$ if and only if $st^* = ts^*$. It can be readily shown (or see, e.g., \cite[Lemma 4.15]{Kud25}) that if $s$ and $t$ have a common upper bound, then $s\smile t$. Moreover, if $s\smile t$ and $u\in S$ then $us \smile ut$ and $su\smile tu$, see \cite[Lemma~4.13]{Kud25}.
We prove the following property of the compatibility relation, which will be needed in the sequel.

\begin{lemma} \label{lem:compatibility} Suppose that $s_1, \dots, s_n$, where $n\geq 1$, are pairwise compatible, and let $e=s_1^*\cdots s_n^*$. Then the meet $s_1\wedge \dots \wedge s_n$ with respect to the natural partial order $\leq$ exists and coincides with each of the elements $s_1e$, $\dots$, $s_ne$. 
\end{lemma}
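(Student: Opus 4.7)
The plan is to exhibit the meet explicitly as $m := s_1 e$, show that $s_i e = m$ for every $i$, verify that $m$ is a lower bound of $\{s_1,\dots,s_n\}$, and finally check minimality using the identity $(se)^*=s^*e$ from \eqref{eq:rule1r}.

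First, I would prove that all the elements $s_i e$ coincide. Since $P(S)$ is commutative and $s_i s_i^* = s_i$, one can reorder and absorb $s_i^*$ to rewrite
\[
s_i e \;=\; s_i \!\!\prod_{k\neq i} \!s_k^*.
\]
For $i\neq j$, move $s_j^*$ next to $s_i$ (again using commutativity of projections) and apply pairwise compatibility $s_i s_j^* = s_j s_i^*$ to swap, then re-absorb:
\[
s_i e \;=\; s_i s_j^*\!\!\prod_{k\neq i,j}\!s_k^*
\;=\; s_j s_i^*\!\!\prod_{k\neq i,j}\!s_k^*
\;=\; s_j\!\!\prod_{k\neq j}\!s_k^*
\;=\; s_j e .
\]
Hence the common value $m=s_1e=\cdots=s_ne$ is well defined.

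Next, that $m$ is a lower bound is immediate: since $e\in P(S)$, the definition of the natural partial order gives $s_i e \leq s_i$ for every $i$, so $m=s_ie\leq s_i$. For minimality, suppose $t\leq s_i$ for all $i$. Then $t=s_i t^*$, and applying the operation ${}^*$ together with \eqref{eq:rule1r} gives
\[
t^* \;=\; (s_i t^*)^* \;=\; s_i^* t^*,
\]
so $t^*\leq s_i^*$ for each $i$. Multiplying these inequalities in the semilattice $P(S)$ yields $t^*\leq s_1^*\cdots s_n^* = e$, whence $t^*=et^*$. Then
\[
t \;=\; s_1 t^* \;=\; s_1(e t^*) \;=\; (s_1 e)t^* \;=\; m t^*,
\]
which exhibits $t\leq m$. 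Therefore $m$ is the greatest lower bound, i.e., $s_1\wedge\cdots\wedge s_n = m = s_i e$ for each $i$.

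I do not expect any serious obstacle: the only place where the compatibility hypothesis enters is the swap step $s_i s_j^* = s_j s_i^*$, and the only place where the defining identity of restriction semigroups is required is the passage from $t=s_i t^*$ to $t^*=s_i^* t^*$ via \eqref{eq:rule1r}. The rest is routine manipulation with the semilattice $P(S)$ and the characterisation $x\leq y\iff x=yx^*$ of the natural partial order, together with Lemma \ref{lem:1}.
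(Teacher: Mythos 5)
Your proof is correct, and it takes a genuinely different route from the paper's. The paper argues by induction on $n$: it first establishes the auxiliary fact that compatibility is inherited downwards (if $s\smile t$ and $u\leq s$ then $u\smile t$), then at the inductive step verifies directly that $ss_n^*$ is the greatest lower bound of $s=s_1\wedge\dots\wedge s_{n-1}$ and $s_n$, using that two elements with a common upper bound are compatible. You instead give a direct, non-inductive argument: the identification $s_ie=s_je$ follows from a single application of the compatibility identity $s_is_j^*=s_js_i^*$ together with commutativity and absorption of projections, the lower-bound property is immediate from the form $s_ie$ with $e\in P(S)$, and minimality reduces to the observation that any common lower bound $t$ satisfies $t^*\leq e$, whence $t=s_1t^*=(s_1e)t^*=mt^*$. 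Your version is shorter and avoids both the induction and the auxiliary lemma about compatibility passing down the order (which the paper's proof needs but yours does not); the paper's version has the side benefit of recording that auxiliary fact, which is of independent use. One tiny quibble with your closing remark: the identity $(st^*)^*=s^*t^*$ that you invoke via \eqref{eq:rule1r} already holds in Ehresmann semigroups and does not require the restriction identity \eqref{eq:axioms_star_restr}; this does not affect the validity of the proof.
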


\begin{proof} 
We first prove that if $s\smile t$ and $u\leq s$ then $u\smile t$. Indeed, bearing in mind that $u^*\leq s^*$, we have $tu^* = tu^*s^* = ts^*u^* = st^*u^* = su^*t^* = ut^*$, as required.

We prove the statement by induction on $n$. If $n=1$, the statement is trivially correct. Suppose that $n\geq 2$. By the inductive assumption, we have that the meet $s=s_1\wedge \dots \wedge s_{n-1}$ exists and equals to each of the elements $s_1\tilde{e}$, $\dots$, $s_{n-1}\tilde{e}$ where $\tilde{e}=s_1^*\cdots s_{n-1}^*$. This implies that $s^* = (s_1\tilde{e})^* = s_1^*\tilde{e} = \tilde{e}$. Since $s\leq s_1$ and $s_1\smile s_n$, the above claim implies that $s\smile s_n$. By the definition of the compatibility relation we then have 
 $ss_n^* = s_ns^*$. We prove that this element is the meet of $s$ and $s_n$. We have that $ss_n^* = s_ns^* \leq s, s_n$ is the lower bound of $s$ and $s_n$. Suppose that $t$ is their other lower bound. Then $t=st^* = s_nt^*$, so that $t^* = (st^*)^* = s^*t^* \leq s^*$, and similarly $t^*\leq s_n^*$. It follows that $t^*\leq s^*s_n^*$. Since $t$ and $ss_n^*$ have a common upper bound, $s$, they are compatible. It follows that $t= tt^* = tt^*s^*s_n^* = t(ss_n^*)^* = ss_n^*t^* \leq ss_n^*$. This proves that $ss_n^*$ is the greatest lower bound of $s$ and $s_n$, as required. It follows that the meet $s\wedge s_n = s_1\wedge \dots \wedge s_n$ coincides with $s_ns^*$ which equals to $s_n \tilde{e} = s_n(\tilde{e}s_n^*) = s_ne$, and the statement follows.
\end{proof}

\begin{definition} (Proper restriction semigroup) \label{def:proper} A restriction semigroup $S$ is called {\em proper} \cite{GG99, F77}, if for all $s,t\in S$ such that $s\mathrel{\sigma} t$ and $s^* = t^*$, one has that $s=t$. 
\end{definition}

Proper restriction semigroups generalize $E$-unitary inverse semigroups. Specifically, an inverse semigroup $S$ (viewed as a restriction semigroup with $s^*=s^{-1}s$) is proper if and only if it is $E$-unitary.

The following lemma is known, but we give a proof for completeness.

\begin{lemma} \label{lem:proper} Let $S$ be a restriction semigroup. Then $S$ is proper if and only if the compatibility relation $\smile$ coincides with $\sigma$.
\end{lemma}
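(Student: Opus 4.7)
The plan is to establish the two containments $\smile \subseteq \sigma$ and (under properness) $\sigma \subseteq \smile$ separately, then deduce the equivalence.

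First, I would show the inclusion $\smile \subseteq \sigma$, which holds in any restriction semigroup and does not require properness. Given $s \smile t$, the element $c := st^* = ts^*$ satisfies $c \leq s$ and $c \leq t$ (as it is obtained from each by multiplying by a projection on the right). By Lemma~\ref{lem:s14a}, the existence of such a common lower bound gives $s \mathrel{\sigma} t$.

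Next, for the forward implication, assume $S$ is proper and take $s \mathrel{\sigma} t$. To prove $s \smile t$, I would consider the two elements $st^*$ and $ts^*$ and show that they coincide. Using identity~\eqref{eq:rule1r}, we have
\[
(st^*)^* = s^* t^* \quad \text{ and } \quad (ts^*)^* = t^* s^* = s^* t^*,
\]
so $(st^*)^* = (ts^*)^*$. Since $\sigma$ is a congruence and $s \mathrel{\sigma} t$, multiplying on the right by $t^*$ gives $st^* \mathrel{\sigma} tt^* = t$, and similarly $ts^* \mathrel{\sigma} s \mathrel{\sigma} t$; hence $st^* \mathrel{\sigma} ts^*$. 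Now properness (Definition~\ref{def:proper}) forces $st^* = ts^*$, which is exactly $s \smile t$.

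For the converse, suppose $\smile = \sigma$ and take $s, t$ with $s \mathrel{\sigma} t$ and $s^* = t^*$. Then $s \smile t$, so $st^* = ts^*$; substituting $t^* = s^*$ in the first expression and $s^* = t^*$ in the second yields
\[
s = s s^* = s t^* = t s^* = t t^* = t,
\]
proving properness.

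I do not anticipate any real obstacle here; the argument is purely formal manipulation of the axioms once the inclusion $\smile \subseteq \sigma$ is observed. The only point worth flagging is the use of identity~\eqref{eq:rule1r} to compute $(st^*)^*$ and $(ts^*)^*$, which is what makes the two candidate witnesses for compatibility have a common projection and thereby enables properness to be invoked.
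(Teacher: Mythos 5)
Your proof is correct and follows essentially the same route as the paper's: both establish $\smile\subseteq\sigma$ via the common lower bound $st^*=ts^*$ and Lemma~\ref{lem:s14a}, both derive $st^*\mathrel{\sigma}ts^*$ from the congruence property and then invoke properness together with $(st^*)^*=(ts^*)^*=s^*t^*$, and both prove the converse by the identical chain $s=ss^*=st^*=ts^*=tt^*=t$. No gaps.
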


\begin{proof}   If $s\smile t$ then $st^* = ts^*$. Since $st^*\leq s$ and  $ts^* \leq t$, Lemma \ref{lem:s14a} implies that $s\mathrel{\sigma} t$.  
Therefore, we only need to show that $S$ is proper if and only if $s\mathrel{\sigma} t$ implies $s\smile t$.

Suppose $S$ is proper and $s\mathrel{\sigma} t$. Since $s^* \mathrel{\sigma} t^* \mathrel{\sigma} 1$ and $\sigma$ is a congruence, we have $st^* \mathrel{\sigma} s$ and $ts^* \mathrel{\sigma} t$, yielding $st^* \mathrel{\sigma} ts^*$.  Since $S$ is proper and $(st^*)^* = (ts^*)^* = s^*t^*$, it follows that $st^* = ts^*$, which means $s\smile t$. Conversely, suppose  $s\mathrel{\sigma} t$ implies that $s\smile t$. Let $u,v\in S$ be such that $u \mathrel{\sigma} v$ and $u^* = v^*$.
The assumption implies $u\smile v$, which means $uv^* = vu^*$. It follows that $u=uu^* = uv^*= vu^* = vv^*= v$. Therefore, $S$ is proper.
\end{proof}

Let now $S$ be a birestriction semigroup. The natural partial order on $S$ is defined, just as on restriction semigroups, by $s\leq t$ if and only if $s=st^*$. It is known that $s\leq t$ holds if and only if $s = s^+t$ (see \cite[Lemma 2.1]{Kud19}\footnote{Beware, that in \cite{Kud19} birestriction semigroups are called two-sided restriction semigroups, and the bicompatibility relation $\asymp$ is called the compatibility relation, denoted $\sim$.}).

Elements $s,t\in S$ are called {\em bicompatible}, denoted $s\asymp t$, if they are compatible ($st^* = ts^*$) and cocompatible ($t^+s = s^+t$). Observe that $s\asymp t$ implies $s\smile t$, but the converse implication does not hold in general (see \cite[Example 4.18]{Kud25}).

\begin{definition} (Proper birestriction semigroups)
A birestriction semigroup $S$ is called {\em proper} if the following conditions hold:
\begin{enumerate}
\item for all $s,t\in S$: if $s^*=t^*$ and $s\mathrel{\sigma} t$ then $s=t$;
\item for all $s,t\in S$: if $s^+=t^+$ and $s\mathrel{\sigma} t$ then $s=t$.
\end{enumerate}
\end{definition}

Proper birestriction semigroups, just as proper restriction semigroups, generalize $E$-unitary inverse semigroups. The following statement can be proved by slightly adapting the proof of Lemma \ref{lem:proper}.

\begin{lemma} \label{lem:proper1} Let $S$ be a birestriction semigroup. Then $S$ is proper if and only if the bicompatibility relation $\asymp$ coincides with $\sigma$.
\end{lemma}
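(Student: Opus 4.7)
The plan is to adapt the proof of Lemma~\ref{lem:proper} to the bi-unary setting, running the arguments for the compatibility and cocompatibility components of $\asymp$ in parallel and invoking the two clauses of properness on the two sides.

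For the inclusion $\asymp\subseteq\sigma$ I would combine the remark recorded just before the statement that $s\asymp t$ implies $s\smile t$ with the opening argument of the proof of Lemma~\ref{lem:proper}: if $s\smile t$, then $st^*=ts^*$ is a common lower bound of $s$ and $t$, so Lemma~\ref{lem:s14a} yields $s\mathrel{\sigma}t$. This inclusion holds in any birestriction semigroup and does not require properness.

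For the converse inclusion under the hypothesis that $S$ is proper, I would take $s,t\in S$ with $s\mathrel{\sigma}t$ and separately establish $st^*=ts^*$ and $t^+s=s^+t$. On the $^*$-side, since $\sigma$ is a congruence identifying all projections, $st^*\mathrel{\sigma}s\mathrel{\sigma}t\mathrel{\sigma}ts^*$, and \eqref{eq:axioms_star} gives $(st^*)^*=s^*t^*=(ts^*)^*$; clause~(1) of properness then forces equality. The $^+$-side is dual: $t^+s\mathrel{\sigma}s^+s=s\mathrel{\sigma}t=t^+t\mathrel{\sigma}s^+t$, and the dual of \eqref{eq:axioms_plus}, namely $(xy)^+=(xy^+)^+$, together with the projection-semilattice identity $(t^+s^+)^+=t^+s^+$, shows that both $(t^+s)^+$ and $(s^+t)^+$ equal $t^+s^+$; clause~(2) then yields $t^+s=s^+t$.

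For the reverse implication, assuming $\asymp=\sigma$, I would recover clauses~(1) and~(2) of properness by the standard telescoping: from $u\mathrel{\sigma}v$ and $u^*=v^*$ the hypothesis gives $u\asymp v$, hence $uv^*=vu^*$, and therefore $u=uu^*=uv^*=vu^*=vv^*=v$; the analogous cocompatibility calculation $u=u^+u=v^+u=u^+v=v^+v=v$ handles the $^+$-side. I do not anticipate a substantial obstacle beyond keeping careful track of which axioms of the biEhresmann signature supply the dual identity $(t^+s)^+=t^+s^+$, since it is precisely this computation that puts both sides of the prospective cocompatibility equation into the form to which clause~(2) of properness can be applied.
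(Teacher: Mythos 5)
Your proposal is correct and is exactly the adaptation the paper intends: the text states that Lemma~\ref{lem:proper1} "can be proved by slightly adapting the proof of Lemma~\ref{lem:proper}", and your argument carries out that adaptation faithfully, running the $^*$- and $^+$-computations in parallel and invoking the two clauses of properness. The only quibble is terminological: the identity $(xy)^+=(xy^+)^+$ is one of the axioms in \eqref{eq:axioms_plus} itself (it is the dual of the corresponding identity in \eqref{eq:axioms_star}), not "the dual of \eqref{eq:axioms_plus}".
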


\subsection{Stone duality for generalized Boolean algebras} \label{subs:Stone1}
For the sake of fixing the notation and for completeness, we briefly recall the duality between generalized Boolean algebras and locally compact Stone spaces \cite{Stone37,D64} (for a resent detailed account, see \cite{L23}), which is one of the main tools used in this paper. A {\em generalized Boolean algebra} is a relatively complemented distributive lattice with the bottom element $0$. By ${\mathbb B} = \{0,1\}$ we denote the smallest non-zero generalized Boolean algebra. If $E,F$ are generalized Boolean algebras, a {\em morphism} $\varphi\colon E\to F$ is a map such that:
\begin{enumerate}
\item $\varphi(0) = 0$;
\item $\varphi(e\wedge f) = \varphi(e)\wedge \varphi(f)$ for all $e,f\in E$;
\item $\varphi(e\vee f) = \varphi(e) \vee \varphi(f)$ for all $e,f\in E$.
\end{enumerate}

 It is easy to verify that a morphism $\varphi\colon E\to F$ satisfies $\varphi(e\setminus f) = \varphi(e)\setminus \varphi(f)$ for all $e,f\in E$. A morphism $\varphi\colon E\to F$ between generalized Boolean algebras is called {\em non-dege\-ne\-rate} (or {\em proper}) if for all $f\in F$ there is $e\in E$ satisfying $\varphi(f)\geq e$, which is equivalent to requiring that the ideal of $F$, generated by $\varphi(E)$, coincides with $F$. If $E$ and $F$ are unital, then $\varphi$ is non-degenerate if and only if it satisfies $\varphi(1) = 1$. In what follows  all morphisms between generalized Boolean algebras are assumed to be non-degenerate.

A Hausdorff space $X$ is a {\em locally compact
Stone space} if it has a basis of compact-open sets. A continuous map  between topological spaces is called {\em proper} if the inverse images of compact sets are compact sets.

Let $E$ be a generalized Boolean algebra. A {\em prime character} of $E$ is a non-zero morphism of generalized Boolean algebras $f\colon E\to {\mathbb B}$. By $\Spec(E)$\footnote{In \cite{Kud25}, we denoted $\Spec(E)$ by $\widehat{E}$, but in this paper $\widehat{E}$ will have a different meaning denoting the spectrum (i.e., the space of characters) of a semilattice $E$ (see Remark \ref{rem:dif}).} we denote the set of all prime characters of $E$. This set is endowed with the locally compact Stone space topology with a basis given by the sets 
$$D(e) = \{\varphi\in \Spec(E)\colon \varphi(e)=1\},$$ where $e$ runs through $E$. The topological space $\Spec(E)$ is called the {\em prime spectrum} or the {\em space of prime characters} of the generalized Boolean algebra $E$.

\begin{definition} (Filter) \label{def:filter} A non-empty subset ${\mathrm F}$ of a poset $(P, \leq)$ is called a {\em filter}, if:
\begin{enumerate}
\item ${\mathrm F}$ is {\em downward directed}, that is, if $e,f\in {\mathrm F}$ then there is $g\leq e,f$ such that $g\in {\mathrm F}$;
\item ${\mathrm F}$ is {\em upward closed}, that is, if $e\in {\mathrm F}$ and $f\geq e$ then $f\in {\mathrm F}$.
\end{enumerate}
\end{definition}

In the particular case where $P$ is a semilattice, we note that a subset ${\mathrm{F}} \subseteq P$ is downward directed if and only if for every $e,f\in {\mathrm F}$, their meet $e\wedge f$ is also in ${\mathrm F}$. Consequently, if $E$ is a semilattice, then 
 a non-empty subset of $E$ is a filter if and only if it upward closed and is a subsemilattice of $E$.
 
If $P$ is a poset and $p\in P$, we denote the {\em principal filter generated by} $p$ by 
$$
p^{\uparrow} = \{q\in P\colon q\geq p\}.
$$

\begin{definition} (Prime filter)
A filter ${\mathrm F}$ of a generalized Boolean algebra $E$ is called {\em prime} if $e\vee f\in {\mathrm F}$ for some $e,f\in {\mathrm F}$ implies $e\in {\mathrm F}$ or $f\in {\mathrm F}$.
\end{definition}

Prime characters of a generalized Boolean algebra $E$ are in a bijection with prime filters of $E$ via the assignments $\varphi\mapsto \varphi^{-1}(1)$ if $\varphi$ is a prime character, and ${\mathrm F}\mapsto \chi_{{\mathrm F}}$ if ${\mathrm F}$ is a prime filter, where $\chi_{{\mathrm F}}$ is the {\em characteristic function} of ${\mathrm F}$. Identifying $\Spec(E)$ with the set of prime filters, we have 
$$D(e) = \{{\mathrm F}\in \Spec(E)\colon e\in {\mathrm F}\}.
$$

The assignment $\Spec\colon E\mapsto \Spec(E)$ gives rise to a contravariant functor from the category of generalized Boolean algebras with non-degenerate morphisms to the category of locally compact Stone spaces with proper and continuous maps. It takes a non-degenerate morphism $f\colon E\to F$ of generalized Boolean algebras to the proper and continuous map $f^{-1}\colon \Spec(F)\to \Spec(E)$. Using the bijection with prime characters, this rewrites to $f^{-1}(\chi_{{\mathrm F}}) = \chi_{{\mathrm F}}\circ f$ where ${\mathrm F}$ is a prime filter of $F$.

If $X$ be a locally compact Stone space, by ${\mathcal B}(X)$ we denote the generalized Boolean algebra of all compact-open sets of $X$ with respect to usual operations on sets. The assignment ${\mathcal B}\colon X\mapsto {\mathcal B}(X)$ gives rise to a contravariant functor from the category of locally compact Stone spaces to the category of generalized Boolean algebras. It takes a proper and continuous map $f\colon X\to Y$ between locally compact Stone spaces to the morphism of generalized Boolean algebras $f^{-1}\colon {\mathcal B}(Y)\to {\mathcal B}(X)$.

The following result dates back to \cite{Stone37, D64}, and its modern exposition can be found in \cite[Proposition 3, Theorem 6]{L23}, see also \cite[Theorem 2.1]{Kud25}.

\begin{theorem} \label{th:duality_sem} 
The functors $\Spec$ and ${\mathcal B}$ establish a dual equivalence between the category of generalized Boolean algebras and the category of locally compact Stone spaces. The natural isomorphism $E\to {\mathcal B}(\Spec(E))$ is given by $e\mapsto \{{\mathrm F}\in \Spec{E}\colon e\in {\mathrm F}\}$.
The natural homeomorphism $X\to \Spec({\mathcal B}(X))$ is given by $x\mapsto {\mathrm F}_x$ where $U\in {\mathrm F}_x$ if and only if $x\in U$.
\end{theorem}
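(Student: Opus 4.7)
The plan is to prove the theorem in four stages: first verify that the functors land in the correct categories, then check that the two natural transformations described in the statement are isomorphisms, and finally observe naturality (which is essentially tautological).

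First I would verify that for a generalized Boolean algebra $E$, the set $\Spec(E)$ together with the topology generated by the sets $D(e)$ is indeed a locally compact Stone space. The identities $D(e)\cap D(f)=D(e\wedge f)$, $D(e)\cup D(f)=D(e\vee f)$, and $D(e)\setminus D(f)=D(e\setminus(e\wedge f))$ show that $\{D(e):e\in E\}$ is a basis closed under finite intersections, unions and relative complements; in particular every $D(e)$ is clopen in the subspace topology, so $\Spec(E)$ has a basis of clopen sets. Hausdorffness follows because if $\varphi\ne \psi$ there is some $e$ with, say, $\varphi(e)=1$ and $\psi(e)=0$, giving disjoint basic clopen separations. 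Compactness of each $D(e)$ is the heart of the matter: using the identification of prime characters with prime filters, one shows by a Zorn's lemma / Stone prime filter theorem argument that any cover of $D(e)$ by basic opens $D(e_i)$ admits a finite subcover, by contradiction: if no finite subcover works, the collection $\{e\}\cup\{e\setminus (e\wedge e_{i_1})\wedge\cdots\wedge (e\setminus e\wedge e_{i_n})\}$ generates a proper filter, which extends to a prime filter not in any $D(e_i)$. Local compactness follows since each point lies in some $D(e)$ which is compact-open. Dually, for a locally compact Stone space $X$, the family $\mathcal{B}(X)$ is closed under finite $\cap$, $\cup$, and $\setminus$, so is a generalized Boolean algebra.

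Next I would check that the map $\eta_E\colon E\to \mathcal{B}(\Spec(E))$, $e\mapsto D(e)$, is an isomorphism. It preserves the operations and $0$ by the identities above. Injectivity: if $e\ne f$, then (say) $e\setminus(e\wedge f)\ne 0$, and the prime filter existence theorem (applied to the non-zero element $e\setminus(e\wedge f)$) produces a prime filter containing this element, hence containing $e$ but not $f$, so $D(e)\ne D(f)$. Surjectivity: any $U\in\mathcal{B}(\Spec(E))$ is open, hence a union of basic sets $D(e_i)$, and being compact it equals a finite union $D(e_{i_1})\cup\cdots\cup D(e_{i_n})=D(e_{i_1}\vee\cdots\vee e_{i_n})$.

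Then I would verify that the map $\varepsilon_X\colon X\to \Spec(\mathcal{B}(X))$, $x\mapsto \mathcal{F}_x$, is a homeomorphism. It is clearly well defined, since $\mathcal{F}_x$ is a prime filter (Hausdorffness plus having a compact-open basis guarantee it is non-empty). Injectivity is immediate from the $T_0$ property (separated by clopens). Surjectivity is the delicate step: given a prime filter $\mathcal{F}$ of $\mathcal{B}(X)$, the family $\{U\in\mathcal{F}\}$ has the finite intersection property in a locally compact Hausdorff space; picking any $U_0\in\mathcal{F}$, the sub-family $\{U\cap U_0 : U\in\mathcal{F}\}$ consists of non-empty closed subsets of the compact set $U_0$, and a standard argument using primality shows their intersection is a single point $x$, whence $\mathcal{F}=\mathcal{F}_x$. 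Finally, $\varepsilon_X^{-1}(D(U))=U$ directly from the definition, so $\varepsilon_X$ is a homeomorphism.

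The main obstacle is the prime filter existence theorem used twice above --- once to prove injectivity of $\eta_E$ and implicitly through compactness of $D(e)$. This is where the Boolean (as opposed to merely distributive lattice) structure is used: relative complementation lets one convert a non-zero element into a proper filter, which then extends by Zorn's lemma to a prime filter. Naturality of $\eta$ and $\varepsilon$ is then a routine check: for a morphism $f\colon E\to F$ one verifies $\mathcal{B}(f^{-1})\circ \eta_E=\eta_F\circ f$ directly by unpacking that $(\varphi\circ f)(e)=1$ iff $\varphi(f(e))=1$, and similarly for $\varepsilon$. Combining these observations with the functoriality of $\Spec$ and $\mathcal{B}$ (which is itself an elementary verification that proper continuous maps pull back compact-open sets to compact-open sets, and that non-degenerate morphisms pull back prime filters to prime filters) yields the claimed dual equivalence.
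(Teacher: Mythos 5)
The paper does not actually prove this theorem: it is recalled as a classical result with citations to Stone, Doctor and to Lawson's modern exposition, so there is no in-paper argument to compare against. Your outline is the standard proof of Stone duality for generalized Boolean algebras and is essentially sound; every step you describe can be completed. Three places where the write-up should be tightened. (i) The claim that each $D(e)$ is clopen does not follow from the three identities alone, since the whole space need not be compact; you need the extra observation that any $\psi\notin D(e)$ satisfies $\psi(f)=1$ for some $f$ (characters are non-zero by definition) and hence lies in the basic open $D(f\setminus(f\wedge e))$, which is disjoint from $D(e)$ because $(f\setminus(f\wedge e))\wedge e=0$; this is also what completes the Hausdorff separation. (ii) The displayed family generating the proper filter in the compactness argument is garbled; it should be the collection of elements $e\setminus(e\wedge(e_{i_1}\vee\cdots\vee e_{i_n}))$ over all finite subsets of the index set (after reducing to a cover by basic opens with $e_i\leq e$), each of which is non-zero precisely because no finite subcover exists, and this downward directed set omits $0$ and so generates a proper filter. (iii) In the functoriality of $\Spec$ the non-degeneracy of $f\colon E\to F$ is exactly what guarantees that $\varphi\circ f$ is non-zero for $\varphi\in\Spec(F)$: one writes an element $b$ with $\varphi(b)=1$ as a finite join of elements of the ideal generated by $f(E)$ and invokes primality of $\varphi$ (this is the same argument the paper uses in the proof of Proposition \ref{prop:universal_sem}), and it deserves a sentence rather than a parenthetical. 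With these repairs your argument is a complete, self-contained proof of the cited result.
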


%%%%%
\subsection{The spectrum and the universal Booleanization of a semilattice} \label{subs:spectrum_sem}
Let $E$ be a meet-semilattice. We define $\widehat{E}$ to be  the set of all non-zero morphisms of semilattices from  $E$ to ${\mathbb B}$. A map $\varphi\colon E\to {\mathbb B}$ belongs to $\widehat{E}$ if it is non-zero and satisfies:
$$
\varphi(e\wedge f) = \varphi(e)\wedge \varphi(f), \quad \text{ for all } \quad e,f\in E.\footnote{Some authors consider only semilattices with zero, and also require that $\varphi(0)=0$, see, e.g., \cite{Exel21}, but we do not need to require this.}
$$ 
It follows from \cite[Lemma~34.1]{GHbook}  that  $\widehat{E}$ is a closed subset with zero removed of the Stone space ${\mathbb B}^E$. Consequently, with respect to the relative topology inherited from ${\mathbb B}^E$, it is a locally compact Stone space. The elements of $\widehat{E}$ are called {\em characters}.  A basis of the topology on $\widehat{E}$, called the {\em patch topology}, is formed by the sets 
\begin{equation}\label{eq:de}
D_e = \{\varphi \in \widehat{E}\colon \varphi(e)=1\},
\end{equation}
where $e\in E$, along with the sets
\begin{equation}\label{eq:de1}
D_{e; f_1,\dots, f_k} = D_e \setminus (D_{f_1}\cup \dots \cup D_{f_k}),
\end{equation}
where $k\geq 1$, $e, f_1,\dots, f_k\in E$, and $f_1,\dots, f_k\leq e$. The space $\widehat{E}$ is called the {\em spectrum} or the {\em character space} of the semilattice $E$.

\begin{remark} \label{rem:dif} We emphasise that if $E$ is a generalized Boolean algebra, $\widehat{E}$ and $\Spec(E)$ are different spaces. A non-zero map $\varphi\colon E\to {\mathbb{B}}$
belongs to $\widehat{E}$ if $\varphi(e\wedge f) = \varphi(e)\wedge \varphi(f)$ for all $e,f\in E$. In contrast, $\Spec(E)$ consists of maps $\varphi\in \widehat{E}$ that satisfy the additional conditions $\varphi(0)=0$ and
$\varphi(e\vee f) = \varphi(e)\vee \varphi(f)$ for all $e,f\in E$. This shows that $\Spec(E)$ is a closed subset of $\widehat{E}$.
\end{remark}

Characters on a semilattice $E$ are in a bijection with {\em filters} of $E$ (see Definition \ref{def:filter}). If $f\colon E\to {\mathbb{B}}$ is a character, then $f^{-1}(1)$ is a filter, the inverse bijection taking a filter ${\mathrm{F}}$ to the character $\chi_{\mathrm{F}}$. 
In the sequel, elements of $\widehat{E}$ will be viewed interchangeably as filters or as characters, with the intended meaning determined by the context.

Since $\widehat{E}$ is a locally compact Stone space, its compact-open sets, which are precisely the sets given in \eqref{eq:de} and \eqref{eq:de1}, form a generalized Boolean algebra ${\mathcal B}(\widehat{E})$.  In order to formulate the universal property of ${\mathcal B}(\widehat{E})$, we need the following definition, which is taken from \cite{Exel21}. 

\begin{definition} (Non-degenerate morphism) Let $E$ be a meet-semilattice and $B$ a generalized Boolean algebra. We call a morphism of meet-semilattices $\alpha\colon E\to B$ {\em non-degenerate} if
the ideal of $B$, generated by $\alpha(E)$, coincides with $B$.
\end{definition}

\begin{proposition} \label{prop:universal_sem} 
Let $E$ be a semilattice, $B$ a generalized Boolean algebra, and let $\alpha\colon E\to B$ be a non-degenerate morphism of meet-semilattices. Then there exists a morphism of generalized Boolean algebras $\psi \colon {\mathcal B}(\widehat{E}) \to B$ such that $\alpha = \psi \iota$. 
\end{proposition}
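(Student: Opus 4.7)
The plan is to use Stone duality (Theorem~\ref{th:duality_sem}) to reduce the construction of $\psi$ to constructing a proper continuous dual map $\alpha^{*}\colon \Spec(B)\to \widehat{E}$. Concretely, I would set $\alpha^{*}(\varphi) = \varphi \circ \alpha$, and then let $\psi$ be the composite of the induced morphism $(\alpha^{*})^{-1}\colon \mathcal{B}(\widehat{E}) \to \mathcal{B}(\Spec(B))$ with the inverse of the natural isomorphism $B \to \mathcal{B}(\Spec(B))$ from Theorem~\ref{th:duality_sem}.

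First I would check that $\alpha^{*}$ lands in $\widehat{E}$ and is continuous and proper. For any $\varphi \in \Spec(B)$, the composite $\varphi \circ \alpha \colon E \to \mathbb{B}$ preserves meets; to see it is non-zero, pick $b$ with $\varphi(b) = 1$ and use non-degeneracy of $\alpha$ to write $b \leq \alpha(e_1) \vee \cdots \vee \alpha(e_n)$, whence $\varphi(\alpha(e_i)) = 1$ for some $i$. For continuity, the identity $(\alpha^{*})^{-1}(D_e) = D(\alpha(e))$ shows that preimages of the basic open sets \eqref{eq:de} are compact-open in $\Spec(B)$, and the preimages of the differences \eqref{eq:de1} follow automatically. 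Properness is handled by noting that any compact $K \subseteq \widehat{E}$ is contained in a finite union $D_{e_1} \cup \cdots \cup D_{e_n}$ of basic compact-opens, so that $(\alpha^{*})^{-1}(K)$ is closed inside the compact set $D(\alpha(e_1)) \cup \cdots \cup D(\alpha(e_n))$, hence compact.

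With $\alpha^{*}$ proper and continuous, Theorem~\ref{th:duality_sem} furnishes a morphism of generalized Boolean algebras $\psi = (\alpha^{*})^{-1}\colon \mathcal{B}(\widehat{E}) \to B$. The identity $\psi \iota = \alpha$ then reduces to the direct computation $\psi(\iota(e)) = (\alpha^{*})^{-1}(D_e) = D(\alpha(e))$, which under the natural isomorphism of Theorem~\ref{th:duality_sem} corresponds to $\alpha(e)$. The one place in the argument that requires genuine care is the properness verification, but this is routine once the compact-open basis of $\widehat{E}$ is exploited; everything else is unwinding of definitions.
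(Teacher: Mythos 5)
Your proposal is correct and follows essentially the same route as the paper: dualize $\alpha$ to the map $\varphi\mapsto\varphi\circ\alpha$ on prime spectra, use non-degeneracy plus primeness of $\varphi$ to see the image lands in $\widehat{E}$, verify continuity via $\Psi^{-1}(D_e)=D(\alpha(e))$, and invoke Theorem~\ref{th:duality_sem}. The only difference is that you spell out the properness verification explicitly (reducing a compact set to a finite union of basic compact-opens), which the paper leaves implicit after the basis computation.
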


\begin{proof} Applying Theorem \ref{th:duality_sem}, it suffices to prove that there is a proper and continuous map $\Psi\colon \Spec(B)\to \widehat{E}$. Let $\varphi \in \Spec(B)$. Then $\varphi\alpha\colon E\to {\mathbb B}$ is a morphism of semilattices. Since $\varphi$ is non-zero, there is $b\in B$ such that $\varphi(b)=1$. Since $\alpha(E)$ generates $B$, we can write $b=b_1\vee \cdots \vee b_k$ where $k\geq 1$ all $b_i$ belong to $\alpha(E)$. Since $\varphi$ is a prime character, $\varphi(b_i)=1$ for some $i\in \{1,\dots, k\}$, so that there is $e\in E$ such that $\varphi\alpha(e)=1$, which shows that $\varphi\alpha$ is a character of $E$. We set $\Psi(\varphi) = \varphi\alpha \in \widehat{E}$. Observe that $\Psi(\varphi) = \varphi\alpha \in D_e$ if and only if $\varphi\alpha(e) = 1 \Leftrightarrow \varphi \in D({\alpha(e)})$. Hence $\Psi^{-1}(D_e) = D({\alpha(e)})$. Similarly, $\Psi^{-1}(D_{e; f_1,\dots, f_k}) = D({\alpha(e)})\setminus (D({\alpha(f_1)})\cup \cdots \cup D({\alpha(f_k)}))$. This shows that $\Psi$ is continuous.
\end{proof}
This preceding proposition motivates the following definition.

\begin{definition} \label{def:univ_b_sem} (Universal Booleanization of a semilattice)
Let $E$ be a semilattice. The generalized Boolean algebra ${\mathcal B}(\widehat{E})$ is called the {\em (universal) Booleanization} of $E$ (or the {\em enveloping generalized Boolean algebra} of $E$) and will be denoted by ${\mathsf B}(E)$. 
 \end{definition}

Since $D_e\cap D_f = D_{e\wedge f}$ for every $e,f\in E$, the map 
$$
\iota\colon E\to {\mathcal B}(\widehat{E}), \qquad e\mapsto D_e
$$
is a morphism of meet-semilattices. Furthermore,  $D_e=D_f$ implies $\chi_{e^{\uparrow}} \in D_f$, which means $f\geq e$. By symmetry, we also have $e\geq f$, so that $e=f$. Thus, $\iota$ is an embedding, which shows that $\iota(E)$ is a subsemilattice of ${\mathcal B}(\widehat{E})$ that is isomorphic to $E$. 
We arrive at the following statement.

\begin{proposition} \label{prop:7o} Let $E$ be a semilattice. Then: 
\begin{enumerate}
\item $\iota(E)$ generates ${\mathcal B}(\widehat{E})$ as a generalized Boolean algebra;
\item the map
$\Psi\colon\Spec({\mathcal B}(\widehat{E}))\to \widehat{E}$, $\varphi\mapsto \varphi\iota$
is a homeomorphism; 
\item for every $e\in E$ we have:
$$
\Psi^{-1}(D_e)  = D(e);
$$
for all $k\geq 1$ and $e,f_1,\dots, f_k\in E$ satisfying $f_1,\dots, f_k\leq e$, we have:
$$
\Psi^{-1}(D_{e; f_1,\dots, f_k}) = D({\alpha(e)})\setminus \left(D({\alpha(f_1)})\cup \cdots \cup D({\alpha(f_k)})\right). 
$$
\end{enumerate}
\end{proposition}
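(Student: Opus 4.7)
My plan is to treat (1) directly from the description of the compact-open basis of $\widehat{E}$, derive (2) by identifying $\Psi$ as the inverse of the Stone-duality homeomorphism from Theorem~\ref{th:duality_sem}, and read off (3) by unwinding the definitions.

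For (1), I would observe that the basic compact-open sets described in \eqref{eq:de}-\eqref{eq:de1} already lie in the generalized Boolean subalgebra generated by $\iota(E) = \{D_e\colon e\in E\}$: one has $D_e = \iota(e)$, and directly from the definition $D_{e;f_1,\dots,f_k} = \iota(e)\setminus(\iota(f_1)\cup \cdots\cup \iota(f_k))$. Since any element of ${\mathcal B}(\widehat{E})$ is a compact open set, hence a finite union of such basic sets, it lies in the subalgebra generated by $\iota(E)$.

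For (2), note that part (1) implies $\iota$ is non-degenerate (every $U\in {\mathcal B}(\widehat{E})$ is dominated by a finite join of elements of $\iota(E)$, since each basic piece is contained in some $\iota(e)$). Proposition~\ref{prop:universal_sem} applied to $\alpha=\iota$ therefore yields that $\Psi$ is a well-defined continuous and proper map. To upgrade it to a homeomorphism I would compare it with the natural homeomorphism $\eta\colon \widehat{E} \to \Spec({\mathcal B}(\widehat{E}))$ from Theorem~\ref{th:duality_sem}, given by $x\mapsto {\mathrm F}_x$. Identifying prime filters with prime characters via the characteristic function, a direct computation gives, for $x\in \widehat{E}$ and $e\in E$, the chain of equivalences $\Psi(\eta(x))(e) = 1 \Leftrightarrow \eta(x)(\iota(e)) = 1 \Leftrightarrow D_e\in {\mathrm F}_x \Leftrightarrow x\in D_e\Leftrightarrow x(e)=1$, whence $\Psi\circ \eta = \mathrm{id}_{\widehat{E}}$. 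Since $\eta$ is bijective by Theorem~\ref{th:duality_sem}, this forces $\Psi = \eta^{-1}$, so $\Psi$ is a homeomorphism.

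Finally, for (3), the formulas are a direct computation: $\varphi \in \Psi^{-1}(D_e)$ iff $(\varphi\iota)(e) = \varphi(\iota(e)) = 1$, i.e.\ iff $\varphi \in D(\iota(e))$; the relative-complement formula then follows by applying this to each $D_{f_i}$ and subtracting. The main obstacle is purely organisational, namely recognising that the map $\Psi$ of Proposition~\ref{prop:universal_sem}, specialised to $\alpha=\iota$, is inverse to Stone duality's $\eta$; once this identification is made, continuity, bijectivity, and the explicit preimage formulas come essentially for free from the material already established.
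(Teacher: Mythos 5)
Your proof is correct and follows essentially the same route as the paper: part (1) from the compact-open basis, part (2) by specialising Proposition~\ref{prop:universal_sem} to $\alpha=\iota$ together with Stone duality, and part (3) by unwinding definitions. The only cosmetic difference is that you verify $\Psi\circ\eta=\mathrm{id}_{\widehat{E}}$ explicitly to identify $\Psi$ with $\eta^{-1}$, whereas the paper argues on the algebra side that the induced endomorphism of ${\mathcal B}(\widehat{E})$ is the identity (using part (1)); these are dual formulations of the same fact.
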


\begin{proof}
(1) The sets $D_e$ and $D_{e;f_1,\dots, f_k}$ are compact-open and form a basis of the topology on $\widehat{E}$. Hence, the generalized Boolean algebra ${\mathcal B}(\widehat{E})$ consists of all finite joins of these sets, so, in view of \eqref{eq:de1}, the sets $D_e$, $e\in E$, generate ${\mathcal B}(\widehat{E})$ as a generalized Boolean algebra. 
 In view of Proposition~\ref{prop:universal_sem}, the non-degenerate morphism $\iota\colon E\to {\mathcal B}(\widehat{E})$ gives rise to the identity isomorphism of the generalized Boolean algebra ${\mathcal B}(\widehat{E})$. The corresponding homeomorphism $\Psi\colon \Spec({\mathcal B}(\widehat{E})) \to \widehat{E}$ from the proof of Proposition \ref{prop:universal_sem} is given by $\varphi\mapsto \varphi\iota$, which establishes (2). Part (3) also follows directly from the same proof.
\end{proof}

\section{Boolean restriction semigroups and ample categories}\label{s:brs}
\subsection{Boolean restriction semigroups}
Boolean restriction semigroups were introduced in \cite[Definition 5.2]{Kud25}. They are a natural generalization of Boolean inverse semigroups, which are the algebraic counterparts of ample topological groupoids \cite{L23, StSz21, W17}. Their category theory analogues, called classical restriction categories, are discussed in \cite{CM09}. Boolean restriction monoids were studied in \cite{G23} (see \cite[Remarks 5.3 and 5.5]{Kud25}). 

Let $S$ be a restriction semigroup. Following \cite{CL07,CL24}, a zero element which is a projection is called a {\em restriction zero}. 
\begin{definition} (Boolean restriction semigroups) \label{def:brs} 
A restriction semigroup $S$ with a restriction zero is called {\em Boolean}  if the following conditions hold:
\begin{enumerate}
\item[(BR1)] For any $s,t\in S$ such that $s\smile t$, the join $s\vee t$ exists in $S$.
\item[(BR2)] $(P(S), \leq$) is a generalized Boolean algebra.
\item[(BR3)]  For any $s,t,u\in S$ such that $s\vee t$ exists, we have $(s\vee t)u = su \vee tu$.
\end{enumerate}
\end{definition}

Condition (BR3) says that in a Boolean restriction semigroup the multiplication distributes over  joins from the right. Furthermore, \cite[Lemma 5.6]{Kud25} ensures also the left distributivity. By a {\em Boolean range semigroup} we mean a range semigroup $(S; \cdot\, ,^*,^+)$ such that $(S; \cdot\, ,^*)$ is a Boolean restriction semigroup.

\begin{definition} (Boolean birestriction semigroups)
 Let $S$ be a  birestriction semigroup with a restriction zero. It is called {\em Boolean} if:
 \begin{enumerate}
\item[(BBR1)] For any two elements $a,b\in S$ such that $a\asymp b$, the join $a\vee b$ exists in $S$.
\item[(BBR2)] $(P(S), \leq$) is a generalized Boolean algebra.
\end{enumerate}
\end{definition}

\subsection{Topological categories}
We start from the precise definition of a category used in this paper. We adopt the approach that a category is a generalization of a monoid which has a partially defined multiplication operation. Adapting the definition of a category given in \cite{M71}, we identify objects with the set of identity morphisms at them. We consider only small categories.
\begin{definition} [Category] \label{def:cat}
Let $\mC$ be a set and $\dom,\ran \colon \mC\to \mC$ be two maps with the same image ${\dom}(\mC) = {\ran}(\mC)$, denoted $\mC^{(0)}$. We refer to the elements of $\mC^{(0)}$ as {\em units} and may sometimes write $\dom,\ran \colon \mC\to \mC^{(0)}$.
The {\em set of composable pairs} is defined as $\mC^{(2)} = \{(x,y)\in \mC\times \mC \colon {\ran}(y)={\dom}(x)\}$. Let, further, $\pr\colon \mC^{(2)}\to \mC$ be a map, and for any $(x,y)\in \mC^{(2)}$, we denote ${\pr}(x,y)$ by $xy$. We say that $\mC$, together with the maps $\dom,\ran$ and  $\pr$, is a (small) category if the following conditions hold:
\begin{enumerate}
\item (domain and range of a unit) If $x\in \mC^{(0)}$ then ${\dom}(x) = {\ran}(x) = x$.
\item (domain of a product) If $(x,y)\in \mC^{(2)}$ then ${\dom}(xy)  = {\dom}(y)$.
\item (range of a product) If $(x,y)\in \mC^{(2)}$ then ${\ran}(xy)  = {\ran}(x)$.
\item (associativity) If $(x,y), (y,z) \in \mC^{(2)}$ then $(xy)z  = x(yz)$.
\item (left and right unit laws) If $x\in \mC$ then ${\ran}(x)x = x{\dom}(x) =x$. 
\end{enumerate}
We call the maps $\dom,\ran, {\pr}$ the {\em domain}, the {\em range}, and the {\em composition} (or {\em multiplication}) maps, respectively.
\end{definition}

A monoid can be viewed as a category where the set of units, $\mC^{(0)}$, is a singleton, $\{1\}$, which implies $\dom(x) = \ran(x) = 1$ for all $x\in \mC$ and $\mC^{(2)} = \mC\times \mC$. In the monoid case, axioms (1), (2) and (3) hold automatically; thus, only axioms (4) and (5) need to be imposed. A {\em groupoid} is a category where each arrow is invertible. That is, for each $x\in \mC$, there exists a unique $y\in \mC$ such that ${\ran}(y) = {\dom}(x)$ and ${\dom}(y) = {\ran}(x)$, and, in addition, $yx = {\dom}(x)$ and $xy = {\ran}(x)$. Such an element $y$ is called the {\em inverse} of $x$ and is denoted by $x^{-1}$. 

\begin{definition} (Functor) Let $\mC$ and $\mD$ be categories. A {\em functor} is a map $f\colon \mC\to \mD$  such that $f(\mC^{(0)}) \subseteq \mD^{(0)}$ and the following conditions hold:
\begin{enumerate}
\item (commutation of $f$ with $\dom$ and $\ran$) $f(\ran(x)) = \ran(f(x))$ and $f(\dom(x)) = \dom(f(x))$, for all $x\in \mC$.
\item (preservation of product) If $(x,y)\in \mC^{(2)}$ then $f(xy) = f(x)f(y)$.
\end{enumerate}
\end{definition}

We say that $f$ is an {\em isomorphism} if it is a bijective functor. If is readily checked that if $f\colon \mC\to \mD$ is an isomorphism, then $f|_{\mC^{(0)}}\colon \mC^{(0)}\to \mD^{(0)}$ is a bijection and, moreover, the inverse bijection $f^{-1}\colon \mD\to \mC$ is also an isomorphism.

\begin{definition} (Topological category) A {\em topological category} is a category endowed with a topology under which the maps $\dom, \ran \colon \mC\to \mC^{(0)}$ are continuous with respect to the relative topology on $\mC^{(0)}$ and the composition $\pr\colon \mC^{(2)}\to \mC$ is continuous with respect to the relative product topology on $\mC^{(2)}$.
\end{definition}

\begin{remark} The inclusion map $\unit\colon \mC^{(0)}\to \mC$ is automatically continuous with respect to the relative topology on $\mC^{(0)}$, since $\unit^{-1}(A) = A\cap \mC^{(0)}$ for any $A\subseteq \mC$.
\end{remark}

\begin{remark} In the definition of a topological groupoid (see, e.g., \cite{CH20,Exel08}) it is usually required that the multiplication  and the inversion maps are continuous. Since $\dom(x) = x^{-1}x$ and $\ran(x) = xx^{-1}$, the maps $\dom$ and $\ran$ are then also continuous. 
\end{remark}

By a {\em topological isomorphism} $f\colon \mC\to \mD$ between topological categories $\mC$ and $\mD$ we mean a functor $f$ which is also a homeomorphism.

%%%%%%%%%%% 
\subsection{\'Etale and ample categories} The following definition was introduced in \cite[Definitions 6.1 and 7.1]{Kud25} and is inspired by the definition of an \'etale groupoid \cite{Paterson, R80,R07, St10}.

\begin{definition} (\'Etale, strongly \'etale and bi\'etale categories) A topological category $\mC$ is called:
\begin{enumerate} 
\item {\em \'etale} (resp. {\em co\'etale}) if the unit space $\mC^{(0)}$ is locally compact and Hausdorff in the relative topology and the domain map $\dom\colon \mC\to \mC^{(0)}$ (resp. the range map $\ran\colon \mC\to \mC^{(0)}$) is a local homeomorphism;
\item  {\em strongly \'etale}, if it is \'etale and the range map $\ran\colon \mC\to \mC^{(0)}$ is open;
\item {\em bi\'etale}, if it is both \'etale and co\'etale. 
\end{enumerate}
\end{definition}

If $\mC$ is a topological groupoid, we have the following (see \cite[Remark 6.2]{Kud25}):
$$
\mC \text{ is \'etale } \quad \Leftrightarrow  \quad \mC \text{ is co\'etale }  \quad \Leftrightarrow  \quad \mC \text{ is bi\'etale }\quad  
\Leftrightarrow \quad \mC \quad \text{ is strongly \'etale.}$$

Let $\mC$ be an \'etale category. It follows from \cite[Corollary 6.4]{Kud25} that $\mC^{(0)}$ is an open subset of $\mC$.\footnote{This generalizes the respective property of \'etale groupoids, see \cite[Proposition 3.2]{Exel08}.} Furthermore, Tristan Bice observed (unpublished) that the composition map $\pr\colon \mC^{(2)}\to \mC$ is open, see \cite[Proposition 6.9]{Kud25}.\footnote{This also generalizes the respective property of \'etale groupoids, \cite[Proposition 2.2.4]{Paterson}.}

The following definition \cite[Definition 6.6]{Kud25} is inspired by the notion of an open bisection in a topological groupoid. 
\begin{definition}(Slices, coslices and bislices) Let $\mC$ be a topological category. An open subset $U\subseteq \mC$ is called a {\em slice} (or an {\em open local section}) if the restriction $\dom|_U$ of the domain map $\dom$ to $U$ is a homeomorphism onto an open subset of $\mC^{(0)}$. It is called an  {\em coslice} (or an {\em open local cosection}) if $\ran|_U$ is a homeomorphism onto an open subset of $\mC^{(0)}$. It is called an  {\em bislice} (or an {\em open local bisection}) if it is both a slice and a coslice.
\end{definition}

Note that slices in a topological category (or even in a topological groupoid) do not in general coincide with bislices, as the following example shows.
\begin{example}
Let $X$ be a set, and consider the {\em pair groupoid} $X\times X$ (endowed with the discrete topology). Then its bislices are in a bijective correspondence with the elements of the symmetric inverse monoid $I(X)$. In contrast, its slices are in a bijective correspondence with the elements of the partial transformation monoid $PT(X)$. 
\end{example}

For subsets $U,V$ of a category $\mC$ we define their product by
\begin{equation}\label{eq:subs_mult}
UV = \{xy \colon x\in U, y\in V \text{ and } (x,y)\in \mC^{(2)}\}.
\end{equation}
We also define 
\begin{align*}
U^* & = \dom(U) = \{\dom(x) \colon x\in U\} \subseteq \mC^{(0)} \quad \text{ and }\\
U^+ & = \ran(U) = \{\ran(x) \colon x\in U\} \subseteq \mC^{(0)}.
\end{align*}

Let $\mC$ be an \'etale category. We denote the set of all its slices by $\mC^{op}$, and the set of all its bislices by $\widetilde{\mC}^{op}$. By \cite[Lemma 6.7]{Kud25}, $\mC^{op}$ forms a basis of a topology on $\mC$, Furthermore, if $\mC$ is bi\'etale, $\widetilde{\mC}^{op}$ forms a basis of a topology on $\mC$.  By \cite[Proposition 6.11]{Kud25}, the set $\mC^{op}$ is closed with respect to the composition and the unary operation $U\mapsto U^*$, forming a restriction monoid with the unit $\mC^{(0)}$. In particular, the projections of the restriction monoid $\mC^{op}$ are precisely the slices contained in $\mC^{(0)}$.

The following definition is taken from \cite[Definitions 7.1 and 7.7]{Kud25} and is motivated by the notion of an ample groupoid \cite[Definition 3.5, Proposition 3.6]{St10}.

\begin{definition} (Ample, strongly ample and biample categories) We say that an \'etale category $\mC$ is {\em ample} if compact slices form a basis of the topology. {\em Coample 
} categories are defined dually. A category $\mC$ is {\em strongly ample} if it is both strongly \'etale and ample. Further, $\mC$ is {\em biample} if compact bislices from a basis of the topology.
\end{definition}

It is shown in \cite[Lemma 7.2]{Kud25} that a bi\'etale category is biample if and only if it is both ample and coample. Moreover,   \cite[Proposition 7.3]{Kud25} shows that an \'etale category is ample if and only if $\mC^{(0)}$ is a locally compact Stone space, and a bi\'etale category is ample if and only if it is biample. Consequently, an \'etale groupoid is ample if and only if it is biample if and only if it is strongly ample. 

We denote the set of all compact slices of an ample category $\mC$ by $\mC^{a}$ and the set of all compact bislices by $\widetilde{\mC}^{a}$. 
Parts (1), (2) and (3) of the next proposition are proved in \cite[Propositions 7.6, 7.9, 7.10, 7.13]{Kud25}. The statement of part (4) is well known and is a consequence of Stone duality for Boolean inverse semigroups, see, e.g., \cite{L23}.   

\begin{proposition} \label{prop:ample} \mbox{}
\begin{enumerate}
\item Let $\mC$ be an ample category. Then $\mC^{a}$ is closed with respect to the composition and the unary operation $U\mapsto U^*$ and is a Boolean restriction semigroup with local units.
\item If $\mC$ is strongly ample, then $\mC^a$ is in addition closed with respect the unary operation $U\mapsto U^+$ and is a Boolean range semigroup.
\item If $\mC$ is biample, then the range semigroup $\mC^a$ is an \'etale Boolean range semigroup (see \cite{Kud25} for the definition), and $\widetilde{\mC}^a$ is a Boolean birestriction semigroup. 
\item If $\mC$ is an ample groupoid, $\widetilde{\mC}^a$ is a Boolean inverse semigroup.
\end{enumerate}
\end{proposition}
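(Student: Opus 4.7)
The plan is to exploit the structure already recalled for the slice monoid $\mC^{op}$ (which is a restriction monoid with unit $\mC^{(0)}$) and argue that passage to the subset $\mC^a$ of compact slices preserves this structure and adds the Boolean layer. For (1), the first task is closure under the operations. Given $U,V\in\mC^a$, the set $U^*=\dom(U)$ is a compact-open slice contained in $\mC^{(0)}$: compactness follows from continuity of $\dom$, and openness from the fact that $\dom|_U$ is a homeomorphism onto an open subset of $\mC^{(0)}$. For the product $UV$, the set $(U\times V)\cap\mC^{(2)}$ is closed in the compact space $U\times V$ (the matching condition $\ran(y)=\dom(x)$ is closed), hence compact, so $UV=\pr((U\times V)\cap\mC^{(2)})$ is compact as well; that $UV$ is a slice is part of the structure of $\mC^{op}$, and that it is open follows from openness of $\pr$ on \'etale categories.

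For the Boolean layer in (1), the projections of $\mC^a$ are precisely the compact-open subsets of $\mC^{(0)}$, which by the ample assumption form a basis of the locally compact Stone space $\mC^{(0)}$, and hence are a generalized Boolean algebra by Stone duality (Theorem \ref{th:duality_sem}); this gives (BR2) along with the restriction zero $\emptyset$. For (BR1) I would verify that two compatible compact slices $U\smile V$ have join $U\cup V$, noting that compatibility forces the slice maps $\dom|_U$ and $\dom|_V$ to agree on the overlap, so that the glued map $\dom|_{U\cup V}$ is injective and locally homeomorphic, and $U\cup V$ is visibly compact-open. The distributivity axiom (BR3) reduces to the setwise identity $(U\cup V)W = UW\cup VW$, which is immediate from the definition \eqref{eq:subs_mult}. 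For local units: $UU^*=U$ always gives a right local unit, while for a left local unit one picks a compact-open $V\subseteq\mC^{(0)}$ with $\ran(U)\subseteq V$, which exists because $\ran(U)$ is compact and the compact-open sets of $\mC^{(0)}$ form a basis.

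For (2), strong ampleness means $\ran$ is additionally open, so $U^+=\ran(U)$ is compact-open and a slice; closure under $^+$ in $\mC^a$ then lifts the range-monoid structure from $\mC^{op}$, and the range-semigroup identities reduce to setwise manipulations. For (3), biampleness gives a basis of compact bislices, so $\widetilde{\mC}^a$ is closed under composition and under both $^*$ and $^+$; verification of the Boolean birestriction axioms proceeds exactly as in (1), now with bicompatibility $U\asymp V$ in place of $U\smile V$. That the Boolean range semigroup $\mC^a$ is in fact \'etale in the sense of \cite{Kud25} amounts to showing that every compact slice is covered by compact bislices, which is immediate from the biample basis. For (4), in a groupoid inversion is a homeomorphism and $U\mapsto U^{-1}=\{x^{-1}\colon x\in U\}$ preserves compact bislices; one then checks the pointwise identities $UU^{-1}U=U$ and $U^{-1}UU^{-1}=U^{-1}$ to obtain the inverse-semigroup structure.

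The main technical point is (BR1) for compact slices: showing that the union of two compatible compact slices is again a slice. Once this gluing lemma is available, the entire Boolean restriction semigroup structure on $\mC^a$ assembles smoothly. A secondary subtlety, specific to (1), is the asymmetric treatment of $\dom$ and $\ran$: because $\ran$ is merely continuous in an ample (but not strongly ample) category, $\ran(U)$ need not be open, and one must invoke the Stone-space basis of $\mC^{(0)}$ to supply a left local unit rather than use $\ran(U)$ directly.
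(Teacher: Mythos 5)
The paper does not actually prove this proposition: it is stated as a summary of results imported from \cite{Kud25} (Propositions 7.6, 7.9, 7.10 and 7.13 there, for parts (1)--(3)) together with the well-known part (4), so there is no in-text argument to compare against. Your reconstruction is correct and supplies exactly the kind of self-contained argument the citation replaces. The three points that genuinely need care are all handled properly: compactness of $UV$ via closedness of the composability condition $\ran(y)=\dom(x)$ in the compact set $U\times V$ (using that $\mC^{(0)}$ is Hausdorff) together with openness of $\pr$; the gluing lemma showing that $U\smile V$ forces $u=v$ whenever $u\in U$, $v\in V$ and $\dom(u)=\dom(v)$ (since both lie in the slice $UV^*=VU^*$), so that $U\cup V$ is again a compact slice and is the join for the inclusion order; and the left local unit obtained from a compact-open $V\supseteq \ran(U)$ supplied by the Stone basis of $\mC^{(0)}$, which is precisely where the asymmetry between $\dom$ and $\ran$ in the merely ample case bites. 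The only soft spot is in part (3): the paper defers the definition of an \'etale Boolean range semigroup entirely to \cite{Kud25}, so your gloss that \'etaleness ``amounts to every compact slice being covered by compact bislices'' is a plausible reading (it matches the usual dictionary identifying partial-isomorphism elements with bislices) but cannot be certified from the present paper alone; if you intend this to stand as a proof you should quote the actual definition from \cite{Kud25} and check it against the biample basis explicitly. Everything else, including the routine setwise verification of the coEhresmann identities for $U^+=\ran(U)$ in part (2) and the inversion argument in part (4), is sound.
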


\section{\'Etale categories of partial monoid actions}\label{s:monoid_actions}
Let $X$ be a topological space. A {\em partial self-map} of $X$ is a map $f\colon {\mathrm{dom}}(f)\to X$ where ${\mathrm{dom}}(f)\subseteq X$ is the {\em domain} of $f$. By $\T(X)$ we denote the monoid of all continuous self-maps $X\to X$, and by $\PT(X)$ the monoid of all continuous partial self-maps of $X$ with open domains. Note that $\T(X)\subseteq \PT(X)$. If $X$ is endowed with the discrete topology,  $\T(X)$ coincides with the 
full transformation monoid $T(X)$, and $\PT(X)$ coincides with the partial transformation monoid  $PT(X)$.  
Note that $\PT(X)$ is a $(\cdot \,, ^*)$-subalgebra of $PT(X)$, where, for any $f\in \PT(X)$,  
$f^*$ is the identity map ${\mathrm{id}}_{\mathrm{dom}(f)}$
on ${\mathrm{dom}}(f)$. 

Let $S$ be a monoid and $T$ a restriction monoid. A map $\theta \colon S\to T$, $s\mapsto \theta_s$ is called a {\em premorphism} if the following conditions are satisfied:
\begin{enumerate}
\item $\theta_1 = 1$,
\item $\theta_s\theta_t \leq \theta_{st}$.
\end{enumerate}

\begin{definition} (Partial actions of a monoid on a locally compact Hausdorff space) Let $S$ be a monoid and $X$ a locally compact Hausdorff space. A {\em partial action} 
of $S$ on $X$ is a premorphism 
$\theta\colon S \to \PT(X)$. We also say that $S$ {\em acts partially} on $X$ via $\theta$.\footnote{For a comprehensive survey on partial actions, see \cite{D19}.}
\end{definition}

A partial action $\theta$ of $S$ on $X$ is equiva\-lently given by the pair $$\Phi = (\{{\mathrm{dom}}(\theta_s)\}_{s\in S}, \{\theta_s\}_{s\in S}),$$ where, for each $s\in S$, ${\mathrm{dom}}(\theta_s)$ is an open subset of $X$ and $\theta_s\colon {\mathrm{dom}}(\theta_s)\to X$ is a continuous map, satisfying, for all $s,t\in S$:
\begin{enumerate}
\item[(1)] $\theta_1 = {\mathrm{id}}_X$, 
\item[(2)] if $x\in {\mathrm{dom}}(\theta_t)$ and $\theta_t(x)\in {\mathrm{dom}}(\theta_s)$ then $x\in {\mathrm{dom}}(\theta_{st})$,
\item[(3)] if $x\in {\mathrm{dom}}(\theta_t)$ and $\theta_t(x)\in {\mathrm{dom}}(\theta_s)$ then $\theta_s\theta_t(x) = \theta_{st}(x)$.
\end{enumerate}

Actions arise as a special case of partial actions, as follows. 
\begin{definition} (Actions of a monoid on a locally compact Hausdorff space) Let $S$ be a monoid and $X$ a locally compact Hausdorff space. An {\em action} of $S$ on $X$ is a monoid morphism $\theta\colon S \to \T(X)$. \end{definition}

Suppose $S$ is a discrete monoid which acts partially on $X$ via $\theta \colon S \to \PT(X)$. Let $S\ltimes X$ (or $S\ltimes_{\theta} X$ to emphasise $\theta$) be the set of all $(s,x)\in S\times X$ such that $x \in {\mathrm{dom}}(\theta_s)$. We endow this set with the relative topology inherited from the product topology on $S\times X$.
For every $(s,x)\in S\ltimes X$, we set 
$$\dom(s,x) = (1,x) \quad  \text{ and } \quad \ran(s,x) = (1, \theta_s(x)).
$$
The product $(s,x)(t,y)$ is defined if and only if $\theta_t(y) = x$, in which case $(s,x)(t,y) = (st, \theta_{st}(y))$.
This turns $S\ltimes X$ into a topological category whose unit space is homeomorphic to $X$ via the map $(1,x)\mapsto x$.  The category $S\ltimes X$ is called the {\em partial transformation category} of the partial action $\theta$. 
If $\theta$ is an action, the category $S\ltimes X$ is denoted by $S\times X$ (since its underlying set coincides with $S\times X$) and is called the {\em transformation category} of the action $\theta$.

\begin{proposition} \label{prop:p1} Suppose $S$ is a monoid which acts partially on $X$ via $\theta$. 
\begin{enumerate}
\item If $Y$ is an open subset of ${\mathrm{dom}}(\theta_s)$ then $(s,Y) = \{(s,y)\colon y\in Y\}$ is a slice homeomorphic to $Y$ via the map $(s,y)\mapsto y$.
\item $S\ltimes X$ is an \'etale category.
\item The sets $(s, Y)$, where $s\in S$ and $Y$ is an open subset of  ${\mathrm{dom}}(\theta_s)$, form a basis of the topology on $S\ltimes X$.
\item If $\theta$ acts by open maps, the category $S\ltimes X$ is strongly \'etale.
\item If $\theta$ acts by homeomorphisms, the category $S\ltimes X$ is bi\'etale.
\end{enumerate}
\end{proposition}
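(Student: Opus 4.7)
My plan is to exploit two facts: $S$ carries the discrete topology, so each fiber $\{s\}\times X$ is clopen in $S\times X$, and the topology on $S\ltimes X$ is inherited as a subspace. Consequently the whole category splits as the disjoint topological union $S\ltimes X = \bigsqcup_{s\in S} (\{s\}\times \mathrm{dom}(\theta_s))$, and on each piece the projection to $X$ is a homeomorphism. Everything below is essentially an unpacking of this observation.

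I will handle (1) and (3) together. Because $\{s\}\times U$ with $U\subseteq X$ open forms a basis of $S\times X$, intersecting with $S\ltimes X$ produces exactly the sets $(s, U\cap \mathrm{dom}(\theta_s))$, which proves (3). For (1), the map $(s,y)\mapsto y$ restricted to $(s,Y)$ is a bijection onto the open set $Y$; it is continuous as the restriction of the second-coordinate projection $S\times X\to X$, and its inverse $y\mapsto (s,y)$ is continuous as a map into $S\times X$, hence into the subspace $S\ltimes X$. Composing with the unit-space identification $(1,x)\leftrightarrow x$, one sees that $\dom$ restricted to $(s,Y)$ is a homeomorphism onto the open subset $Y$ of $\mathcal{C}^{(0)}$, so $(s,Y)$ is a slice.

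Part (2) is then immediate: $\mathcal{C}^{(0)}$ is homeomorphic to $X$, which is locally compact and Hausdorff by hypothesis; and by (3) the slices from (1) form a basis, so $\dom$ is a local homeomorphism. For (4), openness of $\ran$ need only be verified on the basis of (3). But $\ran((s,Y))$ corresponds under the unit-space identification to $\theta_s(Y)$, which is open in $X$ whenever $\theta_s$ is assumed to be an open map. For (5), if each $\theta_s$ is a homeomorphism from its open domain onto its open image, then on a slice $(s,Y)$ the range map factors as the composition $(s,y)\mapsto y\mapsto \theta_s(y)\mapsto (1,\theta_s(y))$, which is a composition of three homeomorphisms: the identification from (1), the hypothesis on $\theta_s$, and the unit-space identification. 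Therefore $(s,Y)$ is also a coslice, hence a bislice, and $S\ltimes X$ is bi\'etale.

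I do not anticipate any serious obstacle. The only subtlety worth flagging is ensuring that one does not conflate the openness of $\theta_s$ on its domain with openness as a map $X\to X$ — but since the domains are open subsets of $X$, the two notions of openness agree where needed. Everything else reduces to the discrete-fiber decomposition of $S\ltimes X$.
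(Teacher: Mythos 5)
Your proof is correct and follows essentially the same route as the paper's: both arguments rest on the observation that, with $S$ discrete, the sets $(s,Y)$ are basic open sets on which $\dom$ (and, under the extra hypotheses, $\ran$) restricts to a homeomorphism onto an open subset of the unit space. Your fiberwise-decomposition framing and the explicit check that the image of $\theta_s$ is open in part (5) merely spell out details the paper leaves implicit.
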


\begin{proof}
(1) The image of $(s,Y)$ under $\dom$ is $(1,Y)$, which is obviously homeomorphic to $Y$. 

(2) Let $(s,x) \in S\ltimes X$ where $x\in {\mathrm{dom}}(\theta_s)$. Then $(s,{\mathrm{dom}}(\theta_s))$ is a neighbourhood of $(s,x)$, which is homeomorphic to $(1, {\mathrm{dom}}(\theta_s))$ via  $\dom$.

Part (3) is immediate by the definition of the product topology.

For part (4), let $(s,U)$ be a basic open set. Then $\ran(s,U) = (1, \theta_s(U))$ which is an open set since $\theta_s$ is open.

Finally, for part (5), let $(s,x) \in S\ltimes X$ where $x\in {\mathrm{dom}}(\theta_s)$.  Then $(s,{\mathrm{dom}}(\theta_s))$ is a neighbourhood of $(s,x)$ and homeomorphic to  $(1, {\mathrm{ran}}(\theta_s))$ via $\ran$. 
\end{proof}  

\begin{example}~\hspace{-1em}\footnote{The author gratefully acknowledges Ruy Exel for suggesting this example.} Let $X$ be the one-point compactification of the discrete space ${\mathbb N}_0$. Specifically, $X = {\mathbb N}_0 \cup \{\infty\}$, where the basic open sets are the finite subsets of ${\mathbb N}_0$ and the cofinite subsets of $X$ containing $\infty$. It is a compact Hausdorff space.
We consider the action $\theta\colon {\mathbb N}_0 \to \T(X)$ of the monoid ${\mathbb N}_0=\{1,s,s^2, \dots\}$ on $X$, generated by the continuous map $\theta_s\colon X \to X$ defined by
$$
\theta_s(x) = \left\lbrace \begin{array}{ll} \left\lfloor \frac{x}{2} \right\rfloor, & x\in {\mathbb N}_0;\\
\infty, & x= \infty.
 \end{array}\right.
$$
By Proposition \ref{prop:p1}, this action gives rise to the \'etale transformation category ${\mathbb N}_0\times X$. The category ${\mathbb N}_0\times X$ is, however, not co\'etale. This follows from the observation that any neighbourhood of $(s,\infty)$ contains some open set $(s,U)$, where $U$ is a cofinite subset of ${\mathbb N}_0$, and therefore the restriction of $\theta_s$ to $U$ is not injective. Consequently, the restriction of $\ran$ to $(s,U)$ is not injective, thus $\ran$ is not a local homeomorphism. Despite the category is not co\'etale, it is strongly \'etale, since $\theta_s$ (and thus all  $\theta_{s^k}$) maps basic open sets to basic open sets and is thus an open map. 
\end{example} 

%%%%%
\section{Actions of restriction semigroups and their categories of germs}\label{s:germs}
\subsection{Actions of restriction semigroups} 
The following definition is inspired by  \cite[Definition 4.3]{Exel08} and \cite[Definition 5.1]{St10}.

\begin{definition} \label{def:act_restr} (Actions and ample actions of restriction semigroups) \label{def:action_restr} Let $S$ be a restriction semigroup and $X$ a locally compact Hausdorff space. An {\em action} of $S$ on $X$ is a $(\cdot\,,^*)$-morphism $\theta\colon S\to \PT(X)$. 
We say that an action $\theta$ is {\em ample} if $X$ is a locally compact Stone space and ${\mathrm{dom}}(\theta_s)\in {\mathcal B}(X)$ for all $s\in S$.
\end{definition}

Since, for every $e\in P(S)$, the map $\theta_e$ is a projection, it is the identity map ${\mathrm{id}}_{X_e}$ on its domain $X_e = {\mathrm{dom}}(\theta_e)$.
Throughout the paper, we will always assume that $\theta$ is {\em non-degenerate} which means that 
\begin{equation}\label{eq:non_deg}
\bigcup_{e\in P(S)} X_e = X.
\end{equation}

Since ${\mathrm{dom}}(f) = {\mathrm{dom}}(f^*)$ for every $f\in \PT(X)$, and since $\theta$ is a $(\cdot\,, ^*)$-morphism, for every $s\in S$ we have:
\begin{equation}\label{eq:e1}
{\mathrm{dom}}(\theta_s) = {\mathrm{dom}}(\theta_s^{\,*}) = {\mathrm{dom}}(\theta_{s^*}) = X_{s^*}.
\end{equation}
Furthermore, if $e,f\in P(S)$, we have
$\theta_{ef} = \theta_e\theta_f = {\mathrm{id}}_{X_e} {\mathrm{id}}_{X_f} =  {\mathrm{id}}_{X_e\cap X_f}$.

We now provide two important examples of actions of restriction semigroups.

\begin{example} \label{ex:spectral} (The spectral action) Let $S$ be a restriction semigroup with local units (see Definition \ref{def:lu}) and $X=\widehat{P(S)}$ be the character space of $P(S)$. For each $s\in S$ we set
$\beta_s$ to be the partial self-map of $X$ with 
$${\mathrm{dom}}(\beta_s) = \{\varphi \in X \colon \varphi(s^*) = 1\},$$ 
and 
\begin{equation*}
\beta_s(\varphi)(e) = \varphi((es)^*), \quad \varphi \in {\mathrm{dom}}(\beta_s).
\end{equation*}
The set ${\mathrm{dom}}(\beta_s)$ is precisely the set $D_{s^*}$, defined in \eqref{eq:de}, and we will use this notation for ${\mathrm{dom}}(\beta_s)$ rather than $X_{s^*}$ or $\widehat{P(S)}_{s^*}$, to emphasise that we are working with the spectral action.

Since $S$ has local units, there is $e\in P(S)$ such that $es=s$, which means $\varphi((es)^*) = 1$, and thus the map $\beta_s(\varphi)$ is non-zero. Furthermore, for $e,f\in P(S)$ we have $(es)^*(fs)^* = (es(fs)^*)^* = (efs)^*$, which yields that $\beta_s(\varphi)\in X$.
The set ${\mathrm{dom}}(\beta_s)$ is open by the definition of the topology on $X$. We show that $\beta_s$ is continuous. It suffices to show that $\beta_s^{-1}(D_e)$ and $\beta_s^{-1}(D_{e;f_1,\dots f_k})$ are open for all basic open sets $D_{e;f_1,\dots, f_k}$, where $k\geq 0$. 
But 
$$\varphi\in \beta_s^{-1}(D_e) \quad \Leftrightarrow \quad \beta_s(\varphi)(e) = 1 \quad \Leftrightarrow \quad \varphi ((es)^*) = 1 \quad \Leftrightarrow \quad \varphi \in D_{(es)^*}.
$$ Hence, $\beta_s^{-1}(D_e) = D_{(es)^*}$ and $\beta_s^{-1}(D_{e;f_1,\dots, f_k}) = D_{(es)^*;(f_1s)^*,\dots, (f_ks)^*}$, which proves that $\beta_s$ is continuous.

To show that the map $s\mapsto \beta_s$ is a morphism of restriction semigroups, observe that ${\mathrm{dom}}(\beta_{st}) = \{\varphi \in X \colon \varphi((st)^*)= 1\}$ and
$\varphi \in {\mathrm{dom(\beta_s\beta_t)}}$ if and only if $\varphi(t^*) = 1$ and $\beta_t(\varphi) \in {\mathrm{dom(\beta_s)}}$, which means that  $1 = \beta_t(\varphi)(s^*)= \varphi((s^*t)^*) = \varphi((st)^*)$. Since $(st)^* = (s^*t)^*\leq t^*$ and $\varphi((st)^*)=1$, it follows that $\varphi(t^*)=1$. This implies that ${\mathrm{dom}}(\beta_{st}) = {\mathrm{dom(\beta_s\beta_t)}}$. If $\varphi \in {\mathrm{dom}}(\beta_{st})$ and $e\in P(S)$, we have $\beta_{st}(\varphi)(e) = \beta_s\beta_t(\varphi)(e) = \varphi((est)^*)$, so that $\beta_{st} = \beta_s\beta_t$. Furthermore, $\beta_{t^*}$ is the identity map on $D_{t^*} = \{\varphi\in X\colon \varphi(t^*)=1\}$, which is the domain of $\beta_t$. Hence $\beta_{t^*} = \beta_t^*$, and we have shown that $\beta$ is a $(\cdot\,,^*)$-morphism. \end{example}

\begin{example} \label{ex:action_ample}
Let ${\mathcal C}$ be an ample category. We define the action $\theta$ of ${\mathcal C}^{a}$ on ${\mathcal C}^{(0)}$ as follows.
If $U\in {\mathcal C}^a$ and $x\in {\mathcal C}^{(0)}$, then $\theta_U(x) = U\circ x$ is defined if and only if $x\in \dom(U)$. If this is the case, we set  $U\circ x= \ran(Ux)$, where $Ux$ is the product in ${\mC}$, which equals the only $u\in U$ with $\dom(u) = x$. Therefore, we have:
$$
\theta_U\colon \dom(U) \to \ran(U), \quad x\mapsto \ran(Ux).
$$
We show that $\theta_U\theta_V=\theta_{UV}$. Suppose that $x\in {\mathrm{dom}}(\theta_{UV}) = \dom(UV)$. Then  $x = \dom(UVx)$. Then $\ran(Vx) \in \dom(U)$ and
$\ran(UVx) = \ran(U\ran(Vx))$, which yields $x\in {\mathrm{dom}}(\theta_U\theta_V)$ and $UV\circ x = U\circ (V\circ x)$. Clearly, we have ${\mathrm{dom}}(\theta_U\theta_V)\subseteq {\mathrm{dom}}(\theta_{UV})$.
Furthermore, as $U^* = \dom(U)$, we have that both
$\theta(U)^*$ and $\theta(U^*)$ coincide with the identity map on $\dom(U)$. It follows that $\theta$ is an action.
\end{example}

%%%%%
\subsection{The category of germs} \label{subs:germs}
Following Exel \cite{Exel08}, we define\footnote{We would like to warn the reader that $s^*$ denotes the inverse of $s$ in an inverse semigroup $S$ in \cite{Exel08}. The element $s^*s$ of \cite{Exel08} is thus an analogue for our element $s^*$.}
$$
\Omega = \{(s,x)\in S\times X \colon x\in X_{s^*}\}.
$$
For every $(s,x), (t,y)\in \Omega$ we write $(s,x)\sim (t,y)$ if $x=y$ and there exists $u\in S$ such that $x\in X_{u^*}$ and $u\leq s,t$. 

\begin{lemma} $\sim$ is an equivalence relation.
\end{lemma}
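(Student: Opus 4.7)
\medskip

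The plan is to verify the three defining properties of an equivalence relation, with the transitivity step being the only one that requires a nontrivial idea.

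Reflexivity and symmetry are immediate. For reflexivity, given $(s,x)\in\Omega$, take $u=s$: trivially $u\leq s,s$, and $x\in X_{s^*}=X_{u^*}$ by the definition of $\Omega$. Symmetry is obvious from the symmetric form of the definition.

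For transitivity, suppose $(s,x)\sim (t,x)$ and $(t,x)\sim (r,x)$, witnessed by elements $u\leq s,t$ with $x\in X_{u^*}$ and $v\leq t,r$ with $x\in X_{v^*}$. The key observation is that $u$ and $v$ share the common upper bound $t$, hence are compatible (by the remark recalled just before Lemma~\ref{lem:compatibility}). I will then define $w=uv^*$; compatibility of $u$ and $v$ gives $w=uv^*=vu^*$, from which $w\leq u\leq s$ and $w\leq v\leq r$ follow immediately, since post-multiplying by a projection produces a lower bound in the natural partial order. To check the remaining condition $x\in X_{w^*}$, apply identity~\eqref{eq:rule1r} with $e=v^*\in P(S)$ to compute $w^*=(uv^*)^*=u^*v^*$. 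Since $\theta$ is a $(\cdot\,,^*)$-morphism, $\theta_{w^*}=\theta_{u^*}\theta_{v^*}=\mathrm{id}_{X_{u^*}\cap X_{v^*}}$, so $X_{w^*}=X_{u^*}\cap X_{v^*}$. Both $x\in X_{u^*}$ and $x\in X_{v^*}$ hold by assumption, so $x\in X_{w^*}$, and $w$ witnesses $(s,x)\sim (r,x)$.

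The only genuinely nontrivial step is this last one, and the difficulty is mild: it amounts to recognising that the common upper bound $t$ provides compatibility of $u$ and $v$, which in turn makes $uv^*$ simultaneously a lower bound of both and keeps the projection $w^*$ inside $X$ via $x$.
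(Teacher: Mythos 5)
Your proof is correct and follows essentially the same route as the paper's: both arguments observe that the two witnesses share the common upper bound $t$, hence are compatible, and then take the element $uv^*=vu^*$ as the new witness, computing its projection as $u^*v^*$ so that $x$ lies in the corresponding domain. The only cosmetic difference is that you spell out reflexivity, symmetry, and the identity $X_{u^*v^*}=X_{u^*}\cap X_{v^*}$, which the paper leaves implicit.
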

\begin{proof} Only transitivity needs a proof. Suppose that $(s,x)\sim (t,x) \sim (v,x)$. Then there are $p\leq s,t$ and $q\leq t,v$ with $x\in X_{p^*}\cap X_{q^*}$. Since $p$ and $q$ have a common upper bound, $t$, we have $p\smile q$ by \cite[Lemma 4.15]{Kud25}.  Let $u=pq^* = qp^*$. Then $u\leq p,q$, so that $u \leq s,t,v$. In addition, $u^*= (pq^*)^* = p^*q^*$, using \eqref{eq:axioms_star}. It follows that $x\in X_{p^*}\cap X_{q^*} = X_{p^*q^*} = X_{u^*}$. Hence $(s,x) \sim (v,x)$.
\end{proof}

The equivalence class of $(s,x)$ will be called the {\em germ of} $s$ {\em at} $x$ and will be denoted $[s,x]$.
\begin{lemma} \label{lem:l2} If $(s,x)\sim (t,x)$ then $\theta_s(x) = \theta_t(x)$.
\end{lemma}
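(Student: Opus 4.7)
The plan is to unpack the equivalence $(s,x) \sim (t,x)$ via a common lower bound $u \leq s,t$ with $x \in X_{u^*}$, and then show that $\theta_u(x)$ agrees with both $\theta_s(x)$ and $\theta_t(x)$. The underlying intuition is that the natural partial order $u \leq s$ says $u = s u^*$, and since $\theta$ is a $(\cdot,{}^*)$-morphism, $\theta_u = \theta_s \theta_{u^*}$ is simply the restriction of $\theta_s$ to the smaller open set $X_{u^*}$.

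First I would record, using Lemma \ref{lem:1}(1), that $u \leq s$ forces $u^* \leq s^*$, and hence $X_{u^*} \subseteq X_{s^*}$ (because $u^* = u^* s^*$ implies $\theta_{u^*} = \theta_{u^*}\theta_{s^*}$, so the domain of $\theta_{u^*}$ is contained in that of $\theta_{s^*}$). Next, I would invoke $u = su^*$ (the defining equation of the natural partial order) and apply the morphism property of $\theta$ to obtain $\theta_u = \theta_s \theta_{u^*}$. Since $\theta_{u^*} = \mathrm{id}_{X_{u^*}}$, for every $x \in X_{u^*}$ we have $\theta_u(x) = \theta_s(x)$. The same argument with $t$ in place of $s$ gives $\theta_u(x) = \theta_t(x)$ on $X_{u^*}$.

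Putting these together, at the given $x \in X_{u^*}$ we get
\begin{equation*}
\theta_s(x) = \theta_u(x) = \theta_t(x),
\end{equation*}
which is the claim. There is no real obstacle: the entire content is that the natural order $u \leq s$ is transported by the action to pointwise restriction, which follows immediately from the fact that $\theta$ respects $^*$ (so projections act as identities on their domains) and from identity \eqref{eq:axioms_star_restr} only implicitly through the use of $u = su^*$. Consequently the proof should take only two or three lines.
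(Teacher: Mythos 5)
Your proof is correct and follows essentially the same route as the paper: take the common lower bound $u\leq s,t$ with $x\in X_{u^*}$, write $u=su^*$, and use the morphism property together with $\theta_{u^*}={\mathrm{id}}_{X_{u^*}}$ to get $\theta_s(x)=\theta_u(x)=\theta_t(x)$. The extra remarks about $X_{u^*}\subseteq X_{s^*}$ are harmless but not needed.
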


\begin{proof}
By assumption, there is $u\leq s,t$ such that $x\in X_{u^*}$. It suffices to show that $\theta_s(x)= \theta_u(x)$. Since $u\leq s$, we have $u=su^*$. Using $\theta_{u^*}(x) = x$, we obtain $\theta_u(x) = \theta_{su^*}(x) = \theta_s\theta_{u^*}(x) = \theta_s(x)$, as desired.
\end{proof}

The following is an analogue of \cite[Proposition 4.7]{Exel08}.

\begin{proposition} \label{prop:p3}
If $(s,x), (t,y)\in \Omega$ are such that $x=\theta_t(y)$, then $(st, y)\in \Omega$. Furthermore, if $(p,x) \sim (s,x)$ and $(q,y)\sim (t,y)$ satisfy $x=\theta_t(y)$, we have that $(st,y)\sim (pq,y)$. \end{proposition}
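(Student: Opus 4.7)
The plan is to reduce both assertions to straightforward applications of the morphism property of $\theta$ together with the identity \eqref{eq:e1} and Lemma~\ref{lem:1}(3).

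For the first claim, I would observe that the hypotheses give $y \in {\mathrm{dom}}(\theta_t)$ and $x = \theta_t(y) \in X_{s^*} = {\mathrm{dom}}(\theta_s)$, so $y$ lies in the domain of the composite $\theta_s \theta_t$. Since $\theta$ is a $(\cdot\,, ^*)$-morphism, this composite equals $\theta_{st}$, and by \eqref{eq:e1} its domain is $X_{(st)^*}$. Hence $y \in X_{(st)^*}$, i.e.\ $(st, y) \in \Omega$.

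For the second claim, I would construct an explicit common lower bound of $st$ and $pq$ whose projection is activated at $y$. Choose elements $u, v \in S$ witnessing the two equivalences: $u \leq s, p$ with $x \in X_{u^*}$, and $v \leq t, q$ with $y \in X_{v^*}$. Set $w = uv$. Two applications of Lemma~\ref{lem:1}(3) yield $uv \leq sv \leq st$ and $uv \leq pv \leq pq$, so $w$ is a common lower bound of $st$ and $pq$. It remains to check $y \in X_{w^*} = {\mathrm{dom}}(\theta_w)$.

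The one nontrivial step, which I expect to be the only real calculation, is computing $\theta_v(y)$. Using $v \leq t$ we have $v = t v^*$, so $\theta_v = \theta_t \theta_{v^*}$; because $\theta_{v^*}$ is the identity map on $X_{v^*}$ and $y \in X_{v^*}$, this gives $\theta_v(y) = \theta_t(y) = x$. Since $x \in X_{u^*} = {\mathrm{dom}}(\theta_u)$, we conclude that $y \in {\mathrm{dom}}(\theta_u \theta_v) = {\mathrm{dom}}(\theta_{uv}) = X_{w^*}$. Thus $w$ witnesses $(st, y) \sim (pq, y)$, completing the argument.
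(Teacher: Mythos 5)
Your proposal is correct and follows essentially the same route as the paper: the first claim via $\mathrm{dom}(\theta_s\theta_t)=\mathrm{dom}(\theta_{st})=X_{(st)^*}$, and the second via the common lower bound $uv$ of $st$ and $pq$ with $y\in X_{(uv)^*}$. The only cosmetic difference is that you re-derive inline the fact that $\theta_v(y)=\theta_t(y)$ (which the paper cites as Lemma~\ref{lem:l2}) and exhibit $uv$ directly as a witness for $(st,y)\sim(pq,y)$ rather than passing through transitivity of $\sim$.
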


\begin{proof}
By assumption, $x\in X_{s^*}$, so that $x\in {\mathrm{dom}}(\theta_s)$. It follows that $y\in {\mathrm{dom}}(\theta_s\theta_t) = {\mathrm{dom}}(\theta_{st}) = X_{(st)^*}$, so that $(st,y)\in \Omega$. For the second claim, let $u\leq s,p$ and $v\leq t,q$ be such that $x\in X_{u^*}$ and $y\in X_{v^*}$. Then $x\in {\mathrm{dom}}(\theta_u)$ and $y\in {\mathrm{dom}}(\theta_v)$. Since $(v,y)\sim (t,y)$, Lemma \ref{lem:l2} implies that $\theta_v(y) = \theta_t(y)=x$. It follows that $y \in {\mathrm{dom}}(\theta_u\theta_v) = {\mathrm{dom}}(\theta_{uv}) = X_{(uv)^*}$. In addition, since $u\leq s$ and $v\leq t$, we have $uv\leq sv \leq st$ by Lemma \ref{lem:1}, so that $(uv,y)\sim (st,y)$. By symmetry, we also have  $(uv,y)\sim (pq,y)$, and the statement follows.
\end{proof}

Let  $S\ltimes_{\theta} X  = \Omega/\sim$ be the set of germs.\footnote{Since it is always clear from the context if $S$ is assumed to be a monoid or a restriction semigroup, there is no danger of confusion between the category of germs $S\ltimes X$ of a an action of a restriction semigroup $S$ and the partial transformation category $S\ltimes X$ of a partial action of a monoid $S$.} We abbreviate  $S\ltimes_{\theta} X$  to  $S\ltimes X$ if $\theta$ is understood. We set
$$\mC = S\ltimes_{\theta} X
$$
and define the maps $\dom,\ran \colon \mC \to \mC$  by
\begin{equation}\label{eq:oper1}
\dom([s,x]) = [s^*,x], \qquad \ran([s,x]) = [e, \theta_s(x)],
\end{equation}
where $e\in P(S)$ is such that $\theta_s(x)\in X_e$. We aim to show that $\mC$ is a category. The set $\mC^{(0)}$ then consist of all germs of the form $[x,e]$ with $e\in P(S)$.
Furthermore, the set of composable pairs is the set
$$
\mC^{(2)} = \{([s,x], [t,y]) \in \mC\times \mC
\colon x=\theta_t(y)\}.$$
For $([s,x], [t,y]) \in \mC^{(2)}$ we set
\begin{equation}\label{eq:def_mult}
\pr([s,x], [t, y]) = [st,y],
\end{equation}
 which is well defined by Proposition \ref{prop:p3}.
\begin{lemma}
$\mC$ is a category. 
\end{lemma}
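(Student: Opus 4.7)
The plan is to verify well-definedness of $\dom$ and $\ran$ as specified in \eqref{eq:oper1}, identify $\mathcal{C}^{(0)}$, and then check the five axioms of Definition~\ref{def:cat}; well-definedness of the composition \eqref{eq:def_mult} has already been settled by Proposition~\ref{prop:p3}. For $\dom$, if $(s,x)\sim(t,x)$ via some witness $u\leq s,t$ with $x\in X_{u^*}$, then Lemma~\ref{lem:1}(1) gives $u^*\leq s^*,t^*$, and since $u^*$ is a projection, the same $u^*$ witnesses $(s^*,x)\sim(t^*,x)$. For $\ran$, Lemma~\ref{lem:l2} yields $\theta_s(x)=\theta_t(x)$ whenever $(s,x)\sim(t,x)$, while independence from the choice of a projection $e$ with $\theta_s(x)\in X_e$ follows from the fact that for any two such $e,f$, the meet $ef\leq e,f$ satisfies $\theta_s(x)\in X_{ef}=X_e\cap X_f$. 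This simultaneously identifies $\mathcal{C}^{(0)}=\dom(\mathcal{C})=\ran(\mathcal{C})$ as precisely the set of germs $[e,x]$ with $e\in P(S)$ and $x\in X_e$.

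Axioms (1)--(3) are then short computations. If $[e,x]\in\mathcal{C}^{(0)}$, both $\dom([e,x])=[e^*,x]=[e,x]$ and $\ran([e,x])=[e,\theta_e(x)]=[e,x]$ hold, using $e^*=e$ and $\theta_e=\mathrm{id}_{X_e}$. For (2), combining \eqref{eq:axioms_star} with \eqref{eq:axioms_star_restr} and \eqref{eq:rule1r} yields $(st)^*=(s^*t)^*=(t(s^*t)^*)^*=t^*(s^*t)^*\leq t^*$, so the witness $u=(st)^*$ establishes $[(st)^*,y]=[t^*,y]$, i.e., $\dom([st,y])=\dom([t,y])$. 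For (3), since $\theta_{st}=\theta_s\theta_t$ one has $\theta_{st}(y)=\theta_s(\theta_t(y))=\theta_s(x)$, and choosing a common witnessing projection gives $\ran([st,y])=\ran([s,x])$.

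For axiom~(4), the composability conditions on both sides of $([s,x][t,y])[v,z]=[s,x]([t,y][v,z])$ reduce to $x=\theta_t(y)$ and $y=\theta_v(z)$ (invoking $\theta_{tv}=\theta_t\theta_v$), and both sides evaluate to $[stv,z]$ by associativity in $S$; membership $(stv,z)\in\Omega$ follows because $z$ lies in the domain of $\theta_s\theta_t\theta_v=\theta_{stv}$. For axiom~(5), applying \eqref{eq:axioms_star_restr} with $x=e\in P(S)$ gives $es=s(es)^*$, so $es\leq s$ and $x\in X_{(es)^*}$ since $\theta_e\theta_s(x)=\theta_s(x)$ is defined; the witness $u=es$ then establishes $[es,x]=[s,x]$, yielding $\ran([s,x])\cdot[s,x]=[s,x]$, while $ss^*=s$ together with $\theta_{s^*}=\mathrm{id}_{X_{s^*}}$ immediately gives $[s,x]\cdot\dom([s,x])=[s,x]$.

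I expect the only mild obstacle to be the bookkeeping around axiom~(2), namely the derivation of $(st)^*\leq t^*$ inside the restriction semigroup axioms; everything else reduces to routine manipulation with the identities of restriction semigroups and the morphism property of $\theta$.
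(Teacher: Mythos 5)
Your proof is correct and takes essentially the same route as the paper, which simply notes that the verification of the axioms of Definition~\ref{def:cat} is routine and writes out only the left unit law $\ran(a)a=a$ (using the same witness $es\leq s$ with $x\in X_{(es)^*}$ that you use). Your additional details — the well-definedness of $\dom$ and $\ran$ on germs, and the derivation $(st)^*=(s^*t)^*=t^*(s^*t)^*\leq t^*$ needed for the domain-of-a-product axiom — are all sound and just make explicit what the paper leaves to the reader.
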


\begin{proof} The proof amounts to routine verification of the axioms of Definition \ref{def:cat}. We check that $\ran(a)a=a$ holds for all $a\in \mC$. Suppose that $a=[s,x]$. Then $\ran(a) = [e, \theta_s(x)]$, where $\theta_s(x)\in X_e$. Then $\ran(a)a = [e,\theta_s(x)][s,x] = [es,x]$. Since $es\leq s$ and $x\in X_{(es)^*}$, we have $[es,x] = [s,x]$, so that $\ran(a)a=a$. The remaining verifications can be carried out similarly.
\end{proof}

We now topologize the category $\mC$. For $s\in S$ and an open subset $U\subseteq X_{s^*}$ we put
$$
(s,U) = \{[s,x]\in \mC \colon x\in U\}.
$$

The following is an analogue of \cite[Proposition 4.14]{Exel08}.

\begin{proposition}
Let $s,t\in S$, and let $U,V$ be open sets such that $U\subseteq X_{s^*}$ and $V\subseteq X_{t^*}$. If $[u,x] \in (s,U)\cap (t,V)$, there are $v\leq u$ and an open set $W\subseteq X_{v^*}$ such that $[u,x]\in (v, W) \subseteq (s, U) \cap (t,V)$. 
\end{proposition}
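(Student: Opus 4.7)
The plan is to unpack the two germ-equalities forced by $[u,x]\in (s,U)\cap (t,V)$, then use compatibility to build a common lower bound $v\leq u,s,t$, and finally construct the neighbourhood $W$ as an intersection of the data we already have.

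First, I would observe that if $[u,x]\in (s,U)$ then the representative $(u,x)$ is equivalent to some $(s,y)$ with $y\in U$; by definition of $\sim$ this forces $y=x$, hence $x\in U$, and yields an element $p\in S$ with $p\leq u,s$ and $x\in X_{p^*}$. Symmetrically, $[u,x]\in (t,V)$ produces $q\leq u,t$ with $x\in V\cap X_{q^*}$. Since $p$ and $q$ have the common upper bound $u$, they are compatible; by Lemma~\ref{lem:compatibility} the meet $v=p\wedge q$ exists and equals $pq^*=qp^*$, with $v^*=p^*q^*$. Thus $v\leq u$, $v\leq s$, $v\leq t$, and $x\in X_{p^*}\cap X_{q^*}=X_{p^*q^*}=X_{v^*}$.

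Next I would set
\[
W \;=\; U\cap V\cap X_{v^*},
\]
which is open (each factor is open in $X$, using that ${\rm dom}(\theta_{v^*})=X_{v^*}$ is open) and contains $x$ by the previous paragraph. Since $W\subseteq X_{v^*}$, the set $(v,W)$ is well defined and $[u,x]=[v,x]\in (v,W)$ because $v\leq u$ and $x\in X_{v^*}$. It remains to check the inclusion $(v,W)\subseteq (s,U)\cap (t,V)$. For any $y\in W$, since $v\leq s$ and $y\in X_{v^*}$, the element $v$ itself witnesses $(v,y)\sim (s,y)$, so $[v,y]=[s,y]\in (s,U)$ as $y\in U$; identically $[v,y]=[t,y]\in (t,V)$ as $y\in V$.

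No step here seems to be a real obstacle — the only subtlety is making sure that passing to the meet $v$ does not lose $x$ from the domain, which is handled by the identity $v^*=p^*q^*$ from Lemma~\ref{lem:compatibility}. Everything else is a direct unwinding of the definition of $\sim$ and of the basic sets $(s,U)$.
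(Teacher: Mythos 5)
Your proof is correct and follows essentially the same route as the paper: extract lower bounds $p\leq u,s$ and $q\leq u,t$ from the two germ equalities, pass to a common lower bound $v$ with $x\in X_{v^*}$ (the paper does this implicitly via compatibility, you cite Lemma~\ref{lem:compatibility} explicitly), and take $W=U\cap V\cap X_{v^*}$. No gaps.
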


\begin{proof}
The assumption $[u,x] \in (s,U)\cap (t,V)$ implies that $[u,x] = [s,y] = [t,z]$ for some $x,y,z\in X$. The definition of equality of germs implies that $x=y=z$, meaning that $x\in U\cap V$ and $[u,x] = [s,x] = [t,x]$. Furthermore, there are $v_1\leq u,s$ and $v_2\leq u,t$ such that $x\in X_{v_i^*}$ for $i=1,2$ and $[u,x] = [v_1,x] = [v_2,x]$. It follows that there is $v\leq v_1,v_2 \leq s,t$ such that $x\in X_{v^*}$ and $[u,x] = [v,x]$. Thus,
$[s,x] = [t,x] = [v,x]$. We set $W= U \cap V \cap X_{v^*}$. Since $x\in W$, we have $[v,x] \in (v,W)$. To prove that $(v,W) \subseteq (s,U) \cap (t,V)$, let $[v,y]$ be an arbitrary element of $(v,W)$. Then $y\in W\subseteq U$. Since $v\leq s$, we have $[v,y] = [s,y]$, which implies that $[v,y]\in (s,U)$. A symmetric argument gives $[v,y]\in (t,V)$. Therefore, $[v,y]\in (s,U)\cap (t,V)$ and, consequently, $(v,W)\subseteq (s,U)\cap (t,V)$.
\end{proof}

It follows that the collection of all $(s,U)$, where $s\in S$ and $U\subseteq X_{s^*}$ is an open set, forms a basis of a topology on $\mC$. From now on, we will consider $\mC$ as a topological space equipped with this topology. 

\begin{lemma}\label{lem:homeo}
The germs of the form $[e,x]$, where $e\in P(S)$, are determined by their second component $x$ and do not depend on $e$. Moreover, the map 
$$\mC^{(0)}\to X, \quad [e,x]\mapsto x$$
is a homeomorphism.
\end{lemma}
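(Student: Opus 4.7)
The plan is to first establish that the germ $[e,x]$ with $e\in P(S)$ depends only on $x$, then define the candidate map $\pi\colon \mC^{(0)}\to X$ by $[e,x]\mapsto x$, verify it is a bijection, and finally analyze its topological properties. For the first point, if $e,f\in P(S)$ with $x\in X_e\cap X_f$, then $ef\in P(S)$ satisfies $ef\leq e$, $ef\leq f$, and $x\in X_e\cap X_f = X_{ef} = X_{(ef)^*}$, so $ef$ witnesses $(e,x)\sim (f,x)$ and hence $[e,x]=[f,x]$. Moreover, every unit of $\mC$ is of the form $[e,x]$ with $e\in P(S)$, since by \eqref{eq:oper1} both $\dom$ and $\ran$ output germs of this shape. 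Therefore $\pi$ is well defined, surjective by the non-degeneracy assumption \eqref{eq:non_deg}, and injective because $\pi([e,x])=\pi([f,y])$ forces $x=y$ and then $[e,x]=[f,x]=[f,y]$ by the first observation.

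Continuity of $\pi$ follows directly from the basis description of the topology on $\mC$: given an open set $V\subseteq X$ and $[e,x]\in \pi^{-1}(V)$, the basic open neighbourhood $(e,V\cap X_e)\subseteq \mC$ intersects $\mC^{(0)}$ in a neighbourhood of $[e,x]$ that is sent into $V$ by $\pi$.

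The crux is openness of $\pi$, for which it suffices to show that $\pi\bigl((s,U)\cap \mC^{(0)}\bigr)$ is open in $X$ for every basic open $(s,U)$. My plan is to first establish the characterization: a germ $[s,x]$ lies in $\mC^{(0)}$ if and only if there exists $u\in P(S)$ with $u\leq s$ and $x\in X_u$. The backward implication is immediate, as $u$ itself witnesses $(s,x)\sim (u,x)$, and $[u,x]$ is a unit. For the forward implication, if $[s,x]=[f,x]$ for some $f\in P(S)$, then the germ equivalence supplies $u\leq s,f$ with $x\in X_{u^*}$; since $u\leq f$ means $u=fu^*$ and $fu^*$ is a product of two projections (hence itself a projection by \eqref{eq:rule1r} and the identity $f^*=f$), we conclude $u\in P(S)$ and $x\in X_u$. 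Consequently,
\[
\pi\bigl((s,U)\cap \mC^{(0)}\bigr) \;=\; \bigcup_{u\in P(S),\, u\leq s} (U\cap X_u),
\]
which is open as a union of open subsets of $X$. The main obstacle is this openness step, and its resolution rests on the algebraic lemma that any lower bound of a projection is itself a projection, together with the correct reading of the germ equivalence relation.
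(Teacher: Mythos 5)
Your proof is correct, and its skeleton (well-definedness of the germ $[e,x]$ via the witness $ef$, bijectivity from non-degeneracy, continuity via the basic sets $(e,V\cap X_e)$) coincides with the paper's. The one place where you genuinely diverge is the openness step. The paper simply observes that $\psi(e,U)=U$ for the basic sets $(e,U)$ with $e\in P(S)$, which implicitly relies on the fact that these sets form a basis of the \emph{relative} topology on $\mC^{(0)}$ --- a point it does not spell out, since a priori the relative topology is generated by all $(s,U)\cap\mC^{(0)}$ with $s\in S$ arbitrary. You instead work directly with these general sets and prove the characterization that $[s,x]\in\mC^{(0)}$ if and only if some projection $u\leq s$ has $x\in X_u$ (using that a lower bound $u=fu^*$ of a projection $f$ is again a projection), which yields $\pi\bigl((s,U)\cap\mC^{(0)}\bigr)=\bigcup_{u\in P(S),\,u\leq s}(U\cap X_u)$, manifestly open. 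Your route is slightly longer but makes explicit exactly the point the paper's terse argument leaves to the reader; the same characterization also shows that the paper's preferred basic sets $(e,U)$, $e\in P(S)$, do form a basis of $\mC^{(0)}$, so the two arguments are easily reconciled.
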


\begin{proof}
If $x\in X$ and $e,f\in P(S)$ are such that $x\in X_e\cap X_f$, then $x\in X_{ef}$, so that $[e,x] = [f,x]$ since $ef \leq e,f$.
This proves the first claim and shows that the map $\psi\colon [e,x]\mapsto x$ is a bijection. 
Since $\psi(e,U) = U$, where $U\subseteq X_e$ is a basic open set, $\psi$ is an open map.
Let $U$ be an open subset of $X$. Since the action is non-degenerate, for each $x\in U$ there is $e_x\in P(S)$ such that $x \in X_{e_x}$.
Setting $U_x = U \cap X_{e_x}$, we have that $U = \cup_{x\in U} U_x$. Since $\psi^{-1}(U_x) = (e_x, U_x)$, it follows that the map $\psi$ is continuous. Hence, it is a homeomorphism.
\end{proof}

The following analogue of \cite[Proposition 4.18]{Exel08}.

\begin{proposition} \label{prop:a21} Let $s\in S$ and $U\subseteq X_{s^*}$ is an open set. Then $(s,U)$ is a slice homeomorphic to $U$.
\end{proposition}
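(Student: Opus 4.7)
The plan is to verify the two defining requirements of a slice for $(s,U)$ by working with the second-coordinate projection $\pi\colon (s,U)\to U$, $[s,x]\mapsto x$, and then transporting the result to $\dom|_{(s,U)}$ via the homeomorphism $\psi\colon \mC^{(0)}\to X$ of Lemma \ref{lem:homeo}. I would first note that $(s,U)$ is a basic open set of $\mC$ by construction, and that $\pi$ is well defined and bijective: well-definedness follows because the equivalence $\sim$ forces equal second coordinates on equivalent pairs, which also gives injectivity, while surjectivity is immediate since any $x\in U\subseteq X_{s^*}$ yields $[s,x]\in (s,U)$.

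Continuity of $\pi$ is quick: for any open $V\subseteq X$, one has $\pi^{-1}(V)=(s,V\cap U)$, which is a basic open set of $\mC$. The main obstacle is openness of $\pi$, which I would handle by showing that the sets $(s,V)$ with $V\subseteq U$ open form a basis of the subspace topology on $(s,U)$. Given an open neighbourhood $N$ of $[s,x]\in (s,U)$, choose a basic open set $(t,W)\subseteq N$ containing $[s,x]$. The equality $[s,x]=[t,x]$ supplies $v\leq s,t$ with $x\in X_{v^*}$, so that $V:=W\cap U\cap X_{v^*}$ is an open set with $x\in V\subseteq U$ and $(s,V)\subseteq (t,W)\subseteq N$, since for every $y\in V$ we have $[s,y]=[v,y]=[t,y]\in (t,W)$. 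As $\pi((s,V))=V$, openness of $\pi$ follows, and $\pi$ is a homeomorphism.

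To conclude, observe that $\pi=\psi\circ \dom|_{(s,U)}$, so $\dom|_{(s,U)}$ is a homeomorphism of $(s,U)$ onto $\psi^{-1}(U)$, which is open in $\mC^{(0)}$. Therefore $(s,U)$ is a slice, and it is homeomorphic to $U$ via $\pi$.
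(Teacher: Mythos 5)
Your proof is correct and follows the same overall strategy as the paper's: both reduce the statement to showing that the second-coordinate map $[s,x]\mapsto x$ on $(s,U)$ is a homeomorphism and then identify it with $\dom|_{(s,U)}$ via Lemma \ref{lem:homeo}. The difference is in how much is actually proved. The paper's proof asserts that the inverse map is continuous without argument, and it justifies continuity of $\dom|_{(s,U)}$ by appealing to the continuity of $\dom$, which is only established in the subsequent Proposition \ref{prop:etale} (a benign but real ordering wrinkle). You instead prove continuity of $\pi$ directly from the identity $\pi^{-1}(V)=(s,V\cap U)$, and you prove openness by showing that the sets $(s,V)$ with $V\subseteq U$ open form a basis of the subspace topology on $(s,U)$ --- essentially re-running the argument of the basis proposition preceding \ref{prop:a21} inside the slice. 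This makes your version self-contained and fills in the one step the paper leaves implicit; the only cost is a slightly longer argument. All the individual steps check out: well-definedness and injectivity of $\pi$ follow from the fact that equivalent pairs share their second coordinate, the element $v\leq s,t$ with $x\in X_{v^*}$ gives $[s,y]=[v,y]=[t,y]$ for all $y\in V=W\cap U\cap X_{v^*}$ (using $V\subseteq X_{s^*}\cap X_{t^*}\cap X_{v^*}$), and the final factorization $\pi=\psi\circ\dom|_{(s,U)}$ correctly transports the conclusion to $\dom$.
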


\begin{proof} Bearing in mind the definition of $\dom$, it follows that $(s,U)$ is a slice. We have that $\dom(s, X_{s^*}) = (s^*, X_{s^*})$, which, in view of Lemma \ref{lem:homeo}, is homeomorphic to $X_{s^*}$. If suffices, therefore, to prove that the map 
$$\dom|_{(s, U)} \colon (s, U) \to U,\quad [s,x]\mapsto x
$$
is a homeomorphism with respect to relative topologies on $(s, U)$ and $U$. This map is clearly a bijection. It is continuous because $\dom$ is continuous and $X_{s^*}$ is open in $X$.
Since its inverse map is also continuous, it is a homeomorphism.
\end{proof}

We now prove an analogue of \cite[Proposition 4.14]{Exel08}.

\begin{proposition} \label{prop:etale}$\mC$ is an \'etale topological category.
\end{proposition}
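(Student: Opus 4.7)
My plan is to verify the three defining conditions in the definition of an étale topological category in turn: (a) continuity of $\dom$, $\ran$, and $\pr$ (so $\mC$ is a topological category), (b) that $\mC^{(0)}$ is locally compact Hausdorff in the relative topology, and (c) that $\dom$ is a local homeomorphism. Condition (b) is essentially immediate: Lemma~\ref{lem:homeo} identifies $\mC^{(0)}$ with $X$ as topological spaces, and $X$ is locally compact Hausdorff by hypothesis. Condition (c) is also quick from what is already in place: Proposition~\ref{prop:a21} says each basic open set $(s,U)$ is a slice, meaning $\dom$ restricts to a homeomorphism from $(s,U)$ onto an open subset of $\mC^{(0)}$; since such sets cover $\mC$, $\dom$ is a local homeomorphism.

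The continuity statements require a bit more work, but nothing that is conceptually hard. For $\dom$, note that on each basic open $(s,U)$ the restriction is already a homeomorphism onto an open set of $\mC^{(0)}$; since the $(s,U)$ form a basis, continuity of $\dom$ globally follows. For $\ran$, I would argue directly: given an open $V\subseteq X$ (identified with an open subset of $\mC^{(0)}$ via Lemma~\ref{lem:homeo}) and a germ $[s,x]$ with $\theta_s(x)\in V$, the set
\[
(s,\, X_{s^*}\cap\theta_s^{-1}(V))
\]
is a basic open neighbourhood of $[s,x]$ mapped into $V$ by $\ran$, using that $\theta_s$ is continuous on its open domain $X_{s^*}$.

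The main work is continuity of composition $\pr \colon \mC^{(2)}\to \mC$. Take $([s,x],[t,y])\in \mC^{(2)}$ with $\theta_t(y)=x$, and suppose $\pr([s,x],[t,y])=[st,y]$ lies in a basic open $(u,W)$. By definition of germ equality this means $y\in W$ and there exists $v\leq st,u$ with $y\in X_{v^*}$. I then set
\[
V = W\cap X_{v^*}, \qquad U = X_{s^*},
\]
and note that $V$ is open, contains $y$, and is contained in $X_{t^*}$ since $v^*\leq (st)^*\leq t^*$. The set $\bigl((s,U)\times (t,V)\bigr)\cap \mC^{(2)}$ is a neighbourhood of $([s,x],[t,y])$ in the relative product topology, and for any composable pair $([s,x'],[t,y'])$ in it we have $y'\in V\subseteq X_{v^*}\cap W$, so since $v\leq st, u$ we obtain $[st,y']=[v,y']=[u,y']\in (u,W)$. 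Hence $\pr$ is continuous.

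The step I expect to require the most care is this last one, where I need to use the crucial fact that basic opens of the form $(s,U)$ are indexed by semigroup elements $s$, so a priori the product of two basic opens need not be contained in a single $(u,W)$; the trick is to use the common lower bound $v$ furnished by germ equivalence to shrink $V$ into $X_{v^*}$, allowing one to replace $st$ with $u$ in the germ representation. Once this is done, putting together (a), (b), (c) gives Proposition~\ref{prop:etale}.
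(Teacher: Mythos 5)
Your proposal is correct and follows essentially the same route as the paper: continuity of $\dom$ and $\ran$ via explicit basic neighbourhoods, continuity of $\pr$ by shrinking the open set in the second coordinate using the common lower bound furnished by germ equality (the paper phrases this via a projection $e$ with $uve=se$, you via the element $v\leq st,u$ directly, but it is the same trick), and the local homeomorphism property from Proposition~\ref{prop:a21}. No gaps.
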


\begin{proof} 
We first show that the  maps $\dom$ and  $\ran$ are continuous. Let $e\in P(S)$ and $U\subseteq X_e$ be an open set. To show that $\dom^{-1}(e,U)$ is open, we observe that
$$
[s,x]\in \dom^{-1}(e,U) \quad \Leftrightarrow \quad [s^*,x] \in (e,U) \quad \Leftrightarrow \quad x\in U\cap X_{s^*}.
$$
Therefore, $(s, U\cap X_{s^*})$ is a neighbourhood of $[s,x]$ contained in  $\dom^{-1}(e,U)$.
Consequently, $\dom$ is continuous. To show that $\ran$ is continuous, we observe that
$$
[s,x]\in \ran^{-1}(e,U) \quad \Leftrightarrow \quad [f, \theta_s(x)] \in (e,U) \quad \Leftrightarrow  \quad x\in \theta_s^{-1}(U)\cap X_{s^*},
$$
where $f$ above is any projection such that $\theta_s(s)\in X_f$. Thus $(s, \theta_s^{-1}(U)\cap X_{s^*})$ is a neighbourhood of $[s,x]$ contained in $\ran^{-1}(e,U)$.
It follows that $\ran$ is continuous, too.

We finally show that $\pr$ is continuous. 
Suppose that $$\pr([u,\theta_v(x)], [v,x]) \in (s, U) \quad  \text{ where } \quad U\subseteq X_{s^*} \,\, \text{ is open}.$$
Then $[uv,x]\in (s,U)$, so that $[uv,x] = [s,x]$. The definition of the equality of germs implies that there is $e\in P(S)$ such that $uve = se$, meaning that $[uv,x] = [uve, x]=[se,x] = [s,x]$.
Then $x\in X_{(uve)^*}\cap U$. Since $(uve)^* = (u^*ve)^* \leq (ve)^*$, it follows that $X_{(uve)^*}\subseteq X_{(ve)^*}$.
We have that $A=(u, X_{u^*})$ is a neighbourhood of $[u, \theta_v(x)]$, $B=(ve, X_{(uve)^*}\cap U)$ is a neighbourhood of $[v,x] = [ve,x]$, and
$$\pr((A \times B) \cap \mC^{(2)}) \subseteq (uve, X_{(uve)^*}\cap U) = (se, X_{(se)^*}\cap U).$$
If $[se,x] \in (se, X_{(se)^*}\cap U)$ then $x\in X_{(se)^*} \subseteq X_{s^*}$ so that $[se,x] = [s,x] \in (s,U)$. It follows that 
$$\pr((A \times B) \cap \mC^{(2)}) \subseteq (s,U),$$
which implies that the map $\pr$ is continuous.

Finally, if $[s,x]\in \mC$, the slice $(s, X_{s^*})$ is its neighbourhood. Proposition \ref{prop:a21} implies that $\dom|_{(s, X_{s^*})}\colon (s,X^*) \to \dom(s, X_{s^*})$ is a homeomorphism. 
\end{proof}

We have the following immediate corollary.

\begin{corollary} Suppose that $\theta\colon S\to \PT(X)$ is an ample action. Then $\mC = S\ltimes_{\theta} X$ is an ample category. 
\end{corollary}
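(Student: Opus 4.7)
The plan is very short: invoke the characterization of ample categories among étale ones that was recalled just before Proposition \ref{prop:ample}, namely that an étale category $\mC$ is ample if and only if $\mC^{(0)}$ is a locally compact Stone space (from \cite[Proposition 7.3]{Kud25}). Proposition \ref{prop:etale} already gives that $\mC = S \ltimes_\theta X$ is étale, so it only remains to identify its unit space.

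By Lemma \ref{lem:homeo}, the map $[e,x] \mapsto x$ is a homeomorphism from $\mC^{(0)}$ onto $X$, and the ampleness hypothesis on $\theta$ stipulates precisely that $X$ is a locally compact Stone space. Hence $\mC^{(0)}$ is a locally compact Stone space, and the cited characterization immediately yields that $\mC$ is ample.

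For readers who prefer a direct verification rather than an appeal to \cite[Proposition 7.3]{Kud25}, one can argue as follows. By Proposition \ref{prop:a21}, every basic open set $(s, U)$ with $U \subseteq X_{s^*}$ open is a slice that is homeomorphic to $U$. Since $X_{s^*} = \mathrm{dom}(\theta_s) \in {\mathcal B}(X)$ is compact-open in the locally compact Stone space $X$, the compact-open subsets of $X_{s^*}$ form a basis for the subspace topology on $X_{s^*}$. Restricting the homeomorphism of Proposition \ref{prop:a21} to these compact-open subsets shows that the sets $(s, U)$ with $U$ compact-open in $X_{s^*}$ form a basis of the topology on $\mC$ consisting of compact slices. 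This is exactly the condition that $\mC$ be ample.

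There is no serious obstacle here: the work has been done in Proposition \ref{prop:etale}, Proposition \ref{prop:a21} and Lemma \ref{lem:homeo}, and the ampleness of the action feeds in precisely where needed, namely to guarantee both that the unit space is a locally compact Stone space and that each $X_{s^*}$ is compact-open.
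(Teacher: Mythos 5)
Your proof is correct and matches what the paper leaves implicit by calling this an ``immediate corollary'' of Proposition \ref{prop:etale}: \'etaleness together with the identification $\mC^{(0)}\cong X$ from Lemma \ref{lem:homeo}, with $X$ a locally compact Stone space, gives ampleness via \cite[Proposition 7.3]{Kud25}, and your direct basis argument via Proposition \ref{prop:a21} is an equally valid alternative. One small remark: for this corollary only the Stone-space hypothesis on $X$ is actually used (your direct argument needs only that compact-open sets form a basis of $X$ and that each $X_{s^*}$ is open); the compact-openness of the sets $X_{s^*}$ is what makes the slices $(s,X_{s^*})$ themselves compact, which is needed later in Proposition \ref{prop:morphism} rather than here.
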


\begin{definition} (The category of germs of an action of a restriction semigroup)
The constructed \'etale category  $\mC=S\ltimes_{\theta} X$  will be called the {\em category of germs of the action} $\theta$ {\em of} $S$ {\em on} $X$.
\end{definition}

We now provide a morphism from $S$ to the restriction semigroup $\mC^{op}$.

\begin{proposition} \label{prop:morphism} Let $\theta\colon S\to \PT(X)$ be an action of $S$ on $X$. Then the map 
$$\Theta\colon S\to \mC^{op}, \quad s\mapsto (s, X_{s^*})
$$ is a morphism of restriction semigroups.
If the action is ample then the slice $(s, X_{s^*})$ is compact, so that the image of $\Theta$ is contained in the restriction semigroup $\mC^a$.
\end{proposition}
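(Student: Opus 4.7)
The plan is to verify the four things packaged into the statement: (i) each $\Theta(s)$ really is a slice, so that $\Theta$ takes values in $\mC^{op}$; (ii) $\Theta$ preserves products; (iii) $\Theta$ preserves the unary operation $^*$; and (iv) under the ample hypothesis, every $\Theta(s)$ is compact. Steps (i) and (iv) will be essentially free: Proposition~\ref{prop:a21} already tells me that $(s, X_{s^*})$ is a slice homeomorphic to $X_{s^*}$, and if the action is ample then by Definition~\ref{def:act_restr} we have $X_{s^*}\in \mathcal{B}(X)$, i.e., $X_{s^*}$ is compact-open, so the homeomorphism transports compactness to $\Theta(s)$.

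For (iii), I would just unfold definitions: $\Theta(s^*) = (s^*, X_{(s^*)^*}) = (s^*, X_{s^*})$ since $P(S)$ is idempotent for $^*$, while $\Theta(s)^* = \dom((s, X_{s^*})) = \{[s^*,x]\colon x\in X_{s^*}\} = (s^*, X_{s^*})$, directly from \eqref{eq:oper1}. So this is a one-line check.

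The main substantive step is (ii): showing $\Theta(s)\Theta(t) = \Theta(st)$ as subsets, where the product on the left is the subset product \eqref{eq:subs_mult} in $\mC$. Using \eqref{eq:def_mult}, the left-hand side is
\[
\{[s,x][t,y] \colon x\in X_{s^*},\ y\in X_{t^*},\ x=\theta_t(y)\} = \{[st,y] \colon y\in X_{t^*},\ \theta_t(y)\in X_{s^*}\}.
\]
The condition $y\in X_{t^*}$ together with $\theta_t(y)\in X_{s^*}$ is exactly the condition $y\in {\mathrm{dom}}(\theta_s\theta_t)$. Since $\theta$ is a $(\cdot\,,^*)$-morphism, we have $\theta_s\theta_t = \theta_{st}$, so ${\mathrm{dom}}(\theta_s\theta_t) = {\mathrm{dom}}(\theta_{st}) = X_{(st)^*}$ by \eqref{eq:e1}. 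Hence the set of such $y$ is precisely $X_{(st)^*}$, and the product equals $\{[st,y] \colon y\in X_{(st)^*}\} = (st, X_{(st)^*}) = \Theta(st)$.

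The only mild subtlety — and the place where care is needed — is confirming that the condition on $y$ collapses cleanly to $y\in X_{(st)^*}$, since a priori one direction (``$y\in X_{(st)^*}$ implies $\theta_t(y)\in X_{s^*}$'') needs both equalities in ${\mathrm{dom}}(\theta_{st}) = {\mathrm{dom}}(\theta_s\theta_t)$. This is furnished by $\theta$ being a morphism of restriction semigroups rather than merely of semigroups; once that is noted, the argument is a short computation rather than a genuine obstacle. Assembling (i)--(iv), the proposition follows.
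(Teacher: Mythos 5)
Your proposal is correct and follows essentially the same route as the paper: the product identity $(s,X_{s^*})(t,X_{t^*})=(st,X_{(st)^*})$ is established via ${\mathrm{dom}}(\theta_s\theta_t)={\mathrm{dom}}(\theta_{st})=X_{(st)^*}$, the $^*$-preservation is the same one-line computation, and compactness in the ample case is transported through the homeomorphism of Proposition~\ref{prop:a21} exactly as in the paper. The only cosmetic difference is that you phrase the product check as a single set-equality rather than two separate inclusions.
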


\begin{proof}
We first show that 
\begin{equation}\label{eq:prod25a}
(s, X_{s^*})(t,X_{t^*}) = (st, X_{(st)^*}).
\end{equation}
Suppose that $[s,y]\in (s, X_{s^*})$ and $[t, x]\in (t, X_{t^*})$ are composable. Then $y=\theta_t(x)$ where $x\in X_{t^*} = {\mathrm{dom}}(\theta_t)$ and
$y\in {\mathrm{dom}}(\theta_s)$. It follows that $x\in {\mathrm{dom}}(\theta_s\theta_t) = {\mathrm{dom}}(\theta_{st})$. Hence $[s,y][t,x] = [st,x] \in (st, X_{(st)^*})$.
Conversely, suppose that $[st, u]\in (st, X_{(st)^*}) = \mathrm{dom}(\theta_s\theta_t)$. Then $u\in {\mathrm{dom}}(\theta_t)$ and $\theta_t(u) \in {\mathrm{dom}}(\theta_s)$,
which shows that $$[st,u] = [s,\theta_t(u)][t,u]\in (s, X_{s^*})(t,X_{t^*}).$$
Furthermore, 
\begin{align*} \Theta(s)^*=(s,X_{s^*})^* & = \{\dom([s,x])\colon [s,x] \in (s,X_{s^*})\}\\
&  = \{[s^*,x]\colon [s,x] \in (s,X_{s^*})\} \\
&= (s^*, X_{s^*}) = \Theta(s^*), 
\end{align*} 
so that the map $\Theta$ preserves the $^*$  operation. 

Suppose that the action $\theta$ is ample. Since $(s, X_{s^*})$ is homeomorphic to $X_{s^*}$ which is compact, $(s, X_{s^*})\in \mC^a$, as desired.
\end{proof}

%%%
\subsection{Actions of range and birestriction semigroups}
Recall that $\PT(X)$ is the restriction semigroup of all continuous partial self-maps $\varphi$ of $X$ such that ${\mathrm{dom}}(\varphi)$ is open,
and the action of a restriction semigroup $S$ is defined as a $(\cdot\,,^*)$-morphism $S\to \PT(X)$ (see Definition \ref{def:act_restr}).
We set $$\PT_{o}(X) = \{\psi\in \PT(X) \colon \psi \text{ is open}\}.$$
Then $\PT_{o}(X)$ is a $(\cdot\,,^*)$-subalgebra of $\PT(X)$ and, moreover, it is closed with respect to the operation $\psi \mapsto {\mathrm{id}}_{{\mathrm{ran}}(\psi)}$. 
Hence it is a $(\cdot\,,^*,^+)$-subalgebra of the range semigroup $PT(X)$ of all partial self-maps of $X$, with
the unary operations $^*$ and $^+$ given by
$$\psi^* = {\mathrm{id}}_{{\mathrm{dom}}(\psi)} \quad \text{ and } \quad \psi^+ = {\mathrm{id}}_{{\mathrm{ran}}(\psi)}.$$
Since $PT(X)$ is a range monoid, so is $\PT_{o}(X)$.
Furthermore, 
$$\I(X) = \{\psi \in \colon \PT_{o}(X)\colon \psi \text{ is a homeomorphism from } {\mathrm{dom}}(\psi) \text{ to } {\mathrm{ran}}(\psi)\}
$$
is an inverse monoid, where $\psi^{-1}$ is the inverse homeomorphism to $\psi$. We say that elements of $\I(X)$ are {\em partial homeomorphisms} of $X$.
Being an inverse monoid, $\I(X)$ is also a birestriction monoid and is a $(\cdot\,,^*,^+,1)$-subalgebra of $\PT_{o}(X)$. 

We are now prepared to define actions of range and birestriction semigroups.

\begin{definition} \label{def:act_restr1} (Actions of range and birestriction semigroups) \label{def:action_range} Let $X$ be a locally compact Hausdorff space.
\begin{enumerate}
\item An {\em action} of a range semigroup $S$ on $X$ is a $(\cdot\,,^*,^+)$-morphism 
$$\theta\colon S\to \PT_{o}(X).$$
 \item An {\em action} of a birestriction semigroup $S$ by {\em partial homeomorphisms} is a $(\cdot\,,^*,^+)$-morphism $$\theta\colon S\to \I(X).$$
 \end{enumerate}
\end{definition}
We will consider only non-degenerate actions, that is, we assume that condition \eqref{eq:non_deg} holds.
Non-degenerate actions of birestriction semigroups have first appeared in \cite[Definition~2.21]{MC24} under the name {\em \'etale actions}.\footnote{Beware that in \cite{MC24}, birestriction semigroups are referred to as restriction semigroups.}

\begin{remark} Since a range or a birestriction semigroup $S$ is a restriction semigroup, if one disregards the $^+$ operation, one can also consider its more general actions in the sense of Definition \ref{def:action_restr}. In that case, however, $\theta(S)$ may not be contained in $\PT_{o}(X)$ or in $\I(X)$. Unless explicitly stated otherwise, an action of a range semigroup will, henceforth, refer to an action in the sense of Definition \ref{def:action_range}.
\end{remark}

For an action $\theta\colon S\to \PT_{o}(X)$ of a range semigroup, the definition implies that
$${\mathrm{dom}}(\theta_s) = {\mathrm{dom}}(\theta_{s^*}) = X_{s^*} \quad \text{ and } \quad {\mathrm{ran}}(\theta_s) = {\mathrm{ran}}(\theta_{s^+}) = X_{s^+}.$$

We recall the following well known definition, see, e.g., \cite[Definition 4.1]{St10}.

\begin{definition} \label{def:act_inv} (Actions of inverse semigroups) An {\em action of an inverse semigroup} $S$ on a locally compact Hausdorff space  $X$ is  a semigroup morphism $\theta \colon S\to \I(X)$. 
\end{definition}

The following proposition shows that this can be equivalently defined as an action of $S$, viewed as a restriction semigroup, that is, as a $(\cdot\, ,^*)$-morphism  $\theta\colon S\to \PT(X)$.

\begin{proposition} \label{prop:inv_s15} Suppose that $S$ is an inverse semigroup, and let $\theta\colon S\to \PT(X)$ be its action as a restriction semigroup given in Definition \ref{def:act_restr}, where $s^* = s^{-1}s$. Then $\theta(S)$ is contained in $\I(X)$. Consequently, $\theta\colon S\to \I(X)$ is an action of $S$ in the sense of Definition~\ref{def:act_inv}.
\end{proposition}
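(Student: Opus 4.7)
The plan is to use the inverse $s^{-1}$ inside $S$ to produce a two-sided continuous inverse for $\theta_s$. Given $s\in S$, since $S$ is inverse we have $s^* = s^{-1}s$ and $(s^{-1})^* = s s^{-1}$, so the open domains of $\theta_s$ and $\theta_{s^{-1}}$ are $X_{s^{-1}s}$ and $X_{ss^{-1}}$, respectively. Because $\theta$ is a semigroup morphism which preserves $^*$, we have
\[
\theta_{s^{-1}}\,\theta_s \;=\; \theta_{s^{-1}s} \;=\; \theta_{s^*} \;=\; \mathrm{id}_{X_{s^{-1}s}},\qquad
\theta_s\,\theta_{s^{-1}} \;=\; \theta_{ss^{-1}} \;=\; \theta_{(s^{-1})^*} \;=\; \mathrm{id}_{X_{ss^{-1}}}.
\]

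From the first identity, the composition $\theta_{s^{-1}}\theta_s$ is defined on all of $X_{s^{-1}s}$, which forces $\theta_s(X_{s^{-1}s}) \subseteq X_{ss^{-1}}$ and shows $\theta_{s^{-1}}$ is a left inverse of $\theta_s$. Symmetrically, the second identity shows $\theta_{s^{-1}}(X_{ss^{-1}})\subseteq X_{s^{-1}s}$ and that $\theta_{s^{-1}}$ is a right inverse of $\theta_s$. Consequently, $\theta_s\colon X_{s^{-1}s}\to X_{ss^{-1}}$ is a bijection with inverse $\theta_{s^{-1}}$, and in particular $\mathrm{ran}(\theta_s) = X_{ss^{-1}}$ is an open subset of $X$.

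Since $\theta$ takes values in $\PT(X)$, both $\theta_s$ and $\theta_{s^{-1}}$ are continuous on their respective open domains. A continuous bijection between open subsets of $X$ whose set-theoretic inverse is also continuous is a homeomorphism, so $\theta_s \in \I(X)$. This proves $\theta(S)\subseteq \I(X)$. The final assertion is then immediate: $\theta\colon S\to \I(X)$ is a semigroup morphism (it was a semigroup morphism into $\PT(X)$ to begin with), and this is precisely what Definition \ref{def:act_inv} demands.

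The main point that requires care is the identification of $\mathrm{ran}(\theta_s)$ with the full set $X_{ss^{-1}}$ rather than just a subset of it; the two composition identities above, read in both directions, handle this cleanly without any further hypothesis on openness of $\theta_s$ (which then follows a posteriori from the existence of the continuous inverse). No axiom beyond multiplicativity and preservation of $^*$ is needed, so the proof is short.
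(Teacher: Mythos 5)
Your proof is correct and follows essentially the same route as the paper: both use the element $s^{-1}$ and the morphism property of $\theta$ to exhibit $\theta_{s^{-1}}$ as a continuous two-sided inverse of $\theta_s$, from which openness of $\theta_s$ follows. Your version is slightly more explicit in computing $\theta_{s^{-1}}\theta_s=\theta_{s^*}=\mathrm{id}_{X_{s^*}}$ and $\theta_s\theta_{s^{-1}}=\mathrm{id}_{X_{ss^{-1}}}$ directly, and in pinning down that $\mathrm{ran}(\theta_s)$ is all of $X_{ss^{-1}}$, but the underlying idea is identical.
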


\begin{proof} 
Let $s\in S$. The equalities $\theta_s\theta_{s^{-1}}\theta_s$ and $\theta_{s^{-1}}\theta_s\theta_{s^{-1}} = \theta_{s^{-1}}$ imply that $\theta_s$ and $\theta_{s^{-1}}$ are mutually inverse bijections. Moreover, if $U\subseteq X_{s^*}$ is open then $\theta_s(U) = \theta_{s^{-1}}^{-1}(U)$, which is open as $\theta_{s^{-1}}$ is continuous. Therefore, $\theta_s \in \I(X)$.
\end{proof}

\begin{proposition} \label{prop:s16a} If $S$ is a range semigroup, then $\beta(S) \subseteq \PT_o(\widehat{P(S)})$; and if $S$ is a birestriction semigroup, then $\beta(S) \subseteq \I(\widehat{P(S)})$.
\end{proposition}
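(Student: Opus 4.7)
The key idea is to recognise $\beta_s$ as pullback along a canonical semilattice morphism. Define $\alpha_s\colon P(S)\to P(S)$ by $\alpha_s(f)=(fs)^*$. Using the restriction axiom $es=s(es)^*$ (for $e\in P(S)$) together with the Ehresmann identity $(xe)^*=x^*e$ of~\eqref{eq:rule1r}, the computation
\[
(f_1 f_2 s)^* \;=\; \bigl(f_1\cdot s(f_2 s)^*\bigr)^* \;=\; \bigl(f_1 s\cdot (f_2 s)^*\bigr)^* \;=\; (f_1 s)^*(f_2 s)^*
\]
shows that $\alpha_s$ is a semilattice morphism; moreover, directly from the definition of the spectral action in Example~\ref{ex:spectral}, $\beta_s(\varphi)=\varphi\circ\alpha_s$ for $\varphi\in D_{s^*}$.

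For the range case, the crucial lemma is that the restriction of $\alpha_s$ to $s^+\!\downarrow = \{h\in P(S): h\le s^+\}$ is a semilattice isomorphism onto $\alpha_s(P(S))\subseteq s^*\!\downarrow$. Surjectivity onto this image is immediate from $\alpha_s(f)=\alpha_s(fs^+)$ with $fs^+\le s^+$. For injectivity, suppose $h,k\le s^+$ satisfy $(hs)^*=(ks)^*$; the restriction axiom gives $hs=s(hs)^*=s(ks)^*=ks$, and taking $^+$ via the identity $(xy)^+=(xy^+)^+$ (which for $y=s$ and a projection $h$ yields $(hs)^+=(hs^+)^+=hs^+$ since $hs^+$ is itself a projection) gives $hs^+=ks^+$, i.e.\ $h=k$ because $h,k\le s^+$. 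For basic opens of the form $D_e\cap D_{s^*}$ with $e\le s^*$ one then computes $\beta_s(D_e\cap D_{s^*})=D_{(se)^+}$: the inclusion $\subseteq$ uses the identity $(se)^+\cdot se=se$ (from $x^+x=x$), which yields $((se)^+s)^*\cdot e=(se)^*=s^*e$, hence $((se)^+s)^*\ge (se)^*$; the reverse inclusion uses the isomorphism above to lift an arbitrary $\psi\in D_{(se)^+}$ to a preimage. Via the identifications $D_{s^*}\cong\widehat{s^*\!\downarrow}$ and $D_{s^+}\cong\widehat{s^+\!\downarrow}$ (given by restricting filters of $P(S)$ containing $s^*$ or $s^+$ to the respective principal down-sets), openness on general basic opens follows by reducing to openness of pullback along the induced surjective semilattice morphism $s^*\!\downarrow\twoheadrightarrow s^+\!\downarrow$.

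For the birestriction case, the corestriction axiom $xy^+=(xy)^+x$ yields the dual semilattice morphism $\alpha_s^+\colon P(S)\to P(S)$, $f\mapsto (sf)^+$, verified by the symmetric calculation. A symmetric argument shows $\alpha_s^+|_{s^*\!\downarrow}$ is an isomorphism onto $\alpha_s^+(P(S))\subseteq s^+\!\downarrow$, and using both the restriction and corestriction axioms one verifies that $\alpha_s$ and $\alpha_s^+$ are mutually inverse between the identified sub-semilattices. Pullback along $\alpha_s^+$ therefore provides a continuous two-sided inverse to $\beta_s\colon D_{s^*}\to D_{s^+}$, exhibiting $\beta_s$ as a partial homeomorphism and placing it in $\I(\widehat{P(S)})$.

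The principal obstacle is the openness claim in the range case for basic opens of the form $D_{e;f_1,\ldots,f_k}\cap D_{s^*}$: because $\beta_s$ may fail to be injective, the naive identification of the image with $D_{(se)^+;(sf_1)^+,\ldots,(sf_k)^+}$ can fail (simple examples in $\PT(\{1,2\})$ already show this discrepancy). The resolution passes through the semilattice-isomorphism identification above, which reduces openness of $\beta_s$ to that of pullback along a surjective semilattice morphism between sub-semilattices of $P(S)$, and this can be verified by direct analysis of filters.
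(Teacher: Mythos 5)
The positive parts of your argument are sound and in places sharper than the paper's own proof. The identification of $\beta_s$ with pullback along the meet-morphism $\alpha_s\colon e\mapsto (es)^*$, the injectivity of $\alpha_s$ on $(s^+)^{\uparrow\downarrow}$-free notation aside, on $\{h\in P(S)\colon h\le s^+\}$ (via $(hs)^+=hs^+$), and the equality $\beta_s(D_e)=D_{(se)^+}$ for $e\le s^*$ are all correct; your injectivity lemma is exactly what is needed to supply the surjectivity half of $\beta_s(D_{s^*})=D_{s^+}$, which the paper's displayed chain of equivalences establishes only as an inclusion. The birestriction half is also fine (it is essentially the paper's argument with $\gamma_s$ renamed $\alpha_s^+$). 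The gap is the step you yourself flag as the principal obstacle, and it cannot be closed. First, the object you propose to reduce to does not exist: the only candidate for a ``surjective semilattice morphism'' from $\{f\le s^*\}$ onto $\{h\le s^+\}$ is $f\mapsto (sf)^+$, and this fails to preserve meets exactly when $\beta_s$ is non-injective (already for $s\colon 1,2\mapsto 1$ in $PT(\{1,2\})$ with $f_1=\{1\}$, $f_2=\{2\}$), so there is no pullback to analyse. What one actually finds, after identifying $D_{s^+}$ with $\widehat{T}$ for $T=\alpha_s(P(S))$, is that $\beta_s(D_{s^*;f})$ is the complement in $\widehat{T}$ of $\bigcup\{D_t\colon t\in T,\ t\le f\}$, and this union need not be closed.

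Indeed the step fails, so the range half of the statement is false as written. Let $Y={\mathbb N}\cup\{*\}$, let $X$ be a disjoint copy of ${\mathbb N}$ together with two extra points $*_0,*_1$, let $s\colon X\to Y$ be the identity on ${\mathbb N}$ and send $*_0,*_1$ to $*$, and set $F=X\setminus\{*_1\}$. Let $S$ be the $(\cdot\,,^*,^+)$-subalgebra of $PT(X\sqcup Y)$ generated by $s$ and the projections $F$ and ${\mathbb N}\setminus\{m\}$, $m\in{\mathbb N}$ (identifying projections with their domains). All products containing $s$ twice vanish, and $P(S)$ consists precisely of $\varnothing$, $X$, $F$, $Y$, the cofinite sets ${\mathbb N}\setminus J$ and their preimages $s^{-1}({\mathbb N}\setminus J)\subseteq F$, for $J$ finite and nonempty. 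Every projection strictly below $X$ lies below $F$, and every projection below $Y$ meets $X$ in $\varnothing$; hence $D_{X;F}=\{X^{\uparrow}\}=\{\{X\}\}$ is a basic compact-open subset of $D_{s^*}$. Since $(es)^*=X$ forces $e=Y$, its image under $\beta_s$ is the singleton $\{Y^{\uparrow}\}$; but every basic neighbourhood $D_{Y;f_1,\dots,f_k}$ of $Y^{\uparrow}$ contains the two-element filters $\{Y,{\mathbb N}\setminus\{m\}\}$ for all but finitely many $m$, so $\{Y^{\uparrow}\}$ is not open and $\beta_s$ is not an open map. For what it is worth, the paper's own proof has the same lacuna: it reduces to basic opens of the form $D_e$ and never considers $D_{e;f_1,\dots,f_k}$. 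So you located the genuine difficulty correctly; but the resolution you sketch does not exist, and no other one can.
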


\begin{proof} Suppose first that $S$ is a range semigroup and let $s\in S$. In view of Example \ref{ex:spectral}, we only need to show that $\beta_s$ is an open map.
We have that ${\mathrm{dom}}(\beta_s) = D_{s^*}$. It suffices to show that $\beta_s(D_e)$ is open, where $e\leq s^*$. Observe that $D_{e} = D_{(es)^*}$. For every
$\varphi \in D_{e}$ we have $[s,\varphi] = [es, \varphi]$, whence $\beta_s(\varphi) = \beta_{es}(\varphi)$. Setting $t=es$, we need to show that $\beta_t(D_{t^*})$ is an open set.
We show that $\beta_t(D_{t^*}) =  D_{t^+}$. We have 
\begin{align*} \beta_t(\varphi) \in D_{t^+} & \Leftrightarrow \beta_t(\varphi)(t^+) = 1 \\
& \Leftrightarrow \varphi((t^+t)^*) = 1 \\
& \Leftrightarrow \varphi(t^*) = 1 \Leftrightarrow \varphi \in D_{t^*},
\end{align*}
as desired. 

Suppose now that $S$ is birestriction. We aim to show that $\beta_s\colon D_{s^*}\to D_{s^+}$, where $s\in S$, is a homeomorphism. We already know that it is an open and continuous map. We show that it is bijective. Define the map 
$$
\gamma_s\colon D_{s^+} \to D_{s^*}, \quad \gamma_s(\varphi)(e) = \varphi((se)^+).
$$
Let $\varphi \in D_{s^*}$ and $e\in P(S)$. Then:
\begin{align*}
\gamma_s\beta_s(\varphi)(e) & = \beta_s(\varphi)((se)^+) & \text{by the definition of } \gamma_s \\
& = \varphi(((se)^+s)^*) & \text{by the definition of } \beta_s\\
& = \varphi((se)^*)  & \text{by } \eqref{eq:ample_r}\\ 
& = \varphi(s^*e) & \text{by } \eqref{eq:rule1r} \\
& = \varphi(s^*)\varphi(e) = \varphi(e) & \text{since } \varphi \text{ is a morphism and } \varphi(s^*) =1.
\end{align*}
This shows that $\gamma_s\beta_s(\varphi) = \varphi$. By symmetry, we also have $\beta_s\gamma_s(\varphi) = \varphi$ for all $\varphi\in D_{s^+}$.
Therefore, $\beta_s$ is a bijection and $\gamma_s = \beta_s^{-1}$. It follows that $\beta_s$ is a homeomorphism.
\end{proof}

The definition of a spectral action of a birestriction semigroup was first introduced in \cite{MC24}.

We can now refine Proposition \ref{prop:etale} by providing extra properties of categories of germs of actions of range and birestriction semigroups.

\begin{proposition} \label{prop:range27} Let $X$ be a locally compact Hausdorff space.
\begin{enumerate}
\item Let $S$ be a range semigroup and $\theta\colon S\to \PT_{o}(X)$ an action of $S$ on $X$. Then the category of germs $\mC=S\ltimes X$ is strongly \'etale. Moreover,
the restriction monoid $\mC^{op}$ is a range monoid where $U^+ = \ran(U)$ for every $U\in \mC^{op}$.
If $\theta$ is ample, then the category $\mC$ is strongly ample and $\mC^{a}$ is a Boolean range semigroup. 
\item Let $S$ be a birestriction semigroup and $\theta\colon S\to \I(X)$ an action of $S$ on $X$ by partial homeomorphisms. Then the category of germs $\mC=S\ltimes X$ is  bi\'etale. Moreover, the set of bislices $\widetilde{\mC}^{op}$ forms a birestriction monoid.  If $\theta$ is ample, then the category $\mC$ is biample and $\widetilde{\mC}^{a}$ is a Boolean birestriction semigroup.
\end{enumerate}
\end{proposition}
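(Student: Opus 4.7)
The \'etale property of $\mC$ is already in hand by Proposition~\ref{prop:etale}, so for part~(1) the decisive additional step is to establish that the range map $\ran\colon\mC\to\mC^{(0)}$ is open. The plan is to work directly with the basic open sets $(s,U)$ where $U\subseteq X_{s^*}$ is open, and compute the image under $\ran$. Via the homeomorphism $\mC^{(0)}\cong X$ of Lemma~\ref{lem:homeo}, one sees that $\ran(s,U)$ corresponds to $\theta_s(U)\subseteq X$. Since $\theta_s\in\PT_{o}(X)$ is open by hypothesis, $\theta_s(U)$ is open in $X$, hence $\ran(s,U)$ is open in $\mC^{(0)}$. This shows that $\ran$ is open on a basis, and therefore open, giving that $\mC$ is strongly \'etale.

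To upgrade $\mC^{op}$ to a range monoid, define $U^+=\ran(U)$ for $U\in\mC^{op}$. By the openness just proved, $\ran(U)$ is an open subset of $\mC^{(0)}$, so it is a projection in $\mC^{op}$. The biEhresmann identities $(U^+)^*=U^+$ and $(U^*)^+=U^*$ are immediate, since $\dom$ and $\ran$ both act as the identity on $\mC^{(0)}$; the identity $U^+U=U$ follows because every element of $U$ has its range in $\ran(U)$ and $\mC^{(0)}$ acts by left-unit; and $U^+V^+=V^+U^+$ follows from commutativity of intersection in the semilattice of open subsets of $\mC^{(0)}$. The corestriction identity $UV^+=(UV)^+U$ is a direct computation using the fact that $U$ and $V$ are slices. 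When $\theta$ is ample, each $X_{s^*}$ is compact-open, so each $(s,X_{s^*})$ is a compact slice by Proposition~\ref{prop:a21}, and such slices form a basis by the argument already used in Proposition~\ref{prop:etale}; thus $\mC$ is ample, hence strongly ample. Proposition~\ref{prop:ample}(2) then upgrades $\mC^a$ to a Boolean range semigroup.

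For part~(2), I must show that $\ran$ is itself a local homeomorphism. Openness on the basis $(s,U)$ is already established in step~1 above (the proof of openness used only that $\theta_s$ was open, which is weaker than being a homeomorphism). For injectivity, suppose $\ran[s,x]=\ran[s,y]$ with $x,y\in U\subseteq X_{s^*}$; unpacking via Lemma~\ref{lem:homeo}, this forces $\theta_s(x)=\theta_s(y)$, and since $\theta_s\in\I(X)$ is a partial homeomorphism, $x=y$. So $\ran|_{(s,U)}$ is a continuous open bijection onto its image, i.e.\ a homeomorphism, and $\mC$ is bi\'etale. Hence $(s,X_{s^*})$ is a bislice, and standard arguments (analogous to \cite[Proposition~6.11]{Kud25} for slices) show that $\widetilde{\mC}^{op}$ is closed under composition and under both $^*$ and $^+$, yielding a birestriction monoid structure. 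In the ample case the $(s,X_{s^*})$ are compact bislices forming a basis, so $\mC$ is biample, and Proposition~\ref{prop:ample}(3) delivers that $\widetilde{\mC}^a$ is a Boolean birestriction semigroup.

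The main (and only genuinely new) obstacle is the openness of $\ran$; once that is extracted from the hypotheses on $\theta_s$, the remainder of the proof is a bookkeeping exercise, cleanly split between direct verification of the unary algebraic identities on slices/bislices and invocation of Proposition~\ref{prop:ample} to handle the Boolean conclusions.
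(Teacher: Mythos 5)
Your overall route coincides with the paper's: for part (1) you compute $\ran(s,U)=\theta_s(U)$ on basic slices and use openness of $\theta_s$ to conclude that $\ran$ is open; for part (2) you use injectivity of $\theta_s$ to upgrade this to a local homeomorphism (the paper phrases the same computation by writing $\ran|_{(s,X_{s^*})}$ as the composite of the homeomorphisms $\dom|_{(s,X_{s^*})}$ and $\theta_s$); and the ample/Boolean conclusions are delegated to Proposition~\ref{prop:ample}, exactly as the paper delegates to the corresponding results of \cite{Kud25}.

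There is, however, one concrete error in your verification that $\mC^{op}$ is a range monoid. You claim that the corestriction identity $UV^+=(UV)^+U$ ``is a direct computation using the fact that $U$ and $V$ are slices.'' This identity is not part of the definition of a range monoid (a range semigroup is a biEhresmann semigroup whose $(\cdot\,,^*)$-reduct is restriction; the corestriction identity would in addition make it corestriction), and it is in general \emph{false} for slices: unwinding \eqref{eq:subs_mult}, one has $UV^+=\{x\in U\colon \dom(x)\in\ran(V)\}$, whereas $(UV)^+U=\{x\in U\colon \ran(x)=\ran(x')\text{ for some }x'\in U\text{ with }\dom(x')\in\ran(V)\}$; since a slice need not be a coslice, distinct elements of $U$ may share a range and the latter set can be strictly larger. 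This is precisely why $PT(X)$ is a range monoid but not a corestriction monoid (Example~\ref{ex:1}(2)). What your verification actually requires, and omits, is the coEhresmann identity $(UV)^+=(UV^+)^+$, which does hold: both sides equal $\{\ran(x)\colon x\in U,\ \dom(x)\in\ran(V)\}$. With that substitution your argument goes through; the paper itself sidesteps the issue by citing \cite[Proposition 7.9]{Kud25}.
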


\begin{proof} (1) Let $X$ be an open subset of $X_{s^*}$. Since $\theta_s$ is an open map, the set $\ran(s,X) = \theta_s(X)$ is open.
Thus $\mC$ is strongly \'etale. The remaining claims follow applying \cite[Proposition 7.9]{Kud25}.

(2) Let $[s, x] \in \mC$, then $(s, X_{s^*})$ is a neighbourhood of $[s,x]$. We show that it is homeomorphic to $\ran(s, X_{s^*})$ via $\ran|_{(s, X_{s^*})}$. Since $\theta_s\colon X_{s^*}\to X_{s^+}$  and $\dom|_{(s, X_{s^*})}\colon (s, X_{s^*})\to X_{s^*}$ are homeomorphisms, the map $\theta_s\dom|_{(s, X_{s^*})} \colon (s, X_{s^*})\to X_{s^+}$ is also a homeomorphism. This map takes $[s,x]\in (s, X_{s^*})$ to $\theta_s(x) = \ran([s,x])$, so that $\ran|_{(s, X_{s^*})}$ is a homeomorphism, as desired. The remaining claims follow applying \cite[Propositions 7.6 and 7.10]{Kud25}.
\end{proof}

By Proposition \ref{prop:morphism}, the map $\Theta\colon S\to \mC^{op}$ is a $(\cdot\,,^*)$-morphism. We show that for range and birestriction semigroups it also preserves the  $^+$ operation.

\begin{proposition} \label{prop:morphism_range} \mbox{}
\begin{enumerate}
\item Let $S$ be a range semigroup and $\theta\colon S\to \PT_{o}(X)$ be an action. Let, further, $\mC = S\ltimes X$ be the associated category of germs. Then the $(\cdot\,, ^*)$-morphism $\Theta\colon S\to \mC^{op}$, $s\mapsto (s, X_{s^*})$ preserves the  $^+$ operation and is thus a $(\cdot\,, ^*,^+)$-morphism. 
\item Let $S$ be a birestriction semigroup and $\theta\colon S\to \I(X)$ be an action by local homeomorphisms. Let, further, $\mC = S\ltimes X$ be the associated category of germs. Then $\Theta\colon S\to \widetilde{\mC}^{op}$, $s\mapsto (s, X_{s^*})$ is a $(\cdot\,, ^*,^+)$-morphism. 
\end{enumerate}
\end{proposition}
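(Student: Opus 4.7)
The plan is to verify, in both cases, the single remaining identity
\[
\Theta(s^+) = \Theta(s)^+ \qquad \text{for all } s \in S,
\]
since Proposition~\ref{prop:morphism} already establishes that $\Theta$ preserves $\cdot$ and $^*$. By the definition of the unary operation $U \mapsto U^+ = \ran(U)$ on subsets of $\mC$, we have
\[
\Theta(s)^+ \;=\; (s, X_{s^*})^+ \;=\; \bigl\{\ran([s,x]) : x \in X_{s^*}\bigr\}.
\]

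For part (1), I would first note that since $\theta$ is a $(\cdot,\,^*,\,^+)$-morphism and $\theta_{s^+} = {\mathrm{id}}_{X_{s^+}}$, we have ${\mathrm{ran}}(\theta_s) = {\mathrm{ran}}(\theta_s^{\,+}) = {\mathrm{ran}}(\theta_{s^+}) = X_{s^+}$. Thus for any $x \in X_{s^*}$ the point $\theta_s(x)$ lies in $X_{s^+}$, so by the definition \eqref{eq:oper1} of $\ran$ on germs we may take the witnessing projection to be $s^+$, giving $\ran([s,x]) = [s^+, \theta_s(x)]$. Since $\theta_s$ maps $X_{s^*}$ onto $X_{s^+}$, this yields
\[
\Theta(s)^+ \;=\; \bigl\{[s^+, y] : y \in X_{s^+}\bigr\} \;=\; (s^+, X_{s^+}).
\]
Using the biEhresmann identity $(s^+)^* = s^+$, so that $X_{(s^+)^*} = X_{s^+}$, this coincides with $(s^+, X_{(s^+)^*}) = \Theta(s^+)$, completing (1). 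We also confirm that $\Theta(s) \in \mC^{op}$ via Proposition~\ref{prop:morphism}; the unary operation $^+$ on $\mC^{op}$ is well defined by Proposition~\ref{prop:range27}(1), which guarantees that $\mC^{op}$ is a range monoid with $U^+ = \ran(U)$.

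For part (2), the argument is essentially the same, but we must first check that the slice $(s, X_{s^*})$ is in fact a bislice, so that it is an element of $\widetilde{\mC}^{op}$. This is where I would invoke Proposition~\ref{prop:range27}(2): the category $\mC$ is bi\'etale, and inspection of the proof shows that $(s, X_{s^*})$ is mapped homeomorphically onto $X_{s^+}$ by $\ran$, hence it is a coslice and therefore a bislice. Once this is established, the computation of $\Theta(s)^+$ is identical to part~(1), using that for an action by partial homeomorphisms $\theta_s\colon X_{s^*} \to X_{s^+}$ is in particular a bijection (so in fact automatically surjective).

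No step is a genuine obstacle; the argument is bookkeeping. The only subtle point to be careful about is distinguishing the roles of the axioms: surjectivity of $\theta_s$ onto $X_{s^+}$ (rather than a mere inclusion) is what forces equality of the two sets of germs, and this surjectivity comes precisely from $\theta$ preserving $^+$, which is the additional hypothesis available for range and birestriction semigroups but not in the general restriction case.
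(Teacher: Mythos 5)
Your proof is correct and follows essentially the same route as the paper's: a direct computation of $\Theta(s)^+ = \ran(s,X_{s^*}) = \{[s^+,\theta_s(x)] : x\in X_{s^*}\} = (s^+, X_{s^+}) = \Theta(s^+)$, using that $\mathrm{ran}(\theta_s)=X_{s^+}$ because $\theta$ preserves $^+$. Your explicit remarks on surjectivity and on the bislice property in part (2) only spell out details the paper leaves implicit.
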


\begin{proof} (1) Applying the definition of $\Theta$, we write:  
\begin{align*}
\Theta(s)^+=(s, X_{s^*})^+ & = \ran(s,X_{s^*}) \\
& = \{\ran([s,x]) \colon x\in X_{s^*}\}  \\
& = \{[s^+, \theta_s(x)] \colon x\in X_{s^*}\} \\
& = (s^+, X_{s^+}) = \Theta(s^+),
\end{align*}
as required.

The second part follows from the first part.
\end{proof}

%%%%
\section{The universal category and the universal Booleanization}\label{s:bool}
\subsection{Stone duality for Boolean restriction semigroups with local units} \label{subs:duality} For the reader's convenience, we summarise some of the definitions and results from \cite{Kud25} that we will need later on. The following is a special case of \cite[Definition 5.13]{Kud25}.

\begin{definition}\label{def:morphisms1} (Morphisms between Boolean restriction semigroups) Let $S$ and $T$ be Boolean restriction semigroups. 
A map $f\colon S\to T$ will be called a {\em morphism}, if:
\begin{enumerate}
\item $f$ is a $(\cdot\,,^*)$-morphism;
\item the restriction of $f$ to $P(S)$ is a morphism $f|_{P(S)}\colon P(S)\to P(T)$ between generalized Boolean algebras.\footnote{We remind the reader that a morphism between generalized Boolean algebras is assumed to be non-degenerate, see Section \ref{subs:Stone1}.}
\end{enumerate}
\end{definition}

Boolean restriction semigroups with the above-defined morphisms form a category, ${\bf{BRS}}$.

To define morphisms between ample categories, we first recall the definition of an action of a topological category on a topological space introduced in \cite[Definition 7.14]{Kud25}.

\begin{definition} (Action of a topological category on a topological space)
Let $\mC$ be a topological category and $X$ a topological space. 
An {\em action} of $\mC$ on $X$ is a pair $(\mu, f)$, where $f\colon X\to \mC^{(0)}$ is a continuous map and 
$$\mu\colon \mC\times_{\dom,f} X = \{(s,x)\in \mC\times X\colon \dom(s)= f(x)\} \to X,  \quad (s,x)\mapsto s\cdot x
$$ is a continuous map such that the following conditions hold:
\begin{enumerate}
\item[(A1)] $f(s\cdot x) = \ran(s)$ for all $(s,x)\in \mC\times_{\dom,f} X$;
\item[(A2)] $s\cdot (t\cdot x) = st\cdot x$ for all $s,t\in \mC$ and $x\in X$ for which both sides are defined;
\item[(A3)] $f(x)\cdot x = x$ for all $x\in X$.
\end{enumerate}
\end{definition}

If an action $(\mu, f)$ of $\mC$ on $X$ is given,  the {\em transformation category} 
$\mC \ltimes X$ is defined to be the set $\mC\times_{d,f} X$ with $(\mC \ltimes X)^{(0)} = \{(f(x),x)\colon x\in X\}$. The structure maps $\dom, \ran$ and $\pr$ are given by 
$$\dom(s,x) = (f(x),x), \quad \ran(s,x) = (f(s\cdot x), s\cdot x), \quad \pr((s,t\cdot x), (t,x)) = (st,x).
$$ 

The following definition was introduced in \cite[Definition 7.15]{Kud25} and is inspired by \cite{BEM12}.

\begin{definition} (Cofunctors)  A {\em cofunctor} $F=(\mu, f, \rho)\colon \mC \rightsquigarrow {\mathcal D}$ between ample topological categories $\mC$ and ${\mathcal D}$ is given by an action $(\mu, f)$ of $\mC$ on ${\mathcal{D}}^{(0)}$, where $f$ is non-degenerate, and a functor $\rho\colon \mC\ltimes {\mathcal D}^{(0)} \to {\mathcal D}$ between topological categories, which acts identically on units, such that, for every compact slice $A\subseteq \mC$, the set $F_*(A) = \{\rho(s,x)\colon s\in A, (s,x)\in \mC\ltimes {\mathcal D}^{(0)}\}$ is compact slice. 
\end{definition}

Ample topological categories form a category ${\bf ATC}$ whose morphisms are cofunctors.

Let $S$ be Boolean restriction semigroup. In \cite{Kud25}, the author constructed the ample category ${\mathcal C}(S)$ of {\em germs of $S$ over the prime spectrum} $\Spec(P(S))$ {\em of} $P(S)$.\footnote{In \cite{Kud25}, $\Spec(P(S))$ is denoted by $\widehat{P(S)}$. In this paper, $\widehat{P(S)}$ has a different meaning and denotes the character space of the {\em meet-semilattice} $P(S)$.} The construction uses the action of $S$ on $\Spec(P(S))$ given by: 
$$
(s\cdot \varphi)(e) = \varphi((es)^*), \quad e\in P(S), \quad  \varphi \in \Spec(P(S)), \quad s\in S,
$$
in a similar way as this is done in Section \ref{subs:germs}. This gives rise to the functor 
$$
{\mathsf C}\colon {\bf{BRS}} \to {\bf{ATC}}.
$$

Conversely, if ${\mathcal C}$ is an ample category, one can consider the Boolean restriction semigroup ${\mathcal C}^a$ of compact slices of ${\mathcal C}$, which gives rise to the functor
$$
{\mathsf S}\colon  {\bf{ATC}} \to {\bf{BRS}}.
$$
It is proved in \cite[Theorem 9.3]{Kud25} that these functors establish an equivalence between the categories ${\bf{ATC}}$ and ${\bf{BRS}}$. A Boolean restriction semigroup $S$ is naturally isomorphic to ${\mathcal C}(S)^a$ via the isomorphism 
$$
S \to {\mathcal C}(S)^a, \quad s\mapsto (s, D_{s^*}),
$$
where $D_{s^*} = \{\varphi\in \Spec(P(S))\colon \varphi(s^*)=1\}$.
On the other hand, an ample topological category $\mC$ is isomorphic to the category of germs of the Boolean restriction semigroup $\mC^a$ over the prime spectrum $\Spec(P(\mC^a))$ of $P(\mC^a)$. Further details can be found in \cite{Kud25}. 

\subsection{The universal category and the universal Booleanization}
In this section we define the universal category and the universal Booleanization of a restriction semigroup with local units, of a range semigroup and of a birestriction semigroup, which all generalize respective notions for an inverse semigroup, considered, e.g., in \cite{Paterson, St10,L20,K21}. 

\begin{definition} \label{def:univ_categ} (The universal category and the universal Booleanization) 
\begin{enumerate}
\item Let $S$ be a restriction semigroup with local units. We define the {\em universal category} $\mathscr{C}(S)$ of $S$ to be the category of germs $S\ltimes_{\beta} \widehat{P(S)}$ of the spectral action $\beta$ of $S$, defined in Example \ref{ex:spectral}. The Boolean restriction semigroup $\mathscr{C}(S)^a$ of compact slices of the universal category $\mathscr{C}(S)$ will be called the {\em universal Booleanization} of $S$ and will be denoted by ${\mathsf{B}}(S)$.
\item Let $S$ be a birestriction semigroup (in particular, an inverse semigroup). Propositions \ref{prop:s16a} and \ref{prop:range27} imply that the category $\mathscr{C}(S)$ is biample. This means that the set $\widetilde{\mathscr{C}(S)}^a$ of compact bislices forms a Boolean birestriction semigroup, which will be called the {\em restricted universal Booleanization} of $S$, denoted by ${\widetilde{\mathsf{B}}}(S)$.
\end{enumerate}
\end{definition}

If $S$ is a range semigroup, Propositions \ref{prop:s16a} and \ref{prop:range27} imply that the category $\mathscr{C}(S)$ is strongly ample, meaning that ${\mathsf{B}}(S)$ is a Boolean range semigroup. If $S$ is an inverse semigroup, it is well known (see, e.g., \cite{St10}) that $\mathscr{C}(S)$ is an \'etale groupoid.  Furthermore, $\widetilde{{\mathsf{B}}}(S)$ is a Boolean inverse semigroup. If $S$ is a birestriction semigroup (or an inverse semigroup), it has both the universal Booleanization ${\mathsf{B}}(S)$ and the restricted universal Booleanization $\widetilde{{\mathsf{B}}}(S)$\footnote{If $S$ is an inverse semigroup, $\widetilde{{\mathsf{B}}}(S)$ is called the {\em universal Booleanization} of $S$ in \cite{L20}.}.

\begin{remark}\label{rem:basis}
The basis of the topology on  $\mathscr{C}(S)$ is given by the compact slices $(s, U)$ where $s\in S$ and $U\subseteq D_{s^*}$ is a basic open set 
$D_{e,f_1,\dots f_k}=D_e\setminus (D_{f_1}\cup \dots \cup D_{f_k})$, where $k\geq 0$, of $\widehat{P(S)}$. By definition, for each $\varphi\in U$, we have $\varphi(e)=1$, so that $\varphi((se)^*) = \varphi(s^*e) = \varphi(e) =1$. This implies that $[s,\varphi] = [se, \varphi]$. It follows that $(s,U) = (se,U)$. Since $(se)^* = e$, we have $U = D_{(se)^*; f_1,\dots f_k}$.
\end{remark}

The {\em underlying category},  ${\mathscr U}(S)$, of a range semigroup $S$ is defined as follows. Its underlying set is $S$, its units are projections of $S$, the domain and the range maps are defined, for $s\in S$, by $\dom(s) = s^*$ and $\ran(s) = s^+$, and the product is given by
$$
s\cdot t = \left\lbrace\begin{array}{ll} st, & \text{if } s^* = t^+ ;\\
\text{undefined,} & \text{otherwise.}\end{array}\right.
$$

The following result generalizes \cite[Lemma 5.16]{St10}. 

\begin{proposition} \label{prop:dense} Let $S$ be a range semigroup. Then ${\mathrm{U}}(S)=\{[s, (s^{*})^{\uparrow}] \colon s\in S\}$ is a dense subcategory of ${\mathscr C}(S)$, and the map ${\mathscr U}(S) \to {\mathrm{U}}(S)$, $s\mapsto [s,(s^*)^{\uparrow}]$ is an isomorphism.
\end{proposition}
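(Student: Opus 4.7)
The plan is to derive the three assertions—that ${\mathrm U}(S)$ is a subcategory, that the assignment $\Phi\colon s\mapsto [s,(s^*)^\uparrow]$ is a category isomorphism from ${\mathscr U}(S)$ onto ${\mathrm U}(S)$, and that ${\mathrm U}(S)$ is dense in ${\mathscr C}(S)$—from a single identity describing how the spectral action behaves on principal filters.

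The key lemma is
\[
\beta_t((t^*)^\uparrow) = (t^+)^\uparrow, \qquad t\in S.
\]
By the definition of $\beta_t$ in Example~\ref{ex:spectral}, the left-hand side evaluates to $1$ on $e\in P(S)$ iff $(et)^*\geq t^*$. By \eqref{eq:axioms_star_restr}, $et = t(et)^*$, so $(et)^*\leq t^*$ always, and the condition therefore amounts to $(et)^*=t^*$. If $(et)^*=t^*$, then $et=tt^*=t$; applying the biEhresmann identity $(xy)^+=(xy^+)^+$ yields $(et)^+=et^+$, hence $et^+=t^+$, i.e.\ $e\geq t^+$. Conversely, if $e\geq t^+$, then $et=et^+t=t^+t=t$, so $(et)^*=t^*$. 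This computation—where the range hypothesis on $S$ is used in an essential way—is the main obstacle I anticipate; once it is in hand, the rest is essentially bookkeeping.

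From this lemma, the functoriality of $\Phi$ follows directly: $\dom(\Phi(s))=[s^*,(s^*)^\uparrow]=\Phi(s^*)$ and $\ran(\Phi(s))=[s^+,\beta_s((s^*)^\uparrow)]=[s^+,(s^+)^\uparrow]=\Phi(s^+)$. Moreover, $\Phi(s)$ and $\Phi(t)$ are composable in ${\mathscr C}(S)$ iff $\beta_t((t^*)^\uparrow)=(s^*)^\uparrow$, i.e.\ $(t^+)^\uparrow=(s^*)^\uparrow$, i.e.\ $s^*=t^+$—which is precisely the composability condition in ${\mathscr U}(S)$. When it holds, $(st)^*=(s^*t)^*=(t^+t)^*=t^*$ gives $\Phi(s)\Phi(t)=[st,(t^*)^\uparrow]=\Phi(st)$. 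For injectivity of $\Phi$, suppose $[s,(s^*)^\uparrow]=[t,(t^*)^\uparrow]$: the second components must coincide, so $s^*=t^*$, and by the definition of germ equality there is $u\leq s,t$ with $u^*\in(s^*)^\uparrow$, i.e.\ $u^*\geq s^*$; combined with $u^*\leq s^*$ this forces $u^*=s^*=t^*$, so $s=su^*=u=tu^*=t$. The arguments above also show that ${\mathrm U}(S)$ is closed under $\dom$, $\ran$, and composition in ${\mathscr C}(S)$, so it is a subcategory.

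For density, Remark~\ref{rem:basis} supplies a basis of ${\mathscr C}(S)$ consisting of compact slices $(s,D_{s^*;f_1,\dots,f_k})$ with $k\geq 0$ and $f_1,\dots,f_k\leq s^*$. If such a slice is non-empty then necessarily $f_i<s^*$ strictly for each $i$ (otherwise $D_{s^*;f_1,\dots,f_k}$ would be empty). The character associated with $(s^*)^\uparrow$ then takes value $1$ on $s^*$ and value $0$ on each $f_i$, since $f_i<s^*$ precludes $f_i\geq s^*$; hence $(s^*)^\uparrow\in D_{s^*;f_1,\dots,f_k}$, so $[s,(s^*)^\uparrow]\in(s,D_{s^*;f_1,\dots,f_k})\cap{\mathrm U}(S)$. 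Density of ${\mathrm U}(S)$ in ${\mathscr C}(S)$ follows.
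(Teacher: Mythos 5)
Your proof is correct and follows essentially the same route as the paper: compute $\ran([s,(s^*)^\uparrow])=(s^+)^\uparrow$, match composability with that of ${\mathscr U}(S)$, deduce injectivity from germ equality, and get density from the fact that the principal filter $(s^*)^\uparrow$ lies in every non-empty basic open set $D_{s^*;f_1,\dots,f_k}$. You merely supply more detail than the paper does, in particular the explicit verification that $\beta_t((t^*)^\uparrow)=(t^+)^\uparrow$ (which the paper asserts) and the remark that non-emptiness of the basic slice forces $f_i\lneq s^*$.
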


\begin{proof} Since $\ran([s, (s^{*})^{\uparrow}]) = (s^+)^{\uparrow}$, the product $[s, (s^*)^{\uparrow}][t, (t^*)^{\uparrow}]$ is defined if and only if $t^+ = s^*$, in which case it equals $[st, (t^*)^{\uparrow}]$. If $[s, (s^*)^{\uparrow}] = [t, (t^*)^{\uparrow}]$ then $s^*=t^*$ and there is $u\leq s,t$ with $u^*\in (s^*)^{\uparrow}$ such that $[s, (s^*)^{\uparrow}] = [u, (s^*)^{\uparrow}]$. But then $u^*\geq s^*$, so that $u=s$ must hold. By, symmetry, we also have $u=t$. Hence, the map $s\mapsto [s, (s^*)^{\uparrow}]$ is injective. It now easily follows that the map $s\mapsto [s,(s^*)^{\uparrow}]$ is an injective functor. We are left to show that ${\mathrm{U}}(S)$ is a dense subcategory of ${\mathscr C}(S)$. Let $[s, \varphi]\in {\mathscr C}(S)$, and let $(s,U)$ be its basic open neighbourhood. In view of Remark \ref{rem:basis}, we can assume that $U = D_{s^*;f_1,\dots, f_k}$, where $k\geq 0$. Then $(s^*)^{\uparrow}\in U$, so that $[s,(s^*)^{\uparrow}] \in (s,U)$. This proves that   ${\mathrm{U}}(S)$ is dense in ${\mathscr C}(S)$.
\end{proof}

 We now demonstrate that $S$ embeds into ${\mathsf{B}}(S)$.
\begin{proposition} \label{prop:embedding} Let $S$ be a restriction semigroup with local units. The map 
$$
\iota\colon S\to \mathsf{B}(S),\quad s\mapsto (s, D_{s^*})
$$
is an injective morphism of restriction semigroups.
In detail, the set
$$
\iota(S) = \{(s, D_{s^*})\colon s\in S\}
$$ is a $(\cdot\,,^*)$-subalgebra of $\mathsf{B}(S)$ with the operations given by
$$
(s,D_{s^*})(t, D_{t^*}) = (st, D_{(st)^*}), \quad (s, D_{s^*})^* = (s^*, D_{s^*}).
$$
Consequently, the map $\iota\colon S\to \iota(S)$, $s\mapsto (s, D_{s^*})$ is a $(\cdot\,,^*)$-isomorphism.
\end{proposition}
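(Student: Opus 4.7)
The plan is to derive this proposition as a direct application of Proposition~\ref{prop:morphism} to the spectral action $\beta\colon S\to \PT(\widehat{P(S)})$ of Example~\ref{ex:spectral}. First I would check that $\beta$ is ample in the sense of Definition~\ref{def:act_restr}: the character space $\widehat{P(S)}$ is a locally compact Stone space by Section~\ref{subs:spectrum_sem}, and $\mathrm{dom}(\beta_s) = D_{s^*}$ is one of the basic compact-open sets listed in \eqref{eq:de}, hence lies in $\mathcal{B}(\widehat{P(S)})$. Consequently $\mathscr{C}(S) = S\ltimes_{\beta}\widehat{P(S)}$ is ample, the slice $(s, D_{s^*})$ is homeomorphic to the compact set $D_{s^*}$ by Proposition~\ref{prop:a21}, and so $\iota(s)$ genuinely lies in $\mathsf{B}(S) = \mathscr{C}(S)^a$.

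With ampleness in hand, Proposition~\ref{prop:morphism}, on specialising $X_{s^*}$ to $D_{s^*}$ in \eqref{eq:prod25a}, immediately supplies the product formula $(s, D_{s^*})(t, D_{t^*}) = (st, D_{(st)^*})$ together with the identity $(s, D_{s^*})^* = (s^*, D_{s^*})$. This shows that $\iota(S)$ is a $(\cdot,^*)$-subalgebra of $\mathsf{B}(S)$ and that $\iota$ is a morphism of restriction semigroups, covering everything in the statement except injectivity.

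The remaining point, and the only place where a little work is required, is injectivity. Suppose $\iota(s) = \iota(t)$. Applying the domain map together with the homeomorphism of Lemma~\ref{lem:homeo}, I obtain $D_{s^*} = D_{t^*}$, and by the injectivity of the canonical embedding $E \hookrightarrow \mathsf{B}(E)$ noted after Definition~\ref{def:univ_b_sem}, this forces $s^* = t^*$. Next I would test the equality of slices on the principal character $\varphi = (s^*)^{\uparrow}$, which lies in $D_{s^*}$: the equality of germs $[s,\varphi] = [t,\varphi]$ furnishes some $u \leq s, t$ with $\varphi(u^*) = 1$, i.e.\ $u^* \geq s^*$. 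Combined with $u \leq s$ giving $u^* \leq s^*$, this forces $u^* = s^* = t^*$, whence $u = s u^* = s$ and $u = t u^* = t t^* = t$, so $s = t$. No genuine obstacle arises; the proposition is essentially a corollary of the machinery already in place, with the Stone-type injectivity $E \hookrightarrow \mathsf{B}(E)$ and a principal-filter separation argument doing all the work.
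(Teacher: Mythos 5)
Your proposal is correct and follows essentially the same route as the paper: the morphism property and the product/star formulas are obtained from Proposition~\ref{prop:morphism} applied to the (ample) spectral action, and injectivity is proved by evaluating the equality of slices at the principal character $\chi_{(s^*)^{\uparrow}}$ and extracting $u\leq s,t$ with $u^*\geq s^*$ from the definition of germ equality. The only cosmetic difference is that you first deduce $s^*=t^*$ and conclude $u=s=t$ directly, whereas the paper deduces $u=s$, hence $s\leq t$, and invokes symmetry.
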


\begin{proof} In view of Proposition \ref{prop:morphism}, only injectivity of the map $\iota$ requires proof. Suppose that $(s, D_{s^*}) = (t, D_{t^*})$, then we have $D_{s^*} = D_{t^*}$. Since the characteristic function
$\chi_{(s^*)^{\uparrow}}$ belongs to $D_{s^*}$, it follows that $\chi_{(s^*)^{\uparrow}}\in D_{t^*}$ and $[s, \chi_{(s^*)^{\uparrow}}] = [t, \chi_{(s^*)^{\uparrow}}]$. By the definition of the equality of germs, there is $u\leq s,t$ such that $\chi_{(s^*)^{\uparrow}}(u^*)=1$ or, equivalently, $u^*\geq s^*$. Given that $u\leq s$, we have $u=su^* \geq ss^* = s$, so $u=s$. It follows that $s\leq t$. By symmetry, we also have $t\leq s$, as required. 
\end{proof}

\begin{corollary}
Let $S$ be a range or a birestriction semigroup. Then $\iota\colon S\to \iota(S)$ is a $(\cdot\,,^*, ^+)$-isomorphism.
Furthermore, if $S$ is a birestriction semigroup, then $\iota(S)\subseteq {\widetilde{\mathsf{B}}}(S)$.
\end{corollary}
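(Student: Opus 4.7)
The corollary is essentially an assembly of earlier results, so the plan is to invoke the pieces in the right order rather than to do new work. The plan is as follows. First I would note that Proposition~\ref{prop:s16a} shows the spectral action $\beta$ of a range semigroup $S$ takes values in $\PT_o(\widehat{P(S)})$, and the spectral action of a birestriction semigroup takes values in $\I(\widehat{P(S)})$; in particular $\beta$ is an action in the stronger sense of Definition~\ref{def:act_restr1}. Consequently Proposition~\ref{prop:morphism_range} applies to $\beta$ and yields that the map $s \mapsto (s, D_{s^*})$ into $\mathscr{C}(S)^{op}$ preserves the $^+$ operation, so it is in fact a $(\cdot,{}^*,{}^+)$-morphism.

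Combining this with the injectivity already established in Proposition~\ref{prop:embedding}, one obtains that $\iota\colon S\to \iota(S)$ is a $(\cdot,{}^*,{}^+)$-isomorphism; the explicit form of $\iota(s)^+$ follows from the computation in the proof of Proposition~\ref{prop:morphism_range}, namely $(s, D_{s^*})^+ = (s^+, D_{s^+})$. For the last assertion, suppose $S$ is birestriction. Then Proposition~\ref{prop:range27}(2) tells us that $\mathscr{C}(S)$ is biample, and its proof records that for every $s\in S$ the neighbourhood $(s, D_{s^*})$ is a bislice, because $\beta_s$ is a homeomorphism from $D_{s^*}$ onto $D_{s^+}$ and hence $\ran|_{(s,D_{s^*})}$ is a homeomorphism onto $D_{s^+}$. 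Since $D_{s^*}$ is compact, $(s, D_{s^*})$ is a compact bislice, so $\iota(S)\subseteq \widetilde{\mathscr{C}(S)}^a = \widetilde{\mathsf{B}}(S)$, which completes the argument.

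There is no genuine obstacle here; the only place where one must be careful is to verify that the spectral action really does land in $\PT_o$ or $\I$, respectively, so that Definition~\ref{def:act_restr1} is satisfied and Proposition~\ref{prop:morphism_range} is legitimately applicable—but this is precisely the content of Proposition~\ref{prop:s16a}. Hence the proof is entirely a matter of citing and combining the appropriate earlier statements.
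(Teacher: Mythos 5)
Your proposal is correct and follows essentially the same route as the paper: the paper's proof likewise derives the first claim from Proposition~\ref{prop:morphism_range}(1) and the second from Proposition~\ref{prop:morphism_range}(2) together with Proposition~\ref{prop:s16a} (the spectral action of a birestriction semigroup being by partial homeomorphisms). You simply spell out in more detail the chain of citations that the paper compresses into two sentences.
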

 
\begin{proof}
The first claim follows from  Proposition \ref{prop:morphism_range}(1). The second claim follows from Proposition \ref{prop:morphism_range}(2) and the fact that the spectral action of a birestriction semigroup is by local homeomorphisms, see Proposition \ref{prop:s16a}.
\end{proof}

\begin{remark} \label{rem:identity} Let $S$ be a restriction semigroup. If it has local units, Proposition \ref{prop:embedding} gives an embedding of $S$ into
${\mathscr C}(S)^a$. Otherwise, $S^1 = S\cup \{1\}$, where $1\not\in S$ is an external identity, is a restriction monoid where $1^*=1$. Since $S^1$ has local units, there is an embedding 
\begin{equation}\label{eq:def_iota15}
\iota\colon S^1\to {\mathscr C}(S^1)^a, \quad s\mapsto (s, D_{s^*}),
\end{equation}
which restricts to an embedding of $S$ into ${\mathscr C}(S^1)^a$.
\end{remark}

The following example shows that the range of the compact slice $\iota(s) = (s, D_{s^*})$ may not be an open set.
\begin{example}\label{ex:range} Let $s\in PT({\mathbb N}_0)$ be the partial map with ${\mathrm{dom}}(s) = \{0\}$ and such that $s(0)=1$.
Define $E$ to be the subsemilattice of the projection semilattice of $PT({\mathbb N}_0)$ consisting of the empty map $\varnothing$, the identity map ${\mathrm{id}}_{\{0\}}$ on $\{0\}$, and the identity maps ${\mathrm{id}}_A$, where $A$ is a cofinite subset of ${\mathbb N}$ (so that $0\not\in A)$ which contains $1$. Then $S = \{s\} \cup E$ is a restriction subsemigroup of  $PT({\mathbb N}_0)$, and it has local units. The semigroup $S$ is not closed with respect to the $^+$ operation on $PT({\mathbb N}_0)$, since ${\mathrm{id}}_{\{1\}} = s^+ \not\in S$. 
We have that $D_{s^*}$ is the set of filters of $E$ which contain $0$, so that $D_{s^*}$ consists of only one element, $\{0\}^{\uparrow} = \{0\}$. Therefore, $(s, D_{s^*}) = \{[s,\{0\}]\}$, so that $\ran(s, D_{s^*}) = \beta_s(D_{s^*})$ is a singleton, $\{{\mathrm F}\}$, where ${\mathrm F}$ is the filter of $E$ consisting of all $e\in E$ which contain $1$. Since there is no smallest element of this filter, it is not principal.
The set $\{{\mathrm F}\}$ is not open in $\widehat{E}$ since any open set is a union of basic open sets, and a basic open set $D_{e;f_1,\dots, f_k}$ (where $k\geq 0$) contains the principal filter $e^{\uparrow}$. We have shown that $\ran(s, D_{s^*})$ is not an open set.
\end{example}

\subsection{The universal property of the universal Booleanization}
The following result provides the universal property of the universal Booleanization of a restriction semigroup with local units.

\begin{theorem} \label{th:univ_main} (Universal property of the universal Booleanization of a restriction semigroup with local units) Let $S$ be a restriction semigroup with local units, $T$ a Boolean restriction semigroup with local units, and $\alpha\colon S\to T$ a $(\cdot\,,^*)$-morphism, such that the morphism $\alpha|_{P(S)}\colon P(S)\to P(T)$ is non-degenerate. Then there is a morphism $\psi\colon {\mathsf{B}}(S)\to T$ of Boolean restriction semigroups such that $\alpha = \psi\iota$.
\end{theorem}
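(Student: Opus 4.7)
My approach is to construct $\psi$ directly on compact slices of $\mathscr{C}(S)$, combining $\alpha$ on the $S$-part with the Stone-dual extension of $\alpha|_{P(S)}$ on projections.

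First, applying Proposition \ref{prop:universal_sem} to the non-degenerate semilattice morphism $\alpha|_{P(S)}\colon P(S)\to P(T)$ (whose codomain is the generalized Boolean algebra $P(T)$ by (BR2)), I obtain a non-degenerate morphism of generalized Boolean algebras $\tilde{\alpha}\colon \mathsf{B}(P(S)) \to P(T)$ with $\tilde{\alpha}(D_e) = \alpha(e)$ for every $e\in P(S)$. By Proposition \ref{prop:ample}(1) and Lemma \ref{lem:homeo}, $\mathsf{B}(P(S)) = \mathcal{B}(\widehat{P(S)})$ coincides with the algebra of projections of $\mathsf{B}(S)$, so this already declares $\psi$ on $P(\mathsf{B}(S))$. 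For a basic compact slice $(s,U)\in \mathsf{B}(S)$ with $U \subseteq D_{s^*}$, I set
$$
\psi(s,U) \;=\; \alpha(s)\,\tilde{\alpha}(U) \in T,
$$
noting $\tilde{\alpha}(U)\leq \tilde{\alpha}(D_{s^*}) = \alpha(s^*) = \alpha(s)^*$. A general compact slice is a finite pairwise-compatible join of basic ones; I would extend $\psi$ additively to joins, using that $(\cdot,^*)$-morphisms preserve the compatibility relation $\smile$ and that (BR1) provides the required joins in $T$.

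The main obstacle is well-definedness on a single slice. Suppose $(s,U) = (t,U)$ as compact slices in $\mathsf{B}(S)$; then for each $\varphi\in U$ there exists $u_\varphi \leq s,t$ with $\varphi \in D_{u_\varphi^*}$. Compactness of $U$ yields a finite cover $U = U_1\cup\cdots\cup U_n$ by sets $U_i = D_{u_i^*}\cap U$ with $u_i \leq s,t$. For each $i$, since $\tilde{\alpha}(U_i)\leq \tilde{\alpha}(D_{u_i^*}) = \alpha(u_i^*)$ and $u_i = su_i^* = tu_i^*$, one computes
$$
\alpha(s)\tilde{\alpha}(U_i) \;=\; \alpha(s)\alpha(u_i^*)\tilde{\alpha}(U_i) \;=\; \alpha(u_i)\tilde{\alpha}(U_i) \;=\; \alpha(t)\tilde{\alpha}(U_i).
$$
Summing over $i$ and invoking right-distributivity (BR3) in $T$ yields $\alpha(s)\tilde{\alpha}(U) = \alpha(t)\tilde{\alpha}(U)$. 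Applying this same argument to slices of the form $(s_i, U_i\cap U_j) = (s_j, U_i\cap U_j)$ also shows that compatibility of the building blocks in $\mathsf{B}(S)$ forces compatibility of their images in $T$, legitimizing the extension to joins.

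Finally, to check $\psi$ is a morphism of Boolean restriction semigroups, preservation of $^*$ follows from identity \eqref{eq:rule1r} applied in $T$: since $\tilde{\alpha}(U)$ is a projection below $\alpha(s)^*$, $(\alpha(s)\tilde{\alpha}(U))^* = \alpha(s)^*\tilde{\alpha}(U) = \tilde{\alpha}(U) = \psi((s,U)^*)$, where $(s,U)^* = U$ as a projection of $\mathsf{B}(S)$. Preservation of products reduces to the naturality identity $\tilde{\alpha}(U)\alpha(t) = \alpha(t)\tilde{\alpha}(\beta_t^{-1}(U))$, which I verify for $U = D_e$ (the general case following from the generalized Boolean algebra structure) using $\beta_t^{-1}(D_e) = D_{(et)^*}$ from Example \ref{ex:spectral} and \eqref{eq:axioms_star_restr} applied in $T$:
$$
\alpha(e)\alpha(t) \;=\; \alpha(t)(\alpha(e)\alpha(t))^* \;=\; \alpha(t)\alpha((et)^*) \;=\; \alpha(t)\tilde{\alpha}(D_{(et)^*}).
$$
Combined with the basic-slice product formula $(s,U)(t,V) = (st, V\cap \beta_t^{-1}(U))$, this gives $\psi((s,U)(t,V)) = \psi(s,U)\psi(t,V)$. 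The restriction $\psi|_{P(\mathsf{B}(S))} = \tilde{\alpha}$ is a non-degenerate morphism of generalized Boolean algebras by construction, and $\psi\iota(s) = \psi(s,D_{s^*}) = \alpha(s)\tilde{\alpha}(D_{s^*}) = \alpha(s)\alpha(s^*) = \alpha(s)$, completing the proof.
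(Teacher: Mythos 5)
Your construction is correct in substance, but it takes a genuinely different route from the paper's. The paper works on the dual side: it invokes the non-commutative Stone duality of \cite[Theorem 9.3]{Kud25} to replace $T$ by $\mathcal{C}^a$ for an ample category $\mathcal{C}$, and then builds a cofunctor $(\mu,f,\rho)\colon \mathscr{C}(S)\rightsquigarrow \mathcal{C}$, verifying the action axioms, continuity of $\mu$, and that $\mathbf{F}_*$ sends compact slices to compact slices; the morphism $\psi$ is then produced by the duality. You instead stay entirely inside the semigroups, writing $\psi(s,U)=\alpha(s)\,\tilde{\alpha}(U)$ on basic compact slices and extending over joins. Your route is more elementary and self-contained (no duality theorem, no cofunctor formalism) and yields an explicit formula for $\psi$; the paper's route outsources the combinatorics to \cite{Kud25} and has the side benefit that the range-semigroup variant (Theorem \ref{th:univ_bool}) follows by checking that the same cofunctor preserves ranges. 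A few of your steps are compressed and should be written out, though none of them fails: (i) the join in ``summing over $i$'' sits in the right-hand factor of the product, so you need left distributivity of multiplication over joins, which is \cite[Lemma 5.6]{Kud25} rather than (BR3); (ii) well-definedness of the additive extension requires, beyond compatibility of the images of the building blocks, independence of the chosen decomposition of a compact slice into basic ones --- this does follow by refining two decompositions against each other and applying your single-slice lemma to $(s_i,U_i\cap V_j)=(t_j,U_i\cap V_j)$ (the key observation being that two basic slices contained in a common slice must agree over the intersection of their domains), but it is a separate check from the compatibility statement you make; (iii) the passage from $U=D_e$ to general compact-open $U$ in the naturality identity $\tilde{\alpha}(U)\alpha(t)=\alpha(t)\tilde{\alpha}(\beta_t^{-1}(U))$ uses that both sides preserve finite joins and that right multiplication by $\alpha(t)$ sends disjoint projections to elements with zero meet (so that values on the sets $D_{e;f_1,\dots,f_k}$ are determined by values on the $D_e$); this is true in a Boolean restriction semigroup via the identity $p\,t\,(qt)^*=p(qt)=(pq)t$, but deserves a line.
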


\begin{proof} Using non-commutative Stone duality \cite[Theorem 9.3]{Kud25} (which is briefly reviewed in Section \ref{subs:duality}),  we can assume that $T$ is isomorphic to $\mC^a$,
where $\mC$ is  an ample category. Then $P(T)$ is the generalized Boolean algebra ${\mathcal B}(\mC^{(0)})$ of compact-open subsets of $\mC^{(0)}$.
Now, 
$$\alpha|_{P(S)} \colon P(S)\to {\mathcal B}(\mC^{(0)})
$$
is a non-degenerate morphism of meet-semilattices from $P(S)$ to the generalized Boolean algebra ${\mathcal B}(\mC^{(0)})$, and Proposition \ref{prop:universal_sem} implies that it extends
to a non-degenerate morphism of generalized Boolean algebras $\psi \colon {\mathsf{B}}(P(S))\to {\mathcal B}(\mC^{(0)})$. Stone duality for generalized Boolean algebras (see Theorem \ref{th:duality_sem}) now implies that 
$$
\psi^{-1} \colon \Spec({\mathcal B}(\mC^{(0)})) \to \Spec({\mathsf{B}}(P(S)))
$$ 
is a proper and continuous map. It follows from Theorem \ref{th:duality_sem} that prime filters of the generalized Boolean algebra ${\mathcal B}(\mC^{(0)})$ are of the form 
$${\mathrm F}_x = \{U \in {\mathcal B}(\mC^{(0)})\colon x\in U\}, \quad \text{where} \quad x\in X, 
$$
and $\Spec({\mathcal B}(\mC^{(0)}))$ is homeomorphic to $\mC^{(0)}$ via the map  ${\mathrm F}_x\mapsto x$.
Moreover, if ${\mathrm F}_x$ is a prime filter of the generalized Boolean algebra ${\mathcal B}(\mC^{(0)})$ then 
$$
\psi^{-1}({\mathrm F}_x)\cap P(S) = \alpha|_{P(S)}^{-1}({\mathrm F}_x) = \alpha^{-1}({\mathrm F}_x)\cap P(S).
$$
It follows from Proposition \ref{prop:7o} that the map
$$\Spec({\mathsf{B}}(P(S)))\to \widehat{P(S)}, \quad {\mathrm F}\mapsto {\mathrm F}\cap P(S)
$$
is a homeomorphism. Therefore, the map
$$
\alpha|_{P(S)}^{-1}\colon   \Spec({\mathsf{B}}(P(S))) \to \widehat{P(S)}, \quad {\mathrm F}_x\mapsto \alpha|_{P(S)}^{-1}({\mathrm F}_x) = \alpha^{-1}({\mathrm F}_x)\cap P(S)
$$
is a proper and continuous map.  In view of \cite[Theorem 9.3]{Kud25}, it suffices to prove that there is a cofunctor 
$${\bf F} = (\mu, f,\rho)\colon {\mathscr{C}}(S) \rightsquigarrow {\mathcal{C}}.$$
It is convenient to identify ${\mathscr C}(S)^{(0)}$ with $\widehat{P(S)}$ via the isomorphism of Lemma \ref{lem:homeo}. Let the map $f\colon \mC^{(0)} \to \widehat{P(S)}$ be defined by
\begin{equation}\label{eq:h1}
f(x) = \alpha|_{P(S)}^{-1}({\mathrm F}_x).
\end{equation}
We now define the action $(\mu,f)$ of ${\mathscr{C}}(S)$ on ${\mathcal C}^{(0)}$. Let $x\in \mC^{(0)}$ and $s\in S$ be such that $\alpha(s^*) = \alpha(s)^* \in {\mathrm F}_x$. Then $[s, f(x)] \in  {\mathscr{C}}(S)$, and we set
\begin{equation}\label{eq:aux5s4}
[s, f(x)] \star x= \mu([s, f(x)],x) = \ran(\alpha(s)x).
\end{equation}
Note that $\alpha(s)$ is a compact slice of $\mC$ and $x\in \dom(\alpha(s)) = \alpha(s)^*$. Therefore, $\alpha(s)x$ is the only $y\in \alpha(s)$ satisfying $\dom(y) = x$.
We verify that $\mu$ is indeed an action. 
We first observe that
\begin{align*}
\alpha(e) \in {\mathrm F}_{\ran(\alpha(s)x)} & \Leftrightarrow  \ran(\alpha(s)x) \in \alpha(e)  \\
& \Leftrightarrow \alpha(s)x = \ran(\alpha(s)x)\alpha(s)x \in \alpha(e)\alpha(s) = \alpha(es).
\end{align*}

For (A1), we need to show that 
$$f(\ran(\alpha(s)x)) = \ran([s, f(x)]).$$
Using the above calculation and \eqref{eq:h1}, we have:
\begin{align*}
e\in f(\ran(\alpha(s)x)) & \Leftrightarrow \alpha(e) \in {\mathrm F}_{\ran(\alpha(s)x)}\\
& \Leftrightarrow \alpha(s)x \in \alpha(es)\\
& \Leftrightarrow x\in \alpha(es)^* \Leftrightarrow (es)^* \in f(x) \\
& \Leftrightarrow e\in \beta_s(f(x)) = \ran([s, f(x)]),
\end{align*}
as desired.
For (A2), we need to show that
$$
([t, \beta_s(f(x))][s,f(x)])\star x = [t, \beta_s(f(x))]\star([s,f(x)] \star x).
$$
The left-hand side equals
$$
[ts, f(x)] \star x = \ran(\alpha(st)x),
$$
while the right-hand side simplifies to
$$
[t, \beta_s(f(x))]\star \ran(\alpha(s)x) = \ran(\alpha(t)\alpha(s)x).
$$
Since $\alpha$ is a morphism, (A2) follows.

(A3) follows from the calculation for $x\in \mC^{(0)}$:
$$
f(x) \star x = [e, f(x)] \star x = \ran(\alpha(e)x) = \ran(x) = x.
$$
 
Let us show that $\mu$ is continuous. Suppose that $[s,f(x)]\star x \in A$ where $A$ is an open set in $\mC^{(0)}$. Then there is a basic open set $\alpha(e)\setminus(\alpha(f_1) \cup \dots \cup \alpha(f_n))$ of $\mC^{(0)}$ such that 
$$[s,f(x)]\star x \in \alpha(e)\setminus (\alpha(f_1)\cup\dots \cup\alpha(f_n)) \subseteq A.$$
 It suffices to show that
there is a neighbourhood, $N$, of $([s,f(x)],x)$ such that $\mu(N)\subseteq \alpha(e)$ and $\mu(N)\cap \alpha(f_i)=\varnothing$ for all $i$. We calculate:
\begin{align*}
[s,f(x)]\star x \in \alpha(e) & \Leftrightarrow  x \in \alpha(es)^*\\
& \Leftrightarrow \alpha((es)^*) \in {\mathrm F}_x\\
& \Leftrightarrow (es)^*\in \alpha|_{P(S)}^{-1}({\mathrm F}_x)\\
& \Leftrightarrow (es)^* \in f(x)\\
& \Leftrightarrow f(x) \in D_{(es)^*}.
\end{align*}
Setting $$B=  \alpha((es)^*)\setminus (\alpha((f_1s)^*)\cup \dots \cup \alpha((f_ns)^*))$$ and 
$$N = \{([s, f(x)], x)\colon x\in B\},$$
we have that
$$N = ((s, D_{(es)^*})\times B) \cap ({\mathscr{C}}(S) \ltimes {\mathcal C}^{(0)}).$$ 
Hence $N$ is an open set and $\mu(N)\subseteq \alpha(e)\setminus (\alpha(f_1)\cup\dots \cup\alpha(f_n))$, as desired.

We finally define $\rho \colon {\mathscr{C}}(S) \ltimes {\mathcal C}^{(0)} \to {\mathcal C}$ by
$$
\rho([s, f(x)],x) = \alpha(s)x.
$$
The routine verification that $\rho$ is a functor between topological categories, which is identical on units, is omitted. We show that ${\bf F}_*$ maps compact slices onto compact slices.
By the definition of the product topology on ${\mathscr{C}}(S) \ltimes {\mathcal C}^{(0)}$ and bearing in mind Remark \ref{rem:basis}, it suffices to show that the set 
\begin{equation}\label{eq:14o1}
\rho\{([s,f(x)], x) \colon x\in \alpha(s^*)\setminus (\alpha(f_1)\cup\dots \cup\alpha(f_n))\}
\end{equation}
is a compact slice, where $n\geq 0$ and $f_i\leq s^*$ for all $i$. But this is clearly so, since the set in \eqref{eq:14o1} equals
$$\{\alpha(s)x \colon x\in \alpha(s^*)\setminus (\alpha(f_1)\cup\dots \cup\alpha(f_n))\} = \alpha(s)\setminus (\alpha(sf_1)\cup\cdots \cup \alpha(sf_n)).
$$
Since the latter set coincides with $\alpha(s)(\alpha(s^*)\setminus (\alpha(f_1)\cup\dots \cup\alpha(f_n))$, it belongs to  ${\mathscr C}^a$. This completes the proof.
\end{proof}

For range semigroups, we have the following similar result.

\begin{theorem} (Universal property of the universal Booleanization of a range semigroup) \label{th:univ_bool} Let $S$ be a range semigroup, $T$ a Boolean range semigroup, and $\alpha\colon S\to T$ a $(\cdot\,,^*,^+)$-morphism, such that the morphism $\alpha|_{P(S)}\colon P(S)\to P(T)$ is non-degenerate. Then there is a morphism $\psi\colon {\mathsf{B}}(S)\to T$ of Boolean range semigroups such that $\alpha = \psi\iota$.
\end{theorem}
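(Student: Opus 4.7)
The plan is to reduce the statement to Theorem \ref{th:univ_main} and then verify $^+$-preservation. Because $S$ is a range semigroup, the identity $s^+s=s$ shows that $S$ has local units; and because $T$ is a Boolean range semigroup, it is in particular a Boolean restriction semigroup. Viewing $\alpha$ as a $(\cdot\,,^*)$-morphism, the hypotheses of Theorem \ref{th:univ_main} are satisfied, and produce a morphism $\psi\colon {\mathsf B}(S)\to T$ of Boolean restriction semigroups with $\alpha=\psi\iota$.

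To upgrade $\psi$ to a morphism of Boolean range semigroups I need $\psi(V^+)=\psi(V)^+$ for every compact slice $V\in{\mathsf B}(S)$. On the image of $\iota$ this is painless: Proposition \ref{prop:morphism_range}(1) applied to the spectral action says that $\iota$ itself is a $(\cdot\,,^*,^+)$-morphism, so
$$\psi(\iota(s))^+=\alpha(s)^+=\alpha(s^+)=\psi(\iota(s^+))=\psi(\iota(s)^+).$$

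For a general compact slice $V=(s,U)=\iota(s)\cdot U$, where $U\leq \iota(s)^*=D_{s^*}$ is a projection, one computes $V^+=\beta_s(U)$ inside ${\mathscr C}(S)$ and $\psi(V)^+=\theta_{\alpha(s)}(\psi(U))$ inside the strongly ample category $\mC$ corresponding to $T$, where $\theta_{\alpha(s)}(x)=\ran(\alpha(s)x)$. Since $\psi$ restricted to projections is the pullback $f^{-1}$ along the Stone-dual map $f\colon \mC^{(0)}\to \widehat{P(S)}$ of $\alpha|_{P(S)}$, the required equality reduces to the set-theoretic identity $\theta_{\alpha(s)}(f^{-1}(U))=f^{-1}(\beta_s(U))$. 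The inclusion $\subseteq$ is immediate from the commutation $f\circ\theta_{\alpha(s)}=\beta_s\circ f$, which is exactly axiom (A1) of the cofunctor constructed in the proof of Theorem \ref{th:univ_main}.

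The reverse inclusion is the main technical obstacle: given $y\in\mC^{(0)}$ with $f(y)\in\beta_s(U)$, one must produce $z\in f^{-1}(U)\cap \alpha(s^*)$ satisfying $\theta_{\alpha(s)}(z)=y$. The $^+$-preservation of $\alpha$ enters here, since it guarantees $\alpha(s^+)=\alpha(s)^+=\ran(\alpha(s))$, so that any $y$ with $f(y)\in D_{s^+}$ automatically admits some preimage $z_0\in\alpha(s^*)$ under $\theta_{\alpha(s)}$. The required refinement $f(z)\in U$ for some preimage $z$ is then obtained by invoking the surjectivity of $\beta_s\colon D_{s^*}\to D_{s^+}$—which holds for range semigroups by Proposition \ref{prop:morphism_range}(1)—together with the structural compatibility between the fibers of $\beta_s$ and of $\theta_{\alpha(s)}$ mediated by the commuting square involving $f$.
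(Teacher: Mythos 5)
Your first step --- deducing local units from $s^+s=s$, viewing $\alpha$ as a $(\cdot\,,{}^*)$-morphism, and invoking Theorem \ref{th:univ_main} to obtain $\psi$ with $\alpha=\psi\iota$ --- is exactly how the paper begins, and the computation of $\psi(\iota(s))^+$ on the image of $\iota$ is fine. The second half, however, diverges from the paper in a way that matters. The paper never asserts the pointwise identity you aim for: it only computes $\ran({\bf F}_*(s,U))=\{\ran(\alpha(s)x)\colon x\in f^{-1}(U)\}$, checks that this set is compact-open, and appeals to the duality framework of \cite{Kud25} for what a morphism of Boolean range semigroups is required to satisfy. Your plan instead hinges on the two-sided identity $\theta_{\alpha(s)}(f^{-1}(U))=f^{-1}(\beta_s(U))$, and the step where you obtain the reverse inclusion from ``the structural compatibility between the fibers of $\beta_s$ and of $\theta_{\alpha(s)}$ mediated by the commuting square'' is not an argument: the relation $f\circ\theta_{\alpha(s)}=\beta_s\circ f$ gives only the inclusion $\subseteq$, and nothing forces a point of $\beta_s(U)$ to lift along $f$ into the correct fiber of $\theta_{\alpha(s)}$ --- $f$ need not be surjective, and $\ran$ restricted to the slice $\alpha(s)$ need not be injective, so the fibers of $\theta_{\alpha(s)}$ need not surject onto the fibers of $\beta_s$.

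In fact the reverse inclusion is false, so this gap cannot be patched. Take $S=\{1,e,s\}\subseteq PT(\{1,2\})$, where $s$ is the total constant map onto $1$, $e=s^+=\mathrm{id}_{\{1\}}$, $s^*=1$, $es=s$, $se=e$; this is a three-element range monoid. Let $T=PT(\{1,2,3\})$ and let $\alpha$ send $1\mapsto\mathrm{id}_{\{1,2,3\}}$, $e\mapsto\mathrm{id}_{\{1,2\}}$, $s\mapsto\sigma$ with $\sigma(1)=1$, $\sigma(2)=2$, $\sigma(3)=1$; one checks that $\alpha$ is a $(\cdot\,,{}^*,{}^+)$-morphism with non-degenerate projection part. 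Here $\widehat{P(S)}=\{1^{\uparrow},e^{\uparrow}\}$ and $f(1)=f(2)=e^{\uparrow}$, $f(3)=1^{\uparrow}$. For $U=D_{1}\setminus D_{e}=\{1^{\uparrow}\}$ one has $\beta_s(U)=\{e^{\uparrow}\}=D_e$, hence $f^{-1}(\beta_s(U))=\{1,2\}$, whereas $\theta_{\alpha(s)}(f^{-1}(U))=\sigma(\{3\})=\{1\}$. Equivalently, for the compact slice $V=\iota(s)\setminus\iota(e)=(s,U)$ one has $V^+=D_e$ in ${\mathsf B}(S)$, while any morphism of Boolean restriction semigroups extending $\alpha$ is forced to send $V$ to $\alpha(s)\cdot\mathrm{id}_{\{3\}}=\sigma|_{\{3\}}$, whose range projection is $\mathrm{id}_{\{1\}}\subsetneq\mathrm{id}_{\{1,2\}}=\psi(V^+)$. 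Thus only the inequality $\psi(V)^+\leq\psi(V^+)$ (your inclusion $\subseteq$) can hold in general; strict elementwise $^+$-preservation is unattainable, which is why the paper's proof verifies the weaker condition on the cofunctor ${\bf F}$ (compact-openness of $\ran({\bf F}_*(s,U))$) demanded by \cite[Definition 5.13]{Kud25} rather than the identity you propose.
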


\begin{proof} By Theorem \ref{th:univ_main}, there is a $(\cdot\,, ^*)$-morphism $\psi\colon {\mathsf{B}}(S)\to T$ such that $\alpha = \psi\iota$. We show that $\psi$ also preserves the $^+$ operation. 
For this, we verify that the cofunctor ${\bf F} = (\mu, f,\rho)\colon {\mathscr{C}}(S) \rightsquigarrow {\mathcal{C}}$, constructed in the previous proof, has the property that ${\bf F}_*$ preserves the ranges of compact slices. Let $A=(s,U)$ be a compact slice, where $s\in S$ and $U\subseteq D_{s^*}$ is a compact-open set. Since ${\bf F}_*(A) = \{\rho(s,x)\colon s\in A, (s,x)\in {\mathscr C}(S) \ltimes \mC^{(0)}\}$, we have:
$$
{\bf F}_*(s, U) = \{\rho([s,f(x)],x) \colon f(x) \in U\} = \{\alpha(s)x \colon x\in f^{-1}(U)\}.
$$
Therefore,
$$\ran({\bf F}_*(s, U)) = \{\ran(\alpha(s)x) \colon x\in f^{-1}(U)\},
$$
which is compact-open, since $f^{-1}$ is proper and continuous, and $\ran$ and $\pr$ are continuous (so they take compact sets to compact sets, as we work with Hausdorff spaces).
\end{proof}

Let now $S$ be a birestriction semigroup. Propositions \ref{prop:s16a} and \ref{prop:range27} imply that the category $\mC$ is in this case biample.  Applying \cite[Theorems 9.6 and 9.8]{Kud25}, one can readily adapt the above proof to show the following. 

\begin{theorem} (Universal property of the restricted universal Booleanization of a birestriction or an inverse semigroup)
Let $S$ be a birestriction semigroup (resp. an inverse semigroup), $T$ a Boolean birestriction semigroup (resp. a Boolean inverse semigroup), and $\alpha\colon S\to T$ a $(\cdot\,,^*,^+)$-morphism (resp. a semigroup morphism), such that the morphism $\alpha|_{P(S)}\colon P(S)\to P(T)$ is non-degenerate. Then there is a mor\-phism $\psi\colon \widetilde{{\mathsf{B}}}(S)\to T$ of Boolean birestriction  semigroups (resp. of Boolean inverse semigroups) such that $\alpha = \psi\iota$.
\end{theorem}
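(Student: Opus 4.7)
The plan is to follow the strategy of Theorem~\ref{th:univ_bool} with three modifications appropriate to the biample setting. First I would invoke the refined non-commutative Stone dualities of \cite[Theorems 9.6 and 9.8]{Kud25}: in the birestriction case these identify $T$ with $\widetilde{\mathcal C}^a$ for a biample topological category $\mathcal C$, and in the inverse case with $\widetilde{\mathcal G}^a$ for an ample \'etale groupoid $\mathcal G$. Under either identification, $P(T)$ becomes the generalized Boolean algebra $\mathcal B(\mathcal C^{(0)})$ of compact-open subsets of the unit space, so that $\alpha|_{P(S)}$ dualizes, just as in the previous proofs, to a proper continuous map $f\colon \mathcal C^{(0)}\to \widehat{P(S)}$ via Proposition~\ref{prop:universal_sem} and Theorem~\ref{th:duality_sem}.

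Next I would repeat the construction of the cofunctor ${\bf F}=(\mu,f,\rho)\colon \mathscr{C}(S)\rightsquigarrow \mathcal C$ from the proof of Theorem~\ref{th:univ_main}, with $[s,f(x)]\star x=\ran(\alpha(s)x)$ and $\rho([s,f(x)],x)=\alpha(s)x$. Propositions~\ref{prop:s16a} and~\ref{prop:range27} guarantee that $\mathscr{C}(S)$ is now biample, so compact bislices form a basis of its topology. The key new verification is that ${\bf F}_\ast$ sends compact bislices of $\mathscr{C}(S)$ to compact bislices of $\mathcal C$: since $\alpha$ is a $(\cdot,^*,^+)$-morphism into the Boolean birestriction semigroup $T$, each $\alpha(s)$ is already a compact bislice of $\mathcal C$, and combining this with the identity ${\bf F}_\ast(s,U)=\alpha(s)\cdot f^{-1}(U)$ established in the proof of Theorem~\ref{th:univ_bool}, together with continuity and properness of $\ran$, shows that the restriction of ${\bf F}_\ast$ to $\widetilde{\mathscr{C}(S)}^a$ lands in $\widetilde{\mathcal C}^a$ and preserves both $^*$ and $^+$.

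In the inverse case, $\mathscr{C}(S)$ is an \'etale groupoid (so bislices coincide with slices) and the cofunctor ${\bf F}$ is automatically a continuous partial functor of groupoids. The induced morphism $\psi\colon \widetilde{{\mathsf B}}(S)\to T$ then preserves inverses automatically, since every element of $\widetilde{{\mathsf B}}(S)$ is a finite compatible join of generators $\iota(s)$ on which $\psi$ coincides with $\alpha$, and $\alpha$ is a semigroup morphism between inverse semigroups, so $\psi(\iota(s)^{-1})=\psi(\iota(s^{-1}))=\alpha(s^{-1})=\alpha(s)^{-1}=\psi(\iota(s))^{-1}$. The main obstacle, as in Theorem~\ref{th:univ_bool}, will be the bookkeeping that ${\bf F}_\ast$ sends compact bislices to compact bislices rather than merely compact slices; once this is in place, the existence, uniqueness, and the equality $\alpha=\psi\iota$ follow directly from \cite[Theorems 9.6 and 9.8]{Kud25}.
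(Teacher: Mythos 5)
Your proposal follows the same route as the paper: the paper's own proof of this theorem is only a two-line remark observing that $\mathscr{C}(S)$ is biample (by Propositions~\ref{prop:s16a} and~\ref{prop:range27}) and that the proof of Theorem~\ref{th:univ_main} can be ``readily adapted'' using the duality between biample categories and Boolean birestriction semigroups from \cite[Theorems 9.6 and 9.8]{Kud25} --- which is exactly the adaptation you spell out, including the key new check that ${\bf F}_*$ carries compact bislices to compact bislices. One small correction: your parenthetical claim that in an \'etale groupoid bislices coincide with slices is false (see the pair-groupoid example in the paper, where slices correspond to $PT(X)$ and bislices to $I(X)$; this is precisely why the paper distinguishes ${\mathsf B}(S)$ from $\widetilde{{\mathsf B}}(S)$ even for inverse $S$). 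The slip does not affect your argument, since you handle inverse-preservation of $\psi$ separately via the generators $\iota(s)$, though there every element of $\widetilde{{\mathsf B}}(S)$ is a finite compatible join of elements of the form $\iota(s)\iota(e)$ cut down by compact-open sets, not of the $\iota(s)$ alone.
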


A variant of the above result for the universal property of $\widetilde{{\mathsf{B}}}(S)$ for an inverse semigroup with zero $S$ was proved in \cite{L20} using different techniques.

\subsection{The finitary case} Suppose that $S$ is a restriction semigroup with local units such that each principal downset of $P(S)$ is finite. Then every filter of $P(S)$ is principal, so that $\widehat{P(S)}$ is in a bijection with $P(S)$ (via $e^{\uparrow}\mapsto e$). In the special case when $S$ is an inverse semigroup, it is easy to verify and well known that the category ${\mathscr C}(S)$ is a groupoid and coincides with the underlying groupoid of $S$ (see, e.g., \cite[Definition 5.14]{St10}). More generally, if $S$ is a range semigroup then  ${\mathscr C}(S)$ similarly coincides with the underlying category of $S$. In the general case, we have the following result.

\begin{proposition} \label{prop:finitary} Let $S$ be a restriction semigroup with local units such that each principal downset of $P(S)$ is finite. The following statements hold.
\begin{enumerate}
\item
${\mathscr C}(S) = \{[s, (s^{*})^{\uparrow}]\colon s\in S\}$
and $s\to [s, (s^{*})^{\uparrow}]$ is a bijection between $S$ and ${\mathscr C}(S)$.
\item Let $\beta$ be the spectral action of $S$, $s\in S$ and $e\in {\mathrm{dom}}(\beta_s)$. Then 
$\beta_s({e^{\uparrow}}) = {f^{\uparrow}}$ where $f$ is the smallest projection $h$ such that $(hs)^*\geq e$.
\item Suppose that $S$ is a range semigroup. Then ${\mathscr C}(S)$ is isomorphic to the underlying category ${\mathscr U}(S)$ of $S$ (as a discrete category).
\end{enumerate}
\end{proposition}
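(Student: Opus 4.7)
The plan is to handle the three parts in order, leaning heavily on the finitary hypothesis to pass between characters and single projections.

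For part (1), I would first observe that since every filter of $P(S)$ is principal, any germ $[s,\varphi] \in {\mathscr C}(S)$ has $\varphi = e^\uparrow$ for some $e \in P(S)$, and $\varphi(s^*) = 1$ forces $s^* \in e^\uparrow$, i.e.\ $e \leq s^*$. Setting $t = se$, I would use $(se)^* = s^*e = e$ (which holds by \eqref{eq:rule1r} together with $e \leq s^*$) to get $t^* = e$; since $t \leq s$ and $e^\uparrow(t^*) = 1$, we have $[s,\varphi] = [t,(t^*)^\uparrow]$. This gives the claimed form. Injectivity is immediate by the argument already used in Proposition~\ref{prop:dense}: if $[s,(s^*)^\uparrow] = [t,(t^*)^\uparrow]$, then $(s^*)^\uparrow = (t^*)^\uparrow$ yields $s^* = t^*$, and the witness $u \leq s,t$ with $u^* \geq s^*$ forces $u^* = s^*$, hence $u = su^* = s$ and symmetrically $u = t$.

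For part (2), I would unwind the definition of $\beta_s$: viewing $\beta_s(e^\uparrow)$ as a filter in $P(S)$, one has $h \in \beta_s(e^\uparrow)$ iff $\beta_s(e^\uparrow)(h) = e^\uparrow((hs)^*) = 1$ iff $(hs)^* \geq e$. So $\beta_s(e^\uparrow) = \{h \in P(S) : (hs)^* \geq e\}$. This is a non-empty filter (characters are non-zero), and by the finitary assumption every such filter is principal. Its minimum $f$ is therefore the smallest $h$ with $(hs)^* \geq e$, and $\beta_s(e^\uparrow) = f^\uparrow$, as claimed.

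For part (3), the range assumption lets me invoke Proposition~\ref{prop:dense}, which already produces an injective functor $\Phi \colon {\mathscr U}(S) \to {\mathscr C}(S)$, $s \mapsto [s,(s^*)^\uparrow]$, onto the subcategory ${\mathrm U}(S)$. Part~(1) upgrades this to a bijective functor. What remains is to verify that $\Phi$ is a topological isomorphism when ${\mathscr U}(S)$ carries the discrete topology, and this is where the main work sits. I would show that ${\mathscr C}(S)$ itself is discrete: for each $e \in P(S)$, the finitary hypothesis makes the set $\{f_1,\dots,f_k\}$ of elements strictly below $e$ finite, so $D_{e;f_1,\dots,f_k}$ is exactly the singleton $\{e^\uparrow\}$, proving $\widehat{P(S)}$ is discrete. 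Consequently each $(s,\{(s^*)^\uparrow\})$ is an open singleton in ${\mathscr C}(S)$, so ${\mathscr C}(S)$ is discrete and $\Phi$ is automatically a homeomorphism.

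The main obstacle is essentially cosmetic rather than conceptual: making sure the rewriting $[s,e^\uparrow] = [se,e^\uparrow]$ in part~(1) respects all of the edge cases where $e \leq s^*$ but $e \neq s^*$, and coordinating the minimum-of-a-principal-filter argument in part~(2) with the translation between characters and principal filters. Once those bookkeeping steps are done cleanly, part~(3) falls out with no further work beyond isolating each $e^\uparrow$ in the spectrum.
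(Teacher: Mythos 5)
Your proposal is correct and follows essentially the same route as the paper's proof: the same rewriting $[s,e^{\uparrow}]=[se,((se)^{*})^{\uparrow}]$ for part (1), the same unwinding of $\beta_s$ plus principality of filters for part (2), and the same appeal to Proposition~\ref{prop:dense} for part (3). Your explicit verification that $D_{e;f_1,\dots,f_k}=\{e^{\uparrow}\}$ makes $\widehat{P(S)}$ (hence ${\mathscr C}(S)$) discrete is a welcome extra detail that the paper only asserts in passing elsewhere, and it is exactly what is needed to read the isomorphism in part (3) topologically.
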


\begin{proof}
(1) By definition, the elements of ${\mathscr C}(S)$ are germs $[s,e^{\uparrow}]$ such that $s^*\in e^{\uparrow}$, which means $s^{*} \geq e$. For such a projection $e$, we have $(se)^* = s^*e=e$, so that  $[s,e^{\uparrow}] = [se, ((se)^*)^{\uparrow}]$. Thus, the elements of ${\mathscr C}(S)$ are germs $[s, (s^*)^{\uparrow}]$ where $s\in S$. To show that the map $s\mapsto [s,(s^*)^{\uparrow}]$ is injective, suppose $[s,(s^*)^{\uparrow}] = [t,(s^*)^{\uparrow}]$. The definition of the equality of germs implies that $s^* = t^*$, and there is $u\leq s,t$ such that $u^*\geq s^*$. Therefore, $s=t$. Since the map $s\to [s, (s^{*})^{\uparrow}]$ is clearly surjective, it is a bijection.

(2) Since $e^{\uparrow} \in {\mathrm{dom}}(\beta_s)$, we have $s^*\geq e$. Then $(se)^* = e$ and $\beta_s(e) = \beta_{se}(e)$. We know that $\beta_s(e^{\uparrow})$ is a principal filter, denote it by $f^{\uparrow}$. By the definition of $\beta_s$, we have $h\in \beta_s(e^{\uparrow}) \Leftrightarrow (hs)^* \in e^{\uparrow}$, which rewrites to $h \geq f \Leftrightarrow (hs)^* \geq e$. It follows that $f$ is the minimum projection $h$, such that $(hs)^* \geq e$, as desired.

(4) The statement follows from Proposition \ref{prop:dense} and part (1).
\end{proof}

Let $S$ be a restriction semigroup with local units such that each principal downset of $P(S)$ is finite. In view of part (2) of Proposition \ref{prop:finitary}, one can define the `range' $s^{\oplus}$ of $s\in S$ as the range of the arrow $[s, (s^*)^{\uparrow}]$ of the category ${\mathscr C}(S)$, that is, to define $s^{\oplus}$ to be the generator of the principal filter $\beta_s((s^*)^{\uparrow})$. This is precisely the minimum projection $e$ such that $(es)^* \geq s^*$. The following example shows that $(S; \,\cdot, ^*,\, ^{\oplus})$ is not, in general, a range semigroup.

\begin{example} Let $X=\{a,b\}$, $f$ be the identity map on $\{a\}$, and $g$ be the map with ${\mathrm{dom}}(g) = a$ such that $g(a)=b$.  Let $S$ be the restriction submonoid of $PT(X)$ generated by $f$ and $g$. Then $S=\{f, g, \varnothing, 1\}$, where $\varnothing$ is the empty map and $1$ is the identity map on $X$. By the definition of the $^*$ operation in $PT(X)$, we have that
$f^* = g^* = f$, $\varnothing^* = \varnothing$ and $1^* = 1$. Note that $S$ is not a $(\cdot\,,^*, ^+)$-submonoid of $PT(X)$ as $g^+$ is the identity map on the set $\{2\}$, which does not belong to $S$. It is readily verified that $\varnothing^{\oplus} = \varnothing$, $f^{\oplus} = f$, $g^{\oplus} = 1$ and $1^{\oplus} = 1$. However, we have:
$(fg)^{\oplus} = \varnothing^{\oplus} = \varnothing \neq f = f^{\oplus} = (fg^{\oplus})^{\oplus}$.
Therefore, the third axiom of \eqref{eq:axioms_plus} is not satisfied for $(S; \,\cdot, ^*,\, ^{\oplus})$, and thus it is not in  a range semigroup.
\end{example}

We conclude this section by recalling that for the restriction semigroup $S$ and its element $s$ from Example \ref{ex:range} the filter $\beta_s ((s^*)^{\uparrow})$  not a principal. This demonstrates that in the general case (with no restrictions imposed on $P(S)$) $\beta_s({\mathrm F})$ does not need to be a principal filter when   ${\mathrm F} \in \widehat{P(S)}$ is a principal filter, so that the operation $s\mapsto s^{\oplus}$ can not be extended to arbitrary restriction semigroups with local units.

%%%
\section{One can see ranges of inductive constellations}\label{s:see_ranges}
\subsection{Inductive constellations} 
Constellations were introduced in \cite{GH09} as analogues of categories, where arrows have domains but do not have ranges. The ESN-type theorem due to Gould and Hollings~\cite{GH09} states that the category of restriction semigroups is isomorphic to the category of inductive constellations. In this section, we explain that every inductive constellation admits a topological representation as an inductive constellations of compact slices of an ample category. 

We begin with the definition of a constellation given in  \cite{GS17, GS22}\footnote{Beware that we use the dual notion to that used in \cite{GS17, GS22}, since we multiply from the right to the left.}.
If $(Q, \cdot)$ is a set with a partial binary operation $\cdot$ then $e\in Q$ is a {\em right identity} for $x\in Q$ if $x\cdot e$ is defined and $x\cdot e=x$. 

\begin{definition} (Constellations)
A  (small) {\em constellation} is a set $Q$ equipped with a partial binary operation $\cdot$ satisfying the following:
\begin{enumerate}
\item[(Q1)] if $(s\cdot t)\cdot u$ exists then $s\cdot (t\cdot u)$ exists, and then $s\cdot (t\cdot u) = (s\cdot t)\cdot u$;
 \item[(Q2)] if $s\cdot t$ and $t\cdot u$ exist then so does $(s\cdot t)\cdot u$;
\item[(Q3)] for each $s\in Q$, there is a unique right identity, denoted by $s^*$.
\item[(Q4)] for each $s,t\in Q$, if there exists $s^*\cdot t$ then $s^* \cdot t =t$. 
\end{enumerate} 
\end{definition}

The set $P(Q)=\{s^* \colon s\in Q\}$ is called the set of {\em projections} of $Q$. Since $^*$ can be viewed as a unary operation, constellations can be viewed as partial algebras $(Q; \,\cdot, ^*)$.
The following definition combines \cite[Definitions 3.1 and 3.3]{GH09}.

\begin{definition} (Inductive constellations) Let $(Q; \,\cdot, \, ^*)$ be a constellation and $\leq$ a partial order on $Q$. We say that $(Q; \, \cdot, \, ^*, \leq)$ is an {\em ordered constellation} if:
\begin{enumerate}
\item[(O1)] If $s\leq t$, $u\leq v$ and $s\cdot u$, $t\cdot v$ exist, then $s\cdot u\leq t\cdot v$;
\item[(O2)] If $s\leq t$ then $s^* \leq t^*$;
\item[(O3)] If $e\in P(Q)$ and $s\in Q$ are such that $e\leq s^*$ then there is the {\em restriction} $s|_e$ which is the unique element with the properties $s|_e \leq s$ and $(s|_e)^*=e$;
\item[(O4)] for all $e\in P(Q)$ and $s\in Q$ there exists a {\em corestriction} ${}_{e}|s$, which is the maximum element $t$ such that $t\leq s$ and the product $e\cdot t$ exists.
\item[(O5)] If $s,t\in Q$ are such that there exists $s\cdot t$ and $e\in P(Q)$, then $({}_e|(s\cdot t))^* = ({}_{({}_e|s)^*}|t)^*$.
\item[(O6)] If $e,f \in P(Q)$ and $e|_f$ is defined, then  $e|_f = {}_f|e$.
\end{enumerate} 
An ordered constellation is called {\em inductive} if the following condition holds:
\begin{enumerate}
\item[(I)] $P(Q)$ is a semilattice with $e\wedge f = {}_f|e$.
\end{enumerate}
We sometimes abbreviate $(Q; \, \cdot, ^*, \leq)$ simply by $Q$.
\end{definition}

The next definition combines \cite[Definitions 2.5, 3.8]{GH09}.
\begin{definition} (Ordered radiants and embeddings)
Let $Q$ and $R$ be ordered constellations. A function $\rho\colon Q\to R$ is called an {\em ordered radiant}, if the following conditions hold:
\begin{enumerate}
\item If $s\cdot t$ exists in $Q$ then $\rho(s)\cdot \rho(t)$ exists in $R$, in which case $\rho(s\cdot t) = \rho(s)\cdot \rho(t)$.
\item For all $s\in Q$: $\rho(s^*) = \rho(s)^*$.
\item If $s\leq t$ in $Q$ then $\rho(s)\leq \rho(t)$ in $R$.
\item For all $s\in Q$ and $e\in P(Q)$: $\rho({}_e|s) = {}_{\rho(e)}|\rho(s)$.
\end{enumerate}
We say that an ordered radiant $\rho$ is {\em strong} if (1) and (3) are strengthened to:
\begin{enumerate}
\item[(1')] $s\cdot t$ exists in $Q$ if and only if $\rho(s)\cdot \rho(t)$ exists in $R$; in which case $\rho(s\cdot t) = \rho(s)\cdot \rho(t)$.
\item[(3')]  $s\leq t$ in $Q$ if and only if $\rho(s)\leq \rho(t)$ in $R$.
\end{enumerate}
Note that (3') implies that $\rho$ is necessarily injective, thus a strong ordered radiant is also called an {\em embedding}. 
\end{definition}

\begin{definition} (Isomorphism) Let $Q$ and $R$ be constellations. A function $\rho\colon Q\to R$ will be called an {\em isomorphism}, if it is bijective and is an embedding.
\end{definition}

Let $S$ be a restriction semigroup, whose multiplication is denoted by juxtaposition and natural partial order by $\leq$. Define a partial product $\circ$ on $S$ by
\begin{equation}\label{eq:partial_pr}
s \circ t = \left\lbrace\begin{array}{ll} st, &  \text{ if } s^*t = t;\\
\text{undefined}, & \text{ otherwise.}\end{array}\right..
\end{equation}
It was shown in \cite[Proposition 4.1]{GH09} that $(S; \, \circ, ^*, \leq)$ is an inductive constellation, denoted by ${\bf P}(S)$. It will be convenient to call ${\bf P}(S)$ the inductive constellation {\em associated to} $S$. We note that if $S$ is a range semigroup, $s\circ t$ is defined if and only if $s^*\leq t^+$.

The other way around, let $(Q;\, \cdot, ^*, \leq)$ be an inductive constellation. 
Observe that for $s,t\in Q$ the product $s \cdot {}_{s^*}|t$ exists. Indeed,  $s^*\cdot {}_{s^*}|t$ exists by (O4), $s\cdot s^*$ exists and equals $s$ by (Q3). Thus, in view of (Q1) and (Q2), $(s\cdot s^*)\cdot {}_{s^*}|t = s\cdot (s^* \cdot {}_{s^*}|t) = s\cdot {}_{s^*}|t$ is defined. We can thus define the binary operation $\otimes$ on $Q$, called the {\em pseudoproduct}, by the rule:
$$
s\otimes t = s \cdot {}_{s^*}|t.
$$
It is proved in \cite[Proposition 4.5]{GH09} that $(Q; \, \otimes, \, ^*)$ is a restriction semigroup. It is denoted by ${\bf T}(Q)$.
Furthermore, for an inductive constellation $Q$, we have ${\bf P}({\bf T}(Q)) = Q$, and for a restriction semigroup $S$, we have ${\bf T}({\bf P}(S)) = S$. This gives rise to the following ESN-type theorem due to Gould and Hollings \cite[Theorem 4.13]{GH09}.

\begin{theorem} \label{th:s15a} The category of restriction semigroups with $(\cdot\,,^*)$-morphisms is isomorphic to the category of inductive constellations with ordered radiants as morphisms.
\end{theorem}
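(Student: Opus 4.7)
The plan is to establish mutually inverse functors between the two categories. At the level of objects, the preceding discussion already records that ${\bf P}({\bf T}(Q)) = Q$ and ${\bf T}({\bf P}(S)) = S$, so it suffices to define the assignments on morphisms and check that they are mutually inverse there. In both directions the morphism will literally be the same underlying set map, so functoriality (preservation of identities and composition) is automatic; the substance lies in verifying that this set map satisfies the respective axioms on each side.

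Given a $(\cdot\,,^*)$-morphism $f\colon S\to T$, I would show that $f$, regarded as a map ${\bf P}(S)\to {\bf P}(T)$, is an ordered radiant. Preservation of $^*$ is the hypothesis. If $s\circ t$ exists in ${\bf P}(S)$ then by \eqref{eq:partial_pr} we have $s^*t = t$, so $f(s)^*f(t) = f(s^*t) = f(t)$, whence $f(s)\circ f(t)$ is defined and equals $f(st) = f(s)\circ f(t)$. The natural partial order is preserved because $s\leq t$ means $s = ts^*$, and applying $f$ yields $f(s) = f(t)f(s)^*$. The nontrivial point is preservation of corestrictions, and for this I would first establish the explicit identification
\[
{}_e|t \;=\; et \quad \text{in } {\bf P}(S), \qquad e\in P(S),\ t\in S.
\]
Indeed, $et\leq t$ holds by identity \eqref{eq:axioms_star_restr} (which gives $et = t(et)^*$), the product $e\cdot et$ is defined in the constellation since $e^*\cdot et = et$, and if $u\leq t$ satisfies $eu = u$ then $u = eu = e(tu^*) = (et)u^*$, so $u\leq et$. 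With this identification in hand, $f({}_e|t) = f(et) = f(e)f(t) = {}_{f(e)}|f(t)$ is immediate.

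Conversely, given an ordered radiant $\rho\colon {\bf P}(S)\to {\bf P}(T)$, I would verify that $\rho$, seen as a set map $S\to T$, is a $(\cdot\,,^*)$-morphism. Preservation of $^*$ is tautological. For the multiplication I would rewrite every product in $S$ as a pseudoproduct $st = s\otimes t = s\cdot {}_{s^*}|t$ and then expand, using in turn that $\rho$ preserves the partial product, the star operation and corestrictions:
\[
\rho(st) \;=\; \rho(s\cdot {}_{s^*}|t) \;=\; \rho(s)\cdot \rho({}_{s^*}|t) \;=\; \rho(s)\cdot {}_{\rho(s)^*}|\rho(t) \;=\; \rho(s)\otimes \rho(t) \;=\; \rho(s)\rho(t).
\]
Since both assignments act as the identity on the underlying set of every object, they are mutually inverse on morphisms and manifestly respect composition.

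The principal technical point, more a bookkeeping matter than a genuine obstacle, is the explicit formula ${}_e|t = et$ in ${\bf P}(S)$: once this is proved, both directions reduce to direct substitution into the defining axioms. I note that this argument does not use any of the topological machinery developed earlier in the paper; the more conceptual proof promised in the introduction would instead deduce the theorem from Theorem~\ref{th:intro1} by realising $S$ as compact slices of its universal ample category ${\mathscr C}(S)$ and recognising ${\bf P}(S)$ as a constellation naturally arising from that category via $\dom$.
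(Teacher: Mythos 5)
Your argument is correct. Note first that the paper does not actually prove Theorem~\ref{th:s15a}: it recalls it from Gould and Hollings \cite[Theorem 4.13]{GH09}, recording only the object-level facts that ${\bf P}(S)$ is an inductive constellation, that $({\bf T}(Q);\otimes,{}^*)$ is a restriction semigroup, and that ${\bf P}({\bf T}(Q))=Q$ and ${\bf T}({\bf P}(S))=S$. What you supply is the morphism-level verification, which is essentially the original Gould--Hollings route. Your key computation ${}_e|t=et$ in ${\bf P}(S)$ is carried out correctly: the three checks ($et\le t$ via the restriction identity $et=t(et)^*$, definedness of $e\circ(et)$ from $e(et)=et$, and maximality via $u=eu=e(tu^*)=(et)u^*$) are exactly what axiom (O4) demands, and the reverse direction correctly reduces the full product to the pseudoproduct $s\otimes t=s\cdot{}_{s^*}|t$ and pushes it through radiant axioms (1), (2) and (4). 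Since both assignments are the identity on underlying set maps, functoriality and mutual inversion are indeed automatic once the object-level identities you quote are granted. Where you diverge from the paper: its own contribution on this theorem is the alternative topological proof sketched in the remark following Theorem~\ref{th:main15}, which embeds $Q$ into the inductive constellation $\iota(Q)$ of compact slices of the universal category ${\mathscr C}(Q)$ and obtains the pseudoproduct as the product of slices, so that associativity of $\otimes$ becomes automatic rather than a calculation. Your elementary route is shorter and self-contained; the topological route buys a conceptual explanation of where the pseudoproduct comes from, at the cost of the machinery of Sections~\ref{s:germs} and~\ref{s:bool}. You correctly flag this distinction yourself.
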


As a consequence, we have the following.

\begin{proposition} \label{prop:isom_const} Let $\rho \colon S\to T$ be an embedding of restriction semigroups. We define ${\bf P}(\rho) \colon {\bf P}(S) \to {\bf P}(T)$ to be the same function on the underlying sets. Then ${\bf P}(\rho)$ is an embedding of inductive constellations. If $\rho$ is an isomorphism, then so is  ${\bf P}(\rho)$.
\end{proposition}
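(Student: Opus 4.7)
The strategy is to leverage Theorem~\ref{th:s15a} to reduce the verification to only the ``strong'' part of the definition. Since $\rho$ is in particular a $(\cdot\,,^*)$-morphism, the functoriality of ${\bf P}$ guarantees that ${\bf P}(\rho)$ is an ordered radiant; that is, conditions (1)--(4) in the definition of an ordered radiant hold for it automatically (in particular, $\rho({}_e|s) = {}_{\rho(e)}|\rho(s)$ is free, which would otherwise be the least transparent clause). Thus, to obtain an embedding, I only need to upgrade (1) to (1') and (3) to (3'), and both upgrades will rely on the injectivity of $\rho$.

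For (1'), I unfold the partial product \eqref{eq:partial_pr} on ${\bf P}(S)$ and ${\bf P}(T)$: the claim reduces to showing that $s^* t = t$ in $S$ if and only if $\rho(s)^* \rho(t) = \rho(t)$ in $T$. The forward direction follows directly from $\rho$ being a $(\cdot\,,^*)$-morphism; the reverse is obtained by applying injectivity of $\rho$ to the identity $\rho(s^* t) = \rho(s)^*\rho(t) = \rho(t)$, yielding $s^* t = t$. For (3'), I recall that $s\leq t$ in $S$ means $s = t s^*$. The forward implication is again a direct application of $\rho$, while the reverse uses injectivity applied to $\rho(s) = \rho(t)\rho(s)^* = \rho(t s^*)$, so that $s = t s^*$ and hence $s \leq t$.

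For the last assertion, assume $\rho$ is an isomorphism of restriction semigroups, i.e., a bijective $(\cdot\,,^*)$-morphism. Since ${\bf P}(\rho)$ is by definition the same underlying set-theoretic map as $\rho$, it is bijective as well; combined with the embedding property established above, it is a bijective embedding, and hence an isomorphism of inductive constellations by the definition recalled just before the proposition. There is no real obstacle in the argument; the whole proof amounts to extracting injectivity from the hypothesis and invoking the functoriality of ${\bf P}$ for the remaining radiant axioms.
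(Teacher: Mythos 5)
Your proof is correct and follows the same route the paper intends: the paper states this proposition without proof as an immediate consequence of Theorem~\ref{th:s15a}, and your argument simply fills in the details, invoking the functoriality of ${\bf P}$ for the ordered-radiant axioms and using injectivity of $\rho$ to upgrade (1) to (1') and (3) to (3'). All the individual steps (the equivalence $s^*t=t \Leftrightarrow \rho(s)^*\rho(t)=\rho(t)$, the order reflection via $\rho(s)=\rho(ts^*)$, and the bijectivity argument for the isomorphism clause) check out against the definitions in the paper.
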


\subsection{Inductive constellations of compact slices of ample categories}
The following easily follows from the definitions.

\begin{proposition} \label{prop:const_slices} Let ${\mathcal C}$ be an ample category and $S$ be a restriction subsemigroup of the restriction semigroup ${\mathcal C}^a$. Let $({\bf P}(S);\,\circ \,, ^*, \leq)$ be the associated inductive constellation. Then: 
\begin{enumerate}
\item The partial product \eqref{eq:partial_pr} is given by
 \begin{equation}\label{eq:partial_pr11}
s \circ t = \left\lbrace\begin{array}{ll} st, &  \text{ if } \dom(s) \supseteq \ran(t);\\
\text{undefined}, & \text{ otherwise.}\end{array}\right.
\end{equation}
\item The partial order is given by subset inclusion, that is, $s\leq t$ if and only if $s \subseteq t$.
\item If $e\in P(S)$ and $s\in S$ are such that $e\leq s^*$, the restriction $s|_e$ is given by
$$
s|_e = se = \{x\in s\colon \dom(x) \in e\}.
$$
\item If $e\in P(S)$ and $s\in S$, the corestriction ${}_{e}|s$ is given by
$$
{}_{e}|s = es = \{x\in s\colon \ran(x)\in e\}.
$$
\end{enumerate}
\end{proposition}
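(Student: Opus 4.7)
The plan is to unwind each of the four items by direct computation with the subset-multiplication formula \eqref{eq:subs_mult} applied to slices of $\mC$. The key preliminary observation is that because $\dom|_s$ is injective on any slice $s$, for a compact-open subset $V \subseteq \mC^{(0)}$ the slice product $sV$ reduces to the fibre selection $\{x \in s \colon \dom(x) \in V\}$; symmetrically $Us = \{x \in s \colon \ran(x) \in U\}$ when $U \subseteq \mC^{(0)}$. Recall also that in $\mC^a$ one has $s^* = \dom(s)$, and that $P(\mC^a)$ consists of the compact-open subsets of $\mC^{(0)}$.

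I would begin with (1): applying the observation above to $s^* t = \dom(s) \cdot t$ gives $s^* t = \{y \in t \colon \ran(y) \in \dom(s)\}$, which equals $t$ precisely when $\ran(t) \subseteq \dom(s)$. This is the claimed compatibility condition from \eqref{eq:partial_pr}, and when it holds the constellation product $s \circ t$ coincides with the slice product $st$. For (2), the natural partial order reads $s \leq t$ iff $s = t s^* = \{x \in t \colon \dom(x) \in \dom(s)\}$. The forward direction immediately yields $s \subseteq t$. For the converse, the inclusion $\supseteq$ is trivial; the inclusion $\subseteq$ uses the slice property of $t$: any $x \in t$ with $\dom(x) \in \dom(s)$ shares its domain with some $y \in s \subseteq t$, and injectivity of $\dom|_t$ forces $x = y \in s$.

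With (1) and (2) in hand, (3) and (4) are short uniqueness and maximality verifications. For (3), the slice product $se$ equals $\{x \in s \colon \dom(x) \in e\}$, is contained in $s$ (hence $\leq s$ by (2)), and has domain $\dom(s) \cap e = e$ since $e \leq s^*$ gives $e \subseteq \dom(s)$; axiom (O3) then identifies it with $s|_e$. For (4), $es = \{x \in s \colon \ran(x) \in e\}$ satisfies $\ran(es) \subseteq e$, so $e \circ es$ is defined by (1); any other $t \leq s$ for which $e \circ t$ is defined satisfies $t \subseteq s$ by (2) and $\ran(t) \subseteq e$ by (1), whence $t \subseteq es$, establishing maximality.

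No step is genuinely hard: the whole proposition is essentially a dictionary between the constellation and restriction-semigroup vocabulary on one side and the set-theoretic behaviour of slices on the other. The only place that demands real care is the reverse direction of (2), where the slice property of $t$ must be invoked to upgrade an obvious inclusion to an equality; the other items then follow by routine set manipulations combined with the defining universal properties (O3) and (O4).
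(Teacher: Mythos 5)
Your proposal is correct and follows exactly the route the paper intends: the paper omits the argument entirely (stating only that the proposition ``easily follows from the definitions''), and your direct unwinding of the slice product \eqref{eq:subs_mult}, the natural partial order, and axioms (O3)--(O4) supplies precisely the routine verification being alluded to. The one point worth noting is that the fibre-selection identity $sV=\{x\in s\colon \dom(x)\in V\}$ for $V\subseteq\mC^{(0)}$ needs only the unit laws, not injectivity of $\dom|_s$; the slice property is genuinely needed only where you invoke it, in the reverse implication of (2).
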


\begin{definition}
We say that the inductive constellation that arises in Proposition \ref{prop:const_slices} is an {\em inductive constellation of compact slices of an ample category.} 
\end{definition}

We say that a constellation $Q$ has {\em local units} if for every $s\in Q$ there is $e\in P(Q)$ such that ${}_e|s=s$, which implies that $e\cdot s$ exists and equals $s$. It is immediate that $Q$ has local units if and only if the restriction semigroup ${\bf T}(Q)$ has local units. Proposition \ref{prop:embedding}  together with Proposition \ref{prop:isom_const} and the equality ${\bf P}({\bf T}(Q))=Q$ imply the following statement. 

\begin{proposition}\label{prop:main15a}
Let $Q$ be an inductive constellation with local units and $\iota$ be the isomorphism ${\bf T}(Q) \to \iota({\bf T}(Q))$ of Proposition \ref{prop:embedding}. Then 
$$
{\bf P}(\iota) \colon Q= {\bf P}({\bf T}(Q)) \to  {\bf P}(\iota({\bf T}(Q))) 
$$
is an isomorphism of inductive constellations. \end{proposition}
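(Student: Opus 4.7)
The plan is to assemble the statement from the three ingredients already mentioned in the paragraph immediately preceding it: Proposition~\ref{prop:embedding}, Proposition~\ref{prop:isom_const}, and the Gould-Hollings identity ${\bf P}({\bf T}(Q))=Q$ recalled after Theorem~\ref{th:s15a}. Since the conclusion is essentially a direct composition of previously established results, the proof will be short and the main task is to verify that the hypotheses of each cited result are met.

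The first step is to check that ${\bf T}(Q)$ is a restriction semigroup with local units, so that Proposition~\ref{prop:embedding} applies to $S={\bf T}(Q)$. This is the equivalence noted in the text just before the proposition: if $e\in P(Q)$ witnesses ${}_e|s=s$, then $e\cdot s$ exists in $Q$, and since $e=e^*$ axiom (Q4) gives $e\cdot s=s$; the pseudoproduct then satisfies $e\otimes s = e\cdot {}_{e^*}|s = e\cdot s = s$, so the same projection is a local unit in ${\bf T}(Q)$. With this in hand, the last assertion of Proposition~\ref{prop:embedding} produces a $(\cdot\,,^*)$-isomorphism of restriction semigroups $\iota\colon {\bf T}(Q)\to \iota({\bf T}(Q))$.

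The second step is to feed this isomorphism into Proposition~\ref{prop:isom_const}: applying ${\bf P}$ to the $(\cdot\,,^*)$-isomorphism $\iota$ yields an isomorphism of inductive constellations
$$
{\bf P}(\iota)\colon {\bf P}({\bf T}(Q))\longrightarrow {\bf P}(\iota({\bf T}(Q))).
$$
Composing with the identity ${\bf P}({\bf T}(Q))=Q$ from the Gould-Hollings correspondence delivers the required isomorphism $Q\to {\bf P}(\iota({\bf T}(Q)))$, completing the argument.

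Since each building block is already proved, there is no genuine obstacle; the only point that requires a line of justification is the transfer of local units from $Q$ to ${\bf T}(Q)$, and even that is routine from the definitions of corestriction and pseudoproduct.
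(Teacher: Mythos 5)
Your proposal is correct and matches the paper exactly: the paper gives no separate proof, stating the proposition as an immediate consequence of Proposition~\ref{prop:embedding}, Proposition~\ref{prop:isom_const}, the identity ${\bf P}({\bf T}(Q))=Q$, and the (noted as immediate) equivalence between $Q$ having local units and ${\bf T}(Q)$ having local units. Your explicit verification that a local unit $e$ for $s$ in $Q$ is also a local unit in ${\bf T}(Q)$ via $e\otimes s=e\cdot{}_{e}|s=e\cdot s=s$ is the same routine check the paper leaves to the reader.
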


If $Q$ is an arbitrary inductive constellation, then ${\bf T}(Q)$ embeds into ${\mathscr C}({\bf T}(Q)^1)^a$ by Remark \ref{rem:identity}. This leads to the embedding of $Q={\bf P}({\bf T}(Q))$ into the inductive constellation ${\bf P}({\mathscr C}({\bf T}(Q)^1)^a)$. Therefore, every inductive constellation is isomorphic to an inductive constellation of compact slices of an ample category.
We arrive at the following refinement of the Gould-Hollings ESN-type theorem recalled in Theorem \ref{th:s15a}. 
 
\begin{theorem} \label{th:main15} \mbox{}
\begin{enumerate}
\item The category of inductive constellations is equivalent its to full subcategory of inductive constellations of compact slices of ample categories. 
\item The category of restriction semigroups  is equivalent to the category of inductive constellations of compact slices of ample categories.
\end{enumerate}
\end{theorem}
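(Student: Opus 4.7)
The plan is to derive both parts from machinery already assembled, rather than starting from scratch.

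For part~(1), note first that inductive constellations of compact slices of ample categories form a full subcategory of the category of inductive constellations --- morphisms are just ordered radiants in both cases. So to establish the equivalence it will suffice to verify essential surjectivity: every inductive constellation $Q$ must be shown isomorphic, as an inductive constellation, to one of compact slices. My strategy is to set $S={\bf T}(Q)$ and to transport a known restriction-semigroup-level embedding back through the functor ${\bf P}$, using the identity ${\bf P}({\bf T}(Q))=Q$.

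If $Q$ has local units, Proposition~\ref{prop:main15a} directly delivers the required isomorphism
$$
Q \;=\; {\bf P}({\bf T}(Q)) \;\xrightarrow{{\bf P}(\iota)}\; {\bf P}(\iota({\bf T}(Q))),
$$
with the target identified by Proposition~\ref{prop:const_slices} as an inductive constellation of compact slices of the ample category $\mathscr{C}(S)$. For arbitrary $Q$, I would instead invoke Remark~\ref{rem:identity}: the embedding \eqref{eq:def_iota15} of $S^1$ into $\mathscr{C}(S^1)^a$ restricts to a $(\cdot,*)$-embedding $\iota\colon S\hookrightarrow\mathscr{C}(S^1)^a$, realizing $S$ isomorphically as a restriction subsemigroup of $\mathscr{C}(S^1)^a$. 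Applying Proposition~\ref{prop:isom_const} to the restriction-semigroup isomorphism $S\to\iota(S)$ yields an isomorphism of inductive constellations ${\bf P}(\iota)\colon Q = {\bf P}(S) \to {\bf P}(\iota(S))$, and Proposition~\ref{prop:const_slices} again identifies the target as an inductive constellation of compact slices, this time of $\mathscr{C}(S^1)$.

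For part~(2), I would simply chain the Gould--Hollings isomorphism of categories of Theorem~\ref{th:s15a} --- which identifies the category of restriction semigroups with that of inductive constellations --- with the equivalence established in part~(1).

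The only place where caution is required, and what one might first expect to be the main obstacle, is ensuring that ${\bf P}(\iota)$ is a genuine isomorphism of inductive constellations rather than merely an embedding. This is precisely the content of the last sentence of Proposition~\ref{prop:isom_const}, applied to the restriction-semigroup isomorphism $S\to\iota(S)$ furnished by Proposition~\ref{prop:embedding} (or by Remark~\ref{rem:identity} in the general case). Since all the technical heavy lifting --- the construction of the universal category, the embedding $\iota$, and the explicit description of the inductive-constellation structure on compact slices --- is already packaged in the cited results, no further calculations should be needed.
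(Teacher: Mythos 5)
Your proposal is correct and follows essentially the same route as the paper: Proposition \ref{prop:main15a} (built from Propositions \ref{prop:embedding}, \ref{prop:isom_const} and the identity ${\bf P}({\bf T}(Q))=Q$) for constellations with local units, the passage to ${\bf T}(Q)^1$ via Remark \ref{rem:identity} for the general case, Proposition \ref{prop:const_slices} to identify the target as a constellation of compact slices, and Theorem \ref{th:s15a} to deduce part (2). The paper presents this same chain of reasoning informally in the paragraph preceding the theorem rather than as a displayed proof, so your write-up is, if anything, slightly more explicit about the reduction to essential surjectivity.
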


Theorem \ref{th:main15}(2) may be viewed as a {\em topological variant} of Theorem \ref{th:s15a}. Unlike the latter, this result does not provide an {\em isomorphism}, but rather an {\em equivalence} of categories. On the other hand, it shows that elements of inductive constellations, which have domains but no ranges, can be represented as slices of categories, which have both domains and ranges. This justifies the title of this section.

\begin{remark} Theorem \ref{th:main15} paves the way for a conceptually new (though not simpler, due to the use of topological machinery) proof of Theorem \ref{th:s15a}, which we now outline. Let $Q$ be an inductive constellation with local units\footnote{In the general case, we can carry out the construction for $Q^1$.}. Let us show that $Q$ gives rise to its {\em universal category} ${\mathscr C}(Q)$. The construction of the spectral action of $Q$ on $\widehat{P(Q)}$ and of the category ${\mathscr C}(Q)$ is carried out exactly in the same way as for restriction semigroups, except that the left and right multiplication by a projection are replaced by the left and right restriction to a projection. A subtlety arises only when we come to the definition of the multiplication of germs. In view of \cite[Lemma 3.2]{GH09}, the product $s\cdot t$ exists in $Q$ if and only if $_{s^*}|t = t$. We show that such products suffice to define the product of germs. Let $x\in \widehat{P(Q)}$ and let $s,t\in Q$ be such that $[s,\beta_t(x)]$ and $[t,x]$ are germs. We can not simply define their product as $[s\cdot t,x]$,  as $Q$ is only a partial algebra, and the product $s\cdot t$ may not exist in $Q$. Note that $x$ is in $D_{(_{s^*}|t)^*}$ and $[t, x] = [_{s^*}|t, x]$. Therefore, setting $u= \, _{s^*}|t$, we see that $[t,x] = [u,x]$ and $_{u^*}|t = t$, so that the product $s\cdot u$ exists in $Q$. We can therefore define $[s,\beta_t(x)][t,x] = [s,\beta_u(x)][u,x] = [s\cdot u, x]$. We have the embedding of inductive constellations $\iota\colon Q\to \iota(Q)$, $s\mapsto (s, D_{s^*})$. On the inductive constellation $\iota(Q)$ there is a natural pseudoproduct, which is simply the product of slices. This gives rise to the restriction semigroup ${\bf T}(\iota(Q))$. Pulling this pseudoproduct back via $\iota$ to $Q$, we obtain the restriction semigroup ${\bf T}(Q)$. This shows that the pseudoproduct on ${\bf T}(Q)$ arises naturally.  Since the product of slices is a special case of the product of subsets of a category, the pseudoproduct is automatically associative. The inductive constellation $\iota(Q)$ thus naturally serves as a mediator  between $Q$ and ${\mathbf T}(Q)$ providing a `concrete' realization of the passage from the constellation $Q$ to the restriction semigroup ${\bf T}(Q)$.

In the same vein, if $Q$ is an inductive groupoid, one can define its spectral action on its objects, $E(Q)$, and thus construct the universal groupoid ${\mathscr G}(Q)$. In particular, to define the product $[s,\beta_t(x)][t,x]$ one observes that $[t,x] =  [_{s^*}|t, x]$ and $[s, \beta_t(x)] = [s|_{t^+}, \beta_t(x)]$. Setting $u=\,_{s^*}|t$, $v=s|_{t^+}$ and observing that the product $v\cdot u$ is defined in $Q$, we have $[s,\beta_t(x)][t,x] = [v,\beta_u(x)][u,x] = [v\cdot u,x].$ The pseudoproduct on $\iota(Q)$ again arises naturally as the product of bislices, which similarly leads to a new proof of the ESN theorem for inverse semigroups.
\end{remark}

%%%
\subsection{A topological ESN-type theorem for range semigroups}
Let $S$ be a range semigroup. It is well known  that $S$  can be reconstructed from the following data: its underlying category ${\mathscr U}(S)$; the natural partial order $\leq$ on $S$; restrictions and corestrictions, given by 
 $s|_e = se$ and ${}_e|s = es$. The product in $S$ is then reconstructed as 
 $$
 st = s|_{s^*t^+} \cdot {}_{s^*t^+}|t,
 $$
where $s^*t^+$ is the meet  $s^*\wedge t^+$ with respect to the order $\leq$ and $\cdot$ is the product in the category ${\mathscr U}(S)$. It follows from \cite{L91} that the category of ordered categories with restrictions and corestrictions is isomorphic to the category of range semigroups\footnote{For a far-reaching generalization of this result, to ${\mathrm{DRC}}$-semigroups, see \cite{EFAMS24}.}.
By Proposition \ref{prop:embedding} and Proposition \ref{prop:morphism_range} (note that a range semigroup always has local units), there is an $(\cdot\,, ^*, ^+)$-embedding
$$
\iota\colon S\to {\mathscr C}(S)^a, \quad s\mapsto (s, D_{s^*}).
$$
Similarly as before, one can now conclude that the ordered category ${\mathscr U}(S)$ is isomorphic to the ordered category ${\mathscr U}(\iota(S))$. This yields a topological representation of ordered categories with restrictions and corestrictions. The result further specializes to inductive categories and inductive groupoids. Therefore, we obtain topological ESN-type theorems for range, birestriction and inverse semigroups, which are proved similarly as Theorem~\ref{th:main15}. 
\begin{theorem}\mbox{}
\begin{enumerate}
\item The category of range semigroups is equivalent to the category of ordered categories with restrictions and corestrictions of compact slices of strongly ample categories.
\item The category of birestriction semigroups is equivalent to the category of inductive categories of compact bislices of biample categories.
\item The category of inverse semigroups is equivalent to the category of induc\-tive grou\-poids of compact bislices of ample groupoids.
\end{enumerate}
\end{theorem}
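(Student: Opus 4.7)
The plan is to follow the same three-step template used in the proof of Theorem~\ref{th:main15}: for each of the three cases, first invoke the classical ESN-type equivalence between the algebraic structure and the appropriate ordered categorical structure; second, use the embedding $\iota$ from Proposition~\ref{prop:embedding} strengthened by Proposition~\ref{prop:morphism_range} to realise $S$ inside slices or bislices of a suitable ample category; third, check that the abstract ordered categorical data are transported onto the concrete subset-theoretic structure on $\iota(S)$, so that every abstract object is isomorphic to one arising from the topological side, which gives the equivalence of categories.

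For part (1), I would start from the isomorphism of \cite{L91} between range semigroups and ordered categories with restrictions and corestrictions. Given a range semigroup $S$, Proposition~\ref{prop:morphism_range}(1) supplies a $(\cdot\,,^*,^+)$-embedding $\iota\colon S\to {\mathscr C}(S)^a$, while Proposition~\ref{prop:range27}(1) ensures that ${\mathscr C}(S)$ is strongly ample and that ${\mathscr C}(S)^a$ is a Boolean range semigroup. The underlying category ${\mathscr U}(\iota(S))$ inherits from ${\mathscr C}(S)$ the partial order by set inclusion, the restriction $s|_e = se$ and the corestriction ${}_e|s = es$, in complete analogy with Proposition~\ref{prop:const_slices}. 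Since the classical correspondence identifies $S$ with ${\mathscr U}(S)$ as ordered categories with restrictions and corestrictions, and ${\mathscr U}(S)$ with ${\mathscr U}(\iota(S))$ via $\iota$, every such abstract ordered category embeds as one of compact slices of a strongly ample category, and the reverse direction is immediate from Proposition~\ref{prop:ample}(2).

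For part (2), I would invoke the classical ESN correspondence between birestriction semigroups and inductive categories. The spectral action of a birestriction semigroup is by partial homeomorphisms (Proposition~\ref{prop:s16a}), so by Proposition~\ref{prop:range27}(2) the category ${\mathscr C}(S)$ is biample and $\iota$ lands in the Boolean birestriction semigroup $\widetilde{{\mathscr C}(S)}^a$ of compact bislices. The inductive category structure on bislices matches the inductive category associated to $S$, yielding the equivalence. Part (3) then specialises this construction to inverse semigroups, using the classical ESN theorem together with the fact recalled after Definition~\ref{def:univ_categ} that ${\mathscr C}(S)$ is an ample groupoid whenever $S$ is inverse; compact bislices of an ample groupoid form a Boolean inverse semigroup by Proposition~\ref{prop:ample}(4), closing the loop.

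The main obstacle will be the careful, though essentially routine, verification that $\iota$ preserves not only the algebraic operations $\cdot$, $^*$ and $^+$, but also the partial order, restrictions and corestrictions in the sense required by the ordered categorical language. Concretely, one needs an analogue of Proposition~\ref{prop:const_slices} in the $(\cdot\,,^*,^+)$-setting, realising $s|_e$ and $_e|s$ as the subset-theoretic intersections $se$ and $es$ at the level of (bi)slices. Once this bookkeeping is in place in each of the three settings, the equivalence of categories follows formally from the already-established Stone-type duality of~\cite{Kud25} exactly as in Theorem~\ref{th:main15}.
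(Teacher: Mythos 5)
Your proposal is correct and follows essentially the same route as the paper: the classical isomorphism of \cite{L91} (and its specializations to inductive categories and inductive groupoids), combined with the $(\cdot\,,^*,^+)$-embedding $\iota$ from Propositions~\ref{prop:embedding} and~\ref{prop:morphism_range}, the strong ampleness/biampleness statements of Propositions~\ref{prop:s16a} and~\ref{prop:range27}, and the transport of order, restrictions and corestrictions onto $\iota(S)$ exactly as in Theorem~\ref{th:main15}. The paper likewise leaves the $(\cdot\,,^*,^+)$-analogue of Proposition~\ref{prop:const_slices} as a routine verification, so there is no substantive difference.
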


%%%%
\section{Extending the Petrich-Reilly theorem to proper restriction semigroups}\label{s:proper}
Recall from Section \ref{subs:proper} that a restriction semigroup $S$ is called {\em proper} if, for all $s,t\in S$, the conditions $s\mathrel{\sigma} t$ and $s^* = t^*$ imply that $s=t$. 

\subsection{The category of germs of an action of a proper restriction semigroup} Suppose that $S$ is a proper restriction semigroup, and let $T=S/\sigma$. Suppose, further, that $\theta\colon S\to \PT(X)$ is an action. Inspired by \cite[Section 3]{MS14}, in this section we construct a partial action $\overline{\theta}\colon T\to \PT(X)$ such that $S\ltimes_{\theta} X$ is topologically isomorphic to $T\ltimes_{\overline{\theta}} X$.

Recall that for $s\in S$, we denote the $\sigma$-class of $s$  by $[s]_{\sigma}$. For $t\in T$, we define 
$$\sigma^{-1}(t) = \{s\in S\colon [s]_{\sigma} = t\}.
$$
We further define
\begin{equation}\label{def:varphi1}
\mathrm{dom}(\overline{\theta}_t) = \bigcup_{s\in \sigma^{-1}(t)} \mathrm{dom}(\theta_s) = \bigcup\limits_{s\in \sigma^{-1}(t)} X_{s^*}.
\end{equation}
Equivalently, $x\in\mathrm{dom}(\overline{\theta}_t)$ if and only if $x\in \mathrm{dom}(\theta_s)$ for some $s\in \sigma^{-1}(t)$.
To define $\overline{\theta}_t(x)$ for $x\in \mathrm{dom}(\overline{\theta}_t)$, we first prove the following lemma.

\begin{lemma}
Let $t\in T$, $x\in X$ and $u,v\in \sigma^{-1}(t)$ be such that $x\in \mathrm{dom}(\theta_u) \cap \mathrm{dom}(\theta_v)$. Then $[u,x] = [v,x]$.
\end{lemma}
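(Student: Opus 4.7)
The plan is to reduce everything to a single element dominating both $u$ and $v$, and then invoke the definition of germ equivalence directly.

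First I would use the hypothesis that $S$ is proper. Since $u, v \in \sigma^{-1}(t)$, we have $u \mathrel{\sigma} v$, and Lemma \ref{lem:proper} then gives $u \smile v$, i.e.\ $u v^* = v u^*$. This is the key algebraic fact provided by properness that is unavailable for a general restriction semigroup.

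Next I would manufacture a common lower bound of $u$ and $v$ whose domain contains $x$. By Lemma~\ref{lem:compatibility} applied to the pair $u, v$, the meet $w = u \wedge v$ exists in $S$ and equals $u \cdot u^* v^* = v \cdot u^* v^*$. In particular $w \leq u$, $w \leq v$, and using \eqref{eq:axioms_star} one computes $w^* = (u u^* v^*)^* = u^* v^*$. Since $\theta$ is a $(\cdot\,,{}^*)$-morphism, we have $X_{u^*} \cap X_{v^*} = X_{u^* v^*} = X_{w^*}$, so the hypothesis $x \in \mathrm{dom}(\theta_u) \cap \mathrm{dom}(\theta_v) = X_{u^*} \cap X_{v^*}$ gives $x \in X_{w^*}$.

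Finally, the definition of the equivalence $\sim$ on $\Omega$ (an element of $S$ below both $u$ and $v$, whose domain contains $x$) is satisfied by $w$, so $(u, x) \sim (w, x) \sim (v, x)$, hence $[u, x] = [v, x]$ in $S \ltimes_\theta X$. I do not foresee any genuine obstacle here: the whole argument is a packaging of Lemmas~\ref{lem:proper} and~\ref{lem:compatibility} together with the identity $X_{ef} = X_e \cap X_f$ already observed after \eqref{eq:e1}. The only subtle point worth double-checking is that $w^* = u^* v^*$ really follows from the formula $w = u(u^* v^*)$ via the axioms in \eqref{eq:axioms_star}, so that $X_{w^*}$ is the intersection we want and not merely contained in it.
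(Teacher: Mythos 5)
Your proof is correct and follows essentially the same route as the paper: both arguments use properness via Lemma \ref{lem:proper} to get $uv^*=vu^*$ and then exhibit this element as a common lower bound of $u$ and $v$ whose domain contains $x$ (your $w=u(u^*v^*)$ is literally the element $uv^*$). The only cosmetic difference is that you verify $x\in X_{w^*}$ through the identity $w^*=u^*v^*$ and $X_{u^*v^*}=X_{u^*}\cap X_{v^*}$, whereas the paper checks $x\in\mathrm{dom}(\theta_u\theta_{v^*})=\mathrm{dom}(\theta_{uv^*})$ directly; your appeal to Lemma \ref{lem:compatibility} is harmless but not needed, since only the compatibility identity itself is used.
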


\begin{proof} 
Since $S$ is proper and $u\mathrel{\sigma} v$, Lemma \ref{lem:proper} implies that $u\smile v$, that is, $uv^* = vu^*$. Furthermore, given that $x\in {\mathrm{dom}}(\theta_{v}^*)$ and $x=\theta_{v^*}(x) \in {\mathrm{dom}}(\theta_u)$, we have $x\in {\mathrm{dom}}(\theta_u\theta_{v^*}) = {\mathrm{dom}}(\theta_{uv^*})$.  It follows that $[u,x] = [uv^*,x] = [vu^*,x] = [v,x]$.
\end{proof}

We can now define $\overline{\theta}_t(x)$ for $t\in T$ and $x\in {\mathrm{dom}}(\overline{\theta}_t)$ as 
\begin{equation}\label{def:varphi}
\overline{\theta}_t(x) = \theta_u(x),
\end{equation}
where $u\in \sigma^{-1}(t)$ is any element satisfying $x\in {\mathrm{dom}}(\theta_u)$.

\begin{lemma} For every $t\in T$ we have that $\overline{\theta}_t\in \PT(X)$ and the map $T\to \PT(X)$, $t\mapsto \overline{\theta}_t$ is a premorphism. That is, $T$ acts partially on $X$ via $\overline{\theta}$. 
\end{lemma}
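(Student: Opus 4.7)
The plan is to verify in turn that (i) $\overline{\theta}_t \in \PT(X)$ for every $t\in T$, and (ii) the assignment $t\mapsto \overline{\theta}_t$ satisfies the two premorphism axioms, keeping in mind that $T=S/\sigma$ is reduced and therefore a monoid whose identity $1_T$ is the $\sigma$-class of any (equivalently, every) projection of $S$.

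For (i), openness of $\mathrm{dom}(\overline{\theta}_t)=\bigcup_{s\in\sigma^{-1}(t)} X_{s^*}$ is immediate since each $X_{s^*}$ is open in $X$. For continuity, I would invoke the preceding lemma: on each $X_{s^*}$ with $s\in\sigma^{-1}(t)$, the partial map $\overline{\theta}_t$ coincides with $\theta_s$, because the choice in (\ref{def:varphi}) is permitted to be $u=s$, and the lemma guarantees independence from this choice. Thus $\{X_{s^*}:s\in\sigma^{-1}(t)\}$ is an open cover of $\mathrm{dom}(\overline{\theta}_t)$ on which $\overline{\theta}_t$ restricts to the continuous maps $\theta_s$, so $\overline{\theta}_t$ is continuous.

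For (ii), to establish $\overline{\theta}_{1_T}=\mathrm{id}_X$ I would note that $P(S)\subseteq\sigma^{-1}(1_T)$, so (\ref{def:varphi1}) together with the non-degeneracy assumption (\ref{eq:non_deg}) gives $\mathrm{dom}(\overline{\theta}_{1_T})\supseteq\bigcup_{e\in P(S)} X_e=X$; and for $x\in X_e$, $\overline{\theta}_{1_T}(x)=\theta_e(x)=x$ since $\theta_e=\mathrm{id}_{X_e}$. For the inequality $\overline{\theta}_{t_1}\overline{\theta}_{t_2}\leq \overline{\theta}_{t_1 t_2}$, I would take $x\in\mathrm{dom}(\overline{\theta}_{t_1}\overline{\theta}_{t_2})$, pick witnesses $s_2\in\sigma^{-1}(t_2)$ with $x\in X_{s_2^*}$ and $s_1\in\sigma^{-1}(t_1)$ with $\theta_{s_2}(x)\in X_{s_1^*}$, and then use that $\theta$ is a $(\cdot\,,{}^*)$-morphism: $x\in\mathrm{dom}(\theta_{s_1}\theta_{s_2})=\mathrm{dom}(\theta_{s_1 s_2})=X_{(s_1 s_2)^*}$. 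Since $s_1 s_2\in\sigma^{-1}(t_1 t_2)$, this places $x$ in $\mathrm{dom}(\overline{\theta}_{t_1 t_2})$, and applying (\ref{def:varphi}) with the witness $s_1 s_2$ yields $\overline{\theta}_{t_1 t_2}(x)=\theta_{s_1 s_2}(x)=\theta_{s_1}(\theta_{s_2}(x))=\overline{\theta}_{t_1}(\overline{\theta}_{t_2}(x))$.

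There is no genuine obstacle: properness of $S$ (via Lemma \ref{lem:proper}) has already been absorbed into the well-definedness lemma preceding the statement, which supplies the essential compatibility $[u,x]=[v,x]$ whenever $u\mathrel{\sigma} v$ and $x$ lies in both $X_{u^*}$ and $X_{v^*}$. Once that is in hand, $\overline{\theta}_t$ is transparently glued from the maps $\theta_s$, $s\in\sigma^{-1}(t)$, and both the topological statement and the premorphism identities are routine unwindings of the definitions.
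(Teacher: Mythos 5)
Your proposal is correct and follows essentially the same route as the paper: the identity axiom via $P(S)\subseteq\sigma^{-1}(1_T)$ and non-degeneracy, and the inequality $\overline{\theta}_{t_1}\overline{\theta}_{t_2}\leq\overline{\theta}_{t_1t_2}$ by choosing witnesses $s_i\in\sigma^{-1}(t_i)$ and using that $\theta$ is a $(\cdot\,,{}^*)$-morphism. The only difference is that you also spell out why $\overline{\theta}_t\in\PT(X)$ (open domain and continuity by gluing the $\theta_s$ over the open cover $\{X_{s^*}\}$), a point the paper's proof leaves implicit.
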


\begin{proof} Since the union of the domains of all $\theta_s$, where $s$ runs through $S$, coincides with $X$, and ${\mathrm{dom}}(\theta_s) = {\mathrm{dom}}(\theta_{s^*})$,  the union of the domains of $\theta_e$, where $e\in P(S)$, coincides with $X$, too. It follows that $\overline{\theta}_1 = {\mathrm{id}}_X$. Suppose that $x\in {\mathrm{dom}}(\overline{\theta}_s)$ and $\overline{\theta}_s(x)\in {\mathrm{dom}}(\overline{\theta}_t)$. Then there are $u\in \sigma^{-1}(s)$ and $v\in \sigma^{-1}(t)$ such that $x\in  {\mathrm{dom}}(\theta_u)$ and $\overline{\theta}_s(x) = \theta_u(x)\in {\mathrm{dom}}(\theta_v) =  {\mathrm{dom}}(\overline{\theta}_t)$. Then $x\in {\mathrm{dom}}(\theta_{u}\theta_v)={\mathrm{dom}}(\theta_{uv})$. Since $[uv]_{\sigma} = [u]_{\sigma}[v]_{\sigma} = st$, we conclude that $x\in {\mathrm{dom}}(\overline{\theta}_{st})$. Furthermore, $\overline{\theta}_{st}(x) = \theta_{uv}(x) = \theta_u\theta_v(x) = \overline{\theta}_s\overline{\theta}_t(x)$. This finishes the proof.
\end{proof}

It follows that we can form the partial transformation category $T\ltimes_{\overline{\theta}} X$. By construction, there is a well-defined map $T\ltimes_{\overline{\theta}} X \to S\ltimes_{\theta} X$ given by
$(t,x) \mapsto [u,x]$ where $u\in \sigma^{-1}(t)$ is such that $x\in {\mathrm{dom}}(\theta_u)$. This map is surjective as $([u]_{\sigma},x)\mapsto [u,x]$ for any $[u,x] \in S\ltimes_{\theta} X$.
Moreover, if $(t,x), (s,x) \mapsto [u,x]$ then $t=[u]_{\sigma} = s$, so that this map is also injective. We arrive at the following generalization to proper restriction semigroups of the result by Milan and Steinberg \cite[Theorem 3.2]{MS14}, which was proved in the setting of $E$-unitary inverse semigroups.

\begin{theorem} \label{th:isom_proper} Let $S$ be a proper restriction semigroup which acts on a locally compact Hausdorff space $X$ via $\theta\colon S\to \PT(X)$. Let $\overline{\theta}$ be the partial action of $T=S/\sigma$ on $X$ defined in \eqref{def:varphi1} and \eqref{def:varphi}. Then the topological categories $T\ltimes_{\overline{\theta}} X$ and $S\ltimes_{\theta} X$ are isomorphic.
\end{theorem}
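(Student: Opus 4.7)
The plan is to verify that the bijection $\Phi\colon T\ltimes_{\overline{\theta}} X \to S\ltimes_{\theta} X$, $(t,x) \mapsto [u,x]$ (where $u \in \sigma^{-1}(t)$ with $x \in X_{u^*}$), constructed just before the statement, is both a functor and a homeomorphism. Functoriality is a direct unpacking of definitions. Under the identifications of both unit spaces with $X$ (the one for $S\ltimes_{\theta} X$ via Lemma~\ref{lem:homeo}), $\Phi$ acts as the identity on units, and commutation of $\Phi$ with $\dom$ and $\ran$ follows immediately from the defining equation $\overline{\theta}_t(x) = \theta_u(x)$. For composition, given $((s,x),(t,y)) \in (T\ltimes_{\overline{\theta}} X)^{(2)}$, I would choose $u \in \sigma^{-1}(s)$ and $v \in \sigma^{-1}(t)$ with $y \in X_{v^*}$ and $x = \theta_v(y) \in X_{u^*}$. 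Then $uv \in \sigma^{-1}(st)$ and $y \in X_{(uv)^*}$, so
$$\Phi\bigl((s,x)(t,y)\bigr) = \Phi(st, y) = [uv, y] = [u,x][v,y] = \Phi(s,x)\Phi(t,y).$$

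The remaining work, and the main obstacle, is to show that $\Phi$ is continuous and open. The key observation, which is precisely where properness enters, is the following: if $t \in T$ and $u, s \in \sigma^{-1}(t)$, then Lemma~\ref{lem:proper} yields $u \smile s$, so the element $w = u s^* = s u^*$ satisfies $w \leq u, s$, whence $[u,x] = [s,x]$ for every $x \in X_{u^*} \cap X_{s^*}$; and, conversely, $[u,x] = [s,x]$ forces $u \mathrel{\sigma} s$ via the definition of germ equality together with Lemma~\ref{lem:s14a}. These two facts precisely identify the fibers of $\Phi$ and reduce the topological verification to routine bookkeeping.

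With this in hand, for a basic open set $(s,U) \subseteq S\ltimes_{\theta} X$ (with $U \subseteq X_{s^*}$ open), a point $(t,x) \in T\ltimes_{\overline{\theta}} X$ lies in $\Phi^{-1}(s,U)$ exactly when $t = [s]_\sigma$ and $x \in U$ (the inclusion $U \subseteq X_{s^*}$ taking care of the compatibility condition automatically), giving $\Phi^{-1}(s,U) = ([s]_\sigma, U)$, which is basic open in $T\ltimes_{\overline{\theta}} X$; hence $\Phi$ is continuous. Dually, for a basic open set $(t,Y) \subseteq T\ltimes_{\overline{\theta}} X$ with $Y \subseteq \mathrm{dom}(\overline{\theta}_t) = \bigcup_{u \in \sigma^{-1}(t)} X_{u^*}$ open, one obtains
$$\Phi(t,Y) = \bigcup_{u \in \sigma^{-1}(t)} (u, Y \cap X_{u^*}),$$
a union of basic open sets of $S\ltimes_{\theta} X$, so $\Phi$ is open. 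Combined with the bijection and functoriality established in the first paragraph, this yields the desired topological isomorphism.
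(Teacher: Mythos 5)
Your proposal is correct and follows essentially the same route as the paper's proof: the same map $(t,x)\mapsto[u,x]$, the same verification of functoriality, the computation $\Phi^{-1}(s,U)=([s]_\sigma,U)$ for continuity, and the decomposition $\Phi(t,Y)=\bigcup_{u\in\sigma^{-1}(t)}(u,Y\cap X_{u^*})$ for openness. Your explicit identification of the fibers of $\Phi$ via properness is a slightly more detailed account of what the paper establishes in the lemma and discussion immediately preceding the theorem, but it is not a different argument.
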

\begin{proof} We show that the map $f\colon T\ltimes_{\overline{\theta}} X \to S\ltimes_{\theta} X$ given by
$f(t,x) = [u,x]$, where $u\in \sigma^{-1}(t)$ is such that $x\in {\mathrm{dom}}(\theta_u)$, is an isomorphism of topological categories. It is immediate by the definition of $f$ that it commutes with $\dom$ and $\ran$ and preserves the product. Bearing in mind that it is bijective, it is an isomorphism. It remains to show that $f$ and $f^{-1}$ are continuous. Let $(s,Y) = \{[s,y]\colon y\in Y\}$ be a basic open set of $S\ltimes_{\theta} X$. Then $f^{-1}(s,Y) = ([s]_{\sigma}, Y)$, which is open in  $T\ltimes_{\overline{\theta}} X$. Conversely, let $(s,Y)$ be a basic open set in $T\ltimes_{\overline{\theta}} X$. Then 
$$f(s, Y) = \bigcup_{u\in \sigma^{-1}(s)} (u, X_{u^*}\cap Y)$$
which is also open. This finishes the proof. 
\end{proof}

We record the following immediate consequence of Theorem \ref{th:isom_proper}.

\begin{corollary} \label{cor:isom1} Let $S$ be a proper restriction semigroup with local units. Then the universal category ${\mathscr{C}}(S)$ is topologically isomorphic to the partial transformation category
\mbox{$S/\sigma \ltimes_{\overline{\beta}} \widehat{P(S)}$.}
\end{corollary}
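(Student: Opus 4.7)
The plan is to apply Theorem \ref{th:isom_proper} directly with the choice $\theta = \beta$, where $\beta$ is the spectral action of $S$ on $X = \widehat{P(S)}$ introduced in Example \ref{ex:spectral}. Since $S$ has local units, Example \ref{ex:spectral} ensures that $\beta\colon S \to \PT(\widehat{P(S)})$ is a well-defined $(\cdot\,,^*)$-morphism, i.e., an action of the restriction semigroup $S$ on the locally compact Hausdorff space $\widehat{P(S)}$. By Definition \ref{def:univ_categ}(1), the universal category $\mathscr{C}(S)$ is by construction the category of germs $S \ltimes_{\beta} \widehat{P(S)}$.

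Given these identifications, Theorem \ref{th:isom_proper} applies verbatim: the hypothesis that $S$ is a proper restriction semigroup is satisfied, and $\overline{\beta}$ denotes the partial action of $T = S/\sigma$ on $\widehat{P(S)}$ obtained from $\beta$ via formulas \eqref{def:varphi1} and \eqref{def:varphi}. The conclusion of Theorem \ref{th:isom_proper} then gives a topological isomorphism
\[
\mathscr{C}(S) = S \ltimes_{\beta} \widehat{P(S)} \;\cong\; S/\sigma \ltimes_{\overline{\beta}} \widehat{P(S)},
\]
which is exactly the assertion of the corollary. There is no genuine obstacle: every hypothesis of Theorem \ref{th:isom_proper} is verified by invoking Example \ref{ex:spectral} (for the existence and non-degeneracy of the spectral action) together with Definition \ref{def:univ_categ}(1) (for the identification of $\mathscr{C}(S)$ with the category of germs). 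The corollary is therefore truly immediate and warrants at most a one-line proof remark.
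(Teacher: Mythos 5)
Your proposal is correct and matches the paper exactly: the paper states this corollary as an "immediate consequence" of Theorem \ref{th:isom_proper}, obtained by specializing that theorem to the spectral action $\beta$ of $S$ on $\widehat{P(S)}$ and invoking Definition \ref{def:univ_categ}(1) to identify ${\mathscr C}(S)$ with $S\ltimes_{\beta}\widehat{P(S)}$. Your explicit verification of the hypotheses (properness, local units, non-degeneracy of $\beta$) is exactly what the paper leaves implicit.
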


Since the partial action $\overline{\beta}$ will play an important role in the sequel, we now record its explicit description, which follows from the definition of the spectral action $\beta$, combined with \eqref{def:varphi1} and \eqref{def:varphi}. For any $t\in S/\sigma$, the domain of $\overline{\beta}_t$ is the set

\begin{equation}\label{eq:o19c}
\mathrm{dom}(\overline{\beta}_t) = \bigcup_{s\in \sigma^{-1}(t)} \mathrm{dom}(\beta_s) = \bigcup\limits_{s\in \sigma^{-1}(t)} D_{s^*}.
\end{equation}
The action of $\overline{\beta}_t$ on $\varphi\in {\mathrm{dom}}(\overline{\beta}_t)$ is given by
\begin{equation}\label{eq:o19d}
\overline{\beta_t}(\varphi)(e) = \varphi((es)^*),
\end{equation}
where $s$ is an arbitrary element of $\sigma^{-1}(t)$ is such that $\varphi\in D_{s^*}$.

%%%
\subsection{Proper restriction semigroups from partial monoid actions} In this section, we introduce a new construction of restriction semigroups from monoids and semilattices using partial actions. We start from the definition of a proper partial action.

\begin{definition} (Proper partial actions) Let $T$ be a monoid, $X$ a locally compact Hausdorff space and $E$ a  meet-subsemilattice of the generalized Boolean algebra ${\mathcal B}(X)$.  Let, further, $\alpha\colon T \to \PT(X)$ be a partial action. We say that $\alpha$ is {\em proper with respect to} $E$, if the following conditions hold:
\begin{enumerate}
\item[(P1)] for every $t\in T$: ${\mathrm{dom}}(\alpha_t)=\cup \{e\in E\colon e \subseteq {\mathrm{dom}}(\alpha_t)\}$;
\item[(P2)] for every $t\in T$, $e,f\in E$ where $e \subseteq {\mathrm{dom}}(\alpha_t)$: $\alpha_t^{-1}(f)\cap e \in E$.
\end{enumerate}
\end{definition}
 
Let $S$ be a restriction semigroup with local units. Recall from Proposition \ref{prop:embedding} that the map $\iota\colon S\to \mC(S)^a$, $s \mapsto (s, D_{s^*})$ is a $(\cdot\,,^*)$-embedding and, consequently, $S\simeq \iota(S)$.
 
\begin{proposition} \label{prop:spectral26} Let $S$ be a proper restriction semigroup with local units and write $E=P(S)$. Then the partial action $\overline{\beta}$ of $S/\sigma$ on $\widehat{E}$ is proper with respect to $\iota(E)$.
\end{proposition}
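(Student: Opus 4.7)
The key observation that drives both conditions is the following: if $g\in E$ and $D_g\subseteq{\mathrm{dom}}(\overline{\beta}_t)$, then there exists $s\in\sigma^{-1}(t)$ with $g\leq s^*$, equivalently $D_g\subseteq D_{s^*}$. Indeed, by \eqref{eq:o19c} we have ${\mathrm{dom}}(\overline{\beta}_t)=\bigcup_{s\in\sigma^{-1}(t)}D_{s^*}$, so the principal filter $g^{\uparrow}\in D_g$ must lie in some $D_{s^*}$, which precisely says $s^*\in g^{\uparrow}$, i.e., $s^*\geq g$.

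Given this, (P1) is immediate: the inclusion $\supseteq$ is trivial, and for $\subseteq$, each $\varphi\in{\mathrm{dom}}(\overline{\beta}_t)$ lies in some $D_{s^*}$ with $s\in\sigma^{-1}(t)$, and $D_{s^*}$ is a member of $\iota(E)$ contained in ${\mathrm{dom}}(\overline{\beta}_t)$.

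For (P2), let $D_g,D_h\in\iota(E)$ with $D_g\subseteq{\mathrm{dom}}(\overline{\beta}_t)$, and pick $s\in\sigma^{-1}(t)$ with $g\leq s^*$. Replacing $s$ by $sg$ -- which lies in $\sigma^{-1}(t)$ because $sg\leq s$ forces $sg\mathrel{\sigma}s$ by Lemma \ref{lem:s14a}, and satisfies $(sg)^*=s^*g=g$ by \eqref{eq:rule1r} -- we may assume $s^*=g$. Then $D_g=D_{s^*}$, and by \eqref{eq:o19d} the partial map $\overline{\beta}_t$ coincides on $D_g$ with $\beta_s$. Using the computation $\beta_s^{-1}(D_h)=D_{(hs)^*}$ from Example \ref{ex:spectral} and the fact that $(hs)^*\leq s^*=g$, we obtain
\[
\overline{\beta}_t^{-1}(D_h)\cap D_g \;=\; \beta_s^{-1}(D_h)\cap D_g \;=\; D_{(hs)^*} \;\in\; \iota(E).
\]

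The main obstacle I anticipate is resisting the more natural but ultimately unwieldy approach via compactness of $D_g$: a finite subcover $D_g\subseteq\bigcup_i D_{s_i^*}$ only yields $\overline{\beta}_t^{-1}(D_h)\cap D_g=\bigcup_i D_{(hs_ig)^*}$, and one is then stuck, since a finite union of basic sets $D_m$ need not be a single $D_n$, and the compatibility of $\sigma^{-1}(t)$ (Lemma \ref{lem:proper}) combined with Lemma \ref{lem:compatibility} produces meets of the $s_i$'s rather than the joins one would need. The principal-filter argument circumvents this by supplying a single representative $s$ that works uniformly on all of $D_g$.
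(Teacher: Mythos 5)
Your proof is correct and follows essentially the same route as the paper: the key step of finding a single representative $s\in\sigma^{-1}(t)$ with $D_g=D_{s^*}$ via the principal filter $\chi_{g^{\uparrow}}\in D_g$ is precisely the paper's Lemma \ref{lem:u1}, and your identification $\overline{\beta}_t^{-1}(D_h)\cap D_g=D_{(hs)^*}$ is the same identity the paper derives (there by computing the slice product $(f,D_f)(u,D_e)$ in two ways, whereas you quote $\beta_s^{-1}(D_h)=D_{(hs)^*}$ directly from Example \ref{ex:spectral} — an inessential difference).
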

 
To prove this proposition, we first make the following observation.
 
\begin{lemma}\label{lem:u1} Suppose $t\in S/\sigma$ and $e\in E$ are such that $D_e\subseteq {\mathrm{dom}}(\overline{\beta}_t)$. Then there is $u\in \sigma^{-1}(t)$ such that $D_e = D_{u^*}$.
\end{lemma}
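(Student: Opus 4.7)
The plan is to observe that since the map $e \mapsto D_e$ from $E$ into the power set of $\widehat{E}$ is injective (by the same argument as in the proof of Proposition~\ref{prop:7o}: if $D_e = D_f$, evaluate at the principal characters $\chi_{e^\uparrow}$ and $\chi_{f^\uparrow}$ to get $f\geq e$ and $e\geq f$), the desired conclusion $D_e = D_{u^*}$ is equivalent to finding $u \in \sigma^{-1}(t)$ with $u^* = e$. So the whole content of the lemma is to produce such a $u$.

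To produce it, I would evaluate the hypothesis at the principal character $\chi_{e^\uparrow} \in \widehat{E}$, which lies in $D_e$. By assumption $D_e \subseteq \mathrm{dom}(\overline{\beta}_t) = \bigcup_{s \in \sigma^{-1}(t)} D_{s^*}$ (see \eqref{eq:o19c}), so there exists $s \in \sigma^{-1}(t)$ with $\chi_{e^\uparrow} \in D_{s^*}$. Unpacking the definitions, $s^* \in e^\uparrow$, that is, $s^* \geq e$.

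Finally, set $u := se$. By the identity \eqref{eq:rule1r}, $u^* = (se)^* = s^* e = e$, where the last equality uses $e \leq s^*$. Moreover, $u = se \leq s$ by the definition of the natural partial order on $S$, so Lemma~\ref{lem:s14a} gives $u \mathrel{\sigma} s$, whence $[u]_\sigma = [s]_\sigma = t$ and thus $u \in \sigma^{-1}(t)$. This element $u$ satisfies $u^* = e$, so $D_e = D_{u^*}$ as required.

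I do not anticipate any serious obstacle here. The only slightly subtle point is the realisation that, although $D_e$ may be covered by many sets $D_{s^*}$ corresponding to different representatives of $t$, it is enough to locate a \emph{single} character in $D_e$ (for which the principal one is optimal) and then truncate the corresponding $s$ by $e$ to obtain an element with projection exactly~$e$; properness of $S$ is not needed for this particular lemma, but the construction of~$\overline{\beta}$ that makes it meaningful relies on it.
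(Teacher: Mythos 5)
Your proposal is correct and follows essentially the same route as the paper: evaluate the hypothesis at the principal character $\chi_{e^\uparrow}\in D_e$ to find $s\in\sigma^{-1}(t)$ with $s^*\geq e$, then set $u=se$ so that $u^*=e$ and $[u]_\sigma=t$. You simply spell out a couple of steps (the injectivity of $e\mapsto D_e$ and the verification that $u\mathrel{\sigma}s$) that the paper leaves implicit.
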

 
\begin{proof}
Since $\chi_{e^{\uparrow}} \in D_e$, \eqref{def:varphi1} implies that there is $s\in \sigma^{-1}(t)$ such that $\chi_{e^{\uparrow}} \in D_{s^*}$. It follows that $s^{*}\geq e$. Letting $u=se$, we have  $[u]_{\sigma} = t$, so that $D_{e} = D_{u^*}$, as desired.
\end{proof}
 
 \begin{proof}[Proof of Proposition \ref{prop:spectral26}.] Since ${\mathrm{dom}}(\beta_s) = D_{s^*}$, (P1) follows from \eqref{def:varphi1}. Let us turn to (P2). Let $e,f \in E$ and $t\in S/\sigma$ be such that $D_e\subseteq {\mathrm{dom}}(\overline{\beta}_t)$.  Lemma \ref{lem:u1} implies that $D_e = D_{u^*}$ for some $u\in \sigma^{-1}(t)$.
 Note that $(f,D_f), (u, D_{e})\in {\mathscr C}(S)^a$. Using \eqref{eq:subs_mult} and \eqref{eq:def_mult}, we have:
\begin{align*}
(f,D_f) (u, D_{e}) & =\{[f,x][u,y] \colon x\in D_f, y\in D_{e} \text{ and } \beta_u(y)=x\} \\ 
& = \{[fu,y] \colon y\in \beta_u^{-1}(D_f)\} \\
& = (fu, \beta_u^{-1}(D_f)).
\end{align*}
Since $\beta_u^{-1}(D_f) = \overline{\beta}_t^{-1}(D_f)\cap D_e$, the latter rewrites to $(fu, \overline{\beta}_t^{-1}(D_f)\cap D_e)$.
On the other hand, $(f,D_f) (u, D_{e}) = (fu, D_{(fu)^*})$, by \eqref{eq:prod25a}. Therefore, $\overline{\beta}_t^{-1}(D_f)\cap D_e = D_{(fu)^*}\in \iota(E)$, which completes the proof.
\end{proof}

Recall that a subset $I$ of a poset $P$ is called an {\em order ideal} if $e\in I$ and $f\leq e$ imply that $f\in I$. An order ideal is {\em principal} if it is of the form $e^{\downarrow}$ for some $e\in P$. We will now show that the open sets of $\widehat{E}$ of the form $\bigcup\limits_{Y\subseteq E} D_e$ are in a bijection with order ideals of $E$.
Let
$$
{\mathscr A} = \{\bigcup\limits_{e\in Y} D_e\colon Y\subseteq E\},
$$
and let ${\mathscr B}$ be the set of all order ideals of $E$. Define the map
$$\Psi\colon {\mathscr A} \to {\mathscr B}, \qquad
\bigcup_{e\in Y} D_e \mapsto \{f\in E\colon D_f\subseteq \bigcup_{e\in Y} D_e\}.
$$

\begin{proposition}\label{prop:oi}  The map $\Psi$ is a bijection with the inverse bijection 
$$\Psi^{-1} \colon {\mathscr B}\to {\mathscr A}, \qquad \Psi^{-1}(I) = \bigcup\limits_{e\in I} D_e.
$$
\end{proposition}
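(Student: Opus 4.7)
My plan rests on a single preparatory observation: for any $e,f\in E$, the containment $D_e\subseteq D_f$ holds if and only if $e\leq f$. The ``if'' direction is routine, since $e\leq f$ means $e = e\wedge f$, whence $\varphi(e) = \varphi(e)\wedge \varphi(f)$ for every character, so $\varphi(e)=1$ forces $\varphi(f)=1$. The ``only if'' direction uses the same principal filter trick already exploited in the proof of Proposition~\ref{prop:embedding}: the character $\chi_{e^{\uparrow}}$ lies in $D_e$, so $D_e\subseteq D_f$ forces $\chi_{e^{\uparrow}}(f)=1$, which reads $f\geq e$. This lemma, which identifies the poset of basic open sets $\{D_e\}_{e\in E}$ with $(E,\leq)$ itself, is the engine driving everything else.

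Next I would verify that $\Psi$ and the claimed inverse both take values in the right sets. That $\bigcup_{e\in I} D_e$ belongs to $\mathscr{A}$ by definition requires no comment. For $\Psi$, I would observe that if $f\in\Psi(A)$ and $g\leq f$, then by the preparatory lemma $D_g\subseteq D_f\subseteq A$, so $g\in\Psi(A)$; hence $\Psi(A)$ is indeed an order ideal of $E$.

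The core of the argument is checking that the two composites are identities. For $\Psi^{-1}\circ\Psi = \mathrm{id}_{\mathscr{A}}$, writing $A = \bigcup_{e\in Y} D_e$, every $e\in Y$ satisfies $D_e\subseteq A$ and hence belongs to $\Psi(A)$, yielding $A\subseteq \bigcup_{e\in\Psi(A)} D_e$; the reverse inclusion holds because each $f\in\Psi(A)$ contributes $D_f\subseteq A$. For $\Psi\circ\Psi^{-1} = \mathrm{id}_{\mathscr{B}}$, given an order ideal $I$, the inclusion $I\subseteq \Psi(\Psi^{-1}(I))$ is immediate from the definition. The substantive direction is that $D_f\subseteq \bigcup_{e\in I} D_e$ forces $f\in I$, and this is the one place where I would expect any subtlety. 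It is dispatched, once again, by evaluating on the principal filter: $\chi_{f^{\uparrow}}\in D_f$ must lie in some $D_e$ with $e\in I$, giving $e\geq f$, and downward closure of $I$ yields $f\in I$. Notably, no compactness argument (such as extracting a finite subcover of $D_f$) is needed, because the principal filter character singlehandedly locates $f$ inside $I$; this is what keeps the proof short.
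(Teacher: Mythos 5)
Your proof is correct and follows essentially the same route as the paper: both composites are checked directly, and the only nontrivial inclusions ($D_f\subseteq\bigcup_{e\in I}D_e$ implies $f\in I$, and the identification of $\Psi(A)$) are dispatched by evaluating the principal filter character $\chi_{f^{\uparrow}}$, exactly as the paper does. Your explicit preparatory lemma ($D_e\subseteq D_f$ iff $e\leq f$) and the well-definedness check that $\Psi(A)$ is an order ideal are minor organizational additions, not a different argument.
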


\begin{proof} Let the map $\psi \colon {\mathscr B}\to {\mathscr A}$ be given by $I \mapsto \bigcup\limits_{e\in I} D_e$. We show that $\Psi\psi$ is the identity map on ${\mathscr B}$ and $\psi\Psi$ is the identity map on ${\mathscr A}$. Let $I$ be an order ideal of $E$. We prove $I = \Psi\psi(I)$. By definition,
$$\Psi\psi(I) = \{f\in E\colon D_f\subseteq \bigcup\limits_{e\in I} D_e\}.$$
For the inclusion $\Psi\psi(I)\subseteq I$, let $f\in E$ be such that $D_f\subseteq \bigcup\limits_{e\in I} D_e$. Since the characteristic function $\chi_{f^{\uparrow}}$ belongs to $D_f$, there must be some $e\in I$ such that $\chi_{f^{\uparrow}} \in D_e$, which implies $\chi_{f^{\uparrow}}(e)=1$, so that $e\geq f$. Since $I$ is an order ideal and $e\in I$, we have $f\in I$, so that $\Psi\psi(I)\subseteq I$. The reverse inclusion, $I\subseteq \Psi\psi(I)$, holds trivially by the definitions of $\Psi$ and $\psi$, establishing the equality $I = \Psi\psi(I)$.

Now, let $A=\bigcup\limits_{e\in Y} D_e \in {\mathscr A}$. We prove that $A = \psi\Psi(A)$. We first characterize the elements of the ideal $\Psi(A)$. For any $f\in E$, we claim that $f\in \Psi(A)$ if and only if $f\leq e$ for some $e\in Y$. 

{\em Proof of Claim.} If $f\in \Psi(A)$, then $D_f\subseteq A$. Since $\chi_{f^{\uparrow}}\in D_f$, it follows that $\chi_{f^{\uparrow}} \in D_e$ for some $e\in Y$. This condition means $\chi_{f^{\uparrow}}(e)=1$, which implies $f\leq e$, as claimed. 

Using this characterization, the equality follows immediately: $$\psi(\Psi(A)) = \bigcup\limits_{e\in \Psi(A)}D_e = \bigcup\limits_{e\in Y} D_e = A.$$ This establishes that $\psi\Psi$ is the identity map on ${\mathscr A}$, completing the proof.
\end{proof}

The bijection $\Psi$ allows us to view the domains of the maps $\overline{\beta}_t$ for $t\in S/\sigma$ as order ideals of $E$. This connection will be further clarified  in Section~\ref{s:new_E_unit}.
We now show that this provides a convenient way to characterize $F$-restriction semigroups.

Recall that a restriction semigroup is called $F$-{\em restriction} if every its $\sigma$-class has a maximum element. Similarly to the proof of \cite[Lemma 5]{K15}, it can be shown that every $F$-restriction semigroup is automatically proper. We obtain the following statement, which parallels the known result for $F$-birestriction (and, in particular, $F$-inverse) monoids, see \cite[Lemma 6]{K15}. 

\begin{proposition} Let $S$ be a proper restriction semigroup with local units. Then $S$ is $F$-restriction if and only if $\Psi({\mathrm{dom}}(\overline{\beta}_t))$ is a principal order ideal for every $t\in S/\sigma$.
\end{proposition}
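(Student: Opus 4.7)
The plan is to reduce the claim to the explicit description of $\Psi(\mathrm{dom}(\overline{\beta}_t))$ provided by Proposition~\ref{prop:oi}. Applying that proposition to the open set $\mathrm{dom}(\overline{\beta}_t) = \bigcup_{s \in \sigma^{-1}(t)} D_{s^*}$ given by \eqref{eq:o19c}, and noting that $D_f \subseteq D_g$ holds precisely when $f \leq g$ (as $\chi_{f^{\uparrow}} \in D_f$ separates), I would first record that
$$\Psi(\mathrm{dom}(\overline{\beta}_t)) = \{f \in E : f \leq s^* \text{ for some } s \in \sigma^{-1}(t)\},$$
so that $\{s^* : s \in \sigma^{-1}(t)\}$ generates this order ideal. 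The whole problem thus reduces to asking whether the family of projections $\{s^* : s \in \sigma^{-1}(t)\}$ has a maximum in $E$, and to relating such a maximum to a maximum in $\sigma^{-1}(t)$ itself.

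For the forward direction, assuming $S$ is $F$-restriction, I would fix $t \in S/\sigma$ and let $m$ denote the maximum of $\sigma^{-1}(t)$. By Lemma~\ref{lem:1}(1), every $r \in \sigma^{-1}(t)$ satisfies $r^* \leq m^*$, so the description above immediately yields $\Psi(\mathrm{dom}(\overline{\beta}_t)) = (m^*)^{\downarrow}$, a principal order ideal.

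For the converse, assume $\Psi(\mathrm{dom}(\overline{\beta}_t)) = e^{\downarrow}$. Since $e$ lies in this ideal, there is $s \in \sigma^{-1}(t)$ with $e \leq s^*$; and since $s^*$ itself lies in the ideal, $s^* \leq e$, giving $s^* = e$. I would then show this $s$ is the maximum of $\sigma^{-1}(t)$. Let $r \in \sigma^{-1}(t)$ be arbitrary; then $r^* \leq e = s^*$. Because $S$ is proper, Lemma~\ref{lem:proper} gives $r \smile s$, i.e. $rs^* = sr^*$. Using $r^* = r^*s^*$ (from $r^* \leq s^*$), the calculation
$$r = rr^* = r(s^*r^*) = (rs^*)r^* = (sr^*)r^* = sr^*$$
together with $sr^* \leq s$ yields $r \leq s$, as required.

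The principal obstacle is the converse implication: one must upgrade the comparison $r^* \leq s^*$ of domains to the comparison $r \leq s$ in $S$. This is exactly where properness enters, via the identification of $\sigma$ with the compatibility relation $\smile$ in Lemma~\ref{lem:proper}; without properness one could only infer $r \mathrel{\sigma} s$, which is too weak. All remaining steps are routine manipulations with the natural partial order and with the order-ideal/open-set bijection of Proposition~\ref{prop:oi}.
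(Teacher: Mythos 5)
Your proof is correct and follows essentially the same route as the paper's: both directions reduce to identifying $\Psi(\mathrm{dom}(\overline{\beta}_t))$ with the order ideal generated by $\{s^*\colon s\in\sigma^{-1}(t)\}$, and the converse uses properness (via $\smile\,=\,\sigma$) to upgrade $r^*\leq s^*$ to $r=sr^*\leq s$, exactly as in the paper. The only cosmetic difference is that the paper invokes Lemma~\ref{lem:u1} to produce the element $s$ with $D_e=D_{s^*}$, where you argue directly from the order-ideal description of Proposition~\ref{prop:oi}.
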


\begin{proof} Let $t\in S/\sigma$ and let $\tilde{t}$ be the maximum element of the $\sigma$-class $\sigma^{-1}(t)$. It is immediate by \eqref{def:varphi1} that ${\mathrm{dom}}(\overline{\beta}_t) = {\mathrm{dom}}(\beta_{\tilde{t}}) = D_{\tilde{t}}^{\,*}$. Hence $\Psi({\mathrm{dom}}(\overline{\beta}_t)) = \tilde{t}^{*}{}^{\downarrow}$.
Conversely, if $\Psi({\mathrm{dom}}(\overline{\beta}_t)) = e^{\downarrow}$ then ${\mathrm{dom}}(\overline{\beta}_t) = \Psi^{-1}(e^{\downarrow}) = \bigcup\limits_{f\leq e} D_f = D_e$.
Applying Lemma \ref{lem:u1}, we write $D_e=D_{s^*} = {\mathrm{dom}}(\beta_s)$ for some $s\in \sigma^{-1}(t)$. If $u\in \sigma^{-1}(t)$ then $u=su^* \leq ss^* = s$, so that $s$ is the maximum element of $\sigma^{-1}(t)$.
\end{proof}

We now show that proper partial actions can be used to construct proper restriction semigroups. 

\begin{proposition} \label{prop:restr} Let $T$ be a monoid, $X$ a locally compact Hausdorff space and $E$ a  meet-subsemilattice of the Boolean algebra ${\mathcal B}(X)$. Let, further, $\alpha$ be a proper partial action of $T$ on $X$ with respect to $E$. On the set 
\begin{equation}\label{eq:def_S}
S = \{(t, e) \colon t\in T, e\in E, e\subseteq {\mathrm{dom}}(\alpha_t)\}
\end{equation}
define the operations
\begin{equation}\label{eq:def_operations}
(s, e)(t, f) = (st, \alpha_t^{-1}(e)\cap f), \quad (s, e)^* = (1, e).
\end{equation}
Then:
\begin{enumerate} 
\item $(S; \, \cdot, \,^*)$ is a restriction semigroup. 
\item $P(S) = \{(1, e) \colon e\in E\}$
and the map $E\to P(S)$, $e\mapsto (1, e)$ is a semilattice isomorphism. 
\item The natural partial order $\leq$ on $S$ is given by $(s,e)\leq (t,f)$ if and only if $s=t$ and $e\subseteq f$.
\item 
The congruence $\sigma$ on $S$ is given by
$(t, e) \mathrel{\sigma} (s, f)$ if and only if $t=s$. Consequently, $S/\sigma \simeq T$ via the isomorphism $(t, e) \mapsto t$.
\item The restriction semigroup $S$ is proper.
\end{enumerate}
\end{proposition}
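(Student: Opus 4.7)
The plan is to prove (1)--(5) by direct verification, with the only subtle point being to confirm that the rule in \eqref{eq:def_operations} actually lands in $S$; the partial-action axioms of Section~\ref{s:monoid_actions} and condition (P2) are used only here. First I would check: if $(s,e), (t,f) \in S$, then (i) $\alpha_t^{-1}(e)\cap f \in E$ and (ii) $\alpha_t^{-1}(e)\cap f\subseteq {\mathrm{dom}}(\alpha_{st})$. Point (i) is precisely (P2) applied to $f\subseteq{\mathrm{dom}}(\alpha_t)$ and $e\in E$. For (ii), any $x\in \alpha_t^{-1}(e)\cap f$ satisfies $x\in{\mathrm{dom}}(\alpha_t)$ and $\alpha_t(x)\in e\subseteq {\mathrm{dom}}(\alpha_s)$, so the second partial-action axiom places $x$ in ${\mathrm{dom}}(\alpha_{st})$.

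Having established well-definedness, part (1) is mostly mechanical. Associativity reduces, after expanding both sides, to the identity
\[
\alpha_t^{-1}(\alpha_s^{-1}(d))\cap \alpha_t^{-1}(e)\cap f = \alpha_{st}^{-1}(d)\cap \alpha_t^{-1}(e)\cap f,
\]
which holds because on the common set $\alpha_t^{-1}(e)\cap f$ the values of $\alpha_t$ lie in $e\subseteq {\mathrm{dom}}(\alpha_s)$, whence the third partial-action axiom gives $\alpha_{st}(x)=\alpha_s\alpha_t(x)$ throughout. The identities \eqref{eq:axioms_star} and \eqref{eq:axioms_star_restr} then amount to short manipulations with the symbolic product; for instance, both $(s,e)^*(t,f)$ and $(t,f)\bigl((s,e)(t,f)\bigr)^*$ evaluate to $(t,\alpha_t^{-1}(e)\cap f)$, verifying \eqref{eq:axioms_star_restr}.

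Parts (2) and (3) are read off from the formulas: $P(S)$ consists of the fixed points of $^*$, which are exactly the $(1,e)$, and $e\mapsto(1,e)$ is obviously a semilattice isomorphism; while $(s,e)\leq(t,f)$ unfolds as $(s,e)=(t,f)(1,e)=(t,f\cap e)$, giving $s=t$ and $e\subseteq f$. For (4), I would introduce the relation $\rho$ on $S$ by $(s,e)\mathrel{\rho}(t,f)\iff s=t$, verify directly that $\rho$ is a congruence collapsing all projections to a point (hence $\sigma\subseteq\rho$), and for the reverse inclusion observe that if $(s,e)\mathrel{\rho}(s,f)$ then $(s,e\cap f)\in S$ (using that $E$ is closed under meet and $e\cap f\subseteq{\mathrm{dom}}(\alpha_s)$) is a common lower bound of $(s,e)$ and $(s,f)$ by part~(3), so Lemma~\ref{lem:s14a} yields $\rho\subseteq\sigma$. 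The induced map $[(t,e)]_{\sigma}\mapsto t$ is then a well-defined injective monoid morphism $S/\sigma\to T$, surjective onto the submonoid of elements of $T$ realized as first coordinates of elements of $S$. Part (5) is then immediate: if $(s,e)\mathrel{\sigma}(t,f)$ and $(s,e)^*=(t,f)^*$, then $s=t$ from~(4) while $(1,e)=(1,f)$ forces $e=f$.

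I expect the main obstacle to be the initial well-definedness step: it is the one place where the specific form of a proper partial action -- namely that preimages under $\alpha_t$ of elements of $E$, intersected with elements of $E$ sitting inside ${\mathrm{dom}}(\alpha_t)$, remain in $E$ and inside ${\mathrm{dom}}(\alpha_{st})$ -- is genuinely needed. Once this is secured, associativity is the same observation applied one level deeper, and the remaining parts are book-keeping dictated by the explicit formulas in \eqref{eq:def_operations}.
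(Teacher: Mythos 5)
Your proof is correct, and the only place where it genuinely diverges from the paper is part (1). The paper does not verify the semigroup and restriction axioms by hand: it identifies each pair $(t,e)$ with the compact slice $\{(t,x)\colon x\in e\}$ of the partial transformation category $T\ltimes X$ of Section~\ref{s:monoid_actions}, observes that the operations \eqref{eq:def_operations} are exactly the slice product and the slice $^*$ in $(T\ltimes X)^a$, and concludes that $S$ is a $(\cdot\,,{}^*)$-subalgebra of a restriction semigroup --- so associativity and the identities \eqref{eq:axioms_star} and \eqref{eq:axioms_star_restr} come for free, and the slice picture is reused in parts (3) and (4), where the natural partial order becomes inclusion of slices. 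Your direct verification is a legitimate, self-contained alternative: well-definedness via (P2) together with the second partial-action axiom, associativity via the identity $\alpha_{st}=\alpha_s\alpha_t$ on $\alpha_t^{-1}(e)\cap f$, and short computations for the remaining identities; this avoids the topological machinery entirely at the cost of more bookkeeping, and it makes visible exactly where each hypothesis on $\alpha$ enters. Parts (2)--(5) are handled essentially as in the paper (which likewise reduces (4) to Lemma~\ref{lem:s14a} via common lower bounds). One small point in your favour: you correctly note that $(t,e)\mapsto t$ is a priori surjective only onto the set of $t\in T$ admitting some $e\in E$ with $e\subseteq{\mathrm{dom}}(\alpha_t)$; the paper's statement tacitly takes this set to be all of $T$, which does hold for the spectral partial action $\overline{\beta}$ in the intended application (Theorem~\ref{th:main1}).
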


\begin{proof} (1) For the duration of this proof, we set $\mC=T\ltimes X$. Proposition \ref{prop:a21} implies that the sets $(t, e)$ are compact slices; therefore, $S\subseteq \mC^a$. Since $S$ is also closed with respect to the product and the operation $^*$, it is a $(\cdot\,,^*)$-subalgebra of $\mC^a$, meaning it is a restriction semigroup. 

(2) It is immediate by the definition of the $^*$ operation that projections of $S$ are precisely the elements $(1, e)$ where $e\in E$. The statement about the isomorphism with $E$ is also clear.

(3) By definition, we have that $(s,e)\leq (t,f)$ if and only if $(s,e) = (t,f)(s,e)^* = (t,f)(1,e) = (t, f\cap e)$. It follows that $s=t$ and $f\subseteq e$, as required. 

(4) Since $(t, {e\wedge f}) \leq (t, e), (t, f)$, we have $(t, e) \mathrel{\sigma} (t, f)$. Furthermore, a slice below $(t, e)$ must be contained in $(t, e)$ and is therefore of the form $(t, h)$ for some $h\leq e$. It follows that $(t, e) \mathrel{\sigma} (s, f)$ implies $s=t$, meaning that there is a bijection 
\begin{equation*}\label{eq:b21}
S/\sigma \to T, \quad (t, e) \mapsto t
\end{equation*}
between the set of $\sigma$-classes of $S$ and $T$.
Since, in addition, we have that $(s, e)(t, f) = (st,  \alpha_t^{-1}(e)\cap f)$, this bijection preserves the multiplication. It also preserves the $^*$ operation, since all the projections are mapped to $1$.

(5) Suppose that $(s, e),(t, f)\in S$ are such that $(t, e) \mathrel{\sigma} (s, f)$ and $(t, e)^* = (s, f)^*$. Parts (3) and (4) imply that $s=t$ and $e=f$, so that $(s, e)=(t, f)$. Hence $S$ is proper.
\end{proof}

A crucial special case of the construction of Proposition \ref{prop:restr} is obtained when $X$ is the spectrum $\widehat{E}$ of a semilattice $E$, yielding a restriction semigroup defined by a monoid $T$, a semilattice $E$ and a proper partial action of $T$ on  $\widehat{E}$. This leads to the following definition.

\begin{definition} (Partial action product of a semilattice by a monoid)
Let $T$ be a monoid and  $E$ a semilattice. Let, further, $\alpha$ be a proper partial action of $T$ on $\widehat{E}$ with respect to $\iota(E)$ (where $\iota$ is the embedding from Proposition \ref{prop:embedding}). The proper restriction semigroup $S$ constructed in Proposition \ref{prop:restr} will be  called the {\em partial action product} of $\iota(E)$ by $T$ {\em with respect to}  $\alpha$, and will be denoted by $T \ltimes_{\alpha} \iota(E)$.
\end{definition}

We now show that every proper restriction semigroup $S$ with local units is isomorphic to the partial action product of $\iota(P(S))$ by  $S/\sigma$ with respect to the partial action $\overline{\beta}$. 
 
\begin{theorem} (Structure of proper restriction semigroups with local units in terms of partial actions) \label{th:main1}
Let $S$ be a proper restriction semigroup with local units. Let $\beta$ be the spectral action of $S$ on $\widehat{P(S)}$. Then $S$ is $(\cdot\,,^*)$-isomorphic
to  $S/\sigma \ltimes_{\overline{\beta}} \iota(P(S))$ via the map 
\begin{align*} \label{eq:isom26}
\psi\colon & S\to S/\sigma \ltimes_{\overline{\beta}} \iota(P(S)), \\
& s\mapsto ([s]_{\sigma}, D_{s^*}).
\end{align*}
\end{theorem}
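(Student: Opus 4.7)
The plan is to verify directly that the stated map $\psi$ is a bijective $(\cdot\,,^*)$-morphism. The three things to check are well-definedness, that $\psi$ respects the operations, and bijectivity. None of these is individually hard; the main technical point is the multiplicativity computation, where one has to unpack the definition of $\overline{\beta}$ on the second coordinate.

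First I would check that $\psi$ is well-defined. Given $s\in S$, the second coordinate $D_{s^*}$ lies in $\iota(P(S))$ by definition of $\iota$, and it is contained in ${\mathrm{dom}}(\overline{\beta}_{[s]_\sigma})$ because, by \eqref{eq:o19c}, this domain is the union $\bigcup_{u\in\sigma^{-1}([s]_\sigma)} D_{u^*}$ and $s$ itself appears in the index set. Hence $([s]_\sigma, D_{s^*})$ belongs to $S/\sigma\ltimes_{\overline{\beta}}\iota(P(S))$.

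Next I would verify that $\psi$ preserves $^*$ and the product. For $^*$: since $s^*\in P(S)$ we have $[s^*]_\sigma = 1$ and $(s^*)^* = s^*$, so $\psi(s^*) = (1, D_{s^*}) = \psi(s)^*$. For the product, by \eqref{eq:def_operations},
\[
\psi(s)\psi(t) = \bigl([st]_\sigma,\; \overline{\beta}_{[t]_\sigma}^{-1}(D_{s^*})\cap D_{t^*}\bigr),
\]
so I need to establish the identity $\overline{\beta}_{[t]_\sigma}^{-1}(D_{s^*})\cap D_{t^*} = D_{(st)^*}$. For $\varphi\in D_{t^*}$, formula \eqref{eq:o19d} (taking the representative $t$ itself in $\sigma^{-1}([t]_\sigma)$) gives $\overline{\beta}_{[t]_\sigma}(\varphi)(s^*) = \varphi((s^*t)^*) = \varphi((st)^*)$, using the Ehresmann identity $(s^*t)^* = (st)^*$. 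Since $(st)^* = (s^*t)^*\leq t^*$, every $\varphi$ with $\varphi((st)^*)=1$ automatically satisfies $\varphi(t^*)=1$, so the two sides coincide with $D_{(st)^*}$. Thus $\psi(s)\psi(t) = \psi(st)$.

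For injectivity, suppose $\psi(s) = \psi(t)$. Then $[s]_\sigma = [t]_\sigma$ and $D_{s^*} = D_{t^*}$; the injectivity of $\iota$ on $P(S)$ (Section \ref{subs:spectrum_sem}) forces $s^* = t^*$, and then properness of $S$ (Definition \ref{def:proper}) gives $s = t$. For surjectivity, let $(t, e)\in S/\sigma\ltimes_{\overline{\beta}}\iota(P(S))$. Write $e = D_f$ for some $f\in P(S)$. Since $D_f\subseteq{\mathrm{dom}}(\overline{\beta}_t)$, Lemma \ref{lem:u1} produces $u\in\sigma^{-1}(t)$ with $D_{u^*} = D_f$, whence $\psi(u) = (t, D_f) = (t,e)$. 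This completes the argument; the only place where the hypothesis of properness is essentially used is the injectivity step, and the only conceptual content beyond definition-chasing is the calculation $\overline{\beta}_{[t]_\sigma}^{-1}(D_{s^*})\cap D_{t^*} = D_{(st)^*}$, which I expect to be the most delicate piece to write out cleanly.
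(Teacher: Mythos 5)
Your proposal is correct and follows essentially the same route as the paper: injectivity from properness, surjectivity via Lemma \ref{lem:u1}, and multiplicativity reduced to the identity $\overline{\beta}_{[t]_{\sigma}}^{-1}(D_{s^*})\cap D_{t^*}=D_{(st)^*}$ (the paper phrases this as $\overline{\beta}_{[t]_{\sigma}}^{-1}(D_{s^*})\cap D_{t^*}={\beta}_{t}^{-1}(D_{s^*})=D_{(st)^*}$, which is the same computation). The only cosmetic difference is that the paper opens by citing Proposition \ref{prop:spectral26} to confirm the partial action product is well formed, a point you address implicitly via your well-definedness check.
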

 
\begin{proof}  Since Proposition \ref{prop:spectral26} implies that $\overline{\beta}$ is a proper partial action with respect to $\iota(P(S))$, we can form the desired partial action product, $S/\sigma \ltimes_{\overline{\beta}} \iota(P(S))$. 
We first prove that $\psi$ is bijective. For injectivity, suppose $\psi(s)=\psi(t)$. This means $([s]_{\sigma}, D_{s^*}) = ([t]_{\sigma}, D_{t^*})$, which implies $s^*=t^*$ and $[s]_{\sigma} = [t]_{\sigma}$. Since $S$ is proper, we conclude that $t=s$. Hence, $\psi$ is injective.
 
For surjectivity,  let $(t, D_e) \in S/\sigma \ltimes_{\overline{\beta}} P(S)$. By the definition of the partial action product, $D_e\subseteq {\mathrm{dom}}(\overline{\beta}_t)$. Lemma \ref{lem:u1} ensures that  $D_e = D_{u^*}$ for some $u\in \sigma^{-1}(t)$. 
Thus, we can write $(t,D_e) = ([u]_{\sigma}, D_{u^*}) = \psi(u)$, implying that $\psi$ is surjective. 
 
We next show that $\psi$ is a $(\cdot\,,^*)$-morphism. By the definition of $\overline{\beta}_t$, we have that  $\overline{\beta}_{[t]_{\sigma}}(x) = \beta_t(x)$ for all $x\in D_{t^*}$. It follows that $\overline{\beta}_{[t]_{\sigma}}^{-1}(D_{s^*}) \cap D_{t^*} = {\beta}_{t}^{-1}(D_{s^*})$. Using this and \eqref{eq:def_operations}, we calculate:
\begin{align*}
\psi(s)\psi(t) & = ([s]_{\sigma}, D_{s^*})([t]_{\sigma}, D_{t^*}) \\
& = ([st]_{\sigma}, \overline{\beta}_{[t]_{\sigma}}^{\,-1}(D_{s^*}) \cap D_{t^*}) \\ 
& =  ([st]_{\sigma}, {\beta}_{t}^{-1}(D_{s^*})) \\
& =  ([st]_{\sigma}, D_{(st)^*}),
\end{align*}
$$
\psi(s)^* = ([s]_{\sigma}, D_{s^*})^* = (1, D_{s^*}) = \psi(s^*),
$$
as required. This completes the proof.
 \end{proof}
 Theorem \ref{th:main1} readily specializes to the structure of proper range, birestriction and inverse semigroups. We formulate the variant for proper inverse semigroups, commonly known as $E$-unitary inverse semigroups. We omit the details since they are similar to the restriction case.
 
\begin{corollary} (Structure of $E$-unitary inverse semigroups in terms of partial actions) \label{cor:E_unit}
Let $S$ be an $E$-unitary inverse semigroup. Then $S/\sigma \ltimes_{\overline{\beta}} \iota(E(S))$ is an inverse semigroup with 
\begin{align*}
(s, D_{e})(t, D_{f}) & = (st, \overline{\beta}_t^{-1}(D_e)\cap D_f), \\
(s, D_{s^*})^{-1} & = (s^{-1}, D_{s^+}),
\end{align*}
 where $s^* = s^{-1}s$ and $s^+=ss^{-1}$. Moreover, the map 
 $\psi\colon S\to S/\sigma \ltimes_{\overline{\beta}} \iota(E(S))$, given by 
$s\mapsto ([s]_{\sigma}, D_{s^*})$ is an isomorphism.
\end{corollary}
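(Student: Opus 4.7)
The plan is to derive this corollary as a direct specialization of Theorem \ref{th:main1} to the inverse case, with the inverse operation recovered by transport of structure along the bijection $\psi$.

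First I would observe that every inverse semigroup $S$, viewed as a restriction semigroup via $s^*=s^{-1}s$, automatically has local units: since $s = (ss^{-1})s$ and $ss^{-1}\in E(S)=P(S)$, one has $e s = s$ for $e=ss^{-1}$. Moreover, $E$-unitary is precisely properness in the sense of Definition~\ref{def:proper}: if $u\mathrel{\sigma} v$ and $u^*=v^*$, then $E$-unitariness forces $u=v$ (and conversely). Hence the hypotheses of Theorem \ref{th:main1} are satisfied, and Proposition \ref{prop:spectral26} together with Theorem \ref{th:main1} yields a bijective $(\cdot\,,^*)$-morphism $\psi\colon S\to S/\sigma\ltimes_{\overline{\beta}}\iota(P(S))$, $s\mapsto ([s]_\sigma, D_{s^*})$. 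Since $P(S)=E(S)$ for an inverse semigroup, the target is exactly $S/\sigma\ltimes_{\overline{\beta}}\iota(E(S))$, and the product formula in the corollary is the product formula from Proposition \ref{prop:restr} together with the identification $S/\sigma\simeq T$ of that proposition.

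Next I would transfer the inverse structure. Since $\psi$ is a bijective semigroup morphism and $S$ is inverse, the target $T := S/\sigma\ltimes_{\overline{\beta}}\iota(E(S))$ is automatically inverse, with $\psi(s)^{-1} = \psi(s^{-1})$; indeed, given $x\in T$, writing $x=\psi(s)$ and setting $y=\psi(s^{-1})$, the identities $sst^{-1}s=s$, $s^{-1}ss^{-1}=s^{-1}$ transfer through $\psi$, and uniqueness of inverses follows since idempotents in $T$ commute (they are precisely the projections $(1,D_e)$, which commute because $E(S)$ does). Finally, a direct computation gives $\psi(s^{-1}) = ([s^{-1}]_\sigma, D_{(s^{-1})^*}) = ([s^{-1}]_\sigma, D_{s^+})$, matching the claimed formula once one notes that $[s^{-1}]_\sigma$ is the group-theoretic inverse of $[s]_\sigma$ in the maximum group image $S/\sigma$ (because $s^{-1}s = s^* \mathrel{\sigma} 1$ implies $[s]_\sigma[s^{-1}]_\sigma=1$ in $S/\sigma$).

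The only mild subtlety, and hence the main point to spell out carefully, is the abuse of notation in the displayed inverse formula: the symbol $s$ appears simultaneously as an element of $S/\sigma$ in the first coordinate and as an element of $S$ inside the subscripts $s^*$ and $s^+$. This is legitimate precisely because, on the image of $\psi$, the semilattice component $D_{s^*}$ determines a canonical lift of the $S/\sigma$-component to $S$ (namely the unique $u\in\sigma^{-1}([s]_\sigma)$ with $D_{u^*}=D_{s^*}$, provided by Lemma \ref{lem:u1} and the properness of $S$). I would note this identification explicitly at the start of the argument; after that, no further work is needed, as every remaining assertion is a direct translation of Theorem \ref{th:main1}, Proposition \ref{prop:restr}, and the transport of the inverse operation across $\psi$.
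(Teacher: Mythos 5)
Your proposal is correct and follows exactly the route the paper intends: the paper itself gives no detailed argument for this corollary, stating only that Theorem \ref{th:main1} ``readily specializes'' to the inverse case with details omitted, and your write-up supplies precisely those details (local units via $s=(ss^{-1})s$, properness $=$ $E$-unitariness, transport of the inverse operation along the bijective morphism $\psi$, and the computation $(s^{-1})^*=s^+$). The only blemish is the typo $sst^{-1}s=s$ for $ss^{-1}s=s$; otherwise nothing needs to change.
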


%%%
\subsection{A new proof of the structure theorem for $E$-unitary inverse semigroups}\label{s:new_E_unit}
In this section, we apply Corollary \ref{cor:E_unit} to provide a new proof for the structure result of $E$-unitary inverse semigroup due to Petrich and Reilly \cite[Theorem 3.2]{PR79}, see also \cite{KL04}. We begin by recalling the structure result from \cite{PR79}. 
 
Let $S$ be an $E$-unitary inverse semigroup. The {\em underlying partial action} of $S$ is a premorphism, $\psi$, from $S$ to the inverse semigroup $\Sigma(E(S))$ of all order-isomorphisms between order ideals of $S$. By definition, we have, for $s\in S$, that 
$$
{\mathrm{dom}}(\psi_s) = \{e\in E(S)\colon e\leq t^* \text{ for some } t\in \sigma^{-1}(s)\}.
$$
If $e\in {\mathrm{dom}}(\psi_s)$ then $\psi_s(e) = ses^{-1}$. 
Let $S/\sigma\ltimes_{\psi} E(S)$ be the set of all pairs $(s,e)$ where $s\in S/\sigma$ and $e\in {\mathrm{dom}}(\psi_s)$. Setting
$$
(s,e)(t,f) = (st, \psi_t^{-1}(\psi_t(f)\wedge e)), \quad (s,e)^{-1} = (s^{-1}, \psi_s(e))
$$
makes $S/\sigma \ltimes_{\psi} E(S)$ an inverse semigroup. The following result was proved by Petrich and Reilly in \cite[Theorem 3.2]{PR79}.

\begin{theorem}\label{th:pr} Let $S$ be an $E$-unitary inverse semigroup. Then  $s\mapsto ([s]_{\sigma}, s^*)$ is an isomorphism from $S$ onto $S/\sigma\ltimes_{\psi} E(S)$.
\end{theorem}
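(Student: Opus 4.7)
The plan is to derive Theorem \ref{th:pr} from Corollary \ref{cor:E_unit} by identifying the Petrich-Reilly partial action product $S/\sigma\ltimes_\psi E(S)$ with the spectral partial action product $S/\sigma\ltimes_{\overline{\beta}} \iota(E(S))$. The composition of this identification with the isomorphism $s\mapsto ([s]_\sigma, D_{s^*})$ from Corollary \ref{cor:E_unit} should recover exactly the Petrich-Reilly map $s\mapsto ([s]_\sigma, s^*)$.

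First I would set up the bijection at the level of underlying sets. By Lemma \ref{lem:u1}, $D_e \subseteq {\mathrm{dom}}(\overline{\beta}_t)$ if and only if $D_e = D_{u^*}$ for some $u\in \sigma^{-1}(t)$, which is equivalent to $e\leq u^*$ for some $u\in \sigma^{-1}(t)$, i.e., to $e\in {\mathrm{dom}}(\psi_t)$. Since $\iota\colon E(S)\to \iota(E(S))$, $e\mapsto D_e$ is a semilattice isomorphism, the assignment $(t,e)\mapsto (t,D_e)$ is a bijection between the two partial action products.

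The decisive step is to verify the compatibility
\begin{equation*}
\overline{\beta}_t(D_e) = D_{\psi_t(e)} \qquad (t\in S/\sigma,\; e\in {\mathrm{dom}}(\psi_t)).
\end{equation*}
To establish this, fix $s\in \sigma^{-1}(t)$ with $e\leq s^*$. Since $[s,\varphi]=[se,\varphi]$ for every $\varphi\in D_e$, we have $\overline{\beta}_t|_{D_e} = \beta_{se}|_{D_{(se)^*}}$. The computation carried out in the proof of Proposition \ref{prop:s16a} shows that $\beta_u$ restricts to a homeomorphism $D_{u^*}\to D_{u^+}$. Taking $u=se$ and using $s^{-1}=s^*$ and $e^{-1}=e$, we obtain $\overline{\beta}_t(D_e) = D_{(se)^+} = D_{ses^{-1}} = D_{\psi_t(e)}$, as required. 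In other words, under $\iota$, the spectral partial action $\overline{\beta}$ is conjugate to the underlying partial action $\psi$ of Petrich-Reilly.

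With this compatibility, the product and inverse formulas transport directly: using that $\iota$ is a meet-semilattice morphism and that $\overline{\beta}_t$ is a partial homeomorphism, one has
\begin{equation*}
\overline{\beta}_t^{-1}(D_e)\cap D_f = \overline{\beta}_t^{-1}(D_e\cap D_{\psi_t(f)}) = \overline{\beta}_t^{-1}(D_{e\wedge \psi_t(f)}) = D_{\psi_t^{-1}(\psi_t(f)\wedge e)},
\end{equation*}
which matches the Petrich-Reilly product $(s,e)(t,f) = (st,\psi_t^{-1}(\psi_t(f)\wedge e))$; a parallel argument handles inverses via $(s,D_e)^{-1} = (s^{-1}, \overline{\beta}_s(D_e)) = (s^{-1}, D_{\psi_s(e)})$. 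Hence $(t,e)\mapsto (t,D_e)$ is an isomorphism of inverse semigroups. Composing with the isomorphism of Corollary \ref{cor:E_unit} gives $s\mapsto ([s]_\sigma, D_{s^*})\mapsto ([s]_\sigma, s^*)$, proving the theorem.

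The only nontrivial obstacle is the verification of the identity $\overline{\beta}_t(D_e) = D_{\psi_t(e)}$, which reduces to the already-established behaviour of $\beta_u$ on $D_{u^*}$ and the observation that germs at points of $D_e$ are independent of the chosen $s\in \sigma^{-1}(t)$ with $e\leq s^*$. Once this is in place, the matching of products, inverses, and domains is routine.
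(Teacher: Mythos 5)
Your proof is correct and follows essentially the same route as the paper: both establish the dictionary $\overline{\beta}_t(D_e)=D_{\psi_t(e)}$ (Lemma \ref{lem:o18a} there, which you instead deduce from the range computation $\beta_u(D_{u^*})=D_{u^+}$ of Proposition \ref{prop:s16a}), show that $(t,e)\mapsto (t,D_e)$ is an isomorphism of the two partial action products, and compose with Corollary \ref{cor:E_unit}. The only blemish is the parenthetical justification ``$s^{-1}=s^*$'', which is false ($s^*=s^{-1}s$); the intended computation $(se)^+=(se)(se)^{-1}=see^{-1}s^{-1}=ses^{-1}=\psi_t(e)$ needs only $e^{-1}=e$ and is fine.
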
 

For our new proof of the Petrich-Reilly result, we first establish the connection between $\overline{\beta}_s$ and $\psi_s$, where $s$ is an element of $S$.

\begin{lemma}\label{lem:o18a} Let $S$ be an $E$-unitary inverse semigroup, $s\in S$ and $e\in E(S)$.
Then: 
\begin{equation}\label{eq:o19a}
e\in {\mathrm{dom}}(\psi_s) \, \Leftrightarrow \, D_e\subseteq {\mathrm{dom}}(\overline{\beta}_s) \,\,\, \text{ and } \,\,\,
e\in {\mathrm{ran}}(\psi_s) \, \Leftrightarrow \, D_e\subseteq {\mathrm{ran}}(\overline{\beta}_s).
\end{equation}
Furthermore, 
\begin{equation}\label{eq:o19b}
\forall e\in {\mathrm{dom}}(\psi_s)\colon \overline{\beta}_s(D_e) = D_{\psi_s(e)} \,\, \text{ and } \,\, \forall e\in {\mathrm{ran}}(\psi_s)\colon \overline{\beta}_s^{-1}(D_e) = D_{\psi_s^{-1}(e)}.
\end{equation}
 \end{lemma}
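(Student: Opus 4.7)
\emph{Plan.} The lemma consists of two equivalences and two equalities, symmetric between domain and range. My plan is to establish the domain versions directly from the definitions and Lemma~\ref{lem:u1}, and then to deduce the range versions by passing to $s^{-1}$, using that the spectral action of an inverse semigroup is by partial homeomorphisms.

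For the first equivalence in \eqref{eq:o19a}, the forward direction is immediate from \eqref{eq:o19c}: if $e \leq t^*$ for some $t \in \sigma^{-1}(s)$, then $D_e \subseteq D_{t^*} \subseteq \mathrm{dom}(\overline{\beta}_s)$. Conversely, suppose $D_e \subseteq \mathrm{dom}(\overline{\beta}_s)$. Then Lemma~\ref{lem:u1} supplies $u \in \sigma^{-1}(s)$ with $D_e = D_{u^*}$; evaluating the principal character $\chi_{e^{\uparrow}} \in D_e = D_{u^*}$ yields $u^* \geq e$, placing $e$ in $\mathrm{dom}(\psi_s)$.

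For the first equality in \eqref{eq:o19b}, given $e \in \mathrm{dom}(\psi_s)$ I pick $t \in \sigma^{-1}(s)$ with $e \leq t^*$ and set $u = te$. Since multiplying a representative by an idempotent stays in the same $\sigma$-class, $u \in \sigma^{-1}(s)$ and $u^* = e$. On $D_{u^*}$ the partial action $\overline{\beta}_s$ coincides with $\beta_u$ by \eqref{eq:o19d}, and the computation inside the proof of Proposition~\ref{prop:s16a} gives $\beta_u(D_{u^*}) = D_{u^+}$. It then remains to identify $u^+ = tet^{-1}$ with the Petrich--Reilly value $\psi_s(e)$; the point needing care is that $tet^{-1}$ is independent of the chosen representative $t$, which follows from the $E$-unitary hypothesis via Lemma~\ref{lem:proper}: compatible representatives $t_1, t_2 \in \sigma^{-1}(s)$ satisfy $t_1 e = t_2 e$ (both lie below a common element of the $\sigma$-class and have equal projection $e$), hence $t_1 e t_1^{-1} = t_2 e t_2^{-1}$.

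For the range equivalence and the second equality in \eqref{eq:o19b}, I will transfer the domain results to $s^{-1}$. Proposition~\ref{prop:s16a} gives that each $\beta_v$ is a partial homeomorphism with $\beta_v^{-1} = \beta_{v^{-1}}$; the bijection $v \mapsto v^{-1}$ between $\sigma^{-1}(s)$ and $\sigma^{-1}(s^{-1})$ assembles these local inverses into the equality $\overline{\beta}_{s^{-1}} = (\overline{\beta}_s)^{-1}$ of partial homeomorphisms, so in particular $\mathrm{ran}(\overline{\beta}_s) = \mathrm{dom}(\overline{\beta}_{s^{-1}})$. Since $\psi_{s^{-1}} = \psi_s^{-1}$ for the Petrich--Reilly partial action, applying the already established domain statements to $s^{-1}$ and inverting yields the range statements for $s$. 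The main technical obstacle in the whole argument is thus the identification $u^+ = \psi_s(e)$, which is precisely where the $E$-unitary hypothesis is used.
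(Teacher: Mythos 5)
Your proof is correct, and its overall architecture matches the paper's: both arguments establish the domain statements first and then deduce the range statements by passing to $s^{-1}$, using ${\mathrm{ran}}(\psi_s)={\mathrm{dom}}(\psi_{s^{-1}})$ and ${\mathrm{ran}}(\overline{\beta}_s)={\mathrm{dom}}(\overline{\beta}_{s^{-1}})$; your converse direction of the first equivalence, via Lemma~\ref{lem:u1} and the principal character $\chi_{e^{\uparrow}}$, is likewise the mechanism the paper uses. Where you genuinely diverge is the equality $\overline{\beta}_s(D_e)=D_{\psi_s(e)}$: the paper proves it by a direct chain of equivalences at the level of characters, ending in $\varphi(\psi_s(e))=1\Leftrightarrow\varphi\in D_{\psi_s(e)}$, and never chooses a representative, whereas you normalize to $u=te$ with $u^*=e$, quote $\beta_u(D_{u^*})=D_{u^+}$ from the proof of Proposition~\ref{prop:s16a}, and then identify $u^+=tet^{-1}$ with $\psi_s(e)$. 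Your route is a little longer but buys something the paper elides: an explicit verification that $tet^{-1}$ is independent of the representative $t\in\sigma^{-1}(s)$ with $e\leq t^*$ (via properness, since $t_1e$ and $t_2e$ are $\sigma$-equivalent with equal projections, hence equal), which is needed to reconcile the Petrich--Reilly formula $\psi_s(e)=ses^{-1}$ with general $e\in{\mathrm{dom}}(\psi_s)$. I see no gaps.
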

\begin{proof} We start from proving the first equivalence in \eqref{eq:o19a}. Since $e\mapsto D_e$ is an isomorphism between $E(S)$ and $\iota(E(S))$, it is also an order-isomorphism, so that $e\leq f$ holds in $E(S)$ if and only $D_e\subseteq D_f$ holds in $\iota(E(S))$. In view of the definition of ${\mathrm{dom}}(\psi_s)$ and the definition $\overline{\beta}_s$ (see \eqref{eq:o19c}), we have:
$$
e\in {\mathrm{dom}}(\psi_s) \,\, \Leftrightarrow \,\, \exists u\in \sigma^{-1}(s) \colon e\leq u^* \,\, \Leftrightarrow \,\, \exists u\in \sigma^{-1}(s) \colon  D_e \subseteq D_{u^*} \,\, \Leftrightarrow \,\, D_e\subseteq {\mathrm{dom}}(\overline{\beta}_s).
$$
For the second equivalence, we first observe that ${\mathrm{ran}}(\psi_s) =  {\mathrm{dom}}(\psi_{s}^{-1})  = {\mathrm{dom}}(\psi_{s^{-1}})$ and ${\mathrm{ran}}(\overline{\beta}_s) = {\mathrm{dom}}(\overline{\beta}_{s}^{-1}) = {\mathrm{dom}}(\overline{\beta}_{s^{-1}})$. The second equivalence then follows directly from the first one, since $e\in {\mathrm{ran}}(\psi_s)$ if and only if $e\in {\mathrm{dom}}(\psi_{s}^{-1})$ and $D_e\subseteq {\mathrm{ran}}(\overline{\beta}_s)$ if and only if $D_e\subseteq {\mathrm{ran}}(\overline{\beta}_s^{-1})$.

Let now $e\in {\mathrm{dom}}(\psi_s)$. The equality $\overline{\beta}_s(D_e) = D_{\psi_s(e)}$ is a consequence of the following equivalences:
$$
\varphi \in \overline{\beta}_s(D_e)  \,\, \Leftrightarrow \,\,  \overline{\beta}_s(\varphi)(e) = 1  \,\, \Leftrightarrow \,\, \varphi((es)^*) = 1 \,\, \Leftrightarrow \,\,   \varphi\psi_s(e) = 1 \,\, \Leftrightarrow \,\, \varphi \in D_{\psi_s(e)}.
$$
This verifies the first formula in \eqref{eq:o19b}. The second formula follows by symmetry using the observation above.
\end{proof}

\begin{proposition}\label{prop:o18b} Let $S$ be an $E$-unitary inverse semigroup.
The map $$\gamma\colon S/\sigma\ltimes_{\psi} E(S) \quad \to \quad S/\sigma \ltimes_{\overline{\beta}} \iota(E(S)),$$ given by $(s, e) \mapsto (s, D_e)$ is an isomorphism. Consequently, 
$$S\to  S/\sigma\ltimes_{\psi} E(S), \quad s\mapsto ([s]_{\sigma}, s^*)$$ is an isomorphism.
\end{proposition}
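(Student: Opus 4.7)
The plan is to translate the Petrich--Reilly construction into our spectral framework via Lemma~\ref{lem:o18a} and then combine with Corollary~\ref{cor:E_unit}. Specifically, I will verify that $\gamma$ is a bijective semigroup homomorphism from the inverse semigroup $S/\sigma \ltimes_{\psi} E(S)$ to the inverse semigroup $S/\sigma \ltimes_{\overline{\beta}} \iota(E(S))$. Since both sides are inverse semigroups and inverses are uniquely determined by the multiplicative structure, $\gamma$ will then automatically be an isomorphism of inverse semigroups. Composing the isomorphism $s \mapsto ([s]_\sigma, D_{s^*})$ from Corollary~\ref{cor:E_unit} with $\gamma^{-1}$ will then produce the classical Petrich--Reilly isomorphism $s \mapsto ([s]_\sigma, s^*)$.

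Well-definedness and bijectivity of $\gamma$ are immediate consequences of Lemma~\ref{lem:o18a}. The first equivalence of \eqref{eq:o19a}, namely $e \in {\mathrm{dom}}(\psi_s) \Leftrightarrow D_e \subseteq {\mathrm{dom}}(\overline{\beta}_s)$, shows that $(s,e)$ belongs to $S/\sigma \ltimes_{\psi} E(S)$ if and only if $(s, D_e)$ belongs to $S/\sigma \ltimes_{\overline{\beta}} \iota(E(S))$. Combining this with the order-isomorphism $e \mapsto D_e$ between $E(S)$ and $\iota(E(S))$ (noted within the proof of Lemma~\ref{lem:o18a}) yields the required bijection.

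The main computation, and the main obstacle, is multiplication preservation. Given $(s,e), (t,f) \in S/\sigma \ltimes_{\psi} E(S)$, unfolding both product formulas reduces the task to the identity
\[
D_{\psi_t^{-1}(\psi_t(f) \wedge e)} \;=\; \overline{\beta}_t^{-1}(D_e) \cap D_f.
\]
The difficulty is that $e$ need not belong to ${\mathrm{ran}}(\psi_t)$, so the second formula of \eqref{eq:o19b} cannot be applied to $D_e$ directly. I plan to bypass this by using $\overline{\beta}_t(D_f) = D_{\psi_t(f)}$ together with the injectivity of $\overline{\beta}_t$ (guaranteed by Proposition~\ref{prop:s16a}) to rewrite the right-hand side as
\[
\overline{\beta}_t^{-1}(D_e) \cap D_f \;=\; \overline{\beta}_t^{-1}\bigl(D_e \cap D_{\psi_t(f)}\bigr) \;=\; \overline{\beta}_t^{-1}(D_{e \wedge \psi_t(f)}),
\]
where the last equality uses that $\iota$ preserves meets. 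Since $e \wedge \psi_t(f) \leq \psi_t(f) \in {\mathrm{ran}}(\psi_t)$ and ${\mathrm{ran}}(\psi_t)$ is an order ideal, we have $e \wedge \psi_t(f) \in {\mathrm{ran}}(\psi_t)$, so the second formula of \eqref{eq:o19b} applies and gives exactly the required identity.

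Once multiplication preservation is established, $\gamma$ is a bijective semigroup homomorphism between inverse semigroups, hence an isomorphism of inverse semigroups. Composition with the isomorphism from Corollary~\ref{cor:E_unit} then yields the stated Petrich--Reilly isomorphism $s \mapsto ([s]_\sigma, s^*)$.
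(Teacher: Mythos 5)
Your proposal is correct and takes essentially the same route as the paper: both reduce preservation of multiplication to the identity $D_{\psi_t^{-1}(\psi_t(f)\wedge e)} = \overline{\beta}_t^{-1}(D_e)\cap D_f$ and resolve it using Lemma \ref{lem:o18a} together with the fact that $e\mapsto D_e$ is a semilattice morphism, then obtain the Petrich--Reilly map by composing with Corollary \ref{cor:E_unit}. The only quibble is your citation of Proposition \ref{prop:s16a} for the injectivity of $\overline{\beta}_t$: that proposition gives injectivity of each $\beta_u$ with $u\in S$, not of the glued map $\overline{\beta}_t$ for $t\in S/\sigma$, so you should add the one-line observation that $\overline{\beta}_{t^{-1}}$ is a two-sided inverse of $\overline{\beta}_t$ (the paper's own proof relies on the same fact implicitly when it writes the product as $\overline{\beta}_t^{-1}(\overline{\beta}_t(D_f)\cap D_e)$).
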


\begin{proof} 
It is readily seen that the map $\gamma$ is a bijection. To show that it preserves the multiplication, we must show that 
$$
\gamma((st, \psi_t^{-1}(\psi_t(f)\wedge e))) = (s, D_e)(t, D_f) = (st, \overline{\beta}_t^{-1}(\overline{\beta}_t(D_f)\cap D_e)).
$$
It suffices to prove that $D_{\psi_t^{-1}(\psi_t(f)\wedge e)} = \overline{\beta}_t^{-1}(\overline{\beta}_t(D_f) \cap D_e)$. Applying Lemma \ref{lem:o18a}, we rewrite the left-hand side of this equality into
$\overline{\beta}_t^{-1}(D_{\psi_t(f)\wedge e})$.  Since $e\mapsto D_e$ is a morphism of semilattices, the latter rewrites to $\overline{\beta}_t^{-1}(D_{\psi_t(f)}\cap D_e))$. Therefore, we aim to prove that
\begin{equation}\label{eq:o19f}
\overline{\beta}_t^{-1}(D_{\psi_t(f)}\cap D_e)) = \overline{\beta}_t^{-1}(\overline{\beta}_t(D_f) \cap D_e).
\end{equation}
Applying $\overline{\beta}_t$ to both sides, we obtain the equivalent equality
$$
D_{\psi_t(f)}\cap D_e = \overline{\beta}_t(D_f) \cap D_e,
$$
which holds by \eqref{eq:o19b}. Therefore, \eqref{eq:o19f} also holds, which finishes the proof.
\end{proof}

Corollary \ref{cor:E_unit} and Proposition \ref{prop:o18b} bring a new proof of the Petrich-Reilly theorem (Theorem \ref{th:pr}) and consequently, a new proof of McAlister's $P$-theorem. The discussion in this section shows that Corollary \ref{cor:E_unit} can be viewed as the topological variant of Theorem~\ref{th:pr}.

\section{The algebra of an ample category and the isomorphism}\label{s:algebras}
\subsection{The convolution algebra of an ample category} The following definition is inspired by the definition of a convolution algebra of an ample groupoid \cite{St10}, known as a {\em Steinberg algebra}.

\begin{definition}\label{def:algebras} (Algebra of an ample category) Let $\mC$ be an ample category and $K$ a commutative unital ring. We define $K\mC$ to be the $K$-module spanned by the characteristic functions of elements of the Boolean restriction semigroup $\mC^a$.
\end{definition}

To align this definition with that of a Steinberg algebra, let $\mC$ be an ample groupoid. Its $K$-algebra, which we denote by $A$, is defined in \cite{St10} as the $K$-module spanned by the characteristic functions of compact bislices, that is, of elements of $\widetilde{\mC}^a$. Let us show that $A=K\mC$. To show that $K\mC\subseteq A$, we need to show that $\chi_U\in A$ for $U \in \mC^{a}$. Since compact bislices form a basis of the topology, $U$ is a finite union of compact bislices: $U=\cup_{i=1}^n U_i$.  Since  $U_i\subseteq U$ for all $i\in \{1,\dots, n\}$, the bislices $U_i$ are pairwise compatible. By Lemma \ref{lem:compatibility}, for any index set $J\subseteq \{1,\dots, n\}$, the intersection $\cap_{j\in J} U_j$ coincides with $U_{j_1} \prod_{j\in J} U_j^*\in \widetilde{\mC}^a$, where $j_1$ is an arbitrary element of $J$. Therefore, any finite intersection of elements of the set $\{U_1,\dots, U_n\}$ belongs to $\widetilde{\mC}^a$. Applying the inclusion-exclusion principle, we conclude that 
$$
\chi_U = \chi_{U_1\cup \dots \cup U_n} = \sum_{k=1}^n (-1)^{k-1} \sum_{J\subseteq \{1,\dots, n\}, |J|=k} \chi_{\cap_{j\in J}U_j},
$$
which means that $\chi_U\in A$, as required. Therefore, $K\mC\subseteq A$. Since any compact bislice is a compact slice, the reverse inclusion is obvious. It follows that $A=K\mC$, so that the algebra of $\mC$ defined in \cite{St10} coincides with the algebra $K\mC$ defined above. Therefore, the algebra $K\mC$ indeed generalizes the notion of a Steinberg algebra of an ample groupoid.

\begin{example} If $\mC$ is endowed with the discrete topology, $K\mC$ is the module of all functions on $\mC$ with finite support. It is spanned by the set $\{\delta_x\colon  x\in \mC\}$, where $\delta_x$ is the Kronecker delta:
$$
\delta_x(y) = \left\lbrace\begin{array}{ll} 1, & \text{if } \, x=y;\\ 0, & \text{otherwise.}\end{array}\right.
$$
\end{example}

For every $f,g\in K\mC$, we define their {\em convolution product} by
\begin{equation}\label{def:conv}
(f*g)(x) = \sum_{uv=x} f(u)g(v).
\end{equation}

\begin{lemma} \label{lem:mult1} If $U,V\in \mC^a$ then $\chi_U * \chi_V$ is well defined and equals $\chi_{UV}$. Consequently, the product in \eqref{def:conv} is well defined.
\end{lemma}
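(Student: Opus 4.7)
The plan is to show the pointwise identity $\chi_U * \chi_V = \chi_{UV}$ by establishing that, for each $x\in \mC$, there is at most one pair $(u,v)\in U\times V$ with $uv=x$. First I would invoke Proposition \ref{prop:ample}(1) to note that $UV\in \mC^a$; in particular $UV$ is open, so $\chi_{UV}$ is a legitimate element of the $K$-module $K\mC$.

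The key step is the uniqueness of the factorisation, where both slice hypotheses get used in turn. Suppose $x=u_1v_1=u_2v_2$ with $u_i\in U$ and $v_i\in V$. Since $V$ is a slice, $\dom|_V$ is injective; the category axioms give $\dom(v_i)=\dom(u_iv_i)=\dom(x)$, so $\dom(v_1)=\dom(v_2)$ forces $v_1=v_2=:v$. Then the composability conditions $\dom(u_i)=\ran(v)$ together with injectivity of $\dom|_U$ force $u_1=u_2$. Consequently the set $\{(u,v)\in U\times V : uv=x\}$ has at most one element, and it is non-empty exactly when $x\in UV$. Substituting into \eqref{def:conv} gives
$$(\chi_U * \chi_V)(x) \;=\; \sum_{uv=x}\chi_U(u)\chi_V(v) \;=\; \chi_{UV}(x),$$
with the sum containing at most one non-zero summand; in particular it is a finite sum and yields a well-defined element of $K\mC$.

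For the final assertion that the product \eqref{def:conv} is well defined on all of $K\mC$, I would write arbitrary $f,g\in K\mC$ as finite linear combinations $f=\sum_{i=1}^n a_i\chi_{U_i}$ and $g=\sum_{j=1}^m b_j\chi_{V_j}$ with $U_i,V_j\in \mC^a$, and exploit bilinearity of \eqref{def:conv}:
$$(f*g)(x) \;=\; \sum_{i=1}^n\sum_{j=1}^m a_ib_j\,(\chi_{U_i}*\chi_{V_j})(x) \;=\; \sum_{i,j} a_ib_j\,\chi_{U_iV_j}(x).$$
Each inner term is well-defined by the first part of the argument, each $U_iV_j$ belongs to $\mC^a$, and the outer sum is finite, so $f*g\in K\mC$. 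The only conceptually delicate step is the unique factorisation property of $x$ through $U\times V$; once that is in hand the rest is bookkeeping, and the slice hypothesis is precisely what makes it work (note that no coslice assumption is used, consistent with the fact that $\mC$ is merely ample).
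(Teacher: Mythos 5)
Your argument is correct and follows essentially the same route as the paper: the unique-factorisation step (first using that $V$ is a slice to pin down $v$ via $\dom(v)=\dom(x)$, then that $U$ is a slice to pin down $u$ via $\dom(u)=\ran(v)$) is exactly the paper's argument. The extra remarks on closure of $\mC^a$ under products and the bilinear extension to all of $K\mC$ are harmless elaborations of what the paper leaves implicit.
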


\begin{proof} Note that $\chi_U(u)\chi_V(v)=0$ unless $u\in U$ and $v\in V$ in which case $\chi_U(u)\chi_V(v)=1$. 
Observe that every $x\in UV$ admits a unique factorization $x=uv$, where $u\in U$ and $v\in V$. Indeed, if $x=st$ is another such a factorization then
we have $\dom(v) = \dom(t) = \dom(x)$. But $v,t\in V$ and $V$ is a slice. Hence $v=t$. It follows that $\dom(u) = \ran(v) = \ran(t) = \dom (s)$. Because $u,s\in U$ and $U$ is a slice, we now conclude that $u=s$, as desired. It follows that for every $x\in UV$ the sum $\sum_{uv=x} \chi_U(u)\chi_V(v)$ has precisely one non-zero term which equals $1$. On the other hand, when $x\not\in UV$, this sum equals $0$. The statement follows.
\end{proof}

\begin{proposition} The convolution product on $K\mC$ is associative. Consequently $K\mC$ is a $K$-algebra.
\end{proposition}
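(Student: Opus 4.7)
The plan is to deduce associativity from the bilinearity of convolution together with Lemma \ref{lem:mult1}, reducing the problem to associativity of subset multiplication in the category $\mC$.

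First, I would verify that the convolution product is well-defined and bilinear on $K\mC$. Since any $f \in K\mC$ can be written as a finite sum $f = \sum_{i} a_i \chi_{U_i}$ with $a_i \in K$ and $U_i \in \mC^a$, and similarly $g = \sum_{j} b_j \chi_{V_j}$, applying \eqref{def:conv} formally and using Lemma \ref{lem:mult1} yields
\[
f * g = \sum_{i,j} a_i b_j \, \chi_{U_i V_j}.
\]
By Proposition \ref{prop:ample}(1), each $U_i V_j$ lies in $\mC^a$, so the sum belongs to $K\mC$ (in particular, the defining sum in \eqref{def:conv} has only finitely many nonzero terms at each $x \in \mC$). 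Bilinearity in $f$ and $g$ is now immediate from the definition.

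Next, by bilinearity it suffices to verify associativity on basis elements, that is, to show
\[
(\chi_U * \chi_V) * \chi_W \;=\; \chi_U * (\chi_V * \chi_W) \qquad \text{for all } U, V, W \in \mC^a.
\]
By two applications of Lemma \ref{lem:mult1}, the left-hand side equals $\chi_{(UV)W}$ and the right-hand side equals $\chi_{U(VW)}$. Hence associativity of convolution reduces to the identity of subsets
\[
(UV)W = U(VW)
\]
inside $\mC$, where subset multiplication is defined by \eqref{eq:subs_mult}.

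The main (and only nontrivial) step is verifying this subset identity; it is a direct unpacking of the associativity axiom (4) of Definition \ref{def:cat}. If $x \in (UV)W$, then $x = yw$ for some $y \in UV$, $w \in W$ with $(y,w) \in \mC^{(2)}$, and $y = uv$ with $u\in U$, $v\in V$, $(u,v)\in \mC^{(2)}$. Then $\dom(v) = \dom(uv) = \ran(w)$, so $(v,w) \in \mC^{(2)}$; and $\ran(vw) = \ran(v) = \dom(u)$, so $(u, vw) \in \mC^{(2)}$. Applying axiom (4) gives $x = (uv)w = u(vw) \in U(VW)$. The reverse inclusion is entirely symmetric. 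Combining these observations yields associativity of $*$, and since $K\mC$ is by construction a $K$-module closed under $*$, it is a $K$-algebra.
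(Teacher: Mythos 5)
Your proof is correct, but it follows a different route from the paper's. The paper proves associativity for arbitrary $f,g,h\in K\mC$ by a direct manipulation of the convolution sums: it expands $((f*g)*h)(x)$ as a double sum over factorizations $x=uv$, $u=pq$, and reindexes it as a single sum $\sum_{pqv=x}f(p)g(q)h(v)$ over triple factorizations, observing that $(f*(g*h))(x)$ reindexes to the same expression. You instead reduce to basis elements by bilinearity, invoke Lemma~\ref{lem:mult1} to identify $\chi_U*\chi_V$ with $\chi_{UV}$, and transfer the problem to the associativity of subset multiplication $(UV)W=U(VW)$, which you verify from the category axioms (and which in fact already follows from Proposition~\ref{prop:ample}(1), since $\mC^a$ is a semigroup under this product). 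Your approach makes the finiteness issues fully explicit --- you need $f*g$ to lie again in $K\mC$ before convolving with $h$, and you supply this via $f*g=\sum_{i,j}a_ib_j\chi_{U_iV_j}$ --- and it highlights that the algebra structure of $K\mC$ is inherited from the multiplicative structure of the Boolean restriction semigroup $\mC^a$; the paper's computation is shorter and avoids decomposing into basis elements, at the cost of leaving the finiteness of the intermediate sums implicit (it was secured earlier in Lemma~\ref{lem:mult1}). Both arguments are sound.
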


\begin{proof} Let $f,g,h\in K\mC$. Then
$$
((f*g)*h)(x) = \sum_{uv=x}(f*g)(u)h(v) = \sum_{uv=x}\sum_{pq=u} f(p)g(q)h(v) = \sum_{pqv=u} f(p)g(q)h(v)
$$
and, similarly, $(f*(g*h))(x) = \sum\limits_{pqv=u} f(p)g(q)h(v)$.
\end{proof}

%%%%%
\subsection{The isomorphism of algebras}
Let $K$ be a commutative unital ring and $S$ a semigroup. Recall that the {\em semigroup algebra} $KS$ is the free $K$-module with basis $S$ which is equipped with the product
$$
\sum_{s\in S}c_s s \cdot \sum_{t\in S} d_tt = \sum_{s,t\in S} c_sd_t st.
$$

Let $S$ be a restriction semigroup with local units and $\beta$ its spectral action, see Example~\ref{ex:spectral}. Recall that
$(s, D_{s^*}) = \{[s,\varphi]\colon \varphi \in D_{s^*}\}$. From Proposition \ref{prop:embedding} we know that $\iota\colon S\to {\mathscr C}(S)^a$, $s\mapsto (s, D_{s^*})$ is an embedding of restriction semigroups.
We now state the algebra isomorphism theorem.

\begin{theorem}\label{th:isom}
Let $S$ be a restriction semigroup with local units and $K$ a commutative unital ring. The map 
$$F\colon S\to K{\mathscr C}(S), \quad  s\mapsto \chi_{(s, D_{s^*})}$$
 extends to an isomorphism of algebras
$F\colon KS \to K{\mathscr C}(S)$.
\end{theorem}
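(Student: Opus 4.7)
The plan is to extend the assignment $s \mapsto \chi_{\iota(s)}$, where $\iota(s) = (s, D_{s^*})$, by $K$-linearity to obtain $F \colon KS \to K{\mathscr C}(S)$, which is well-defined since $S$ is a free $K$-basis of $KS$. I then need to verify three things: multiplicativity, surjectivity, and injectivity.

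\emph{Multiplicativity} is essentially immediate: for $s,t \in S$, Lemma \ref{lem:mult1} gives $\chi_{\iota(s)} * \chi_{\iota(t)} = \chi_{\iota(s)\cdot\iota(t)}$, and Proposition \ref{prop:morphism} (applied to the spectral action) yields $\iota(s)\cdot\iota(t) = \iota(st)$, so $F(s)F(t) = F(st)$; bilinearity extends this to $KS$. \emph{Surjectivity} requires showing every $\chi_U$ with $U \in {\mathscr C}(S)^a$ lies in $F(KS)$. By compactness and Remark \ref{rem:basis}, $U$ is a finite union of basic compact slices of the form $(s, D_{s^*;\, f_1, \dots, f_k})$ with $f_i \leq s^*$; iterated inclusion-exclusion on such a finite union (together with Lemma \ref{lem:compatibility}, used to express finite intersections as finite unions of basic compact slices via common lower bounds) reduces $\chi_U$ to a $\mathbb{Z}$-linear combination of terms $\chi_{(s, D_e)}$ with $e \leq s^*$. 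Finally Remark \ref{rem:basis} gives $(s, D_e) = (se, D_{(se)^*}) = \iota(se)$, since $(se)^* = s^*e = e$ by \eqref{eq:rule1r}, so $\chi_{(s, D_e)} = F(se)$.

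For \emph{injectivity}, suppose $\sum_{s \in T} c_s F(s) = 0$ for a finite subset $T \subseteq S$ with $c_s \neq 0$ for all $s \in T$. Choose $s_0 \in T$ maximal under the natural partial order on $T$, and evaluate both sides at the germ $\gamma = [s_0, \chi_{(s_0^*)^{\uparrow}}] \in {\mathscr C}(S)$. For $t \in T$, the value $F(t)(\gamma) \in \{0,1\}$ equals $1$ iff $\gamma \in \iota(t) = (t, D_{t^*})$, which unpacks to the existence of $u \leq s_0, t$ with $u^* \geq s_0^*$. Combining $u = s_0 u^*$ (from $u \leq s_0$) with $u^* = (s_0 u^*)^* = s_0^* u^*$ from \eqref{eq:rule1r} forces $u^* \leq s_0^*$ and hence $u^* = s_0^*$, so $u = s_0 s_0^* = s_0$; that is, $F(t)(\gamma) = 1$ iff $t \geq s_0$. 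By maximality of $s_0$ in $T$, only $t = s_0$ contributes, yielding $c_{s_0} = 0$, a contradiction.

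The main obstacle is the surjectivity step. Although basic compact slices generate the topology of ${\mathscr C}(S)$, an intersection $(s_1, V_1) \cap (s_2, V_2)$ of two such slices need not itself be a basic slice: elements $s_1, s_2 \in S$ can fail to have a single meet even when they agree as germs at various points of $V_1 \cap V_2$. One has to cover the intersection by finitely many common lower bounds $u_i \leq s_1, s_2$ to write it as a finite union of basic slices $(u_i, W_i)$, and then confirm that the resulting inclusion-exclusion calculation terminates in a finite $\mathbb{Z}$-linear combination of terms $\chi_{(s, D_e)} = F(se)$. Once this bookkeeping is carried out, the rest of the proof is routine, and the same argument, when specialized to an inverse semigroup $S$, recovers Steinberg's original isomorphism \cite[Theorem 6.3]{St10} without appeal to the underlying groupoid.
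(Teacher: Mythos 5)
Your proof is correct, and for multiplicativity and injectivity it coincides with the paper's: the paper also evaluates the putative dependence relation at the germ $[s,\chi_{(s^*)^{\uparrow}}]$ and shows that $\chi_{(t,D_{t^*})}$ is nonzero there precisely when $t\geq s$; it merely organizes the endgame differently (for each $s$ in the support it produces some $u$ with $s\lneq u$ and builds an impossible strictly increasing chain in a finite set, rather than choosing a maximal element as you do --- the two bookkeepings are interchangeable). The one genuine difference is surjectivity. The paper argues bottom-up: the image of $F$ is a subalgebra containing every $\chi_{(e,D_e)}$ with $e\in P(S)$; since these generate ${\mathsf B}(P(S))$ and, for compact-open $A,B\subseteq \mC^{(0)}$, one has $\chi_{A\cap B}=\chi_A*\chi_B$ and $\chi_{A\cup B}=\chi_A+\chi_B-\chi_{A\cap B}$, the image contains $\chi_{(s^*,U)}$ for every compact-open $U\subseteq D_{s^*}$, and then $\chi_{(s,U)}=F(s)*\chi_{(s^*,U)}$ because $(s,U)=(s,D_{s^*})(s^*,U)$. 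This pushes all the Boolean manipulation into the unit space and avoids most of the inclusion-exclusion you perform on the slices themselves. As for the obstacle you flag at the end: it is not actually there. Two basic compact slices $(s_1,V_1)$ and $(s_2,V_2)$ contained in a common compact slice are compatible in ${\mathscr C}(S)^a$, so by Lemma \ref{lem:compatibility} (applied in ${\mathscr C}(S)^a$, not in $S$) their set-theoretic intersection equals $(s_1,V_1)(s_2,V_2)^*=(s_1s_2^*,\,V_1\cap V_2)$, again a single basic compact slice; no covering by several common lower bounds of $s_1,s_2$ in $S$ is needed, and your inclusion-exclusion terminates. (Both your argument and the paper's still quietly use that an arbitrary compact slice is a finite union of basic ones and that the resulting inclusion-exclusion stays in the span; this is exactly the computation the paper carries out after Definition \ref{def:algebras} in the groupoid case.)
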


\begin{proof} Let us show that $F$ extends to a homomorphism $F\colon KS \to  K{\mathscr C}(S)$. By Proposition
\ref{prop:embedding} we know that $(st, D_{(st)^*}) = (s, D_{s^*})(t, D_{s^*})$, which, in view of Lemma \ref{lem:mult1}, yields $\chi_{(st, D_{(st)^*})} = \chi_{(s, D_{s^*})}\chi_{(t, D_{t^*})}$, as desired.
Since all $(e, D_e)$, where $e$ runs through $P(S)$, are in the image of $F$ and generate the generalized Boolean algebra ${\mathsf{B}}(P(S))$, it follows that each $\chi_{(e, U)}$, where $U\in  {\mathsf{B}}(P(S))$ and $U\subseteq D_e$, is also in the image of $F$. Furthermore, for a compact slice $(s, U)$, where $U$ is a compact-open subset of $D_{s^*}$, the equality $(s, U)=(s, D_{s^*})(s^*, U)$ shows that $\chi_{(s, U)}$ is also in the image of $F$. Therefore, $F$ is surjective.

To show that $F$ is injective,
it suffices to prove that the set $\{\chi_{(s, D_{s^*})}\colon s\in S\}$ is linearly independent. Seeking a contradiction, assume that this is not so. Then there is a finite set $X=\{s_1,\dots, s_n\}\subseteq S$ such that 
\begin{equation}\label{eq:a}
\sum_{s\in X} \alpha_s \chi_{(s, D_{s^*})} =0,
\end{equation}
where all coefficients $\alpha_s\in K$ are non-zero. 

Let us fix an arbitrary $s\in X$. We evaluate the linear combination at the germ $[s, \chi_{(s^*)^{\uparrow}}]$. Since $[s, \chi_{(s^*)^{\uparrow}}] \in (s, D_{s^*})$, it follows that $\chi_{(s, D_{s^*})} ([s, \chi_{(s^*)^{\uparrow}}]) =1$. Consequently, the $s$-term 
contributes $\alpha_s\cdot 1 = \alpha_s\neq 0$ to the sum.  Since the total sum in \eqref{eq:a} equals zero, there must be some $u\in X \setminus \{s\}$ such that $\chi_{(u, D_{u^*})}([s, \chi_{(s^*)^{\uparrow}}]) =1$.
This means that $[s, \chi_{(s^*)^{\uparrow}}] \in (u, D_{u^*})$, which implies the equality $[s, \chi_{(s^*)^{\uparrow}}] = [u, \psi]$ for some $\psi \in D_{u^*}$. By the definition of the equality of germs, this implies that $\psi = \chi_{(s^*)^{\uparrow}} \in D_{u^*}$ and there is $v\leq s,u$ satisfying $\chi_{(s^*)^{\uparrow}}(v^*) =1$ or, equivalently, $v^*\geq s^*$.
Since $v\leq s$, we also have $v^*\leq s^*$. Therefore, $v^* = s^*$, which in turn yields $v=sv^* = ss^* = s$. Since $v\leq u$, we conclude that $s\leq u$. However, as we chose $u\neq s$, the inequality must be strict: $s\lneq u$. We have thus shown that for every $s\in X$ there is $u = u(s) \in X \setminus \{s\}$ such that $s\lneq u(s)$.
It follows that $u(s) \lneq u(u(s)) = u^2(s)$, etc. We thus construct a strictly increasing chain of elements of $X$:
$$
s\lneq u(s) \lneq u^2(s) \lneq u^3(s) \lneq  \dots
$$ 
which is impossible, since $X$ is finite.
The obtained contradiction proves that the set  $\{\chi_{(s, D_{s^*})}\colon s\in S\}$ is linearly independent, as required.
\end{proof}

Theorem \ref{th:isom} is analogous to Steinberg's result on algebras of inverse semigroups \cite[Theorem 6.3]{St10}. Our proof, applied to an inverse semigroup $S$, yields a direct proof of  \cite[Theorem 6.3]{St10} which, unlike the original proof in \cite{St10}, does not involve the universal groupoid of $S$ or M\"obius inversion.

\subsection{The finitary case} Let us look at the special case where $S$ satisfies the condition that for all $e\in P(S)$ the principal downset $e^{\downarrow}$ is finite.
Proposition \ref{prop:finitary} implies that
$$(s, D_{s^*}) = \{[se, ((se)^*)^{\uparrow}] \colon e \in P(S), e\leq s^*\} = \{[t, (t^*)^{\uparrow}]\colon t\leq s\}$$ is a finite set. It is readily verified that the topology on ${\mathscr C}(S)$ is discrete (cf. \cite[Proposition 2.5]{St10}), so that compact slices are then precisely finite slices. It follows that the functions $\delta_{[s,(s^*)^{\uparrow}]}$, where $s$ runs through $S$, form a basis of the algebra $K{\mathscr C}(S)$. Furthermore, we have:
$$
\chi_{(s, D_{s^*})} = \sum_{t\leq s} \delta_{[t, (t^*)^{\uparrow}]}.
$$
In the case where $S$ is a range semigroup\footnote{The isomorphism result of \cite[Theorem 1.5]{S18} is proved for range semigroups satisfying the condition that every principal downset of $P(S)$ is finite. We remark that range semigroups are referred to as $E$-Ehresmann left restriction semigroups in \cite{S18}.} this recovers the isomorphism from \cite[Theorem 1.5]{S18} (note that the element $\delta_{[s,(s^*)^{\uparrow}]}$ is denoted by $C(s)$ in \cite{S17,S18}). Therefore, Theorem \ref{th:isom} generalizes the isomorphism of \cite{S17,S18} from range semigroups, for which all principal downsets of $P(S)$ are finite, to arbitrary restriction semigroups with local units.

%%%
\section{Concluding remarks}\label{s:last}
In this final section, we outline some areas for future exploration suggested by the work done in the present paper. 

Since the connection between restriction semigroups and \'etale categories closely resembles the connection between inverse semigroups and \'etale groupoids, it is reasonable to seek extensions of the properties of \'etale groupoids, such as being  effective, principal, minimal, etc. to the broader setting of \'etale categories and connecting these properties with algebraic properties of the associated Boolean restriction semigroups of compact slices, extending the existing programme for Boolean inverse semigroups (see \cite{L23}). Another direction is to relate restriction semigroups and \'etale categories with topological binary relations (see \cite{Power92, HPP05}), which have existed for a while and are useful, for example, for classification of triangular subalgebras of certain groupoid $C^*$-algebras.

It also seems reasonable to look for further applications of the partial action product construction of  Section \ref{s:proper} and its variations. For example, we anticipate that the technique of Section \ref{s:proper} can be suitably adapted to extend the results on proper extensions of birestriction semigroups of \cite{DKK21} to describe proper extensions of restriction semigroups.

Our work suggests developing the theory of algebras and operator algebras of \'etale categories and restriction semigroups, generalizing the theory of Steinberg algebras and $C^*$-algebras of inverse semigroups and \'etale groupoids. An obvious task is to define operator algebras of a restriction semigroup with local units, as well as operator algebras of  \'etale categories, and prove analogues of Theorem \ref{th:isom}. 
Another direction is to extend the results of \cite{MC24} (established for the birestriction and bi\'etale case) to algebras of restriction semigroups and \'etale categories.  It would be also worthwhile to establish connections between  properties of the algebra $K{\mathscr C}(S)$ for a restriction semigroup with local units $S$, and properties of $S$ and ${\mathscr C}(S)^a$, thereby extending the existing programme for inverse semigroup algebras, see, e.g., \cite{StSz21}. 

Finally, an important research direction is seeking categorical models for non self-adjoint operator algebras of dynamical origin considered in the literature, see, e.g., \cite{Peters84, KK12} as well as of seeking new natural examples of algebras (in both analytical and algebraic settings) arising from \'etale categories.

\footnotesize
\def\bibspacing{-3pt}

%---------------------------------------------------

\end{document}